%!TEX encoding =  UTF-8 Unicode
\documentclass[10pt,a4paper,twoside]{article}
\ifx\pdfpageheight\undefined
   \usepackage[dvips,colorlinks=true,linkcolor=blue,citecolor=red,%
      urlcolor=green]{hyperref}
   \usepackage[dvips]{graphicx}
   \makeatletter
   \edef\Gin@extensions{\Gin@extensions,.mps}
   \DeclareGraphicsRule{.mps}{eps}{*}{}
   \makeatother
\else
   \usepackage[pdftex]{graphicx}
   \usepackage[bookmarksopen=false,pdftex=true,breaklinks=true,%
      backref=page,pagebackref=true,plainpages=false,%
      hyperindex=true,pdfstartview=FitH,%
      colorlinks=true,%
      pdfpagelabels=true,linkcolor=blue,%
      citecolor=red,urlcolor=green,hypertexnames=false,%
      colorlinks=false,%
      ]%
   {hyperref}
\fi

\usepackage[utf8]{inputenc} 
\usepackage{amsfonts,amssymb}
\usepackage{amsthm}
\usepackage[french]{babel}
\usepackage[all]{xy}
\usepackage{mathrsfs}
\usepackage{lmodern}
\usepackage{microtype}
\usepackage{array}
\usepackage{stmaryrd}
\usepackage{wasysym}
\usepackage{bm}
\usepackage{xspace}
\usepackage{etoolbox}

\makeatletter\def\th@plain{\slshape}\makeatother
\makeatletter\patchcmd{\th@remark}{\itshape}{\slshape}{}{}\makeatother

%------ ceci retrecit les marges format europeen A4 -----------
\marginparwidth 0pt
\oddsidemargin 1.2cm
\evensidemargin .8cm
\marginparsep 0pt
\topmargin 0cm
\textwidth 14cm
\textheight 22cm
\sloppy
%----------------------

\FrenchFootnotes

%: \sibrouillon 
%\newcommand\sibrouillon[1]{#1}
\newcommand\sibrouillon[1]{}
\newcommand \hum[1]{\sibrouillon{\rdb
{\tt\small hum:  #1}
}}

%:  subsection* avec table des mati\`eres
%%%%%%%%%%%%%%%%%%%%%%%%%%%%%%%%%%%%%%%%%
\newcommand \subsec[1]{\goodbreak\addcontentsline{toc}{subsection}{#1}\subsection*{#1}}
%%%%%%%%%%%%%%%%%%%%%%%%%%%%%%%%%%%%%%%%%

%%%%%%%%%%%%%%%%%%%%%%%%%%%%%%%%%%%%%%%%%
\newcommand \subsect[2]{\goodbreak\addcontentsline{toc}{subsection}{#2}
\subsection*{#1}}
%%%%%%%%%%%%%%%%%%%%%%%%%%%%%%%%%%%%%%%%%

%%%%%%%%%%%%%%%%%%%%%%%%%%%%%%%%%%%%%%%%%

%%%%%%%%%%%%%%%%%%%%%%%%%%%%%%%%%%%%%%%%%

%%%%%%%%%%%%%%%%%%%%%%%%%%%%%%%%%%%%%%%%%

%%%%%%%%%%%%%%%%%%%%%%%%%%%%%%%%%%%%%%%%%

\newcommand \rdb{}
\newcommand \ix[1] {\emph{#1}}
\newcommand \iref[1] {\ref{#1}}
\newcommand \aref[1] {\cite[#1]{ACMC}}

%\newenvironment{proof}{
%\trivlist \item[\hskip \labelsep{\it Démonstration.}]}{\hfill\mbox{$\Box$}
%\endtrivlist}

%!TEX encoding =  UTF-8 Unicode

%%%%%%%%%%%%%%%%%%%%%%%%%%%%%%%%%%%%%%%%%

%:  lecteur, lectrice, masculin feminin

\newcounter{MF}
\newcommand\stMF{\stepcounter{MF}}

\newcommand{\lec}{\stMF\ifodd\value{MF}lecteur \else 
lectrice \fi}
\newcommand{\lecz}{\stMF\ifodd\value{MF}lecteur\else lectrice\fi}

\newcommand{\lecs}{\stMF\ifodd\value{MF}lecteurs \else 
lectrices \fi}
\newcommand{\lecsz}{\stMF\ifodd\value{MF}lecteurs\else 
lectrices\fi}

\newcommand{\alec}{\stMF\ifodd\value{MF}au lecteur \else%
\`a la lectrice \fi}
\newcommand{\alecz}{\stMF\ifodd\value{MF}au lecteur\else%
\`a la lectrice\fi}

\newcommand{\dlec}{\stMF\ifodd\value{MF}du lecteur \else%
de la lectrice \fi}
\newcommand{\dlecz}{\stMF\ifodd\value{MF}du lecteur\else%
de la lectrice\fi}

\newcommand{\llec}{\stMF\ifodd\value{MF}le lecteur \else la lectrice \fi}
\newcommand{\llecz}{\stMF\ifodd\value{MF}le lecteur\else la lectrice\fi}

\newcommand{\Llec}{\stMF\ifodd\value{MF}Le lecteur \else La lectrice \fi}

% les suivants ne changent pas le genre
\newcommand{\lui}{\ifodd\value{MF}lui \else
elle \fi}
\newcommand{\luiz}{\ifodd\value{MF}lui\else
elle\fi}

\newcommand{\celui}{\ifodd\value{MF}celui \else
celle \fi}

\newcommand{\ceux}{\ifodd\value{MF}ceux \else
celles \fi}

\newcommand{\er}{\ifodd\value{MF}er \else
\`ere \fi}

\newcommand{\eux}{\ifodd\value{MF}eux \else
elles \fi}

\newcommand{\eUx}{\ifodd\value{MF}eux \else
euse \fi}

\newcommand{\leux}{\ifodd\value{MF}leux \else
leuse \fi}

\newcommand{\il}{\ifodd\value{MF}il \else
elle \fi}

\newcommand{\e}{\ifodd\value{MF} \else e \fi}
\newcommand{\ez}{\ifodd\value{MF}\else e\fi}

\newcommand{\n}{\ifodd\value{MF}n \else nne \fi}
\newcommand{\nz}{\ifodd\value{MF}n\else nne\fi}

%  \la* fait le ou la sans changer le genre,
%  \la fait le ou la en changeant de genre
\makeatletter
\newcommand{\la}{\@ifstar{\ifodd\value{MF}le\else
la\fi}{\stMF\ifodd\value{MF}le \else la \fi}}
\makeatother

\newcommand \rem{\rdb
\noi{\it Remarque. }}

\newcommand \rems{\rdb
\noi{\it Remarques. }}

\newcommand \exl{\rdb
\noi{\bf Exemple. }}

\newcommand \exls{\rdb
\noi{\bf Exemples. }}

\newcommand \thref[1] {\thoz~\ref{#1}}
\newcommand \paref[1] {page~\pageref{#1}}

\DeclareRobustCommand{\guig}{\mbox{{\usefont{U}{lasy}%
{\if b\expandafter\@car\f@series\@nil b\else m\fi}{n}%
\char40\kern-0.20em\char40}~}}
\DeclareRobustCommand{\guid}{\mbox{~\usefont{U}{lasy}%
{\if b\expandafter\@car\f@series\@nil b\else m\fi}{n}%
\char41\kern-0.20em\char41}}
\newcommand\gui[1]{\guig{#1}\guid}

\newcommand \facile{\begin{proof}
La \dem est laiss\'ee \alecz.
\end{proof}
}

\newcommand \num {{n$^{\mathrm{ o}}$}}
%\newcommand \num {{n\o}}

%:  Commentaires, remarques, exemples, problemes

%\newcommand \eoe {\hbox{}\nobreak\hfill
%\vrule width 1.4mm height 1.4mm depth 0mm \par \smallskip} 

%:   cad hdr spdg propeq ...

\newcommand \Cadz {C'est-\`a-dire}
\newcommand \recu {r\'ecur\-rence }

\newcommand \cad {c'est-\`a-dire }

\newcommand \ssi {si, et seu\-le\-ment si, }
\newcommand \ssiz {si, et seu\-le\-ment si,~}

\newcommand \spdg {sans per\-te de g\'en\'e\-ra\-lit\'e }

\newcommand \Propeq {Les pro\-pri\-\'e\-t\'es sui\-van\-tes sont 
\'equi\-va\-len\-tes.}
\newcommand \propeq {les pro\-pri\-\'e\-t\'es sui\-van\-tes sont 
\'equi\-va\-len\-tes.}

%:  Kev Alg etc...
\newcommand \Kev {$\gK$-\evc }

\newcommand \Kevz {$\gK$-\evcz}

\newcommand \Alg {$\gA$-\alg}

\newcommand \Amo {$\gA$-mo\-du\-le }
\newcommand \Amos {$\gA$-mo\-du\-les }
\newcommand \Amoz {$\gA$-mo\-du\-le}
\newcommand \Amosz {$\gA$-mo\-du\-les}

%\newcommand \lmo {$\gl$-mo\-du\-le }  % \lmo d\'ej\`a pris
%\newcommand \lmos {$\gl$-mo\-du\-les }
%\newcommand \lmoz {$\gl$-mo\-du\-le}
%\newcommand \lmosz {$\gl$-mo\-du\-les}

%:  abbr\'eviations a
  
\newcommand \acz{alg\'e\-bri\-quement clos}  

\newcommand \acl {an\-neau \icl}
\newcommand \acls {an\-neaux \icl}

\newcommand \adk {an\-neau de Dedekind }

\newcommand \adp {an\-neau de Pr\"u\-fer }

\newcommand \adpz {an\-neau de Pr\"u\-fer}

\newcommand \adv {anneau de valuation }
\newcommand \advs {anneaux de valuation }

\newcommand \advz {anneau de valuation}

\newcommand \Advls {Anneaux \dvlas } 
\newcommand \advl {anneau \dvla } 
\newcommand \advls {anneaux \dvlas } 
\newcommand \advlz {anneau \dvlaz } 
\newcommand \advlsz {anneaux \dvlasz }

\newcommand \agq{alg\'e\-bri\-que }
\newcommand \agqs{alg\'e\-bri\-ques }

\newcommand \aKr {anneau de Krull }
\newcommand \aKrs {anneaux de Krull }
\newcommand \aKrz {anneau de Krull}
\newcommand \aKrsz {anneaux de Krull}

\newcommand \alg {alg\`e\-bre }

\newcommand \algo{algo\-rithme }

\newcommand \algq{al\-go\-rith\-mi\-que }
\newcommand \algqs{al\-go\-rith\-mi\-ques }

\newcommand \alo {an\-neau lo\-cal }
\newcommand \aloz {an\-neau lo\-cal}

\newcommand \ari{arith\-m\'e\-tique }

\newcommand \autos {auto\-mor\-phis\-mes }
\newcommand \autoz {auto\-mor\-phis\-me}

%:  abbr\'eviations b c

\newcommand \bdp {base de \dcn partielle }
\newcommand \bdpz {base de \dcn partielle}
\newcommand \bdps {bases de \dcn partielle }
\newcommand \bdpsz {bases de \dcn partielle}

\newcommand \bdf {base de \fap }

\newcommand \bif {borne inf\'e\-rieure } %
\newcommand \bifz {borne inf\'e\-rieure} %

\newcommand \bsp {borne sup\'e\-rieure } %

\newcommand \cacz{corps \acz}

\newcommand \cara{carac\-t\'e\-ris\-tique }

\newcommand \carn{carac\-t\'e\-ri\-sation }  
\newcommand \carns{carac\-t\'e\-ri\-sations }

\newcommand \cdi{corps discret }

\newcommand \cdiz{corps discret}
\newcommand \cdisz{corps discrets}

\newcommand \cli {cl\^o\-ture int\'e\-grale }

\newcommand \coe {coef\-fi\-cient }
\newcommand \coes {coef\-fi\-cients }

\newcommand \coesz {coef\-fi\-cients}

\newcommand \coh {co\-h\'e\-rent }
\newcommand \cohs {co\-h\'e\-rents }
\newcommand \cohz {co\-h\'e\-rent}
\newcommand \cohsz {co\-h\'e\-rents}

\newcommand \cohc {co\-h\'e\-rence }

\newcommand \coli {combi\-nai\-son \lin }

\newcommand \com {co\-ma\-xi\-maux }
\newcommand \comz {co\-ma\-xi\-maux}

\newcommand \cop {compl\'e\-men\-taire }
\newcommand \cops {compl\'e\-men\-taires }

\newcommand \corg{\coh r\'egulier }

%\newcommand \cro {croi\-s\'ee }
%\newcommand \cros {croi\-s\'ees }
%\newcommand \croz {croi\-s\'ee}
%\newcommand \crosz {croi\-s\'ees}

%:  abbr\'eviations d e

\newcommand \dcn {d\'ecom\-po\-sition }
\newcommand \dcns {d\'ecom\-po\-sitions }

\newcommand \dcnb {\dcn born\'ee }
\newcommand \dcnbz {\dcn born\'ee}

\newcommand \dcnc {\dcn compl\`ete }
\newcommand \dcncz {\dcn compl\`ete}

\newcommand \dcnp {\dcn partielle }
\newcommand \dcnpz {\dcn partielle}

\newcommand \ddk {dimension de~Krull }
\newcommand \ddkz {dimension de~Krull}

\newcommand \ddp {domaine de Pr\"u\-fer }
\newcommand \ddps {domaines de Pr\"u\-fer }
\newcommand \ddpz {domaine de Pr\"u\-fer}
\newcommand \ddpsz {domaines de Pr\"u\-fer}

\newcommand \demo{d\'emon\-stra\-tion }     
     
\newcommand \demos{d\'emon\-stra\-tions }

\newcommand \dem{d\'emons\-tra\-tion }
\newcommand \demz{d\'emons\-tra\-tion}
\newcommand \dems{d\'emons\-tra\-tions }

\newcommand \deter {d\'eter\-mi\-nant }

\newcommand \dfn{d\'efi\-nition }  
\newcommand \dfns{d\'efi\-nitions }  
\newcommand \dfnz{d\'efi\-nition}

\newcommand \dip{diviseur principal }
\newcommand \dips{diviseurs principaux }

\newcommand \dipsz{diviseurs principaux}

\newcommand \dit{distri\-bu\-ti\-vit\'e }

\newcommand \dok {domaine de Dedekind }
\newcommand \doks {domaines de Dedekind }

\newcommand \doksz {domaines de Dedekind}

\newcommand \dvla {\`a diviseurs }% {diviso\-riel } %anneaux
\newcommand \dvlaz {\`a diviseurs}% %anneaux
\newcommand \dvlas {\`a diviseurs }% %anneaux
\newcommand \dvlasz {\`a diviseurs}% %anneaux

\newcommand \dvlg {diviso\-riel } %groupes
\newcommand \dvlgz {diviso\-riel} %groupes
\newcommand \dvlgs {diviso\-riels } %groupes
\newcommand \dvlgsz {diviso\-riels} %groupes

\newcommand \dvli {\dvlt inver\-sible } %ideal
\newcommand \dvlis {\dvlt inver\-sibles } %ideal
\newcommand \dvliz {\dvlt inver\-sible} %ideal
\newcommand \dvlisz {\dvlt inver\-sibles} %ideal

\newcommand \dvlt {diviso\-riel\-lement } %

\newcommand \dvzz {di\-viseur de z\'e\-ro}

\newcommand \dve {divi\-si\-bi\-lit\'e }
\newcommand \dvez {divi\-si\-bi\-lit\'e}

\newcommand \dvee {\`a \dve explicite }
\newcommand \dveez {\`a \dve explicite}

\newcommand \dvr {diviseur }
\newcommand \dvrs {diviseurs }
\newcommand \dvrz {diviseur}
\newcommand \dvrsz {diviseurs}

\newcommand \dvrps {diviseurs prin\-cipaux }

\newcommand \dvrpsz {diviseurs prin\-cipaux}

%:  e

\newcommand \egmt {\'ega\-le\-ment }
\newcommand \egmtz {\'ega\-le\-ment}

\newcommand \egt {\'ega\-li\-t\'e }
\newcommand \egts {\'ega\-li\-t\'es }
\newcommand \egtz {\'ega\-li\-t\'e}
\newcommand \egtsz {\'ega\-li\-t\'es}

\newcommand \elr{\'el\'e\-men\-tai\-re }

\newcommand \elt{\'el\'e\-ment }  
\newcommand \elts{\'el\'e\-ments }  
\newcommand \eltz{\'el\'e\-ment}  
\newcommand \eltsz{\'el\'e\-ments}

\newcommand\evc{es\-pa\-ce vec\-to\-riel } 
 
\newcommand\evcz{es\-pa\-ce vec\-to\-riel}

\newcommand \eqve {\'equi\-valente }  
  
\newcommand \eqves {\'equi\-valentes }

\newcommand \eqvsz {\'equi\-valents}  
\newcommand \eqvesz {\'equi\-valentes}  

\newcommand \eqvc {\'equi\-va\-lence }  
\newcommand \eqvcs {\'equi\-va\-lences }  
\newcommand \eqvcz {\'equi\-va\-lence}

%:  abbr\'eviations f g
\newcommand \fab {\fcn born\'ee }

\newcommand \fabz {\fcn born\'ee}

\newcommand \fat {\fcn totale }

\newcommand \fatz {\fcn totale}

\newcommand \fap {\fcn partielle }

\newcommand \fcn {factorisation }

\newcommand \fdi {for\-te\-ment dis\-cret }

\newcommand \fdiz {for\-te\-ment dis\-cret}

\newcommand\gmq{g\'eo\-m\'e\-trique }

\newcommand\gmqsz{g\'eo\-m\'e\-triques}

\newcommand\gne{g\'e\-n\'e\-ra\-li\-s\'e }

\newcommand\gnl{g\'e\-n\'e\-ral }  
\newcommand\gnle{g\'e\-n\'e\-ra\-le }

\newcommand\gnlz{g\'e\-n\'e\-ral}  
  
\newcommand\gnlsz{g\'e\-n\'e\-raux}  
\newcommand\gnlesz{g\'e\-n\'e\-ra\-les}  

\newcommand\gnlt{g\'e\-n\'e\-ra\-le\-ment }

\newcommand\gnr{g\'e\-n\'e\-ra\-li\-ser }  

\newcommand \gns{g\'en\'e\-ra\-lise }

\newcommand \grl{groupe r\'eti\-cul\'e }
\newcommand \grls{groupes r\'eti\-cul\'es }
\newcommand \grlz{groupe r\'eti\-cul\'e}
\newcommand \grlsz{groupes r\'eti\-cul\'es}

\newcommand\gtr{g\'e\-n\'e\-ra\-teur }  
\newcommand\gtrs{g\'e\-n\'e\-ra\-teurs }  
  
\newcommand\gtrsz{g\'e\-n\'e\-ra\-teurs}  

%:  abbr\'eviations h i

\newcommand \homo {ho\-mo\-mor\-phisme }

\newcommand \homoz {ho\-mo\-mor\-phisme}

\newcommand \icl {int\'e\-gra\-le\-ment clos }

\newcommand \iclez {int\'e\-gra\-le\-ment close}
\newcommand \iclz {int\'e\-gra\-le\-ment clos}

\newcommand \id {id\'eal }
\newcommand \ids {id\'eaux }
\newcommand \idz {id\'eal}
\newcommand \idsz {id\'eaux}

\newcommand \idema {\id maxi\-mal }
\newcommand \idemas {\ids maxi\-maux }

\newcommand \idemasz {\ids maxi\-maux}

\newcommand \idep {\id pre\-mier }
\newcommand \idepz {\id pre\-mier}
\newcommand \ideps {\ids pre\-miers }

\newcommand \idif {id\'eal \dvlg fini }
\newcommand \idifs {id\'eaux \dvlgs finis }

 %ideal
\newcommand \idlis {\ids \dvlis } %ideal
 %ideal
\newcommand \idlisz {\ids \dvlisz } %ideal

\newcommand \idm {idem\-po\-tent }

\newcommand \idmz {idem\-po\-tent}

\newcommand \idp {id\'e\-al prin\-ci\-pal }
\newcommand \idps {id\'e\-aux prin\-ci\-paux }

\newcommand \idpz {id\'e\-al prin\-ci\-pal}

\newcommand \idtrs {in\-d\'e\-ter\-mi\-n\'ees }

\newcommand \ifr {id\'eal frac\-tion\-nai\-re }
\newcommand \ifrs {id\'eaux frac\-tion\-nai\-res }

\newcommand \imd {imm\'e\-diat }
\newcommand \imde {imm\'e\-diate }

\newcommand \ird {irr\'e\-duc\-tible }
\newcommand \irds {irr\'e\-duc\-tibles }
\newcommand \irdz {irr\'e\-duc\-tible}
\newcommand \irdsz {irr\'e\-duc\-tibles}

\newcommand \iso {iso\-mor\-phisme }
\newcommand \isos {iso\-mor\-phismes }

\newcommand \isoz {iso\-mor\-phisme}

\newcommand \isoc {iso\-morphe }

\newcommand \itf {id\'e\-al \tf}
\newcommand \itfs {id\'e\-aux \tf}
\newcommand \itfz {id\'e\-al \tfz}

\newcommand \ivs {inversibles }
\newcommand \ivz {inversible}
\newcommand \ivsz {inversibles}

 %groupes
 %groupes
 %groupes
 %groupes

\newcommand \ivde {inverse divisorielle } %anneaux, liste 
\newcommand \ivdes {inverses divisorielles } %anneaux, liste 
\newcommand \ivdez {inverse divisorielle} %anneaux, liste 
 %anneaux, liste 

\newcommand \ivda {inverse divisoriel } %anneaux, ideaux
\newcommand \ivdas {inverses divisoriels } %anneaux
\newcommand \ivdaz {inverse divisoriel} %anneaux
 %anneaux

%:  abbr\'eviations l m

\newcommand \lgb {local-global }

\newcommand \lgbz {local-global}

\newcommand \lin {lin\'e\-aire }

\newcommand \lot {loca\-le\-ment }
\newcommand \lotz {loca\-le\-ment}

\newcommand \lon {loca\-li\-sa\-tion }
\newcommand \lons {loca\-li\-sa\-tions }
\newcommand \lonz {loca\-li\-sa\-tion}

\newcommand \lop {\lot prin\-ci\-pal }

\newcommand \lopz {\lot prin\-ci\-pal}

\newcommand \mo {mo\-no\"{\i}de }
\newcommand \mos {mo\-no\"{\i}des }
\newcommand \mosz {mo\-no\"{\i}des}

\newcommand \molos {morphismes de \lon }

\newcommand \moquo {morphisme de passage au quotient }

%:  abbr\'eviations n 

\newcommand \ncr{n\'e\-ces\-sai\-re }       
       
\newcommand \ncrz{n\'e\-ces\-sai\-re}

\newcommand \ncrt{n\'e\-ces\-sai\-re\-ment }

\newcommand \ndz {r\'egu\-lier }

\newcommand \ndzz {r\'egu\-lier}
\newcommand \ndzsz {r\'egu\-liers}

\newcommand \ndze {r\'egu\-li\`ere }

\newcommand \noco {\noe \coh }

\newcommand \nocoz {\noe \cohz }

\newcommand \Noe {Noether }

\newcommand \noe {noeth\'e\-rien }
\newcommand \noes {noeth\'e\-riens }
\newcommand \noee {noeth\'e\-rienne }

\newcommand \noez {noeth\'e\-rien}

\newcommand \noetz {noeth\'e\-ria\-nit\'e}

\newcommand \ort{or\-tho\-go\-nal }  
\newcommand \orte{or\-tho\-go\-na\-le }  
\newcommand \orts{or\-tho\-go\-naux }

\newcommand \ortsz{or\-tho\-go\-naux}

%:  abbr\'eviations p

\newcommand \pb{pro\-bl\`e\-me }

\newcommand \pf {de \pn finie }

\newcommand \Plg {Prin\-cipe \lgb }
\newcommand \plg {prin\-cipe \lgb }

\newcommand \plgz {prin\-cipe \lgbz}

\newcommand \Plgc {\Plg con\-cret }
\newcommand \plgc {\plg con\-cret }

\newcommand \pn {pr\'esen\-ta\-tion }

\newcommand \pol {poly\-n\^ome }
\newcommand \pols {poly\-n\^omes }
\newcommand \polz {poly\-n\^ome}
\newcommand \polsz {poly\-n\^omes}

\newcommand \polles{poly\-no\-miales }  
\newcommand \pollesz{poly\-no\-miales}

\newcommand \polcar {\pol ca\-rac\-t\'e\-ris\-ti\-que }

\newcommand \polmin {\pol mini\-mal }

\newcommand \prmtz {pr\'eci\-s\'e\-ment}

\newcommand \prn {pro\-jec\-tion }

\newcommand \prof {profondeur }
\newcommand \profz {profondeur}

\newcommand \Prts {Pro\-pri\-\'et\'es }
\newcommand \prt {pro\-pri\-\'et\'e }
\newcommand \prts {pro\-pri\-\'et\'es }
\newcommand \prtz {pro\-pri\-\'et\'e}
\newcommand \prtsz {pro\-pri\-\'et\'es}

%:  abbr\'eviations r

%\newcommand \rcvq {recou\-vrement par quotients } 
%\newcommand \rcvqs {recou\-vrements par quotients } 
%\newcommand \rcvqz {recou\-vrement par quotients} 
%\newcommand \rcvqsz {recou\-vrements par quotients} 

\newcommand \rde {rela\-tion de d\'epen\-dance }

\newcommand \rdi {\rde int\'e\-grale }

 % recollement de quotients
 % recouvrement par quotients
 % 
 % 
 % s
 % 
 % 
 % 

\newcommand \sdo {\sdr \orte }

\newcommand \sdr {somme directe }

\newcommand \sdzz {sans \dvzz}

\newcommand \sgr {\sys \gtr }

\newcommand \smq {sym\'e\-trique }

  % algebres

  % polynomes

\newcommand \srg {suite r\'egu\-li\`ere }

\newcommand \sulsz {suppl\'e\-men\-taires}

\newcommand \sys {sys\-t\`eme }

%:  abbr\'eviations t

\newcommand \tf {de ty\-pe fi\-ni }
\newcommand \tfz {de ty\-pe fi\-ni} 

\newcommand \Tho {Th\'eo\-r\`eme }
\newcommand \tho {th\'eo\-r\`eme }
\newcommand \thos {th\'eo\-r\`emes }
\newcommand \thoz {th\'eo\-r\`eme}

\newcommand \trdi {treil\-lis dis\-tri\-bu\-tif }

\newcommand \trdiz {treil\-lis dis\-tri\-bu\-tif}

\newcommand \unt {uni\-taire }

\newcommand \untz {uni\-taire}

\newcommand \zed {z\'{e}\-ro-di\-men\-sion\-nel }

%:  ------- maths constructives

\newcommand \cof {cons\-truc\-tif }
\newcommand \cofs {cons\-truc\-tifs }
\newcommand \cofz {cons\-truc\-tif}

\newcommand \cov {cons\-truc\-ti\-ve }
\newcommand \covz {cons\-truc\-ti\-ve}
\newcommand \covsz {cons\-truc\-ti\-ves}
\newcommand \covs {cons\-truc\-ti\-ves }

\newcommand \coma {\maths\covs}
\newcommand \comaz {\maths\covsz}
\newcommand \clama {\maths clas\-si\-ques }
\newcommand \clamaz {\maths clas\-si\-ques}

\renewcommand \cot {cons\-truc\-ti\-vement }
\newcommand \cotz {cons\-truc\-ti\-vement}

\newcommand \maths {math\'e\-ma\-tiques }

%!TEX encoding =  UTF-8 Unicode

%-------- newtheorem ----------

%\theoremstyle{theorem}
\newtheorem{theorem}{Théorème}[section]
\newtheorem{thdef}[theorem]{Théorème et définition}

\newtheorem{lemma}[theorem]{Lemme}%[section]
\newtheorem{corollary}[theorem]{Corolaire}%[section]
%[section]
\newtheorem{proposition}[theorem]{Proposition}%[section]
\newtheorem{propnot}[theorem]{Proposition et notation}%[section]
%[section]
\newtheorem{propdef}[theorem]{Proposition et définition}
\newtheorem{fact}[theorem]{Fait}%[section]

\newtheorem{plcc}[theorem]{Principe local-global concret}

\theoremstyle{definition}
%[section]
%[section]
\newtheorem{definition}[theorem]{Définition}%[section]
%[section]
%[section]
%[section]
%[section]
%[section]

%[section]
\newtheorem{definota}[theorem]{Définition et notation}

\theoremstyle{remark}
%[section]
%[section]

%!TEX encoding =  UTF-8 Unicode

\newcommand {\junk}[1]{}

%:  cadre
\newcommand \Grandcadre[1]{%
\begin{center}
\begin{tabular}{|c|}
\hline
~\\[-3mm]
#1\\[-3mm]
~\\ 
\hline
\end{tabular}
\end{center}

}

\newcommand\ndsp{\textstyle}

%------- med skip et autres ---------
\newcommand \noi {\noindent}
\renewcommand \ss {\smallskip}
\newcommand \sni {\ss\noi}
\newcommand \snii {\noi}
\newcommand \ms {\medskip}
\newcommand \mni {\ms\noi}

\newcommand\eti{^\times}
\newcommand \epr{^\perp}
\newcommand \etl{^*}

\newcommand \etoz{$^*$}

\newcommand \iBA {_{\gB/\!\gA}}

\newcommand \divi {\mid}

\newcommand\equidef{\buildrel{{\rm def}}\over{\quad\Longleftrightarrow\quad}} 
\newcommand\eqdefi{\buildrel{\rm def}\over {\;=\;}}

%------- formules a parametres -------- 

\newcommand \fraC[2] {{{#1}\over {#2}}}

\newcommand \abs[1] {\left|{#1}\right|}
\newcommand \abS[1] {\big|{#1}\big|}
\newcommand \aqo[2] {#1\sur{\gen{#2}}\!}

\newcommand \gen[1] {\left\langle{#1}\right\rangle} 
\newcommand \geN[1] {\big\langle{#1}\big\rangle}
\newcommand \so[1] {\left\{{#1}\right\}} 
\newcommand \sotq[2] {\so{\,#1\,|\,#2\,}} 
\newcommand \sur[1] {\!\left/#1\right.}
 
\newcommand \und[1] {\underline{#1}}

\newcommand \idg[1] {|\,#1\,|}

% dimension d'une extension
\newcommand \dex[1] {[\,#1\,]}

% liste

%:   fleches longues

\newcommand\simarrow{\vers{_\sim}}
 
\newcommand\vers[1]{\buildrel{#1}\over \longrightarrow }

\newcommand \lora {\longrightarrow}

%:  leqslant
\renewcommand \leq{\leqslant}
\renewcommand \preceq{\preccurlyeq}
\renewcommand \geq{\geqslant}

\newcommand \som {\sum\nolimits}

%:  macros \`a parametres

\newcommand\lrb[1] {\llbracket #1 \rrbracket}
\newcommand\lrbn {\lrb{1..n}}

\newcommand\lrbm {\lrb{1..m}}
\newcommand\lrbk {\lrb{1..k}}
\newcommand\lrbp {\lrb{1..p}}
\newcommand\lrbq {\lrb{1..q}}
\newcommand\lrbr {\lrb{1..r}}

\newcommand \snic[1] {\sni\centerline{$#1$}

\ss}

\newcommand \snac[1]{\sni
{\small\centering$#1$\par}

\ss}

\newcommand \eoe {\hbox{}\nobreak\hfill
\vrule width .5em height .5em depth 0mm \par \smallskip}

\newcommand \ov[1] {\overline{#1}}

\newcommand \wi[1] {\widetilde{#1} }

%: revdots
\makeatletter 
\def\revddots{\mathinner{\mkern1mu\raise\p@ 
\vbox{\kern7\p@\hbox{.}}\mkern2mu 
\raise4\p@\hbox{.}\mkern2mu\raise7\p@\hbox{.}\mkern1mu}} 
\makeatother

%------- lettres formatees --------
%: standards maths
 
\newcommand \FF{\mathbb {F}}

\newcommand \NN{\mathbb {N}} 
\newcommand \ZZ{\mathbb {Z}} 
\newcommand \OO{\mathbb{O}}
 
\newcommand \QQ{\mathbb {Q}} 
\newcommand \RR{\mathbb {R}}

% gras
\newcommand \gk {\mathbf{k}}
\newcommand \gA {\mathbf{A}}
\newcommand \gB {\mathbf{B}}
\newcommand \gC {\mathbf{C}}
\newcommand \gF {\mathbf{F}}
\newcommand \gK {\mathbf{K}}
\newcommand \gL {\mathbf{L}}

\newcommand \gV {\mathbf{V}}

\newcommand\Ati {\gA^{\!\times}}
\newcommand \Atl {\gA^{\!*}}
\newcommand \Btl {\gB^{*}}
\newcommand \Ktl {\gK^{*}}
\newcommand{\KAt}{\gK\etl\!\sur{\Ati}}
\newcommand{\AAt}{\Atl\!\sur{\Ati}}

%:     SCO  (suites complementaires)
\newdimen\xyrowsp
\xyrowsp=3pt
\newcommand{\SCO}[6]{
\xymatrix @R = \xyrowsp {
                                  &1 \ar@{-}[dl] \ar@{-}[dr] \\
#3 \ar@{-}[ddr]                   &   & #6 \ar@{-}[ddl] \\
                                  &\bullet\ar@{-}[d] \\
                                  &\bullet   \\
#2 \ar@{-}[ddr] \ar@{-}[uur]      &   & #5 \ar@{-}[ddl] \ar@{-}[uul] \\
                                  &\bullet \ar@{-}[d] \\
                                  &\bullet  \\
#1 \ar@{-}[uur]                   &   & #4 \ar@{-}[uul] \\
                                  & 0 \ar@{-}[ul] \ar@{-}[ur] \\
}
}

%:   mots mathematiques  sf 

\newcommand\MA[1]{\mathop{#1}\nolimits}

\newcommand \Zar {\MA{\mathsf{Zar}}}

%: calligraphique
\newcommand \cC {{\cal C}}

%: roman
\newcommand \rc {\mathrm{c}}

\newcommand \rD {\mathrm{D}}

\newcommand \rN {\mathrm{N}}

%: gothique
\newcommand\fa{\mathfrak{a}}
\newcommand\fb{\mathfrak{b}}
\newcommand\fc{\mathfrak{c}}

\newcommand\fp{\mathfrak{p}}

\newcommand\fq{\mathfrak{q}}

\newcommand\fx{\mathfrak{x}}
\newcommand\fy{\mathfrak{y}}

%:   mathscr

%:  gras math avec \bm 

% anneaux, ideaux, ...

%: treillis distributifs
\newcommand \vu {\vee} % sup dans les treillis
\newcommand \vi {\wedge} % inf dans les treillis
\newcommand \Vu {\bigvee}
\newcommand \Vi {\bigwedge}
 % fleche des treillis implicatifs

%\newcommand \ua {\,\uparrow\!}

%:  soulign tout pret

\newcommand \ua  {{\underline{a}}}
\newcommand \ub  {{\underline{b}}}

\newcommand \uc{{\underline{c}}}
\newcommand \ud  {{\underline{d}}}

\newcommand \ux {{\underline{x}}}

\newcommand \uX {\underline{X}}
\newcommand \uy{{\underline{y}}}

\newcommand \uz{{\underline{z}}}

\newcommand \ak {a_1,\ldots,a_k}
\newcommand \an {a_1,\ldots,a_n}
\newcommand \aq {a_1,\ldots,a_q}
\newcommand \aln {\alpha_1,\ldots,\alpha_n}
\newcommand \bn {b_1,\ldots,b_n}

\newcommand \bbm {b_1,\ldots,b_m}
\newcommand \ck {c_1,\ldots,c_k}
\newcommand \cq {c_1,\ldots,c_q}

\newcommand \sn {s_1,\ldots,s_n} 
\newcommand \xn {x_1,\ldots,x_n}

\newcommand \Xn {X_1,\ldots,X_n}
\newcommand \xr {x_1,\ldots,x_r}

\newcommand \xin {\xi_1,\ldots,\xi_n}

\newcommand \ym {y_1,\ldots,y_m}

\newcommand \AT {\gA[T]}
\newcommand \AX {\gA[X]}

\newcommand \AuX {\gA[\uX]}

\newcommand \AXn {\gA[\Xn]}

\newcommand \BuX {\gB[\uX]}

\newcommand \BX {{\gB[X]}}
\newcommand \BT {{\gB[T]}}

\newcommand \BXn {{\gB[\Xn]}}

\newcommand \CT {{\gC[T]}}

\newcommand \KX {\gK[X]}

\newcommand \KuX {\gK[\uX]}
\newcommand \kuX {\gk[\uX]}

\newcommand \KXn {\gK[\Xn]}

\newcommand \kXn {\gk[\Xn]}

%:  Foncteurs grasmanniennes, GLn,SLn ...

 % Grasmannienne ``projective'' usuelle

  % Grasmannienne ``affine'': matrices de proj

%: math rm

\newcommand \Div {\MA{\mathrm{Div}}}
\newcommand \DivA {\Div\gA }
\newcommand \DivAp {(\Div\gA)^{+} }
\newcommand \DivB {\Div\gB }
\newcommand \DivBp {(\Div\gB)^{+} }

\newcommand \dv {\MA{\mathrm{div}} }
\newcommand \dvA {\dv_\gA }
\newcommand \dvB {\dv_\gB }

\newcommand \Frac {\MA{\mathrm{Frac}}}

\newcommand \Gfr {\MA{\mathrm{Gfr}}}
\newcommand \Gr {\MA{\mathrm{Gr}}}

\newcommand \Ker {\MA{\mathrm{Ker}}}
\newcommand \Lst {\MA{\mathrm{Lst}}}

\newcommand \Id {\MA{\mathrm{Id}}}
\newcommand \Idif {\MA{\mathrm{Idif}}}
\newcommand \Idv {\MA{\mathrm{Idv}}}

\newcommand \Icl {\MA{\mathrm{Icl}}}

\newcommand \mod {\;\mathrm{mod}\;}

\newcommand \Rad {\MA{\mathrm{Rad}}}
\newcommand \Reg {\MA{\mathrm{Reg}}}

%
%
%

%: hsu
  
\newcommand\hsu{\\ \hspace*{4mm}}

\renewcommand \snii{\sni}

%: debut du papier
\pagestyle{myheadings}
\markboth{Anneaux à diviseurs et anneaux de Krull} {T. Coquand, H. Lombardi}
%{Les auteurs}
\thispagestyle{empty}

\DeclareMathAlphabet{\mathpzc}{OT1}{pzc}{m}{it}

\begin{document}

%:     \center
\title{ 
Anneaux à diviseurs et anneaux de Krull\\
 (une approche \covz)}

\author{T. Coquand, H. Lombardi }
\date{\today}

\vspace{-2em}

\maketitle

\vspace{-1em}
\centerline{paru dans Communications in Algebra, {\bf 44}: 515--567, 2016}

\smallskip\noindent  Quelques typos dans la bibliographie ont été corrigés par rapport à la version publiée et la version 1 sur ArXiv. Dans la version 3 on a mieux organisé les \demos de 1.19 et 1.20. Dans cette version une amélioration substantielle est la démonstration simplifiée du théorème 1.29. Une table des matières se trouve page 44.

\smallskip \noindent \hum{biblio en plus  pour Lorenzen? ajout de commentaires sur Lorenzen dans l'intro?}

\medskip \noindent Keywords: Divisor theory, PvMD, Constructive mathematics, Krull domains

\medskip \noindent 
MSC 2010: 13F05, 14C20, 06F15, 03F65
% Dedekind, Prufer, Krull et generalisations; Divisors, linear systems, invertible sheaves; Ordered groups; other constructive maths

\vspace{1em}

%:     abstract
\begin{abstract} 
Nous présentons dans cet article une approche constructive, dans le style de Bishop, de la théorie des diviseurs et des anneaux de Krull. 
 Nous accordons une place centrale aux ``anneaux à diviseurs'', appelés PvMD dans la littérature anglaise. Les résultats classiques sont obtenus comme résultats d'algorithmes explicites sans faire appel aux hypoth\`eses de factorisation compl\`ete. 

\bigskip \centerline{\bf Abstract}

\smallskip  
 We give give an elementary and constructive version of the 
theory of ``Prüfer v-Multiplication Domains" (which we call
``anneaux à diviseurs" in the paper) and Krull Domains. 
The main results of these theories are revisited from a constructive point of
view, following the Bishop style, and without assuming properties of complete factorizations.

\end{abstract}

\subsection*{Introduction}
\addcontentsline{toc}{section}{Introduction}

%-----------------------------------------
Nous présentons dans cet article une approche \cov de la théorie des \dvrsz.

L'excellent livre \textsl{Divisor Theory} \cite[Edwards, 1990]{Edw}  traite \cot la \cli d'un anneau factoriel dans une extension finie de son corps des fractions. Edwards suppose aussi que l'anneau factoriel poss\`ede de bonnes \prts pour la \fcn des \polsz.

Nous traitons ici de mani\`ere \cov un cas plus \gnl dans lequel nous n'avons aucune hypoth\`ese de \dcn en facteurs \irdsz. L'exemple le plus important en pratique reste celui des anneaux \gmqsz\footnote{Alg\`ebres \pf sur un \cdiz}  \iclz. Et dans ce cas la \dcn d'un diviseur en somme d'\irds n'est pas assurée \cotz.   

Nous nous situons dans la suite du livre \textsl{Commutative algebra : constructive methods} \cite[Lombardi et Quitté, 2015]{ACMC}, où est développée une théorie \cov des \doks (chapitre XII) indépendante de la possibilité d'une \dcn  des \ids en produit d'\idemasz.
Comme cet article est écrit dans le style des \coma à la Bishop, nous donnerons la version \cov précise que nous choisissons pour beaucoup de notions classiques, même tr\`es bien connues.

\medskip 
Le \gui{tour de force} qui est réalisé par l'adjonction de pgcds idéaux
en théorie des nombres peut-il être \gne de mani\`ere significative?

Oui, pour certains anneaux \iclz, que nous appelons les \textsl{\advlsz}, dénommés \gui{anneaux pseudo-prüfériens} dans les exercices de Bourbaki,
et \gui{Prüfer $v$-multiplication domains} dans la littérature anglaise. Ils constituent une classe d'anneaux suffisamment large  pour lesquels on a une notion raisonnable de \dvrsz, mais où l'on ne réclame pas l'existence  de \dvrs \irdsz.  

La théorie correspondante en \clama semble due principalement à Lorenzen, Jaffard, Lucius et Aubert (\cite{Aubertadvl,Jafadvl,Lorenzenadvl,Luciusadvl,Luciusadvl1}).

Un développement moderne récent de cette théorie se trouve dans \cite[Halter-Koch, 2011]{Halter-Koch}.

Cette théorie généralise la théorie plus classique des anneaux dits de Krull,
dans laquelle on réclame une \dcn unique en facteurs premiers pour les diviseurs. Elle a été élaborée notamment par Krull (1936), Arnold (1929), van der Waerden (1929),  
Prüfer (1932) et Clifford (1938) (\cite{Arnold29,Clifford38,Krulladvl,Prufadvl,vdW29}). Il semble aussi que l'approche de \cite[Borevitch et Chafarevitch (1967)]{Bosha} ait eu une forte influence sur
les exposés ultérieurs de la théorie.  

En pratique les \advls sont au départ des anneaux à pgcd int\`egres ou des \ddpsz. Ensuite la classe des \advls est stable par extensions \polles ou enti\`eres et intégralement closes, ce qui donne les exemples usuels de la littérature.

\medskip Voici un bref résumé de l'article.

\smallskip Dans la section \iref{secTheoDiviseurs}
nous donnons une version \elr et \cov des bases de la théorie des \advlsz.
Nous donnons des \carns simples des \advls (\thos \ref{thADVL}, \ref{corplccAnnDivl} et \ref{cor2plccAnnDivl}).
 Nous démontrons qu'un anneau int\`egre \coh est \dvla \ssi il est \icl (\thref{thiclcohidv}). 
 Nous démontrons que les \advls de \ddk $\leq 1$ sont les \ddps de \ddk $\leq 1$ (\thref{thi2clcohidv}).
 Nous donnons en \ref{plccAnnDivl} un 
\plgc pour la \dvez, les \acls et les \advlsz.

\smallskip Dans la section \iref{secpropdecgrl}  nous abordons les questions de \dcn des diviseurs. Nous expliquons notamment les \grls de dimension $1$ et leurs 
\prts de base.

\smallskip Dans la section  \iref{secAnnDivl} nous donnons les \prts de stabilité essentielles pour les \advlsz: \lonz, anneaux de \polsz, \cli dans une extension \agq du corps de fractions.
 
\smallskip Dans la section \iref{secAKrull} nous donnons un traitement \elr et \cof des anneaux de Krull (les anneaux dont les \dvrs forment un \grl à \dcnbz). Nous indiquons l'\algo de \dcn partielle pour les familles finies de \dvrsz, nous démontrons un \tho d'approximation simultanée, le théor\`eme \gui{un et demi} pour les \aKrsz, et le \tho caractérisant les \aKrs qui ne poss\`edent qu'un nombre fini de \dvrs \irdsz. 

\smallskip La question des \dvrs \irds est traitée pour l'essentiel dans les énoncés
\ref{lemdivirdadvlgnl}, \ref{corlemdivirdadvlgnl}, \ref{corcorlemdivirdadvlgnl} et  \ref{cor0lemdvlloc}, puis précisée en  \ref{lemirreddim1}, \ref{lemGrldcc}, \ref{lemdvlloc2}, \ref{lemdvlloc2Krull} et \ref{lemdvllocDcnc}.

\medskip   Nous terminons cette introduction par quelques explications concernant notre terminologie. 
Dans la littérature anglaise nos \gui{\advlsz} sont \gnlt appelés 
des \gui{Prüfer $v$-multiplication domain}, abrégé en PvMD. Ce n'est vraiment pas tr\`es élégant. Ils ont été introduits en 1967 par 
M.~Griffin dans \cite{Griff} initialement sous le nom de \textsl{$v$-multiplication rings}. Lucius les appelle des \textsl{anneaux avec une théorie des pgcds de type fini} dans \cite{Luciusadvl}.
Dans les exercices de Bourbaki, \textsl{Alg\`ebre Commutative, Diviseurs}, leur nom est \textsl{anneau pseudo-prüferien}.
Pour plus de renseignements sur la littérature existante voir \cite{Chang08,FL06,FZ09,FZ11}. 

Nous aurions voulu appeler ces anneaux   \textsl{anneaux divisoriels}.
Mais dans la littérature anglaise on trouve \gui{divisorial ring} pour un anneau
int\`egre dont tous les \ids sont \gui{\dvlgsz} au sens de Bourbaki, \cad intersections d'\ifrs principaux.
Cette \dfn n'est pas pertinente d'un point de vue \cofz, car il est 
impossible de montrer \cot que l'anneau $\ZZ$ la satisfait\footnote{Une
mésaventure analogue arrive avec la \dfn usuelle de la \noetz: tout \id est \tfz.}. 
%Il semble impossible d'attribuer une signification \cov claire à cette notion, en particulier à cause du \gui{tous les \idsz}:
%la phrase \gui{$\fa$ est un \id \dvlgz} est correctement définie\footnote{I.e.~$\forall x \in \gK\setminus\fa, \exists z\in \gK, (\fa \subseteq \gen{z},\,x\notin \gen{z})$.}  mais les \ids ne forment pas un ensemble, donc il y a quelques difficultés pour quantifier dessus.
Néanmoins, nous avons considéré que le terme \gui{anneau divisoriel},
que nous convoitions, était déjà pris, et nous nous sommes rabattus sur le moins élégant \gui{\advlz}.

 Signalons aussi que nous introduisons la notion purement multiplicative de liste \dvli \paref{defiidvli} dans la section \ref{secTheoDiviseurs}. La notion associée d'\id \dvli  dans un anneau int\`egre co\"{\i}ncide avec celle d'\id $t$-\ivz, définie dans la littérature sur les {PvMD}. Nous pensons cependant que la terminologie \dvli est plus parlante.

Enfin nous définissons \paref{defKRprof2} l'anneau de Nagata divisoriel $\gB_{\dv}(\uX)$ d'un anneau commutatif arbitraire $\gB$. Dans le cas d'un anneau int\`egre,
il est appelé dans la littérature \textsl{l'anneau de Nagata pour la star opération $v$}, et il est noté $\mathrm{Na}(\gB,v)(\uX)$.

%%%%%%%%%%%%%%%%%%%%%%%%%%%%%%%%%%%%%%%%%%%%%%%%%%%%%%%%%%%%%%%%%%%%%%%%%%%
\section[Anneaux \dvlasz]{Anneaux \dvlasz}
\label{secTheoDiviseurs}

\Grandcadre{Dans tout l'article, $\gA$ est un anneau int\`egre, de corps de fractions $\gK$, et l'on note \\
 $\Atl=\Reg\gA$ (le \mo des \elts \ndzsz) et $\Ati$ le groupe des unités.}

Nous définissons un anneau int\`egre comme un anneau qui satisfait l'axiome \gui{tout \elt est nul ou \ndzz}. 
Nous n'excluons donc pas à priori le cas de l'anneau trivial. Dans ce cas
$\Atl=\gA$ et l'anneau total des fractions~$\gK$ est \egmt trivial.
Pour qualifier un \elt de $\Atl$, nous parlerons donc plut\^ot d'\elt \ndz que d'\elt non nul.

Notez que l'anneau $\gK$ vérifie
l'axiome  \gui{tout \elt est nul ou \ivz}, que $\gA$ soit trivial ou non.

\medskip 
Dans la suite, nous parlerons de $\AAt$ comme du \textsl{\mo de \dve de $\gA$}, et de $\KAt$ comme du \textsl{groupe de \dve de~$\gA$}.
Le premier est vu avec sa structure ordonnée de \mo positif\footnote{On définit un \textsl{\mo positif} comme un \mo commutatif simplifiable~\hbox{$(M,+,0)$} pour lequel  est satisfaite 
l'implication~$x+y=0\; \Longrightarrow\;  x=y=0$. Le \mo $M$ peut alors être vu comme la partie positive d'un groupe ordonné $G$ (en tant que groupe, $G$ est le symétrisé de $M$.)  
} et le second,
isomorphe au symétrisé du premier,
 est vu comme un groupe ordonné (ses \eltsz~\hbox{$\geq 0$}
sont les classes des \elts de $\Atl$). 

Nous désignons par $\Gr_\gA(\an)$ la \prof de $\fa=\gen{\an}$,  (abréviation pour la \prof du \Amo $\gA$
relativement à l'\id $\fa$).
Pour cet article nous suffira de rappeler les \dfns suivantes: 
une liste $(\ua)=(\an)$ est dite de \prof $\geq 1$ si 
les \egts $a_1x=\dots=a_nx=0$ impliquent $x=0$. La liste $(\ua)$ est dite de \profz~\hbox{$\geq 2$} si en outre, toute liste $(\xn)$
proportionnelle (i.e. $a_ix_j=a_jx_i$ pour tous $i$, $j$) est multiple de~$(\ua)$ (i.e. il existe $x$ tel que $x_j=xa_j$ pour tout $j$).
Ces \prts sont attachées à l'\idz~$\fa$: on peut changer de \sgr pour l'\idz. 

Rappelons aussi les résultats classiques suivants (nous les utilisons pour $k=1$ ou $2$): si $\Gr(\fa)\geq k$ et $\Gr(\fb)\geq k$ alors $\Gr(\fa\fb)\geq k$; si $\Gr(\fa)\geq k$ et $\fa\subseteq \fb$ alors $\Gr(\fb)\geq k$;
si $\Gr(\an)\geq k$ alors pour tout $m$, $\Gr(a_1^{m},\dots,a_n^{m})\geq k$.

%: subsec{Pgcd fort
\subsec{Pgcd fort, ppcm et profondeur $\geq 2$}

%\smallskip
On sait que dans un anneau int\`egre, deux \elts qui admettent un pgcd n'admettent pas \ncrt un ppcm, bien que la réciproque soit valable.
Voici des précisions sur ce sujet.

%:     propdef{defiStrongPGCD}
\begin{propdef} \label{defiStrongPGCD} \textsl{(Pgcd fort, ppcm et profondeur $\geq 2$)}\\
Soient  $\gA$ un anneau int\`egre, et $a_1$, \dots, $a_n$, $b$, $g\in\Atl$.  %\label{factStrongPGCD}
\begin{enumerate}
\item \label{i1defiStrongPGCD} On dit que la famille  $(a_i)$   admet l'\elt $g$ comme \ix{pgcd fort} si sont satisfaites les conditions \eqves suivantes.
\begin{enumerate}
\item \label{i1adefiStrongPGCD} L'\elt $g$ vu dans $\KAt$ est le pgcd (la \bifz) de la liste~$(\an)$.
\item \label{i1bdefiStrongPGCD} Pour tout $x\in\Atl$ l'\elt $xg$ est un pgcd dans $\gA$ \hbox{de  $(xa_1,\dots,xa_n)$}.
\item \label{i1cdefiStrongPGCD}  \'Etant donné un multiple commun $a$ des $a_i$ dans $\Atl$, l'\elt $a/g$ est un ppcm \hbox{des $a/a_i$} (autrement dit $\fraC a g \,\gA=\bigcap_{i=1}^n \fraC a{a_i} \,\gA$)
\item \label{i1ddefiStrongPGCD}  (Formulation avec des \ifrs dans $\gK$) L'\elt $1/g$ est un ppcm \hbox{des $1/a_i$} dans $\gK$, autrement dit $ \,\fraC1 g \,\gA=\bigcap_{i=1}^n\fraC1 {a_i} \,\gA$.
\end{enumerate}
\item \label{i2defiStrongPGCD} Si $g$ est le pgcd fort de $(\an)$, c'est aussi le pgcd fort de n'importe quel autre \sgr de l'\id $\fa=\gen{\an}$. On dira donc que $g$ est le \textsl{pgcd fort de l'\itfz~$\fa$}.
\item \label{i3defiStrongPGCD} Si $g$ est le pgcd fort de $(\an)$, $bg$ est le pgcd fort de $(ba_1, \dots, ba_n)$.
\item \label{i4defiStrongPGCD} On a  $\Gr(\an)\geq 2$ \ssi $1$ est  pgcd fort de $(\an)$.
\end{enumerate}  
\end{propdef}
%--------- fin propdef ---------------------------------------------- 
\facile

Naturellement, un pgcd fort est un pgcd.

\medskip 
\rem  \label{notaIvifrtf} Soit $\gA$ un anneau int\`egre de corps de fractions $\gK$.
Pour deux sous-\Amos non nuls  $\fa$ et $\fb\subseteq \gK$, on note 
${(\fa:\fb)_\gK=\sotq{x\in\gK}{x\fb\subseteq \fa}.}$
Dans la terminologie des star-opérations, on note $\fa^{-1}= (\gA:\fa)_\gK$.
Alors $\fa\mapsto(\fa^{-1})^{-1}$ est une star-opération, \gnlt appelée $v$-opération, et $\fa^{v}=(\fa^{-1})^{-1}$ est l'intersection des \ifrs principaux (non nuls) qui contiennent~$\fa$.
\\
Avec ces notations, les \prts \eqves du point \textsl{1} ci-dessus
sont aussi \eqves à~$\gen{\an}^{-1}=\geN{\fraC1g}$ ou à
$\big(\gen{\an}^{-1}\big)^{-1}=\gen{g}$.  Lorsque l'anneau est \coh et $\fa$, $\fb$ \tf comme \Amosz, $(\fa:\fb)_\gK$
et $(\fa^{-1})^{-1}$ sont \tfz.
\eoe

%: subsec{Idéaux divisoriellement inversibles}
\subsec{Idéaux \dvlisz}

%:     Definition{defiidvli}
\begin{definition} \label{defiidvli}
Une liste $(\ua)=(\an)$ dans $\Atl$ est dite \textsl{\dvliz}  si l'on a une liste $(\ub)=(\bbm)$ dans~$\Atl$ telle
que la famille~\hbox{$(a_ib_j)_{i\in \lrbn,j\in \lrbm}$} admet un pgcd fort $g$ dans~$\Atl$.\\
Les deux listes $(\ua)$ et $(\ub)$ sont dites \textsl{\ivdes l'une de l'autre}.%
\index{divisoriellement inversible!liste --- (dans un anneau int\`egre)}\end{definition}
%--------- fin definition ---------------------------------------------- 

\exls \\
1) Dans un anneau à pgcd, les pgcds sont forts et toute famille finie admet la liste $(1)$ comme 
\ivdez.
\\
2) Si $\gen{\an}\gen{\bbm}=\gen{g}$ avec $g\in\Atl$, la liste $(\ua)$ admet la liste $(\ub)$
comme \ivde et $g$ est le pgcd fort des $a_ib_j$.
\eoe

\medskip 
En notant $\fa=\gen{\ua}$ et $\fb=\gen{\ub}$,
cette \prt des listes $(\ua)$ et $(\ub)$ ne dépend que de l'\idz~$\fa\fb$. Et donc la \prt pour la liste $(\ua)$ d'être \dvli ne dépend que de l'\itf $\fa=\gen{\ua}$.
Ceci introduit une nouvelle \dfnz.

%:     Definition{defiidvli2}
\begin{definition} \label{defiidvli2} Soit $\gA$ un anneau int\`egre.
\begin{itemize}
\item Un \itf $\fa$ de $\gA$ est dit \textsl{\dvliz}
s'il existe un \itf $\fb$ tel que $\fa\fb$ admette un pgcd fort.
On dit aussi  que l'\itf $\fa$ et l'\itf $\fb$ sont \textsl{\ivdas l'un de l'autre}.
\item  L'anneau  $\gA$ est appelé un \textsl{\advlz} si tout \itf non nul
 est \dvliz.%
\index{divisoriellement inversible!ideal@\itf --- (dans un anneau int\`egre)}%
\index{anneau!a divis@à diviseurs}
\end{itemize}
\end{definition}
%--------- fin definition ---------------------------------------------- 

Notez que l'\ivda d'un \itfz, s'il existe, n'est en aucun cas unique. 
Il s'agit un peu du même flottement que lorsque l'on parle de l'inverse d'un \itf dans un \adpz. Mais ici, ce sera encore plus flottant,
car deux \ivdas d'un même \itf sont rarement  isomorphes comme~\Amosz.   
Par exemple dans un \advl 
si $\Gr(\fc)\geq 2$  et si $\fa\fb$ admet un pgcd fort $g$, 
alors $\fa(\fb\fc)$ admet le même pgcd fort. Mais si $\gA$ n'est pas un \adpz, en \gnlz, $\fb$ et $\fb\fc$
ne sont pas des \Amos isomorphes.   

\medskip  
\rem La notion de liste \dvli est une notion purement multiplicative
qui peut être définie pour un anneau commutatif arbitraire en utilisant
la \carn dans le point \textsl{\ref{i4defiStrongPGCD}} 
de la proposition~\ref{defiStrongPGCD}. La notion associée d'\id \dvli  dans un anneau int\`egre co\"{\i}ncide avec celle d'\id $t$-\ivz, définie dans la littérature sur les~{PvMD}. Nous pensons cependant que la terminologie \dvli est plus parlante.
\eoe

%:     Lemma{lemdvli}
\begin{lemma} \label{lemdvli}
Soit  $(\an)$ une liste  \dvli dans~$\Atl$ et 
$x$ \ndz dans l'\id $\gen{\an}$. 
\begin{enumerate}
\item On peut trouver une liste $(\cq)=(\uc)$ tel que $x$ soit le pgcd fort 
des $a_ic_k$.
\item  Si l'anneau $\gA$ est \cohz, on peut prendre
pour $(\cq)$ un \sgr de l'\id transporteur~$(\gen{x}:\fa)_\gA$.
\end{enumerate}
\end{lemma}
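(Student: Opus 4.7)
The plan is to construct the list $(\uc)$ explicitly in part~1 by rescaling a given \ivde of $(\ua)$, then to deduce part~2 from part~1 via a short inclusion argument on the $(-)^{-1}$ operation on \ifrsz.

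For part~1: by Definition~\ref{defiidvli} there exist a list $(\bbm)=(\ub)$ in $\Atl$ and an element $g\in\Atl$ such that $g$ is the pgcd fort of $(a_ib_j)_{i,j}$. Since $x\in\fa=\gen{\ua}$, we write $x=\sum_iu_ia_i$; then $xb_j=\sum_iu_i(a_ib_j)$, so $g$ divides $xb_j$ for every $j$. I would set $c_j:=xb_j/g$; this is a well-defined element of $\Atl$ (lying in $\gA$ by the previous divisibility, and \ndz because $gc_j=xb_j$ is \ndz and $\gA$ is int\`egre). To verify that $x$ is the pgcd fort of $(a_ic_j)_{i,j}$, I would use item~\emph{1.\ref{i1ddefiStrongPGCD}} of Proposition~\ref{defiStrongPGCD} together with the identity $1/(a_ic_j)=g/(xa_ib_j)$:
\[
\bigcap_{i,j}\frac{1}{a_ic_j}\,\gA \;=\; \frac{g}{x}\bigcap_{i,j}\frac{1}{a_ib_j}\,\gA \;=\; \frac{g}{x}\cdot\frac{1}{g}\,\gA \;=\; \frac{1}{x}\,\gA,
\]
where the middle equality uses exactly that $g$ is the pgcd fort of $(a_ib_j)$.

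For part~2: suppose $\gA$ is \cohz. Then $\fc:=(\gen{x}:\fa)_\gA$ is un \itfz, say $\fc=\gen{c'_1,\ldots,c'_p}$. The elements $c_j=xb_j/g$ constructed above already lie in $\fc$ (since $a_ic_j=x(a_ib_j/g)\in\gen{x}$), so $\gen{c_1,\ldots,c_m}\subseteq\fc$. Passing to $(-)^{-1}$ in $\gK$ reverses inclusions, giving $(\fa\fc)^{-1}\subseteq(\fa\gen{c_1,\ldots,c_m})^{-1}=(1/x)\,\gA$ by part~1. Conversely, each $c'_k\in\fc$ forces $x\mid a_ic'_k$, whence $1/x\in\bigcap_{i,k}(1/(a_ic'_k))\,\gA=(\fa\fc)^{-1}$. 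Combining the two inclusions yields $(\fa\fc)^{-1}=(1/x)\,\gA$, and item~\emph{1.\ref{i1ddefiStrongPGCD}} applied once more concludes that $x$ is the pgcd fort de $(a_ic'_k)$.

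The conceptual step --- and the only genuine obstacle --- is guessing the right rescaling $c_j:=xb_j/g$, which transforms the \ivde $(\ub)$ of $(\ua)$ into one whose pgcd fort with $(\ua)$ is exactly $x$. Once this is in hand, both parts amount to routine manipulations of the intersection formula for \ifrsz.
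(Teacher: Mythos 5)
Votre démonstration est correcte et suit essentiellement la même voie que celle de l'article : même construction $c_j=xb_j/g$ pour le point 1, et même argument d'inclusion ($\fa\gen{\uc}\subseteq\fa\fc'$ avec $x$ divisant tous les générateurs du plus grand idéal) pour le point 2. La seule différence, purement cosmétique, est que vous vérifiez la propriété de pgcd fort via le critère \emph{1d} de la proposition~\ref{defiStrongPGCD} (intersections d'idéaux fractionnaires principaux) là où l'article utilise le critère \emph{1a} (borne inférieure dans $\KAt$).
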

%--------- fin lemma ---------------------- 
%
\begin{proof}
\textsl{1.} Considérons une liste $({\bbm})$ pour laquelle les $a_ib_j$ admettent un pgcd fort $g$. Dans  $\KAt$, $g$ est le pgcd de la famille 
$({a_ib_j})_{i,j}$, donc~$x=\fraC x g \,g$ est le pgcd de la famille  $\big(\fraC{xa_ib_j}g\big)_{i,j}$. 
Comme les $\fraC{a_ib_j}g$  sont dans $\gA$ et $x$ est \coli des $a_i$, on a~$\fraC{xb_j}g \in \gA$ et on peut prendre
pour $(\uc)$ la liste des $\fraC{xb_j}g$.

\snii \textsl{2.} Lorsque $\gA$ est \cohz, le transporteur $\fc'=(\gen{x}:\fa)_\gA$
est un \itf qui contient l'\id $\fc=\geN{\fraC{xb_j}g,\, j\in\lrbm}$ construit
au point~\textsl{1}. On doit vérifier \hbox{que $\fa\fc'$} admet aussi $x$ pour pgcd fort. Et  puisque $\fa\fc\subseteq \fa\fc'$ il suffit \hbox{que $x$} divise tous les \gtrs de $\fa\fc'$,
ce qui est clair. 
\end{proof}

%: subsec{Description de projets pour les diviseurs}
\subsect{Un projet pour le groupe des \dvrs
d'un anneau int\`egre}{Projet pour le groupe des \dvrsz}

Comme nous l'avons déjà souligné, du point de vue \algq la \fat 
 n'est pas une \prt  \gui{facile}, et nous sommes surtout 
intéressés
par le fait d'avoir un bon \grl construit de mani\`ere raisonnable à 
partir du \mo positif $\AAt$ (ou du groupe 
ordonné~$\KAt$). 

Une mani\`ere
d'expliquer ce que l'on veut réaliser est 
de décrire formellement les \prts du \textsl{groupe des \dvrs de $\gA$},
que nous noterons  $\DivA$, et de l'application $\dvA$ de $\Atl $ \hbox{dans $(\DivA)^+$}, qui à tout \elt de $\Atl$ associe le \textsl{\dip (positif ou nul)} qu'il définit.
Nous utilisons une notation additive pour le groupe $\DivA$. Nous nous inspirons ici de la présentation de la théorie des \dvrs donnée dans le livre \cite{Bosha} consacré à la théorie des nombres.

Mais comme le sugg\`ere \cite[Aubert]{Aubertadvl}, nous ne mentionnerons ici que la structure multiplicative de~$\Atl$: l'addition dans $\gA$ ne doit pas intervenir ici!
Voir cependant la proposition~\iref{lemADVL}.

\rdb
%: Projet \dvlg
\goodbreak\mni
{\bf Projet \dvlg 1.} \label{projet1}
{\it Les \prts requises pour \fbox{$\dvA:\Atl\to(\DivA)^+$} sont les suivantes.
\begin{enumerate}
\item [$\rD_1$]\label{i1Projet1}
$\DivA$ est un \grl et $\dvA$ un morphisme de \mosz:

\snic{\dvA(1)=0,\quad \dvA(ab)=\dvA(a)+\dvA(b).}
\item [$\rD_2$]\label{i2Projet1} Pour tous $a$, $b\in\Atl$,
${\dvA(a)\leq\dvA(b)\;\Longleftrightarrow\; a\divi b.}$     
\item [$\rD_3$]\label{i3Projet1}
Tout \elt  de $(\DivA)^+$ est la \bif dans $\DivA$ d'une famille finie d'\elts de  $\dvA(\Atl)$. 
\end{enumerate}
}

\medskip  On peut  écrire ce projet sous la forme \eqve suivante,
en demandant de traiter tous les \dipsz, y compris ceux qui proviennent 
de~$\Ktl$.

%: Projet \dvlg 2
\goodbreak\mni
{\bf Projet \dvlg 2.} \label{projet2}
{\it Les \prts requises pour  \fbox{$\dvA:\Ktl\to\DivA$} sont les suivantes.
\begin{enumerate}
\item [$\rD'_1$] \label{i1Projet2}
$\DivA$ est un \grl et $\dvA$ passe au quotient~$\KAt$  en donnant un morphisme de groupes ordonnés.
\[ 
\begin{array}{ccc} 
\forall x,\,y\in\Ktl: \dvA(xy)=\dvA(x)+\dvA(y),    \\[1mm] 
\dvA(1)=0,\quad \forall a\in\Atl: \dvA(a)\geq 0. 
\end{array}
\]
\item [$\rD'_2$] \label{i2Projet2} Pour tout $x\in\Ktl$,
${\dvA(x)\geq 0 \iff x\in\gA.}$     
%
%\item \label{i4Projet2}
%Si $x$, $y$, $x+y\in\Ktl$ alors 
%
%\snic{ \dvA(x)\vi\dvA(y)\leq \dvA(x+y).}
%
\item  [$\rD'_3$] \label{i3Projet2}
Tout \elt  de $\DivA$ est la \bif dans $\DivA$ d'une famille finie d'\elts de  $\dvA(\Ktl)$. 
\end{enumerate}
}

\medskip Dans la suite nous utiliserons parfois \gui{$\dv$} au lieu de \gui{$\dvA$} lorsque le contexte sera clair.

\medskip \rem
La condition $\rD_3$ implique la \prt qu'un \elt de $(\DivA)^+$ est égal à la \bif dans $\DivA$ de tous les \elts de $\dv(\Atl)$ qui le majorent. 
Nous utiliserons souvent cette \prt par la suite.

\noindent Mais naturellement la condition $\rD_3$ est à priori plus forte. \\
Dans \cite{Bosha} c'est une \prt encore plus faible qui est énoncée: deux \elts de $(\DivA)^+$ qui ont les mêmes majorants
dans~$\dv(\Atl)$ sont égaux.  
Par contre ces auteurs introduisent deux conditions \sulsz. 
\\
D'une part ils demandent que $\dvA(a+b)\geq \dvA(a)\vi\dvA(b)$ (\prt non multiplicative). 
\\
D'autre part ils demandent à $\DivA$ d'être un \grl à \dcn compl\`ete. 
\eoe

\mni 
{\bf Convention.}
 Il peut être pratique de  rajouter\footnote{Lorsque $\gA\neq 0$ les conditions requises sont satisfaites avec $+\infty >\DivA$. Par contre si $\gA=0$, on ne rajoute rien du tout, car $\dv(0)=\dv(1)$.} un \elt $+\infty$ à~$\DivA$ en posant~\hbox{$\dv(0)=+\infty$},
$+\infty\geq \delta$
 et $\delta+\infty=\infty+\delta=+\infty\;$  pour \hbox{tout $\delta\in\DivA\cup\so{+\infty}.$}
\\ Alors les \prts décrites dans le Projet \dvlg restent valables
 en acceptant $0$ là où on le refusait.
%Notez que $+\infty$ est la \bif de la partie vide, qui manquait dans $\DivAp$.
%\\
Cette convention présente deux intérêts. Premi\`erement, $\DivAp\cup\so{+\infty}$ est un \trdiz\footnote{Lorsque $\gA\neq 0$, il manque juste l'\elt maximum dans $\DivAp$ pour en faire un \trdiz.}. Deuxi\`emement, cela permet
de traiter de mani\`ere plus uniforme les espaces spectraux implicitement présents dans la théorie.
\eoe

%%%%%%%%%%%%%%%%%%%%%%%%%%%%%%%%%%%%%%%%%%%%%%%%%%%%%%%%%%%%%%%%%%%%%%%%%%%
%: subsec{Le \tho de base}
\subsec{Le \tho de base}
%:     Thdef{thADVL}
%\goodbreak
\begin{thdef} \label{thADVL} \label{defADVL} \emph{(Anneaux \dvlasz)}\\
 Pour que le projet \dvlg puisse être réalisé pour l'anneau~$\gA$ il faut et suffit que $\gA$ soit un \advl (toute famille finie non vide de $\Atl$ est \dvliz).
\\ Dans ce cas le couple $(\dvA,\DivA)$ est unique à \iso unique pr\`es. 
\begin{itemize}
\item On dit alors que $\gA$ est un \emph{\advl avec $\dvA:\Ktl\to\DivA$ pour théorie des \dvrsz}.\index{theorie des di@théorie des diviseurs}

\item  On note alors $\dvA(\an)$ pour $\dvA(a_1)\vi\cdots\vi\dvA(a_n)$.

\item  Un \elt $\alpha\in\DivA$ est appelé un \ix{diviseur principal}
s'il est de la forme~$\dv(x)$ pour \hbox{un $x\in\Ktl$}.

\end{itemize}

\end{thdef}
%--------- fin theorem ----------------------------------------------

\begin{proof}  
Nous laissons \alec le soin de vérifier que les deux projets \dvlgs sont \eqvsz. 

\noindent Supposons le projet \dvlg réalisé et montrons que toute liste $(\an)$ dans $\Atl$ est \dvliz.
On note $\dvA(\an)$ pour $\dvA(a_1)\vi\cdots\vi\dvA(a_n)$.  On a  \ncrt par \dit

\snic{\dvA(\an)+\dvA(\cq)   =   \dvA\big((a_ic_j)_{i\in\lrbn,j\in\lrbq}\big)  .}

\noindent 
et trivialement $\dvA(\an)\vi\dvA(\cq)  =   \dvA(\an,\cq)$.\\
On a $\dvA(a_1)\geq \dvA(\an)$ de sorte que l'on peut écrire (en vertu de $\rD_3$)

\snic{\dvA(a_1)-\dvA(\an)=\dvA(\cq)}

\noindent  pour une famille finie $(\cq)=(\uc)$ dans $\Atl$. L'\egt

\snic{\dvA(a_1)=\dvA(\ua)+\dvA(\uc)=\dvA\big((a_ic_j)_{i\in\lrbn,j\in\lrbq}\big)}

\noindent  nous dit que $\dvA(a_1)$ est la \bif de la famille  $\big(\dvA(a_ic_j)\big)_{i,j}$ dans $\DivA$.
Et vu~$\rD'_2$, cela implique que  $a_1$ est la \bif de la famille  $(a_ic_j)_{i,j}$ dans $\KAt$. Ce qui signifie que~$a_1$ est pgcd fort des $a_ic_j$ dans $\Atl$. Ainsi la condition que toute famille finie soit \dvli est bien satisfaite.

\snii Voyons maintenant la question de l'unicité.\\
Les \elts de $\DivA$ s'écrivent tous \ncrt sous forme $\dvA(\ua)-\dvA(\ub)$ pour deux listes finies $(\ua)=(\an)$ et $(\ub)=(\bbm)$ dans~$\Atl$. On peut même demander \hbox{que $\dvA(\ua,\ub)=0$}, autrement dit que $1$ soit pgcd fort des $a_i$ et $b_j$ dans $\Atl$.\\
Pour que l'unicité de la solution du projet divisoriel soit acquise, il suffit de voir qu'on n'a pas le choix pour décider une \egt

\snic{\dvA(\ua)-\dvA(\ub)=\dvA(\uc)-\dvA(\ud)}

\noindent pour des listes finies $(\ua)$, $(\ub)$, $(\uc)$, $(\ud)$ de $\Atl$.
Cela revient à $\dvA(\ua)+\dvA(\ud)=\dvA(\ub)+\dvA(\uc)$. 
\\
De mani\`ere plus \gnle on donne une \prt \cara pour une \egt 

\snic{\dvA(\an)=\dvA(\bbm).}

\noindent 
Ici, on peut avancer l'un des deux arguments suivants, au choix.
\\
\textsl{Premier argument.} 
\\
Si $(\cq)$ est une famille \ivde de $(\an)$ avec $g$ pgcd fort des $(a_ic_j)$, l'\egt $\dvA(\ua)=\dvA(\ub)$ équivaut au fait que
$g$ est pgcd fort des~$(b_kc_j)$.\\
\textsl{Deuxi\`eme argument.}  
\\
On donne une \prt \cara pour une in\egt $\dvA(\ua)\leq \dvA(b)$. 
C'est la \prt suivante. 
\hsu--- \textsl{Dans $\Ktl,\,b$ est multiple de tous les \dvrs communs à $(\an). \quad (*)$ \label{projet1D4}}

\noindent En effet,  
puisqu'on doit avoir $\dvA(1/x)=-\dvA(x)$ en raison de $\rD'_1$,
la condition $\rD'_3$ est \eqve à sa formulation duale: tout \elt  de $\DivA$ est la \bsp dans $\DivA$ d'une famille finie d'\elts de  $\dvA(\Ktl)$, et à fortiori il est la \bsp de l'ensemble des $\dvA(x)$ qu'il majore.
\\
En particulier 

\snic{\dvA(b)\geq \alpha=\dvA(\ua) \iff \dvA(b)\hbox{  majore } M_\alpha=\sotq{\dvA(x)}{x\in \Ktl,\,\dvA(x)\leq \alpha}\,.}

\noindent 
Et $M_\alpha$ est égal à $\sotq{\dvA(x)}{x\in \Ktl,\,\&_i\; x \mid a_i}$.
Et vu~$\rD_2$, $\dvA(b)\geq \alpha$ signifie exactement que
 dans~$\Ktl$, $b$ est multiple des \dvrs communs à la liste $(\ua).$ 

\snii Supposons maintenant la condition d'existence des \ivdas satisfaite et montrons que l'on peut construire $\DivA$ et $\dvA$ conformément au projet \dvlgz.
\\
Pour cela on 
munit l'ensemble   $\Lst(\gA)\etl$  (ensemble des listes finies non vides d'\elts de~$\Atl$) de la relation de préordre $\preceq$ directement inspirée de la condition $(*)$ précédente:

\snic{(\an)\preceq (\bbm) \equidef \hbox{ chaque } b_j \hbox{ est multiple des \dvrs communs à } (\ua).}

\noindent 
On  définit $\DivAp$ comme l'ensemble ordonné correspondant (où $\alpha=\beta\Leftrightarrow \alpha\preceq \beta \hbox{ et } \beta\preceq\alpha$). 
\\
On vérifie alors que l'on peut définir une addition sur $\DivAp$ en posant 

\snic{(\ua)+(\ub)\eqdefi(a_ib_j)_{i,j}.}

\noindent  En d'autres termes cette loi à priori mal définie \gui{passe au quotient}. On vérifie ensuite que l'on obtient par symétrisation un \grl $\DivA$ convenable. Les détails sont laissés \alecz.   
\end{proof}
%\end{proof}

Le résultat dans le lemme qui suit est donné page 388 dans \cite[Zafrullah, 2006]{ZafTrib}:
il faut lire \gui{PvMD} au lieu de \gui{\advlz} et  \gui{$t$-\ivz} au lieu de  \gui{\dvliz}. 
%:     lemma  lem2suffisent
\begin{lemma} \label{lem2suffisent}
Un anneau int\`egre dans lequel tout \id fid\`ele à \und{deux} \gtrs est \dvli est 
un \advlz.
\end{lemma}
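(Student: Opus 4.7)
Le plan est de procéder par récurrence sur le nombre $n$ de générateurs de l'\itf $\fa$ dont on veut montrer qu'il est divisoriellement inversible. Les cas $n=1$ (trivial: $\gen{a_1}$ admet $(1)$ pour inverse divisoriel) et $n=2$ (précisément l'hypothèse du lemme) sont immédiats.

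Le premier ingrédient que j'établirais est le fait auxiliaire suivant: si $\fa$ et $\fa'$ sont deux idéaux de type fini admettant chacun un inverse divisoriel, alors leur produit $\fa\fa'$ en admet un aussi. En effet, si $(\fa\fb)^v=g\gA$ et $(\fa'\fb')^v=g'\gA$ avec $\fb$, $\fb'$ de type fini, alors, puisque l'opération $v$ est une star-opération,
\[
\bigl((\fa\fa')(\fb\fb')\bigr)^v \;=\; \bigl((\fa\fb)^v(\fa'\fb')^v\bigr)^v \;=\; (g\gA\cdot g'\gA)^v \;=\; gg'\gA,
\]
donc $\fb\fb'$ est un inverse divisoriel de $\fa\fa'$. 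Le second ingrédient est une identité purement formelle, valable pour trois idéaux quelconques $\mathfrak{I}$, $\mathfrak{J}$, $\mathfrak{K}$ dans un anneau commutatif:
\[
(\mathfrak{I}+\mathfrak{J})(\mathfrak{I}+\mathfrak{K})(\mathfrak{J}+\mathfrak{K}) \;=\; (\mathfrak{I}+\mathfrak{J}+\mathfrak{K})(\mathfrak{I}\mathfrak{J}+\mathfrak{I}\mathfrak{K}+\mathfrak{J}\mathfrak{K}),
\]
vérifiée par développement direct: les deux membres valent $\mathfrak{I}^2\mathfrak{J}+\mathfrak{I}^2\mathfrak{K}+\mathfrak{I}\mathfrak{J}^2+\mathfrak{I}\mathfrak{K}^2+\mathfrak{J}^2\mathfrak{K}+\mathfrak{J}\mathfrak{K}^2+\mathfrak{I}\mathfrak{J}\mathfrak{K}$.

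Pour le pas inductif avec $n\geq 3$, on pose $\mathfrak{I}=\gen{a_1}$, $\mathfrak{J}=\gen{a_2}$, $\mathfrak{K}=\gen{a_3,\ldots,a_n}$, de sorte que $\fa=\mathfrak{I}+\mathfrak{J}+\mathfrak{K}$. Les trois sommes par paires $\mathfrak{I}+\mathfrak{J}$, $\mathfrak{I}+\mathfrak{K}$, $\mathfrak{J}+\mathfrak{K}$ ont respectivement $2$, $n-1$, $n-1$ générateurs, et admettent donc toutes un inverse divisoriel (la première par hypothèse du lemme, les deux autres par récurrence). Par le premier ingrédient, leur produit en admet un aussi; et l'identité réécrit ce produit sous la forme $\fa\cdot\mathfrak{L}$, où $\mathfrak{L}:=\mathfrak{I}\mathfrak{J}+\mathfrak{I}\mathfrak{K}+\mathfrak{J}\mathfrak{K}$ est de type fini. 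Il existe alors un idéal $\mathfrak{T}$ de type fini avec $(\fa\,\mathfrak{L}\,\mathfrak{T})^v$ principal, si bien que $\mathfrak{L}\,\mathfrak{T}$ (de type fini, contenu dans $\gA$) est un inverse divisoriel de $\fa$.

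Le point délicat sera la chaîne d'égalités $\bigl((\fa\fb)(\fa'\fb')\bigr)^v = \bigl((\fa\fb)^v(\fa'\fb')^v\bigr)^v$ utilisée dans le premier ingrédient: elle résulte des propriétés standard de la star-opération $v$ rappelées dans la remarque~\ref{notaIvifrtf}. Une fois cet ingrédient en place, l'identité formelle ci-dessus fournit le pas inductif sans aucun calcul supplémentaire.
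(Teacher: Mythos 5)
Votre démonstration est correcte et suit essentiellement la même voie que celle de l'article: la même identité de Dedekind $(\mathfrak{I}+\mathfrak{J})(\mathfrak{I}+\mathfrak{K})(\mathfrak{J}+\mathfrak{K})=(\mathfrak{I}+\mathfrak{J}+\mathfrak{K})(\mathfrak{I}\mathfrak{J}+\mathfrak{I}\mathfrak{K}+\mathfrak{J}\mathfrak{K})$ combinée au fait qu'un produit d'idéaux divisoriellement inversibles l'est encore. La seule différence est de présentation: vous explicitez la récurrence (via $\mathfrak{K}=\gen{a_3,\dots,a_n}$) et justifiez la multiplicativité par la $v$-opération, là où l'article se contente de l'affirmer et de dire «de proche en proche».
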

\begin{proof}
On utilise l'astuce de Dedekind. Pour 3 \ids
arbitraires~$\fa$,~$\fb$,~$\fc$ dans un anneau on a toujours l'\egt

\snic{(\fa+\fb)(\fb+\fc)(\fc+\fa)=(\fa+\fb+\fc)(\fa\fb+\fb\fc+\fa\fc).}

\noindent  En outre un produit d'\idlis est \dvliz. Donc si $\fa+\fb$, $\fb+\fc$ et $\fa+\fc$ sont \dvlisz, il en va de même pour $\fa+\fb+\fc$. Cela permet de passer des \ids à deux \gtr aux \ids à trois \gtrsz, puis de proche en proche à un nombre quelconque de \gtrsz. 
\end{proof}
\exls On a deux exemples fondamentaux, à partir desquels seront construits la plupart des autres exemples intéresssants en pratique.
\\ 1) Un anneau int\`egre à pgcd $\gA$ est \dvla et l'on a 

\snic{(\DivA,+,0,\leq )\simeq (\Ktl/\Ati, \,\cdot\,, \ov 1, \,\mid\, ).}

\noindent  En outre un \ivda de n'importe quel \itf est $\gen{1}$.\\ 
Un \advl dont tous les \dvrs sont principaux est un anneau à pgcd.

\snii 2) Un \ddp est un \advl et l'on a

\snic{(\DivA,+,0,\leq,\vi)\simeq (\Gfr(\gA), \,\cdot\, , \gen{1}, 
\supseteq,+   ).}

\noindent 
(rappelons que $\Gfr(\gB)$ désigne en \gnl le groupe des  \ifrs \ivs de~$\gB$: pour un \ddp
ce sont tous les \ifrs \tf fid\`eles).\\
En outre un \itf fid\`ele\footnote{Si $\gA$ est un anneau int\`egre non trivial, un \itf est fid\`ele \ssi il est non nul.} admet un inverse (au sens 
des \ids \ivsz) qui est aussi un \ivdaz.

\snii\rdb 3)  On donne souvent comme premier exemple d'un \adp qui n'est pas un anneau de Bézout l'anneau $\ZZ[\alpha]$ où $\alpha=\sqrt{-5}$ dans lequel on a $2\times 3=(1+\alpha )(1-\alpha )$ avec les $4$ \elts \irdsz. Le myst\`ere de la \dcn unique en facteurs premiers est alors éclairci par les 
\dcns en produits d'\ideps données par les \egts 
\[ 
\begin{array}{rcl} 
\gen{2}=\gen{2,1+\alpha } ^{2}  &,   &\gen{3}=\gen{3,1+\alpha }\gen{3,1-\alpha }\;,   \\[.3em] 
\gen{1+\alpha }=\gen{2,1+\alpha }\gen{3,1+\alpha }  & \hbox{et}  & 
\gen{1-\alpha }=\gen{2,1+\alpha }\gen{3,1-\alpha }.   
\end{array}
\]
avec les trois \ids $\gen{2,1+\alpha }$, $\gen{3,1+\alpha }$, $\gen{3,1-\alpha }$ \irds distincts.

\snii \label{advlexemple3}Voici un exemple du même style avec un \advl qui n'est ni un anneau à pgcd ni un \ddpz. On consid\`ere un \cdi $\gk$ et l'on définit 

\snic{\gA=\aqo{\gk[A,B,C,D]}{AD-BC}=\gk[a,b,c,d].}

\noindent 
Il s'agit d'un anneau int\`egre  dans lequel
$a$, $b$, $c$, $d$ sont des \elts \irdsz.
En effet l'anneau quotient $\gA$ reste gradué et $a$, $b$, $c$, $d$ sont de degré~$1$. 
La suite $(a,d)$ est \ndze et les localisés en $a$ et $d$ sont des anneaux à pgcd. Par exemple $\gA[\fraC1a]\simeq\gk[A,B,C,1/A]$. Donc~$\gA$ est un \advl (\plg \ref{plccAnnDivl}). En outre $\dvA(a,d)=0$ car l'\egt a lieu dans les deux localisés (\plg \ref{plccAnnDivl}).
\\
On vérifie que $\gen{b,a}\gen{b,d}=\gen{b}\gen{a,b,c,d}$, donc $\dvA(b,a)+\dvA(b,d)=\dvA(b)$ {car $\dv(a,b,c,d)=0$}. \\
Donc  l'\egt $ad=bc$ sans \dcn unique apparente en facteurs premiers s'explique par les \dcns en sommes de \dvrs deux à deux \orts dans $\DivA$ données par les \egts
\[\!\!\! 
\begin{array}{ccc} 
\dvA(a)=\dvA(a,b)+\dvA(a,c)  &,   & \dvA(d)=\dvA(d,b)+\dvA(d,c)\;,  \\[.3em] 
\dvA(b)=\dvA(a,b)+\dvA(d,b)  &\hbox{et}   &  \dvA(c)=\dvA(a,c)+\dvA(d,c).  
\end{array}
\] 
Notons qu'il est un peu plus délicat de certifier que les quatre diviseurs $\dvA(a,b)$, $\dvA(a,c)$, $\dvA(d,b)$ et $\dvA(d,c)$  sont \irdsz.
Voir à ce sujet la poursuite de cet exemple pages \pageref{advl1exemple3} et \pageref{advl3exemple3}.
\\
Enfin l'\elt $a$ est \ird mais il n'est pas premier car $\aqo\gA a\simeq\aqo{\gk[B,C,D]}{BC}$. Donc $\gA$ n'est pas un anneau à pgcd\footnote{Autre argument: le couple $(a,b)$ a pour pgcd $1$, mais ce n'est pas un pgcd fort
car~$ad$  est multiple de $a$ et $b$ sans être multiple de $ab$
(sinon $d$ serait multiple de $b$).}.  
\\
Et l'anneau $\gA$ n'est pas non plus un \ddp car l'\egt \hbox{$\dvA(a,d)=0$} impliquerait $\gen{a,d}=\gen{1}$, ce qui n'est pas le cas\footnote{Un argument plus savant: $\gA$ étant \nocoz, il serait de dimension $\leq 1$ (\thoz~\hbox{XII-7.8} dans~\cite{ACMC}), or il contient la \srg $(a,d,b+c)$.}. 
\eoe
%

%:     proposition cordvla1
\begin{proposition} \label{cordvla1}
Lorsque  $\gA$ est un \advl avec  $\dvA:\Ktl\to \DivA$ comme théorie des \dvrsz, on a les \prts suivantes.
\begin{enumerate}
\item Pour $b$, $a_1$, \dots, $a_n$ dans $\Atl$, 
\begin{enumerate}
\item $\dvA(b)\leq\dvA(\an)$ \ssi
$b$   divise les $a_i$ dans~$\Atl$,
\item $\dvA(b)\geq\dvA(\an)$ \ssi
$b$  est multiple de tous les \dvrs communs  à $(\an)$ dans~$\Ktl$,
\item $\dvA(b)=\dvA(\an)$
\ssi $b$ est pgcd fort de~$(\an)$.
\end{enumerate}
\item Soit $\alpha$ un \elt arbitraire de $\DivA$.
\begin{enumerate}
\item $\alpha$ est la \bif d'une famille finie de \dips (en conséquence il est la \bif des \dips qu'il minore).
\item $\alpha$ est la \bsp d'une famille finie de \dips (en conséquence il est la \bsp des \dips qu'il majore).
\item On peut écrire $\alpha$ sous la forme $\dvA(\ua)-\dvA(\ub)$ pour deux listes finies $(\ua)$ et $(\ub)$ dans~$\Atl$ avec  $\dvA(\ua,\ub)=0$ (autrement dit  $1$ est pgcd fort des $a_i$ et $b_j$ dans~$\Atl$).
\end{enumerate}
\item On consid\`ere deux listes $(\ux)=(\xn)$ et $(\uy)=(\ym)$ dans~$\Ktl$ 
\begin{enumerate}
\item On a l'\egtz~$\;\Vi_{i}\dvA(x_i)=\Vi_{j}\dvA(y_j)\;$  \ssi $(\ux)$ et $(\uy)$ ont les mêmes \dvrs communs dans~$\Ktl$.
\item On a l'\egtz~$\;\Vu_{i}\dvA(x_i)=\Vu_{j}\dvA(y_j)\;$  \ssi $(\ux)$ et $(\uy)$ ont les mêmes multiples communs dans~$\Ktl$.
\end{enumerate}
\end{enumerate}
\end{proposition}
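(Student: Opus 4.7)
\medskip\noindent
\textbf{Plan de démonstration.} Toutes les assertions découlent presque directement des axiomes $\rD_1$--$\rD_3$ (et de leur version $\rD'_1$--$\rD'_3$ sur~$\Ktl$), de la caractérisation $(*)$ et de sa duale mises en place lors de la preuve du \thref{thADVL}, ainsi que de la définition du pgcd fort~(\ref{defiStrongPGCD}).

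Pour le point 1, (a) est immédiat : $\dvA(b)\leq \Vi_i\dvA(a_i)$ équivaut à $\dvA(b)\leq \dvA(a_i)$ pour chaque $i$, soit, via $\rD'_2$ appliqué aux fractions $a_i/b\in\Ktl$, à $b\divi a_i$ dans~$\gA$. Pour (b), c'est exactement la caractérisation~$(*)$ : tout diviseur étant la \bsp des \dips qu'il majore, $\dvA(b)\geq \Vi_i\dvA(a_i)$ signifie que $b$ est multiple dans~$\Ktl$ de tout $x$ divisant chacun des $a_i$. Le point (c) combine alors (a) et (b) avec la formulation duale du pgcd fort (condition (1d) de~\ref{defiStrongPGCD}). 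Concernant le point 2, (a) est une relecture de $\rD'_3$ (en se restreignant à la famille des \dips qui majorent~$\alpha$) et (b) est sa forme duale, déjà dérivée dans la preuve de~\ref{thADVL}.

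Le seul point non immédiat est 2(c), dans lequel il s'agit de \gui{nettoyer} le pgcd commun entre le numérateur et le dénominateur d'une représentation donnée. On part d'une écriture $\alpha=\dvA(\ua)-\dvA(\ub)$ avec $(\ua),(\ub)$ dans $\Atl$ et on pose $\delta=\dvA(\ua,\ub)$. Comme $\gA$ est \dvlaz, la liste $(a_1,\dots,a_n,b_1,\dots,b_m)$ admet une \ivda $(\uc)=(\cq)$ dans $\Atl$, avec $h\in\Atl$ pour pgcd fort des produits $a_ic_k$ et $b_jc_k$; on a alors $\delta+\dvA(\uc)=\dvA(h)$. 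Comme $h$ divise chaque $a_ic_k$ et chaque $b_jc_k$, les quotients $a'_{i,k}=a_ic_k/h$ et $b'_{j,k}=b_jc_k/h$ sont dans $\Atl$, et $\alpha=\dvA(\ua')-\dvA(\ub')$ vérifie bien $\dvA(\ua',\ub')=0$. Enfin, le point 3 résulte de 2(a) et 2(b) : par 2(b), $\Vi_i\dvA(x_i)=\Vi_j\dvA(y_j)$ \ssi les deux membres ont les mêmes \dips qui les minorent, ce qui équivaut, via $\rD'_2$, à ce que $(\ux)$ et $(\uy)$ aient les mêmes diviseurs communs dans~$\Ktl$; (b) est dual, utilisant 2(a) et le fait que les \dips majorant $\Vu_i\dvA(x_i)$ correspondent aux multiples communs des $x_i$ dans~$\Ktl$.
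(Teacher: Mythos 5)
Votre démonstration est correcte et suit essentiellement la même route que celle de l'article, dont la preuve de cette proposition tient en une phrase renvoyant aux considérations développées dans la démonstration du théorème \ref{thADVL} (la caractérisation $(*)$, sa duale via $\rD'_3$, et l'écriture des diviseurs comme différences). Vous ne faites qu'expliciter les détails que l'article laisse implicites — en particulier la réduction à $\dvA(\ua,\ub)=0$ au point 2(c), dont votre calcul (division de la famille $(a_ic_k,b_jc_k)$ par son pgcd fort $h$) est exact.
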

\begin{proof}
Tout ceci résulte des considérations développées dans la \dem du \thref{thADVL}.
\end{proof}

On a alors comme corolaire une \prt faisant intervenir (enfin) la structure additive de l'anneau. 
%:     lemma  lemADVL
\begin{propdef} \label{lemADVL}
On suppose que $\gA$ est un \advlz. 
\begin{enumerate}
\item Pour tous $a$, $b\in\Atl$
on a $\dvA(a)\vi\dvA(b)\leq \dvA(a+b)$.
\item Pour $a_1$, \dots, $a_n\in\gA$ et $a\in\gen{\an}$,
on a 

\snic{\dvA(a_1)\vi\dots\vi\dvA(a_n)\leq \dvA(a).}

\noindent En notant $\fa=\gen{\an}$
on voit que le \dvr $\alpha=\Vi_i\dvA(a_i)$ ne dépend que de~$\fa$.
On note donc $\alpha=\dvA(\fa)=\dvA(\an)$.\\
On dit que \emph{$\alpha$ est le \dvr de l'\itf $\fa$}. 
\item Pour deux \itfs $\fa$ et $\fb$ de $\gA$ on a alors 

\snic{\fb\supseteq \fa\Rightarrow \dvA(\fb)\leq \dvA(\fa) \quad \hbox{ et }\quad \dvA(\fa\fb)=\dvA(\fa)+\dvA(\fb).}

\end{enumerate}
\end{propdef}
\begin{proof}
\textsl{1.} En effet les \dvrs communs 
\hbox{à  $(a,b)$} dans $\Ktl\!\sur{\Ati}$ 
sont les mêmes que les \dvrs communs à $(a,b,a+b)$.\\
Vu la proposition \iref{cordvla1} on a donc $\dv(a)\vi \dv(b)=\dv(a)\vi \dv(b)\vi \dv(a+b)$.

\snii\textsl{2.} Même chose.

\snii\textsl{3.} Par \dit dans le \grl $\DivA$.
\end{proof}

\medskip 
On peut \gnr la proposition \ref{lemADVL}
aux \ifrs \tf de $\gA$.
  On a alors les implications suivantes
($x\in\gK$, $\fx$, $\fy$  \ifrs \tf  $\subseteq \gK$):
\[ 
\begin{array}{rclcrcl}
x\in\fx  & \Longrightarrow  & \dvA(\fx)\leq \dvA(x),  &\qquad & 
\fx\supseteq \fy  & \Longrightarrow  &  \dvA(\fx)\leq \dvA(\fy). 
\end{array}
\]
Notez que pour $\fx=\gen{\xn}$ l'in\egt $\dvA(\fx)\leq \dvA(x)$ signifie que

\snic{\dvA(\xn)=\dvA(\xn,x)   \hbox{ i.e. }   \dvA(\fx)=\dvA(\fx+\gen{x}).}

\noindent 
Cela signifie aussi que tout $y\in\gK$ qui divise les  $x_i$ divise $x$(\footnote{Pour $x$ et les $x_i$ dans $\gA$ c'est une condition du type \gui{pgcd fort}: pour tout $z\in\gA$ et $c\in\Atl$, si $z$ divise les~$cx_i$, alors~$z$ divise~$cx$.}).

%: subsec{Quand le groupe des diviseurs est discret}
\subsec{Exprimer un \dvr comme \bsp de \dvrpsz}

%:     propnot leminfsupdiv
\begin{propnot} \label{lemsupfinidivpr} Soit $\gA$ un \advlz. 
\begin{enumerate}
\item Un \dvr $\geq 0$ s'écrit $\alpha=\dv(\fa)$ pour un \itf $\fa$. 
Pour exprimer $\alpha$ comme \bsp de \dvrps on consid\`ere un \ivda $\fb=\gen{\bbm}$ de l'\id $\fa$. On a donc $\alpha+\beta=\dv(g)$ ($\beta=\dv(\fb)$) pour un $g\in\Atl$. D'où finalement 

\snic{\alpha=\Vu_{j=1}^{m}\dv(\fraC{g}{b_j})=\Vu_{j=1}^{m}\dv(a_j)$ pour des $a_j\in\Ktl.}
\item De la même mani\`ere, un \elt arbitraire de $\DivA$ peut s'écrire sous forme $\alpha-\dv(b)$ pour un $\alpha\in\DivAp$ et un $b\in\Atl$. Il s'écrit donc explicitement comme une \bsp finie de \dvrpsz. 
\item Si $\alpha\in\DivA$ s'écrit $\Vu_{j=1}^{m}\dv(a_j)$, on a pour $x\in\gK$: 

\snic{\dvA(x)\geq \alpha  \iff  x\in \bigcap_ia_i\gA.}

\noindent On note \hbox{$\Idv_\gA(\alpha)$} ou $\Idv(\alpha)$ cet \ifr 

\snic{\Idv(\alpha)=\sotq{x\in\gK}{\dvA(x)\geq \alpha}=\bigcap_ia_i\gA.}

\noindent 
Enfin pour un \ifr $\fc=\sum_{i=1}^{q} c_i\gA$, on note de mani\`ere abrégée
$\Idv(\fc)$ ou $\Idv(\cq)$ au lieu de $\Idv\!\big(\!\dv(\fc)\big)$.  
\item Pour $\alpha$ et $\gamma\in\DivA$ on a $\alpha\leq \gamma$ \ssi $\Idv(\alpha)\supseteq \Idv(\gamma)$ dans $\gK$.  \\ 
En particulier $\alpha= \gamma$ \ssi $\Idv(\alpha)= \Idv(\gamma)$.  
\item Dans le cas de figure du point 1, on a $\Idv(\alpha)=\gA \cap \,\bigcap_j\fraC{g}{b_j}\gA=(g:\fb)_\gA$.
\item Si  $\Idv(\alpha)=\bigcap_ia_i\gA=\sum_{h=1}^{m}c_h\gA$, alors 
${\alpha=\dvA(c_1,\dots,c_m).}$ \\
Ainsi lorsque $\gA$ est \cohz, l'\ifr $\Idv(\alpha)$ est \tf pour tout
\dvrz~$\alpha$.
\end{enumerate}
\end{propnot}
\begin{proof} \textsl{1} et \textsl{2.} Clair. 

\snii
\textsl{3.}  De mani\`ere \gnle un \dvr est \bif des \dips qui le majorent. Or dire que $\dvA(x)$ majore les
$\dvA(a_j)$, \hbox{autrement} dit qu'il majore leur \bsp $\alpha$, c'est dire \hbox{que $x\in \bigcap_ia_i\gA$}.

\snii
\textsl{4} et \textsl{5.} Clair d'apr\`es \textsl{3.}

\snii \textsl{6.} Posons  $\gamma=\dvA(c_1,\dots,c_m)$.
Un \dip $\dvA(x)$ majore  $\alpha$ \ssi $x\in \bigcap_ia_i\gA$. 
Ainsi $\gamma\geq \alpha$, et par ailleurs tout \dip qui majore~$\alpha$ majore~$\gamma$.  \\
Donc $\gamma=\alpha$ car ils sont majorés par les mêmes \dipsz. 
\end{proof}
\exl \label{advl1exemple3} Avec l'exemple 3) \paref{advlexemple3}, puisque $\dvA(a,b)+\dvA(a,c)=\dvA(a)$, le point~\textsl{5} ci-dessus nous donne
 l'\egt $\Idv_\gA(a,b)=(a:c)_\gA$. 
 Un calcul montre alors \hbox{que $(a:c)_\gA=\gen{a,b}$}. Et par suite $\gen{a,b}=\Idv_\gA(a,b)$.
\eoe

\medskip En \gnl  pour un \ifr $\fc=\gen{\cq}$ on peut avoir une inclusion stricte $\fc\subsetneq \Idv(\fc)$ (c'est le cas si $\fc\subsetneq \Icl(\fc)$). Lorsque l'anneau est \cohz, les intersections finies d'\ifrs principaux sont des \ifrs \tf qui ont donc un statut particulier.

\medskip  \rems 1) Pour un \ifr  non nul \tf $\fa$ de l'\advl $\gA$ on a $\Idv(\fa)=(\fa^{-1})^{-1}$, qui est l'intersection des \ifrs principaux  contenant $\fa$.

\noindent 2) Tout \dvr étant \bsp d'une famille finie de \dipsz, on pourrait choisir de représenter les \dvrs par les intersections finies d'\ifrs principaux comme \gui{forme canonique}.
L'avantage est alors que $\alpha=\beta$ \ssiz\hbox{$\Idv(\alpha)=\Idv(\beta)$}.
Notons que cependant dans l'exemple précédent: $\dvA(a,b)$ est $>0$ mais il ne peut s'écrire comme \bsp d'\elts $\dvA(x_i)$ pour des $x_i\in\gA$
car tout diviseur commun de~$a$ et~$b$ dans $\gA$ est une unité.
\eoe
 
%: subsec{Idéaux divisoriels finis}
\subsec{Idéaux divisoriels finis}

La proposition \ref{lemsupfinidivpr} justifie la \dfn suivante, et donne le corolaire qui la suit.

%:     definition  def.idif
\begin{definota} \label{def.idif}~\\
Soit $\gA$ un anneau et $\gK=\Frac\gA$ son anneau total de fractions.
\begin{itemize}
\item On appelle \textsl{\idif de $\gA$} une intersection finie d'\ifrs principaux fid\`eles dans $\gK$ (i.e., de la forme $x\gA$ pour un $x\in\Ktl$). Si $\gA$ est un \advlz, un \idif est n'importe quel  \ifr de la forme $\Idv(\alpha)$
pour un $\alpha\in\DivA$. 
\item On note $\Idif(\gA)$ l'ensemble des \idifs de~$\gA$.  
\item Pour un \ifr $\fa$ qui n'est pas supposé \tfz, on définit $\Idv(\fa)$ comme l'intersection des \ifrs principaux fid\`eles qui contiennent $\fa$. 
En outre, dans le cas d'un \advl et pour un \(\alpha\in \DivA\), on écrit $\dv(\fa)=\alpha$ si $\Idv(\fa)=\Idv(\alpha)$. Ceci revient à dire que $\fa\subseteq \Idv(\alpha)$ et que 
$\alpha=\dv(\fb)$ pour un \ifr \tf $\fb\subseteq \Idv(\fa)$.
\end{itemize}

\end{definota}
\noindent NB. Un \idif n'est pas \ncrt un \ifr \tfz, mais c'est le cas lorsque $\gA$ est \cohz. Par ailleurs,  $\dv(\fa)=0$ signifie que~$\fa$ contient une suite de \profz~\hbox{$\geq 2$}.

%:     corollary  corlemsupfinidivpr
\begin{corollary} \label{corlemsupfinidivpr}
Pour un \advlz, les applications 
\[ 
\begin{array}{rcl} 
\Idif(\gA)\lora \DivA  &,   &  \bigcap_ix_i\gA\,\longmapsto\,\Vu_i\dvA(x_i)\;\;\hbox{et} \\[.3em] 
\DivA\lora \Idif(\gA)  &,   & \alpha\,\longmapsto\,\Idv(\alpha)  \end{array}
\]
sont des bijections réciproques bien définies. 
\end{corollary}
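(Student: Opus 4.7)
Le plan est de vérifier séparément: (i) que les deux applications sont bien définies, (ii) qu'elles sont l'une l'inverse de l'autre. Tout se ram\`ene aux points \emph{3} et \emph{4} de la proposition \ref{lemsupfinidivpr}, qui ont fait tout le travail en amont.

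Pour la bonne définition de $\alpha\mapsto\Idv(\alpha)$: d'apr\`es les points \emph{1} et \emph{2} de \ref{lemsupfinidivpr}, tout $\alpha\in\DivA$ s'écrit $\alpha=\Vu_j\dv(a_j)$ avec $a_j\in\Ktl$, et alors par le point \emph{3}, on a $\Idv(\alpha)=\bigcap_j a_j\gA$, qui est bien un \idif (intersection finie d'\ifrs principaux fid\`eles). Pour la bonne définition de $\bigcap_i x_i\gA\mapsto\Vu_i\dvA(x_i)$, il faut voir que la valeur obtenue ne dépend pas de l'écriture: si $\bigcap_i x_i\gA=\bigcap_j y_j\gA$, en posant $\alpha=\Vu_i\dvA(x_i)$ et $\beta=\Vu_j\dvA(y_j)$, le point \emph{3} donne $\Idv(\alpha)=\bigcap_i x_i\gA=\bigcap_j y_j\gA=\Idv(\beta)$, d'o\`u $\alpha=\beta$ par le point \emph{4}.

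Pour les deux compositions je procéderais ainsi. Partant d'un $\alpha\in\DivA$ écrit $\Vu_j\dv(a_j)$, son image $\Idv(\alpha)=\bigcap_j a_j\gA$ se renvoie sur $\Vu_j\dvA(a_j)=\alpha$. Réciproquement, partant de $\fc=\bigcap_i x_i\gA\in\Idif(\gA)$, son image $\alpha=\Vu_i\dvA(x_i)$ vérifie $\Idv(\alpha)=\bigcap_i x_i\gA=\fc$ par le point \emph{3}. Il n'y a pas à proprement parler d'obstacle: le seul point qui demande un minimum d'attention est la bonne définition de $\bigcap_i x_i\gA\mapsto\Vu_i\dvA(x_i)$ (c'est-\`a-dire l'indépendance vis-\`a-vis de la représentation choisie de l'\idifz), mais celle-ci se déduit formellement de l'implication $\Idv(\alpha)=\Idv(\beta)\Rightarrow\alpha=\beta$ déjà établie.
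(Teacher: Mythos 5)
Votre démonstration est correcte et suit exactement la voie prévue par le texte: le papier ne donne pas de preuve explicite de ce corollaire et le présente comme conséquence immédiate de la proposition \ref{lemsupfinidivpr}, et vous explicitez précisément les détails attendus à partir des points \emph{1}--\emph{4} de cette proposition (l'identification $\Idv(\alpha)=\bigcap_j a_j\gA$ et l'implication $\Idv(\alpha)=\Idv(\beta)\Rightarrow\alpha=\beta$). Rien à redire.
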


%: subsec{Quand le groupe des diviseurs est discret}
\subsec{Quand le groupe des \dvrs est-il discret?}

Un \grl $G$ est dit \textsl{discret} si la relation d'ordre $\alpha\leq \beta$
est décidable\footnote{En \comaz, la notion de construction (et donc celle de décidabilité) est une notion primitive qui n'est pas susceptible d'une \dfn en termes de machines de Turing. Voir \cite[Coquand, 2014]{CoqRec}.}. Pour cela il faut et suffit que l'\egt  $\gamma=0$ pour un $\gamma\in G^{+}$ soit décidable, car $\alpha\leq \beta\iff \alpha-(\alpha\vi\beta)=0$. Pour le \grl $\DivA$, cela veut dire que l'on sait tester si une liste finie $(\an)$ dans $\Atl$ admet $1$ comme pgcd fort.

En fait savoir tester si $\dvA(x)\leq \dvA(y)$ pour $x$ et $y\in\Ktl$ revient à savoir tester la \dve dans~$\Atl$, et cela  va suffire. 
En effet, pour un \advl $\gA$, on peut tester \gui{$\alpha\leq \beta$?} pour~$\alpha$ et~$\beta$ dans~$\DivA$ comme suit.
On écrit $\alpha=\Vu_{i=1}^{n}\dvA(x_i)$ \hbox{et  $\beta=\Vi_{j=1}^{m}\dvA(y_j)$} pour des $x_i$ et $y_j\in\Ktl$.
 
On doit donc tester $\dvA(x_i)\leq \dvA(y_j)$ pour chaque  couple $(i,j)$. 
D'où le résultat.

%:     lemma{thDivDiscret}
\begin{lemma} \label{thDivDiscret} Pour un \advl $\gA$ \propeq
\begin{enumerate}
\item Le groupe $\DivA$ est discret.
\item La \dve dans $\Atl$ est décidable (autrement dit $\gA$ est une partie détachable de $\gK$). On dit aussi dans ce cas que $\gA$ est un \emph{anneau \dveez}. 
\end{enumerate}
C'est le cas lorsque $\gA$ est \fdiz. 
\end{lemma}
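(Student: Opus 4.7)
The argument is essentially already sketched in the paragraph preceding the statement, so the plan is short. The central observation is that, thanks to the representation results already established, every element of $\DivA$ can be written explicitly as a finite $\Vu$ (resp.\ $\Vi$) of principal divisors (cf.\ Proposition~\ref{cordvla1}), and that comparing two principal divisors $\dvA(x)$ and $\dvA(y)$ amounts by $\rD'_2$ to deciding whether $y/x \in \gA$.

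\emph{Plan for $(2) \Rightarrow (1)$.} Given $\alpha$, $\beta \in \DivA$, I would invoke Proposition~\ref{cordvla1}(2) to write $\alpha = \Vu_{i=1}^n \dvA(x_i)$ and $\beta = \Vi_{j=1}^m \dvA(y_j)$ with $x_i$, $y_j \in \Ktl$. The inequality $\alpha \leq \beta$ then holds iff $\dvA(x_i) \leq \dvA(y_j)$ for every pair $(i,j)$, i.e.\ (by $\rD'_2$) iff $x_i$ divides $y_j$ in $\Ktl$, i.e.\ iff $y_j/x_i \in \gA$. Hence the test reduces to finitely many membership tests of elements of $\gK$ in $\gA$, which is exactly detachability of $\gA$ in $\gK$, and this is equivalent to decidability of divisibility in $\Atl$ (clearing a common denominator).

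\emph{Plan for $(1) \Rightarrow (2)$.} Conversely, to decide $a \mid b$ in $\Atl$ with $a$, $b\in \Atl$, it suffices by $\rD_2$ to decide $\dvA(a) \leq \dvA(b)$ in $\DivA$, which is decidable by assumption. More generally, to decide whether $x\in\gK$ lies in $\gA$, write $x=b/a$ with $a$, $b\in\Atl$ and use the same equivalence; this gives detachability of $\gA$ in $\gK$.

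\emph{Plan for the last clause.} If $\gA$ is fortement discret (\fdi), then for every \itf $\fa$ and every $b\in\gA$ one can decide whether $b\in\fa$. Taking $\fa=\gen{a}$ gives decidable divisibility in $\Atl$, so condition (2) holds. I expect no real obstacle here: the proof is essentially a reorganisation of the observations already made just before the statement, the only nontrivial ingredient being the two dual representation results (as $\Vu$ and as $\Vi$ of \dipsz) for arbitrary elements of $\DivA$, which were established in Proposition~\ref{cordvla1} and Proposition~\ref{lemsupfinidivpr}.
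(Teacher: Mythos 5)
Votre démonstration est correcte et suit essentiellement la même voie que celle esquissée dans le paragraphe qui précède l'énoncé dans l'article : écrire $\alpha=\Vu_i\dvA(x_i)$ et $\beta=\Vi_j\dvA(y_j)$, puis ramener le test $\alpha\leq\beta$ aux tests $\dvA(x_i)\leq\dvA(y_j)$, c'est-à-dire à des tests d'appartenance $y_j/x_i\in\gA$. Les compléments que vous ajoutez (la réciproque via $\rD_2$ et le cas fortement discret) sont corrects et immédiats.
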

%--------- fin theorem ---------------------------------------------- 

%
%%%%%%%%%%%%%%%%%%%%%%%%%%%%%%%%%%%%%%%%%%%%%%%%%%%%%%%%%%%%%%%%%%%%%%%%%%%
%: subsec{Diviseurs \irds}
\subsec{Diviseurs \irdsz}

Rappelons qu'un \elt $\pi>0$ d'un \grl $G$ est  dit \textsl{\irdz} si toute \egt $\pi=\eta+\zeta$
dans~$G^{+}$ implique $\eta=0$ ou $\zeta=0$.
Par ailleurs le \gui{lemme de Gauss} dit que

\snic{(\xi\perp \eta\hbox{ et }\xi\leq \eta+\zeta)  \;\Longrightarrow\;  \xi\leq \zeta.}

\noindent On en déduit le \gui{lemme d'Euclide}, qui  pour un \elt \ird $\pi$ et deux \elts $\eta$ et $\zeta\in G^{+}$, donne, si $G$ est discret, l'implication,

\snic{\pi\leq \eta+\zeta  \;\Longrightarrow\;  \pi\leq \eta \hbox{ ou }\pi\leq \zeta}

\noindent En langage de \dve on dirait \gui{tout \elt \ird est premier}.

\smallskip Comme dans \cite{ACMC}, nous appelons \gui{\idepz} tout \id qui donne par passage au quotient un anneau \sdzz, \cad vérifiant $xy=0\;\Rightarrow\; (x=0 \hbox{ ou } y=0)$.
En particulier l'\id $\gen{1}$ est premier.

\smallskip On obtient pour les \dvrs \irds les deux \thos importants suivants.

%:     theorem lemdivirdadvlgnl
\begin{theorem} \label{lemdivirdadvlgnl}
Dans un \advl  \dveez, un \dvr $\alpha>0$ est \ird \ssi $\Idv(\alpha)$
est un \idepz.\\ 
On obtient donc
une bijection entre  les ensembles suivants.
\begin{itemize}
\item Les \dvrs \irdsz. 
\item Les \idifs   premiers $\neq \gen{1}$. 
\end{itemize} 
\end{theorem}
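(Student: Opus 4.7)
L'équivalence annoncée, \gui{$\alpha$ \ird $\iff$ $\Idv(\alpha)$ \idepz}, fournira la bijection en la composant avec celle du corollaire~\ref{corlemsupfinidivpr} entre $\DivA$ et $\Idif(\gA)$: les \dvrs \irds étant par \dfn strictement positifs, ils correspondront exactement aux \idifs entiers premiers distincts de $\gA=\gen 1$ (l'\idif $\gA$ étant associé au \dvr $0$).

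Pour le sens direct, je partirais de $x,y\in\Atl$ avec $xy\in\Idv(\alpha)$, soit $\dv(x)+\dv(y)=\dv(xy)\geq\alpha$ dans $\DivA$. Comme $\DivA$ est discret par le lemme~\ref{thDivDiscret} et $\alpha$ \irdz, le lemme d'Euclide rappelé juste avant l'énoncé livre immédiatement $\alpha\leq\dv(x)$ ou $\alpha\leq\dv(y)$, c'est-\`a-dire $x\in\Idv(\alpha)$ ou $y\in\Idv(\alpha)$; les cas o\`u $x$ ou $y$ s'annule sont triviaux puisque $0\in\Idv(\alpha)$.

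Pour le sens réciproque, supposant $\Idv(\alpha)$ premier, je considérerais une décomposition arbitraire $\alpha=\beta+\gamma$ avec $\beta,\gamma\in\DivAp$. Gr\^ace \`a $\rD_3$, on peut écrire $\beta=\dv(b_1,\dots,b_n)$ et $\gamma=\dv(c_1,\dots,c_q)$ avec les $b_i,c_j\in\Atl$, de sorte que $\alpha=\dv(\fb\fc)$ pour $\fb=\gen{b_1,\dots,b_n}$ et $\fc=\gen{c_1,\dots,c_q}$, et en particulier $b_ic_j\in\Idv(\alpha)$ pour tous $i,j$. L'hypoth\`ese de décidabilité de la \dve permet alors, pour chaque $i$, de tester si $b_i\in\Idv(\alpha)$ (i.e.\ si $\dv(b_i)\geq\alpha$). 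Ou bien tous les $b_i$ y sont, auquel cas $\fb\subseteq\Idv(\alpha)$ entra\^ine $\beta\geq\alpha$, ce qui combiné \`a $\beta\leq\alpha$ (puisque $\gamma\geq 0$) force $\gamma=0$; ou bien un certain $b_{i_0}$ n'y est pas, et la primalité appliquée \`a chaque $b_{i_0}c_j\in\Idv(\alpha)$ impose $c_j\in\Idv(\alpha)$ pour tout $j$, d'o\`u $\fc\subseteq\Idv(\alpha)$, puis $\gamma\geq\alpha$ et finalement $\beta=0$. L'obstacle principal se concentre précisément dans cette disjonction constructive \gui{tous les $b_i\in\Idv(\alpha)$, ou un $b_{i_0}\notin\Idv(\alpha)$}, qui repose de mani\`ere essentielle sur la décidabilité de la \dve: sans cette hypoth\`ese, on ne pourrait conclure \cot que $\beta=0$ ou $\gamma=0$.
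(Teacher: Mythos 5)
Votre démonstration est correcte. Le sens direct est celui de l'article: pour $x$, $y\in\Atl$ avec $xy\in\Idv(\alpha)$, on a $\dv(x)+\dv(y)\geq\alpha$ et le lemme d'Euclide (licite car $\DivA$ est discret d'après le lemme~\ref{thDivDiscret}) donne $x\in\Idv(\alpha)$ ou $y\in\Idv(\alpha)$. La mise en place de la réciproque est également la même (écriture $\beta=\dv(b_1,\dots,b_n)$, $\gamma=\dv(c_1,\dots,c_q)$, constat que chaque $b_ic_j$ est dans $\Idv(\alpha)$).

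La seule divergence réelle est la conclusion de la réciproque. Vous tranchez la disjonction \gui{tous les $b_i\in\Idv(\alpha)$, ou bien un $b_{i_0}\notin\Idv(\alpha)$} au moyen de la décidabilité de la \dvez, puis concluez $\gamma=0$ ou $\beta=0$. L'article n'a pas besoin de ce test: la primalité de $\Idv(\alpha)$ fournit déjà, pour chaque couple $(i,j)$, la disjonction constructive $P_i$ ou $Q_j$ (où $P_i$ est \gui{$b_i\in\Idv(\alpha)$} et $Q_j$ est \gui{$c_j\in\Idv(\alpha)$}), et la distributivité du \gui{ou} sur le \gui{et} --- valide intuitionnistiquement pour des familles finies --- donne directement $(\&_i P_i)$ ou $(\&_j Q_j)$, c'est-à-dire $\alpha\leq\beta$ ou $\alpha\leq\gamma$. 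Ce que chaque approche apporte: celle de l'article établit au passage l'énoncé plus fort que $\alpha$ est \gui{premier} dans le \grl ($\alpha\leq\beta+\gamma\Rightarrow\alpha\leq\beta$ ou $\alpha\leq\gamma$ pour tous $\beta$, $\gamma\geq0$), et cette direction ne consomme pas l'hypothèse de \dve explicite, qui ne sert que pour le sens direct; la vôtre n'obtient que l'irréductibilité (ce qui suffit pour l'énoncé) et mobilise l'hypothèse de décidabilité dans les deux sens, mais elle a le mérite de localiser explicitement le point où une disjonction doit être décidée. Les deux arguments sont recevables constructivement.
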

\begin{proof} Supposons $\alpha$ \ird et montrons que $\Idv(\alpha)$ est un
\idepz. Si $xy\in \Idv(\alpha)$, on a $\dv(x)+\dv(y)\geq \alpha$, donc 
 $\dv(x)\geq \alpha$ ou $\dv(y)\geq \alpha$ (car $\alpha$ est \gui{premier}),
\cad $x\in \Idv(\alpha)$ ou $y\in \Idv(\alpha)$.

\snii Supposons $\Idv(\alpha)$ premier et $\alpha\leq \beta+\gamma$ 
avec 

\snic{\beta=\dv(\bn)\geq 0 \hbox{ et }\gamma=\dv(\cq)\geq 0.}

\noindent On montre que
$\alpha\leq \beta$ ou $\alpha\leq \gamma$. Ainsi, puisque~\hbox{$\alpha>0$}, il est \gui{premier} et à fortiori \irdz.
Commme chaque $b_ic_j$ est dans $\Idv(\alpha)$ (car $\dv(b_ic_j)\geq \alpha$),
on obtient $b_i\in\Idv(\alpha)$ \hbox{ou $c_j\in\Idv(\alpha)$}.
Or par \dit du \gui{ou} sur le \gui{et} dans le calcul des propositions, on a, en notant $P_i$ pour  \gui{$b_i\in\Idv(\alpha)$} et $Q_j$
pour \gui{$c_j\in\Idv(\alpha)$}

\snic{(\&_i P_i) \hbox{ ou } (\&_j Q_j) = \&_{i,j} (P_i \hbox{ ou } Q_j).}

\snii Et si par exemple  $\&_i\big(b_i\in\Idv(\alpha)\big)$, cela donne $\alpha\leq \beta$.
\end{proof}

\exls 
 1)  Soit $\gA$  un \adv int\`egre \dveez. C'est un domaine de Bézout pour lequel le groupe $\DivA$ est  totalement ordonné discret et il y a au plus un \dvr \irdz~$\pi$, qui s'écrit $\pi=\dv(p)$ avec $\Rad(\gA)=\gen{p}=\Idv(\pi)$. Ainsi on a un \dvr \ird \ssi $\Rad(\gA)$ est un \idpz.
S'il y a d'autres \ideps non nuls, \cad si le rang  du groupe $\DivA$ \hbox{est $>1$},
ils ne sont pas de la forme $\Idv(\alpha)$. Donc  ce ne sont pas des \idps
et ils ne sont pas \tfz.   

\snii 2)  Soit  $\gA$  un anneau à pgcd.
Les \dvrs \irds correspondent aux \elts \irds de l'anneau (à association pr\`es), ou encore aux \ideps principaux non nuls. Si $\gA$ est factoriel et si $x\in\fp$ premier ($\neq \gen{0}, \gen{1}$), un des \elts \irds qui divisent $x$, disons~$p$, doit être dans~$\fp$. Donc si $\fp\neq \gen{p}$, il y a au moins deux \elts \irds (non associés) dans $\fp$, ce qui fait une suite de profondeur $2$, et $\dv(\fp)=0$. 
\eoe

%:     theorem corlemdivirdadvlgnl
\begin{theorem} \label{corlemdivirdadvlgnl}
Dans un \advl  \dvee non trivial, si $\fp$ est un \idep \tf non nul avec $\pi=\dv(\fp)>0$, alors $\fp=\Idv(\fp)$ et $\pi$ est un \dvr \irdz.
\end{theorem}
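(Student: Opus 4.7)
The plan is to establish the non-trivial inclusion $\Idv(\pi) \subseteq \fp$, the reverse $\fp \subseteq \Idv(\dv(\fp)) = \Idv(\pi)$ being automatic from Proposition--Notation~\ref{lemsupfinidivpr}. Once this is done, Theorem~\ref{lemdivirdadvlgnl} finishes the job: in the divisibility-decidable PvMD $\gA$, $\pi$ is irreducible iff $\Idv(\pi)$ is a prime ideal, and by the new equality $\Idv(\pi)=\fp$ is prime by hypothesis.

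To obtain $\Idv(\pi) \subseteq \fp$, I would pick $x \in \Idv(\pi)$ and a divisorially inverse ideal $\fb = \langle b_1,\dots,b_m\rangle$ of $\fp = \langle p_1,\dots,p_n\rangle$, so that $\fp\fb$ admits a strong gcd $g \in \Atl$; one may assume all $b_j$ nonzero since $\fp\fb \neq 0$. Write $p_i b_j = g z_{ij}$ with $z_{ij} \in \gA$. Multiplying the identity $(1/g)\gA = \bigcap_{i,j} (1/(p_i b_j))\gA$ of clause (d) in Proposition~\ref{defiStrongPGCD} by $g$ yields $\gA = \bigcap_{i,j}(1/z_{ij})\gA$, i.e. $1$ is a strong gcd of the $z_{ij}$; hence the ideal $\fd := \langle z_{ij}\rangle$ satisfies $\dv(\fd) = 0$. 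On the other hand, $\dv(x) + \dv(\fb) \geq \pi + \dv(\fb) = \dv(g)$, so each $x b_j = g y_j$ with $y_j \in \gA$.

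The key identity is $(x b_j) p_i = (p_i b_j) x$, which after cancelling $g \neq 0$ in the integral domain $\gA$ reads $x z_{ij} = y_j p_i \in \fp$. Primality of $\fp$ yields, for each pair $(i,j)$, the disjunction $x \in \fp$ or $z_{ij} \in \fp$. The intuitionistic distributivity $\bigwedge_{i,j}\bigl(P \vee Q_{ij}\bigr) \iff P \vee \bigwedge_{i,j} Q_{ij}$ over the finite index set---the same device exploited in the proof of Theorem~\ref{lemdivirdadvlgnl}---then forces either $x \in \fp$, or else $\fd \subseteq \fp$, in which case $\pi = \dv(\fp) \leq \dv(\fd) = 0$, contradicting $\pi > 0$.

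The main delicate point I anticipate is the transfer of the strong-gcd property from $\{p_i b_j\}$ to $\{z_{ij}\}$: one must use the intersection-of-principal-fractional-ideals formulation of Proposition~\ref{defiStrongPGCD}(d), rather than the bare gcd condition, to make the passage clean and avoid circularity. Everything else is a careful exploitation of primality combined with the constructive handling of finite disjunctions, already rehearsed in the proof of Theorem~\ref{lemdivirdadvlgnl}.
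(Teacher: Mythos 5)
Your proof is correct and follows essentially the same route as the paper's: both take an inverse $\fb$ of $\fp$, write $\fp\,\fb=g\,\fc$ with $\dv(\fc)=0$ (your ideal generated by the $z_{ij}$ is exactly this $\fc$), deduce $x\,\fc\subseteq\fp$ for $x\in\Idv(\fp)$ via the same cancellation of $g$, and conclude by primality of $\fp$ combined with $\dv(\fc)=0<\pi$. The only divergence is the endgame: the paper uses the decidability of divisibility to test $\dv(c_j)\geq\pi$ and exhibit one generator $c_j$ of $\fc$ outside $\fp$, then applies primality a single time to $xc_j\in\fp$, whereas you apply primality to all the products $xz_{ij}$ and use the finite distributivity of the disjunction over the conjunction, reaching $x\in\fp$ without testing anything --- a minor but genuine simplification, since decidability is then only needed where the statement invokes Théorème~\ref{lemdivirdadvlgnl}.
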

\begin{proof}
D'apr\`es le \thref{lemdivirdadvlgnl}, il suffit de montrer que $\fp=\Idv(\pi)$, l'inclusion $\fp\subseteq \Idv(\pi)$ étant triviale.\\
Soit $a\neq 0$ un \elt de $\fp$. D'apr\`es le lemme \ref{lemdvli}, il existe
deux listes $(\ub)=(b_1, \dots, b_m)$ 
 \hbox{et  $(\uc)=(\cq)$} dans $\Atl$, la deuxi\`eme de profondeur~\hbox{$\geq 2$}, telles que 
$${\fp\,\gen{b_1, \dots, b_m}}= \gen{a}\gen{\cq}
%,\;\hbox{ d'où }\dv(\fp)+\dv(b_1, \dots, b_m)=\dv(a)
\eqno(*)
$$
Comme $\pi>0=\dv(\cq)$ il y a un $c_j$ tel que $\dv(c_j)\not\geq \pi$, et à fortiori $c_j\notin\fp$.
\\
Soit $x\in\Idv(\fp)=(\gen{a}:\gen{\ub})$ (point \textsl{5} de la proposition \ref{lemsupfinidivpr}). On \hbox{a $x\gen{\ub}\subseteq \gen{a}$}, avec %l'\egtz~
$(*)$
cela donne $a\fp\supseteq x\,\fp \gen{\ub}= xa\gen{\uc}$, \hbox{donc   $\fp\supseteq  x\gen{\uc}$}. Enfin, comme $xc_j\in\fp$ et $c_j\notin\fp$, on \hbox{obtient $x\in\fp$}. Ce qu'il fallait démontrer.
\end{proof}

\medskip \exl 
 \label{advl3exemple3} Dans l'exemple 3) \paref{advlexemple3}, les  \dvrs  $\dv(a,b)$, $\dv(a,c)$, $\dv(d,b)$ et~$\dv(d,c)$ sont \irds car les \ids $\gen{a,b}$, $\gen{a,c}$, $\gen{d,b}$ \hbox{et $\gen{d,c}$} sont premiers.
Comme $\dv(a,b)+\dv(a,c)=\dv(a)>0$, on a par raison de symétrie $\dv(a,b)>0$ et $\dv(a,c)>0$. On peut appliquer le \thref{corlemdivirdadvlgnl}. On obtient en outre sans calcul l'\egt
 $\Idv(a,b)=\gen{a,b}$ et les trois autres \egts analogues. 
\eoe

%:     corollary corcorlemdivirdadvlgnl
\begin{corollary} \label{corcorlemdivirdadvlgnl}
Dans un \advl  \coh \dvee  non trivial, on a les \prts \eqves suivantes pour un \id $\fq\neq \gen{0}$ arbitraire.
\begin{itemize}
\item L'\id $\fq$ est premier, \tfz, et $\dv(\fq)>0$.
\item L'\id $\fq$ est premier, \tf et $\dv(\fq)$ est \irdz.  
\item L'\id $\fq$ est un \idif premier $\neq \gen{1}$. 
\item Il existe un \dvr \ird $\pi$ tel que $\fq=\Idv(\pi)$.  
\end{itemize}
Et dans un tel cas $\fq$ est détachable.
\end{corollary}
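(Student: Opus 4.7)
The plan is to establish the four equivalences by a short cyclic chain of implications, leveraging the two main theorems just proved (Theorems~\ref{lemdivirdadvlgnl} and~\ref{corlemdivirdadvlgnl}), and then read off detachability from the decidability of divisibility in a dvee ring.

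For the implication from the first property to the second, I simply invoke Theorem~\ref{corlemdivirdadvlgnl}: if $\fq$ is a nonzero finitely generated prime ideal with $\dv(\fq) > 0$, that theorem already asserts that $\pi = \dv(\fq)$ is an irreducible divisor (and as a byproduct that $\fq = \Idv(\pi)$). From the second to the third, the same Theorem~\ref{corlemdivirdadvlgnl} gives $\fq = \Idv(\fq)$, so $\fq$ is a divisorial ideal; it is $\neq \gen{1}$ because $\dv(\fq)$ is irreducible and hence strictly positive. From the third to the fourth, knowing $\fq$ is a finitely generated divisorial prime ideal distinct from $\gen{1}$, Theorem~\ref{lemdivirdadvlgnl} tells us directly that $\pi := \dv(\fq) > 0$ is irreducible, and by divisoriality $\fq = \Idv(\pi)$. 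Finally, from the fourth back to the first: given $\fq = \Idv(\pi)$ with $\pi$ irreducible, Theorem~\ref{lemdivirdadvlgnl} makes $\fq$ a prime ideal; coherence of $\gA$ together with point~\emph{6} of Proposition~\ref{lemsupfinidivpr} ensures that $\Idv(\pi)$ is finitely generated; and $\dv(\fq) = \pi > 0$ by irreducibility of $\pi$.

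For the detachability statement, write $\pi = \dv(\fq)$. Since $\gA$ is dvee, divisibility in $\Atl$ is decidable, hence by Lemma~\ref{thDivDiscret} the lattice-ordered group $\DivA$ is discrete. To test whether $x \in \fq = \Idv(\pi)$, it suffices by Proposition~\ref{lemsupfinidivpr} item~\emph{3} to test the inequality $\dv(x) \geq \pi$ in $\DivA$, which is decidable. (Writing $\pi = \Vi_i \dv(a_i)$ for finite generators $a_1,\dots,a_n$ of $\fq$, this reduces explicitly to testing the finitely many divisibilities $a_i \mid x$ in $\Atl$.)

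I do not foresee a real obstacle: the corollary is essentially a repackaging of the two preceding theorems, and its content is the observation that in the coherent dvee setting every nonzero finitely generated prime with positive divisor is automatically divisorial, and conversely. The only point requiring a bit of care is the finite generation claim in passing from (4) to (1), which is exactly what coherence plus point~\emph{6} of Proposition~\ref{lemsupfinidivpr} is designed to supply; and the detachability, which relies on the dvee hypothesis through Lemma~\ref{thDivDiscret}.
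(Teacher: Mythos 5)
Your proof is correct and is exactly the argument the paper intends: the corollary is stated without a separate proof precisely because it is an immediate repackaging of Théorème~\ref{lemdivirdadvlgnl}, Théorème~\ref{corlemdivirdadvlgnl}, coherence via le point~\emph{6} de la proposition~\ref{lemsupfinidivpr}, and le lemme~\ref{thDivDiscret} — which is the chain you follow.

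One correction in your final parenthetical, however. Writing $\pi=\Vi_i\dv(a_i)$ over generators $a_i$ of $\fq$ and testing the divisibilities $a_i\mid x$ does \emph{not} decide $\dv(x)\geq\pi$: that test decides $\dv(x)\geq\Vu_i\dv(a_i)$, which is in general strictly stronger (in the example $\gA=\gk[a,b,c,d]$ of the paper one has $a\in\Idv(a,b)=\gen{a,b}$ although $b$ does not divide $a$). The explicit test must instead use a writing of $\pi$ as a finite \emph{supremum} of principal divisors, $\pi=\Vu_j\dv(c_j)$ with $c_j\in\Ktl$ (proposition~\ref{lemsupfinidivpr}), and then test $x\in c_j\gA$ for each $j$, each such test being decidable since $\gA$ is détachable dans $\gK$. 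Your main sentence — decidability of $\dv(x)\geq\pi$ because $\DivA$ is discrete — is nonetheless correct, so the detachability claim stands.
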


En d'autres termes, on obtient
une bijection entre  les ensembles suivants:
\begin{itemize}
\item les \dvrs \irdsz, 
\item les \idifs premiers $\neq \gen{1}$, 
\end{itemize} 
et une \egt entre les ensembles suivants:
\begin{itemize}
\item les \idifs premiers $\neq \gen{1}$, 
\item les \itfs premiers $\fq\neq \gen{0}$ tels que $\dv(\fq)>0$. 
\item les \itfs premiers $\fq\neq \gen{0},\gen{1}$ tels que $\fq=\Idv(\fq)$. 
\end{itemize} 

%
%%%%%%%%%%%%%%%%%%%%%%%%%%%%%%%%%%%%%%%%%%%%%%%%%%%%%%%%%%%%%%%%%%%%%%%%%%%
%: subsec{Propriétés de cl\^oture intégrale}
\subsec{Propriétés de cl\^oture intégrale}

On note
$\Icl_\gA(\fa)$ la \cli de $\fa$ dans $\gA$. Quand le contexte est clair, on utilise~$\Icl(\fa)$.

%:     theorem{prop2Idv}
%\goodbreak
\begin{theorem} \label{prop2Idv} 
Soit $\gA$ un anneau \dvla et $\fa$ un \itfz.
\begin{enumerate}
\item Pour tout $x$  entier sur $\fa$, on a $\dvA(x)\geq \dvA(\fa)$.
\\
En conséquence,
\begin{itemize}
\item $\dvA(\fa)$ ne dépend que de $\Icl(\fa)$,
\item  une inclusion $\fb\subseteq \Icl(\fa)$  implique $\dvA(\fb)\geq \dvA(\fa)$,
\item  on a $\Icl(\fa)\subseteq \Idv(\fa)$ et l'\id $\Idv_\gA(\fa)$ est \iclz.
\end{itemize}
\item En particulier $\gA$ est \iclz.
%
%\item 
%
\end{enumerate}
\end{theorem}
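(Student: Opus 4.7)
The plan is to establish point~1 first; the three sub-items and point~2 then follow routinely. For point~1, begin with an integrality relation $x^m + c_1 x^{m-1} + \cdots + c_m = 0$ with $c_i \in \fa^i$. By part~1 of proposition~\ref{lemADVL}, $\dv(u+v) \geq \dv(u)\vi\dv(v)$, and by parts~2 and 3 of that same proposition, $\dv(c_i) \geq \dv(\fa^i) = i\,\dv(\fa)$. Applied to $x^m = -(c_1x^{m-1}+\cdots+c_m)$, these ingredients yield, inside the \grl{} $\DivA$, the key inequality
\[
m\,\dv(x) \;\geq\; \Vi_{i=1}^{m}\bigl(i\,\dv(\fa) + (m-i)\,\dv(x)\bigr).
\]

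The real work is now a purely lattice-theoretic manipulation inside $\DivA$: one must deduce $\dv(x) \geq \dv(\fa)$ from this inequality. Set $\xi = \dv(x)$, $\alpha = \dv(\fa)$, $\epsilon = \xi\vi\alpha$, $\eta = \xi - \epsilon$ and $\zeta = \alpha - \epsilon$; then $\eta,\zeta \geq 0$ and $\eta\vi\zeta = 0$ by the very construction of the meet. Subtracting $m\epsilon$ from both sides of the inequality turns it into $m\eta \geq \Vi_{i=1}^{m}(i\zeta + (m-i)\eta)$. The standard orthogonality property in any \grl{} --- namely that $a\vi b=0$ with $a,b\geq 0$ forces $pa\vi qb=0$ for all integers $p,q\geq 0$ and hence $pa+qb=pa\vee qb$ --- rewrites each summand as $i\zeta \vee (m-i)\eta \geq i\zeta \geq \zeta$ for $i\geq 1$; the whole meet is therefore $\geq \zeta$, so $m\eta \geq \zeta$. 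Since $m\eta\vi\zeta = 0$ (again by orthogonality), we conclude $\zeta = m\eta\vi\zeta = 0$, i.e.\ $\xi \geq \alpha$. This manipulation is the main obstacle of the proof: it provides a constructive substitute for the familiar classical argument that reduces everything to valuations in totally ordered groups.

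The three sub-items of point~1 are then immediate. If $\fb\subseteq \Icl(\fa)$, each generator of $\fb$ satisfies the hypothesis of point~1, whence $\dv(\fb)\geq \dv(\fa)$; when $\Icl(\fa)=\Icl(\fb)$, applying this symmetrically yields equality. The inclusion $\Icl(\fa)\subseteq \Idv(\fa)$ is merely the characterization $\Idv(\alpha)=\{x\in\gK \mid \dv(x)\geq \alpha\}$ of proposition~\ref{lemsupfinidivpr} combined with the valuation inequality. To see that $\Idv(\fa)$ is itself integrally closed, observe that any product of $i$ elements of $\Idv(\fa)$ has divisor $\geq i\,\dv(\fa)=\dv(\fa^i)$; hence an element $y$ integral over $\Idv(\fa)$ satisfies an integrality equation whose coefficients still verify $\dv(c_i)\geq i\,\dv(\fa)$, and the same argument gives $\dv(y)\geq \dv(\fa)$, i.e.\ $y\in\Idv(\fa)$.

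Finally, point~2 is the special case $\fa=\gen{1}$ of point~1: any $x\in\gK$ integral over $\gA$ satisfies $\dv(x)\geq \dv(\gen{1})=0$, hence $x\in\gA$ by axiom~$\rD'_2$ of the divisorial project.
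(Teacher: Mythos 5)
Your proposal is correct and follows essentially the same route as the paper: the same integrality relation produces the same key inequality $n\xi \geq \Vi_{k=1}^{n}\big((n-k)\xi + k\alpha\big)$, and the consequences (including point \emph{2}) are drawn in the same way. The only difference is that where the paper concludes $\xi\geq\alpha$ by citing the lattice-group fact XI-2.12 (point 13) of [ACMC], you supply a correct, self-contained proof of that implication via the orthogonal decomposition $\xi=\eta+(\xi\vi\alpha)$, $\alpha=\zeta+(\xi\vi\alpha)$ and the rule $pa\vi qb=0$ for orthogonal positive elements.
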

%--------- fin theorem ----------------------------------------------
%
\begin{proof} \textsl{1.}
On note $\xi=\dvA(x)$ et $\alpha=\dvA(\fa)$. 
 \\ La \rdi s'écrit

\snic{x^n=u_1x^{n-1}+\cdots+u_{n-1}x+u_n$ avec $u_k\in\fa^k.}

\noindent On a des in\egts 
$\dvA(u_k)\geq k\alpha$ et donc 

\snic{
n\xi\,\geq\, \Vi_{k=1}^{n} \big((n-k)\xi+\dv(u_k)\big)\geq\, \Vi_{k=1}^{n} \big((n-k)\xi+k \alpha\big), 
}

\noindent et l'on conclut
que $\xi\geq \alpha$  
par \aref{Fact XI-2.12, item \textsl{13.}}

\snii \textsl{2.} En effet, un anneau int\`egre est \icl \ssi ses \idps sont \iclz.
\end{proof}
\exl Dans un anneau à pgcd int\`egre, l'\id $\Idv(\fa)$ est l'\idp engendré par le pgcd des \gtrs de $\fa$. \\
Par exemple sur l'anneau  $\gA=\gk[x,y]$ ($\gk$ un \cdiz), on a 

\snic{\Idv_\gA\big(\gen{x^{4},y^{3}}\big)=\gen{1}$ et $\Icl_\gA\big(\gen{x^{4},y^{3}}\big)=\gen{x^4, y^3, x^2y^2, x^3y}.}

\noindent Donc $\gen{x^{4},y^{3}}\subsetneq\Icl_\gA\big(\gen{x^{4},y^{3}}\big) \subsetneq \Idv_\gA\big(\gen{x^{4},y^{3}}\big)$.
\eoe

\medskip 
\rem La \dem du \tho précédent n'a pas utilisé toute la force
d'un anneau à \dvrsz. Il suffisait d'avoir un \grl $G$ et une application
$\dv:\Atl/\Ati\to G$  qui satisfait les points $\rD_1$
et $\rD_2$ du projet \dvlg ainsi que le point \textsl{1} de la 
proposition~\iref{lemADVL}. \\
\Cadz: $\dvA$ est un morphisme de groupes ordonnés qui réfléchit les in\egts et qui satisfait l'in\egt $\dvA(a)\vi\dvA(b)\leq \dvA(a+b)$.
\eoe

%:     corollary{corprop2Idv}
\begin{corollary} \label{corprop2Idv}
Soit $\gA$ un \advlz. Pour \hbox{$p$, $q\in\AuX$} on a 

\snic{\dv\big(\rc(pq)\big)=\dv\big(\rc(p)\big)+\dv\big(\rc(q)\big).}
\end{corollary}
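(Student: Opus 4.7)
L'idée est de combiner le théorème~\ref{prop2Idv} (qui assure que $\dv(\fa)$ ne dépend que de $\Icl(\fa)$) avec la formule de Dedekind-Mertens, en s'appuyant sur l'additivité $\dv(\fa\fb)=\dv(\fa)+\dv(\fb)$ du point~\emph{3} de la proposition~\ref{lemADVL}. On procède par double inégalité.

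Pour l'inégalité $\dv(\rc(pq))\geq \dv(\rc(p))+\dv(\rc(q))$, il suffit d'observer l'inclusion évidente $\rc(pq)\subseteq \rc(p)\rc(q)$ (obtenue en développant $pq$), qui par le point~\emph{3} de la proposition~\ref{lemADVL} fournit
$$\dv\big(\rc(pq)\big)\;\geq\;\dv\big(\rc(p)\rc(q)\big)\;=\;\dv\big(\rc(p)\big)+\dv\big(\rc(q)\big).$$

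Pour la direction réciproque, on établira l'inclusion $\rc(p)\rc(q)\subseteq \Icl(\rc(pq))$~: combinée au point~\emph{1} du théorème~\ref{prop2Idv}, elle entraîne $\dv(\rc(p)\rc(q))\geq \dv(\rc(pq))$, et l'on aura conclu. Pour cette dépendance intégrale, on invoque la formule de Dedekind-Mertens, qui affirme que pour $n=\deg q$ on a l'égalité d'idéaux $\rc(p)^{n+1}\rc(q)=\rc(pq)\cdot\rc(p)^n$. Il en résulte, pour tout $x\in\rc(p)\rc(q)$, l'inclusion $x\cdot\rc(p)^n\subseteq\rc(p)\rc(q)\cdot\rc(p)^n=\rc(pq)\cdot\rc(p)^n$~; le lemme du déterminant (type Cayley--Hamilton) appliqué à l'idéal de type fini $\rc(p)^n$ (qui est fidèle dès que $p\neq 0$, puisque $\gA$ est intègre) fournit alors pour $x$ une relation unitaire à coefficients dans~$\rc(pq)$, d'où $x\in \Icl(\rc(pq))$.

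L'obstacle principal est la formule de Dedekind-Mertens elle-même, classique mais non triviale~; elle se démontre par récurrence sur $\deg q$ sans requérir l'intégrité de $\gA$. Les cas dégénérés $p=0$ ou $q=0$ sont immédiats grâce à la convention $\dv(0)=+\infty$ rappelée plus haut, qui rend les deux membres égaux à $+\infty$.
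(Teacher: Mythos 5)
Votre démonstration est correcte et suit essentiellement la même voie que celle de l'article~: les deux inégalités proviennent respectivement de l'inclusion $\rc(pq)\subseteq\rc(p)\rc(q)$ jointe à l'additivité $\dv(\fa\fb)=\dv(\fa)+\dv(\fb)$, et de l'inclusion de Kronecker $\rc(p)\rc(q)\subseteq\Icl\big(\rc(pq)\big)$ combinée au théorème~\ref{prop2Idv}. La seule différence est que vous redémontrez cette derni\`ere inclusion via Dedekind--Mertens et le lemme du déterminant, alors que l'article se contente de citer le théorème de Kronecker.
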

%--------- fin corollary ---------------------------------------------- 
%
\begin{proof}
Cela résulte de $\dv(\fa\fb)=\dv(\fa)+\dv(\fb)$, du \tho de Kronecker (\cad \hbox{$\rc(p)\rc(q)\subseteq \Icl\big(\rc(pq)\big)$}, et du \thoz~\iref{prop2Idv}.
\end{proof}

Un autre corolaire est l'\eqvc donnée ci-apr\`es pour les anneaux \cohsz.
Notons que ce \tho est une version \cov non \noee du \tho bien connu des \clama qui affirme qu'un anneau \noe \icl est un anneau de Krull (voir \cite[Matsumura, Theorem 12.4]{Mat}).% et plus loin le \thref{factKruladvdec}).
%:     Theorem{thiclcohidv}
\begin{theorem} \label{thiclcohidv}
Pour un anneau $\gA$  \und{\cohz} int\`egre, \propeq
\begin{enumerate}
\item L'anneau $\gA$ est \iclz.
\item L'anneau $\gA$ est \dvlaz.
\item Pour toute famille finie $(\an)$ dans $\Atl$, si 
$\gen{\bbm}=\big(\!\gen{a_1}:\gen{\an}\!\big)_\gA$,
la famille des $a_ib_j$ admet $a_1$ comme pgcd fort. 
\end{enumerate}
\end{theorem}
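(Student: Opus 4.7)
The plan is to establish the cycle $(3)\Rightarrow(2)\Rightarrow(1)\Rightarrow(3)$. The implication $(2)\Rightarrow(1)$ is already Théorème \ref{prop2Idv}, point~\emph{2}. For $(3)\Rightarrow(2)$, I would take any nonzero \itf $\fa$, choose a generating list $(a_1,\dots,a_n)$ with $a_1\in\Atl$, and invoke coherence to write the transporter $(\gen{a_1}:\fa)_\gA=\gen{b_1,\dots,b_m}$; hypothesis~(3) then says $a_1$ is a strong gcd of the $a_ib_j$, so $(b_j)$ is an \ivda of $(a_i)$ and $\fa$ is \dvliz.

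The substantive direction is $(1)\Rightarrow(3)$. Fix $(a_1,\dots,a_n)$ in $\Atl$, set $\fa=\gen{a_1,\dots,a_n}$ and $\fb=(\gen{a_1}:\fa)_\gA=\gen{b_1,\dots,b_m}$ (\tf by coherence). Since $a_1\fa\subseteq\gen{a_1}$ we have $a_1\in\fb$, so $\fb$ is a nonzero (hence faithful) \itf of~$\gA$. The relation $\fa\fb\subseteq\gen{a_1}$ already shows that $a_1$ is a common divisor of the $a_ib_j$. To establish that it is their \emph{strong} gcd I would verify characterization~\emph{1.d} of Proposition~\ref{defiStrongPGCD}, i.e.
$$
\bigcap_{i,j}\fraC{1}{a_ib_j}\,\gA\;=\;\fraC{1}{a_1}\,\gA.
$$
The inclusion $\supseteq$ is immediate from $\fa\fb\subseteq\gen{a_1}$.

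For the reverse inclusion I would fix $z\in\gK$ with $z\fa\fb\subseteq\gA$, set $t=za_1$, and aim to show that $t$ is integral over $\gA$; integral closure (hypothesis~(1)) then forces $t\in\gA$, which is exactly what is needed. The key observation is that multiplication by $t$ stabilizes $\fb$: for any $b\in\fb$, on the one hand $tb=z(a_1b)\in z\fa\fb\subseteq\gA$, and on the other hand, for every $k$, $(tb)a_k=a_1(za_kb)\in a_1\gA$ since $za_kb\in\gA$; hence $tb\in(\gen{a_1}:\fa)_\gA=\fb$. Thus $t\fb\subseteq\fb$, and since $\fb$ is a \tf faithful $\gA$-submodule of $\gK$, the standard determinant trick (Cayley--Hamilton on a presentation matrix of $\fb$) yields a monic polynomial over $\gA$ annihilated by~$t$.

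The main obstacle, I expect, is guessing the right $t$-stable module on which to run Cayley--Hamilton: once one recognizes that $\fb$ itself does the job, each verification is a one-line reshuffling of the identity $z\fa\fb\subseteq\gA$, and the only appeal to hypothesis~(1) is the last step, which promotes integrality of~$t$ to $t\in\gA$.
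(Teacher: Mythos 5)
Your proposal is correct and follows essentially the same route as the paper: the paper also reduces everything to $(1)\Rightarrow(3)$, takes $t$ a candidate fraction (there written $t=a_1y/x$, which is your $za_1$ with $z=y/x$), shows $t\fb\subseteq\fb$ by exactly the two computations you give, and concludes by faithfulness and finite generation of $\fb$ plus integral closedness. The only cosmetic difference is that you certify the strong gcd via formulation \emph{1d} of la proposition \ref{defiStrongPGCD} while the paper uses the equivalent formulation \emph{1b}.
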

%--------- fin theorem ---------------------------------------------- 
%
\begin{proof} On sait déjà que \textsl{2} $\Rightarrow$ \textsl{1}, et que lorsque $\gA$ est \cohz, \textsl{2} $\Leftrightarrow$ \textsl{3} 
(lemme~\ref{lemdvli}).

\noindent Il reste à montrer \textsl{1} $\Rightarrow$ \textsl{3.}
On pose $\fa=\gen{\an}$, $\fb=\gen{\bbm }$. On veut montrer que les $a_ib_j$ admettent $a_1$ comme
pgcd fort. 

\noindent Pour un $y\in\Atl$ on montre que $ya_1$ est un pgcd des~$ya_ib_j$. Il est clair que~$ya_1$ divise tous les~$ya_ib_j$. Soit un $x\in\Atl$ qui divise tous les~$ya_ib_j$, on doit montrer que la fraction $t=a_1y/x$ est dans $\gA$. 

\noindent On consid\`ere $t_j=tb_j=(ya_1b_j)/x$, qui est dans $\gA$, et on montre que $t_j\in\fb$.\\
On a $t_ja_i=a_1(ya_ib_j)/x\in\gen{a_1}$ pour chaque~$i$, \hbox{donc 
$t_j\in(\gen{a_1}:\fa)_\gA=\fb$}. 
Ainsi l'\eltz~$t$ de~$\gK$ vérifie $t\fb\subseteq \fb$. Comme $\fb$ contient $a_1\in\Atl$, c'est \hbox{un \Amoz} fid\`ele. En outre il est  \tfz, et puisque~$\gA$ est \iclz, on obtient $t\in\gA$. 
\end{proof}
NB: il existe des anneaux factoriels (cas particuliers d'\aKrsz) non \cohs (exemple 5.2 dans \cite[Glaz, 2001]{Glaz01}, voir  \paref{exempleFactnonCoh}).

\mni{\bf Exemple important.}
Tout anneau int\`egre \corg est \dvla car il est \iclz.
\eoe

\medskip 
\rem On a déjà noté que pour un \advl et un \ifr \tf $\fa$,
on a $\Idv(\fa)=(\fa^{-1})^{-1}$.
Donc pour un anneau  \coh \iclz,  un \itf  $\fa$ \hbox{et $a\in\fa$} 
on peut calculer un \sgr fini de $\Idv(\fa)$ par la formule
$\Idv(\fa)=(\gen{a}:(\gen{a}:\fa)_\gA)_\gA$.
\eoe

\medskip \noindent {\bf Terminologie.} Dans la littérature classique, pour un anneau \noe \iclz~$\gA$, le groupe $\DivA$ est souvent appelé le \textsl{groupe des \dvrs de Weil}, et l'on appelle \textsl{\dvr effectif} un \elt de $\DivAp$, et \textsl{\dvr positif} un \dvr effectif non nul. 
La signification du mot effectif étant différente en \comaz, nous utiliserons la terminologie suivante: \textsl{\dvr positif ou nul} (au lieu de \dvr effectif) et \textsl{\dvr strictement positif} (au lieu de \dvr positif).
\eoe

%:
\subsec{\Advls de dimension $\leq 1$}

Un anneau int\`egre \zed est un corps discret. Ceci r\`egle la question des \advls de dimension $\leq 0$.

Dans ce paragraphe, nous voyons que la question de la dimension $\leq 1$ admet une réponse surprenante par sa simplicité.

Comme corolaire du \thref{thiclcohidv}, puisqu'un \ddp de \ddkz~\hbox{$\leq 1$} est la même chose qu'un
anneau int\`egre \coh \icl de \ddk $\leq 1$ (\aref{Theorem XII-6.2}) on obtient l'\eqvc suivante: un anneau  int\`egre  de  \ddkz~$\leq 1$ est 
un \adp \ssi c'est un \advl \cohz.

Mais en fait on a mieux, car on peut supprimer la \cohc dans cette \eqvcz.
On obtient alors une version \cov non \noee du \tho des \clama qui affirme qu'un
anneau de Krull de dimension $1$ est un \dok (\cite[Theorem 12.5]{Mat}). 

%:     Theorem{thi2clcohidv}
\begin{theorem} \label{thi2clcohidv} 
Pour un anneau  int\`egre  \propeq
\begin{enumerate}
\item $\gA$ est un \adp de \ddk $\leq 1$.
\item $\gA$ est un \advl  de \ddk $\leq 1$.
\end{enumerate}
\end{theorem}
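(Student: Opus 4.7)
The direction $(1)\Rightarrow(2)$ is essentially immediate. In a Prüfer domain, every nonzero finitely generated ideal $\fa$ admits a finitely generated inverse $\fb$ with $\fa\fb=\gen{g}$ for some $g\in\Atl$; this $g$ is automatically a strong gcd of $\fa\fb$ (example 2 following Theorem~\ref{thADVL}), so $\fa$ is divisorially invertible and $\gA$ is an \advl, the Krull dimension being unchanged. The interesting content is therefore the converse.

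For $(2)\Rightarrow(1)$, the plan is to upgrade divisorial invertibility to ordinary invertibility using the dimension hypothesis, without routing through the coherence detour of Theorem~\ref{thiclcohidv}. Let $\fa=\gen{a_1,a_2}$ be a nonzero 2-generated ideal; by the Dedekind trick used in Lemma~\ref{lem2suffisent} (applied now to invertibility rather than to divisorial invertibility), it suffices to treat this case. By hypothesis there exist $\fb=\gen{b_1,\dots,b_m}$ and $g\in\Atl$ such that $g$ is a strong gcd of $\fa\fb$; equivalently $\Idv_\gA(\fa\fb)=g\gA$, and in particular $\fa\fb\subseteq g\gA$.

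Set $\fc=g^{-1}\fb$, a finitely generated fractional ideal. Then $\fa\fc\subseteq\gA$, and by Proposition~\ref{lemADVL} applied in $\DivA$ one has
$\dv(\fa\fc)=\dv(\fa)+\dv(\fb)-\dv(g)=\dv(\fa\fb)-\dv(g)=0$.
By item~\emph{4} of Proposition~\ref{defiStrongPGCD}, the finite generating family $(a_ib_j/g)_{i,j}$ of $\fa\fc$ therefore has profondeur~$\geq 2$ in $\gA$.

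The decisive step, where the hypothesis on Krull dimension enters, is to conclude $\fa\fc=\gen{1}$. This rests on the constructive form of the depth/dimension inequality available in~\cite{ACMC}: a finitely generated ideal containing a list of profondeur~$\geq 2$ in a ring of Krull dimension~$\leq 1$ must contain~$1$. Once this is granted, $\fc$ is an inverse of $\fa$, so $\fa$ is invertible and $\gA$ is Prüfer. The main obstacle lies exactly here: without the dimension hypothesis one obtains only $\Idv(\fa\fc)=\gA$, which is strictly weaker than $\fa\fc=\gA$ (as the examples of non-Prüfer \advls show), and it is precisely this gap that dimension~$\leq 1$ closes.
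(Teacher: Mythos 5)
Your strategy coincides with the paper's: both reduce the implication \emph{2} $\Rightarrow$ \emph{1} to the single key fact that, in an integral domain of Krull dimension $\leq 1$, a finitely generated ideal generated by a list of profondeur $\geq 2$ equals $\gen{1}$ (equivalently, a strong gcd $g$ of a list actually generates the ideal of the list). Your reduction to that point is correct: from $\fa\fb\subseteq g\gA$ and $\dv(\fa\fc)=0$ with $\fc=g^{-1}\fb$, the generating family $(a_ib_j/g)_{i,j}$ has profondeur $\geq 2$ by point \emph{\ref{i4defiStrongPGCD}} of la proposition \ref{defiStrongPGCD}, and you have correctly located where the dimension hypothesis must intervene. (The preliminary Dedekind-trick reduction to two generators is harmless but unnecessary; the paper's argument is uniform in the number of generators. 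The direction \emph{1} $\Rightarrow$ \emph{2} is fine.)

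The weak point is that you dispose of the decisive step by citation. The statement \og une liste de profondeur $\geq 2$ dans un anneau de dimension de Krull $\leq 1$ engendre $\gen{1}$ \fg{} is not an off-the-shelf result of \cite{ACMC} that one can simply quote; it is precisely the lemme \ref{lemthi2clcohidv} of the paper, and proving it is where essentially all the work of the theorem lies. The paper's proof adapts the argument of \cite[lemme XI-3.10]{ACMC} (stated there for gcd domains) to the strong-gcd setting: after normalizing $g=1$, one passes to the zero-dimensional quotient $\gA/\gen{b_1}$, produces idempotents $e_i$ with $\gen{b_i^{m_i}}=\gen{e_i}$ there, deduces that $\gen{b_1,e}=\gen{b_1,b_2^{m_2},\dots,b_n^{m_n}}$ is locally principal, and then uses that $1$ remains a strong gcd of these generators after localization at comaximal elements to force this locally principal ideal to equal $\gen{1}$. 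As you yourself observe, this is exactly the gap between $\Idv(\fa\fc)=\gA$ and $\fa\fc=\gA$ that the dimension hypothesis must close; asserting it without proof therefore leaves the argument incomplete at its crucial point. With the lemme \ref{lemthi2clcohidv} supplied, your proof becomes the paper's.
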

%--------- fin theorem ---------------------------------------------- 
%
\begin{proof} Il faut prouver \textsl{2} $\Rightarrow$ \textsl{1.} On suppose que $\gA$ est un \advl et on doit montrer qu'un \id $\fa$ \tf  fid\`ele est \ivz.
On a un \itf $\fb$ tel que~$\fa\fb$ admet un pgcd fort~$g\in\Atl$, donc par le lemme \ref{lemthi2clcohidv},  $\fa\fb=\gen{g}$  et $\fa$ est bien un \id \ivz.    
\end{proof}
%

%:     lemme  lemthi2clcohidv
\begin{lemma} \label{lemthi2clcohidv}
Dans un anneau int\`egre $\gA$ de \ddk $\leq 1$, si $g$ est un pgcd fort
de $(\an)$, alors $\gen{\an}=\gen{g}$. 
\end{lemma}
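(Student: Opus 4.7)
The inclusion $\langle a_1,\dots,a_n\rangle \subseteq \langle g\rangle$ is immediate, since $g$ divides every $a_i$ as a strong gcd. Write $a_i = g b_i$ with $b_i \in \gA$; the plan is to establish a Bézout identity $1 = \sum_i c_i b_i$ in $\gA$, whereupon $g = \sum_i c_i a_i$ supplies the missing inclusion.

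First I would transport the strong-gcd condition on $(a_i)$ into a profondeur condition on $(b_i)$. Property~(b) in item~\emph{1} of Proposition-Definition~\ref{defiStrongPGCD} applied to $g$ says that for every $x \in \Atl$, the element $xg$ is a gcd in~$\gA$ of $(xa_1,\dots,xa_n) = (xgb_1,\dots,xgb_n)$. Cancelling $g$, legitimate since $\gA$ is an integral domain, yields that $x$ is a gcd of $(xb_1,\dots,xb_n)$ for every $x \in \Atl$. Hence $1$ is a strong gcd of $(b_1,\dots,b_n)$, which by item~\emph{\ref{i4defiStrongPGCD}} of the same proposition-definition is exactly the condition $\Gr(b_1,\dots,b_n) \geq 2$.

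The heart of the matter is then the assertion: \emph{in any integral domain of Krull dimension $\leq 1$, a list of profondeur $\geq 2$ generates the unit ideal}. As the text announces, I would prove this \emph{mutatis mutandis} by adapting the proof of \cite[Thm~XI-3.12]{ACMC}, where the analogous conclusion is reached under the weaker hypothesis that $1$ is an ordinary gcd in a gcd-domain. Concretely, since $\Gr(b_1,\dots,b_n) \geq 1$ some $b_i$ is regular; renumber so that $b_1 \neq 0$. The Krull-dimension $\leq 1$ hypothesis then supplies, for each $j \geq 2$, an identity of the shape $b_j^N(1 + b_j u_j) = b_1 v_j$ with $u_j, v_j \in \gA$ and a common exponent~$N$. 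Using the stability of profondeur under powers recalled in the preamble to section~\ref{secTheoDiviseurs} — namely $\Gr(b_1^N,\dots,b_n^N) \geq 2$ — these identities should assemble into a family proportional to $(b_1^N,\dots,b_n^N)$; the proportionality clause in the definition of profondeur $\geq 2$ then furnishes a common factor which, upon testing against the original list, must be a unit, yielding the desired Bézout relation.

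The main obstacle I anticipate lies in this last manipulation. The ACMC~XI-3.12 argument is stated for $n=2$, so I expect either to run the argument uniformly in $j$ or to reduce to the two-element case by induction, replacing the pair $(b_1, b_2)$ with $(b_1, b_2 + t_3 b_3 + \cdots + t_n b_n)$ for suitable $t_i \in \gA$ chosen so that the profondeur is preserved — which is reasonable since, by the observation following Proposition-Definition~\ref{defiStrongPGCD}, profondeur is an invariant of the ideal and not of its particular generating list. Once the pair case is in hand, the assembly of the Krull-dimension identities into a proportional family should proceed as in the gcd-domain proof, with the strong-gcd property replaced throughout by the proportionality clause provided by $\Gr \geq 2$.
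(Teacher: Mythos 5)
Your opening reduction is exactly the paper's: pass to $b_i=a_i/g$, observe that $1$ is a strong pgcd of $(b_1,\dots,b_n)$, i.e.\ $\Gr(b_1,\dots,b_n)\geq 2$, and reduce to showing that a depth-$\geq 2$ list in a domain of Krull dimension $\leq 1$ generates $\gen{1}$. The identities $b_j^{N}(1+b_ju_j)\in\gen{b_1}$ are also the right raw material. But the proof stops where the work begins: you never exhibit the family proportional to $(b_1^{N},\dots,b_n^{N})$, and your fallback plan --- replacing the list by a pair $(b_1,\,b_2+t_3b_3+\cdots+t_nb_n)$ --- is unjustified. Profondeur is an invariant of the ideal generated by the \emph{whole} list; it cannot in general be concentrated into two of its elements, and sub-pairs of a depth-$2$ list can fail to have depth $2$: in $\gk[x,y]$ the list $(x^{2},xy,y^{2})$ has depth $\geq 2$ (it generates $\gen{x,y}^{2}$) while $(x^{2},xy)$ does not, since $(xy,y^{2})$ is proportional to it without being a multiple of it. Producing a depth-$2$ pair inside a depth-$2$ ideal is precisely the kind of extraction (analogous to extracting a regular sequence) that the list-based definition of profondeur is designed to avoid, and you give no procedure for it.

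The uniform-in-$j$ route can in fact be completed, but it needs a construction you do not supply: put $c=\prod_{j\geq 2}(1+b_ju_j)$, so that $1-c\in\gen{b_2,\dots,b_n}$ and $b_j^{N}c=b_1w_j$ for each $j\geq 2$; setting $w_1=b_1^{N-1}c$, the relations $b_1w_i=b_i^{N}c$ give $b_1(b_i^{N}w_j-b_j^{N}w_i)=0$, and cancelling the regular element $b_1$ shows $(w_1,\dots,w_n)$ is proportional to $(b_1^{N},\dots,b_n^{N})$. Since $\Gr(b_1^{N},\dots,b_n^{N})\geq 2$, there is $w$ with $w_i=wb_i^{N}$, and $i=1$ yields $c=wb_1$, whence $1=(1-c)+c\in\gen{b_1,\dots,b_n}$. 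Note that $w$ need not be a unit, contrary to what your endgame suggests; what is used is only $c\in\gen{b_1}$. The paper itself argues differently: it works in the zero-dimensional quotient $\gA/\gen{b_1}$, writes $\gen{b_i^{m_i}}=\gen{e_i}$ there for idempotents $e_i$, deduces that $\gen{b_1,b_2^{m_2},\dots,b_n^{m_n}}$ is locally principal, and concludes because a locally principal ideal whose generators admit $1$ as strong pgcd equals $\gen{1}$. Either route works, but as written your argument has a hole at its central step.
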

\begin{proof} 
On suppose \spdg les $a_i\in\Atl$.
Puisque $g$ est un pgcd fort des~$a_i$,~$1$ est un pgcd fort des $b_i=\fraC{a_i}g$ et il suffit de montrer \hbox{que $\gen{\bn}=\gen{1}$}. 
Pour cela, on reprend mutatis mutandis la \dem du  
\tho XI-3.12 dans~\cite{ACMC}
qui affirme qu'un anneau int\`egre à pgcd de dimension $\leq 1$ est un anneau de~Bézout.
\\
Dans l'anneau \zed $\aqo\gA{b_1}$,  chaque $b_i$ satisfait $\geN{b_i^{m_i}}=\gen{e_i}$
pour un \idmz~$e_i$. Si~$e$ est l'\idm pgcd des
$e_i$ modulo $b_1$, on a dans~$\gA$ l'\egt $\gen{b_1}=\gen{b_1,e}\gen{b_1,1-e}$,
donc l'\id $\gen{b_1,e}$ est \lopz. 
Et $\gen{b_1,e}=\gen{b_1,b_2^{m_2},\dots,b_n^{m_n}}$. 
Comme $\gen{\bn}$ est de profondeur $\geq 2$, il en va de même pour $\gen{b_1,b_2^{m_2},\dots,b_n^{m_n}}$.
Ainsi $\gen{b_1,e}$ est \lop et de profondeur $\geq 2$. On en déduit qu'il est égal à~$\gen{1}$ car un \idp de profondeur $\geq 2$ est égal à $\gen{1}$. On conclut avec $\gen{\bn}\supseteq \gen{b_1,b_2^{m_2},\dots,b_n^{m_n}}=\gen{1}$.
\end{proof}

%: subsec{Anneaux de valuation discr\`ete }
\subsec{Anneaux de valuation discr\`ete}

On rappelle qu'un \textsl{\adv discr\`ete} est par \dfn un anneau int\`egre $\gV$ 
donné avec un \elt $p\notin\gV\eti$, et dans lequel tout \elt de~$\gV\etl$ s'écrit $p^{n}u$ pour un  $n\in \NN$ et un $u\in\gV\eti$. 
On dit alors que $p$ est une \ix{uniformisante} de $\gV$. On peut  voir
$\gV$ comme un anneau principal à \fat avec $p$ pour seul \ird (modulo l'association).

%:     lemma  lemVALD
\begin{lemma} \label{lemVALD} \emph{(Anneaux de valuation discr\`ete, 1)}\\
Soit  $\gA$ un anneau int\`egre.
\Propeq %
\begin{enumerate}
\item \label{i1lemVALD} $\gA$ est un \adv discr\`ete.
\item \label{i2lemVALD} $\gA$ est un \advl et $\DivA\simeq (\ZZ,\geq)$. 
\item \label{i3lemVALD} $\gA$ est un \advl local de dimension $\leq 1$, $\DivA$ est discret et contient un \elt \irdz. 
\item \label{i4lemVALD} $\gA$ est un anneau principal local à \fat et il existe un $a\in\Atl\setminus\Ati$. 
\end{enumerate}
\end{lemma}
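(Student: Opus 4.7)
Le plan est d'établir le cycle $(\ref{i1lemVALD}) \Rightarrow (\ref{i4lemVALD}) \Rightarrow (\ref{i1lemVALD})$, la double implication $(\ref{i1lemVALD}) \Leftrightarrow (\ref{i2lemVALD})$, et enfin $(\ref{i2lemVALD}) \Leftrightarrow (\ref{i3lemVALD})$. Les implications $(\ref{i1lemVALD}) \Rightarrow (\ref{i4lemVALD})$ et $(\ref{i1lemVALD}) \Rightarrow (\ref{i2lemVALD})$ sont immédiates à partir de la définition: dans le second cas, l'application $p^n u \mapsto n$ fournit un isomorphisme $\DivA \simeq \ZZ$ qui satisfait sans peine les conditions du projet divisoriel.

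Pour $(\ref{i4lemVALD}) \Rightarrow (\ref{i1lemVALD})$, je montrerai d'abord que dans un anneau local de Bezout les idéaux principaux sont totalement ordonnés par inclusion: écrivant $\gen{a, b} = \gen{c}$ avec $a = cx$, $b = cy$, $c = au + bv$, on tire $1 = xu + yv$, et la localité force $x$ ou $y$ à être inversible. Deux éléments irréductibles sont alors associés. En factorisant le non-inversible non nul dont on dispose, on extrait un irréductible $p$, et tout élément de $\Atl$ s'écrit $p^n u$ avec $u \in \Ati$, donnant bien un \adv discrète.

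Pour $(\ref{i2lemVALD}) \Rightarrow (\ref{i1lemVALD})$, j'appliquerai la condition $\rD_3$ à l'élément $1 \in \DivAp \simeq \NN$: on a $1 = \Vi_i \dv(a_i)$ pour une famille finie $(a_i)$ dans $\Atl$, c'est-à-dire $\min_i \dv(a_i) = 1$ dans $\ZZ$. Choisissant $p$ parmi les $a_i$ réalisant ce minimum, $p$ est non inversible puisque $\dv(p) = 1 > 0$; et pour tout $a \in \Atl$ on a $\dv(a) = n \in \NN$, d'où $\dv(a) = \dv(p^n)$ et donc $a = p^n u$ avec $u \in \Ati$. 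L'implication $(\ref{i2lemVALD}) \Rightarrow (\ref{i3lemVALD})$ en découle immédiatement, puisqu'un \adv est local de dimension $\leq 1$ et que $\ZZ$ est discret avec $1$ pour élément irréductible.

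L'étape délicate sera $(\ref{i3lemVALD}) \Rightarrow (\ref{i2lemVALD})$. Par le théorème \ref{thi2clcohidv}, $\gA$ est un \adp local de dimension $\leq 1$; étant local et de Bezout, c'est un anneau de valuation, donc $\DivA$ est totalement ordonné. L'élément irréductible $\pi$ donné par hypothèse est alors nécessairement le plus petit élément strictement positif du groupe (sinon, $0 < \alpha < \pi$ entraînerait $\pi = \alpha + (\pi - \alpha)$ avec deux facteurs strictement positifs). L'essentiel de l'argument consistera à établir que tout élément positif de $\DivA$ est un multiple entier de $\pi$, c'est-à-dire la propriété archimédienne; celle-ci devra être extraite de la contrainte de dimension de Krull $\leq 1$, qui empêche l'existence de sous-groupes convexes propres non triviaux dans le groupe de valeurs d'un anneau de valuation. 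On obtiendra ainsi $\DivA \simeq \ZZ\pi \simeq \ZZ$, puis on conclura via $(\ref{i2lemVALD}) \Rightarrow (\ref{i1lemVALD})$ déjà établi.
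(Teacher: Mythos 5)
Votre architecture globale est correcte et, pour les implications $(\ref{i1lemVALD})\Rightarrow(\ref{i2lemVALD})$, $(\ref{i1lemVALD})\Rightarrow(\ref{i4lemVALD})$, $(\ref{i2lemVALD})\Rightarrow(\ref{i1lemVALD})$ et $(\ref{i2lemVALD})\Rightarrow(\ref{i3lemVALD})$, elle coïncide pour l'essentiel avec celle du texte; votre $(\ref{i4lemVALD})\Rightarrow(\ref{i1lemVALD})$ direct (factorisation totale plus totalité de la divisibilité dans un Bezout local) est une variante inoffensive du $(\ref{i4lemVALD})\Rightarrow(\ref{i3lemVALD})$ du texte. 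En revanche, l'implication $(\ref{i3lemVALD})\Rightarrow(\ref{i2lemVALD})$, que vous identifiez vous-même comme l'étape délicate, n'est pas démontrée: vous renvoyez la propriété archimédienne de $\DivA$ au fait que «~la dimension de Krull $\leq 1$ empêche l'existence de sous-groupes convexes propres non triviaux dans le groupe de valeurs~». C'est précisément l'énoncé qu'il faut établir, et sous cette forme c'est le théorème classique «~rang du groupe de valeurs $=$ dimension de Krull~», dont le contenu constructif est justement ce qui est en jeu dans cet article. Tel quel, votre argument reporte tout le poids de la preuve sur un fait admis.

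Le texte extrait ce contenu de la caractérisation constructive de la dimension de Krull $\leq 1$: pour $a\in\Rad\gA$ non nul et $p$ tel que $\dv(p)=\pi$ soit l'élément minimum $>0$ du groupe totalement ordonné discret $\DivA$, on dispose d'une égalité $p^{n}(1+px)+ay=0$ dans $\gA$; comme $1+px\in\Ati$ (puisque $p\in\Rad\gA$), on en déduit que $a$ divise $p^{n}$. En prenant $n$ minimal (test possible car $\DivA$ est discret) et $az=p^{n}$, on a $0\leq\dv(z)<\dv(p)$ (sinon $p$ diviserait $z$ et l'on contredirait la minimalité de $n$), donc $\dv(z)=0$ et $\dv(a)=n\pi$. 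C'est cette manipulation explicite qui manque à votre rédaction; une fois qu'on en dispose, votre conclusion $\DivA=\ZZ\pi\simeq\ZZ$ et le repli sur $(\ref{i2lemVALD})\Rightarrow(\ref{i1lemVALD})$ fonctionnent bien.
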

\begin{proof} \textsl{\ref{i1lemVALD}} $\Rightarrow$ \textsl{\ref{i2lemVALD}}, 
\textsl{\ref{i3lemVALD}} et \textsl{\ref{i4lemVALD}.} Clair

\sni\textsl{\ref{i2lemVALD}} $\Rightarrow$ \textsl{\ref{i1lemVALD}.} 
Soit $\pi$ le \gtr $>0$ de $\DivA$. On a $\pi=\dv(\ua)$ pour une famille finie $(\an)=(\ua)$. Puisque $\dv(a_i)=n_i\pi$ pour des $n_i\geq 0$, l'un des $n_i$ est égal à $1$. \hbox{Ainsi $\pi=\dv(p)$} pour un $p\in\Atl$. Pour tout autre \elt $a$ de $\Atl$
on a $\dv(a)=\dv(p^{n})$ pour un $n\geq 0$, donc $a$ et $p^{n}$ sont associés. Ainsi $\gA$ est un \adv discr\`ete d'uniformisante $p$.

\sni\textsl{\ref{i3lemVALD}} $\Rightarrow$ \textsl{\ref{i1lemVALD}.} En tant qu'\advl local de dimension~\hbox{$1$, $\gA$} est un \adv (\thref{thi2clcohidv}).
Puisque $\DivA$ est discret, $\gA$ est \dveez, et $\gA$ est réunion disjointe explicite de $\Ati=\sotq x{\dv(x)=0}$ et $\Rad\gA=\sotq x{\dv(x)>0}$.
Si $\pi$ est un \dvr \irdz, il est \elt $>0$ minimum dans le groupe totalement ordonné $\DivA$,  on l'écrit $\pi=\dv(p)$. 
Donc pour tout $a\in\Rad\gA$, $p$ divise $a$. En outre
la \ddkz~$\leq 1$ de l'anneau nous donne une \egt $p^n(1+px)+ay=0$, donc $a$ divise $p^{n}$ pour un $n>0$. Si $n$ est la plus petite valeur possible et $az=p^{n}$ 
alors $z\in\Ati$ car $0\leq \dv(z)<\dv(p)$.
Ainsi $\gA$ est un \adv discr\`ete d'uniformisante $p$.

\sni\textsl{\ref{i4lemVALD}} $\Rightarrow$ \textsl{\ref{i3lemVALD}.}
En effet $\gA$ est un \advz, et $\DivA$ est discret parce que $\gA$ est à \fatz. En outre, $\dv(a)>0$, donc il existe un \elt \irdz. 
\end{proof}

\rem Sous la seule hypoth\`ese que $\gA$ est un anneau principal local
à \fab
avec un  $a\in\Atl\setminus\Ati$, il n'y a pas d'\algo \gnl pour produire
un \elt \ird dans $\gA$.
\eoe

%: subsec{Localisations d'un \advlz, 1
\subsec{Localisations d'un \advlz, 1}

%:     theorem{lemdvlloc0}
\begin{theorem} \label{lemdvlloc0}
Soient $\gA$ un \advl et $S$ un filtre ne contenant pas~$0$.
 L'anneau $S^{-1}\gA=\gA_S$ est un \advl et 
 il y a un unique morphisme  de \grls 
$\varphi_S:\DivA\to\Div \gA_S$
tel 
{que $\varphi_S\big(\dv_\gA(a)\big)=\dv_{\gA_S}(a)$} pour tout $a\in\Atl$. 
Ce morphisme est surjectif, donc $\Div \gA_S\simeq (\DivA)/\Ker\varphi_S$. 
\end{theorem}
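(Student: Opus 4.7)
\medskip
\noindent\textbf{Plan de \dem proposé.}
L'outil central sera la commutation de la \lon aux intersections finies de sous-$\gA$-modules de $\gK$: si $M_1, \dots, M_n \subseteq \gK$ et $x = y_k/s_k \in S^{-1}M_k$ pour chaque $k$, alors $s = s_1\cdots s_n \in S$ vérifie $sx \in M_k$ pour tout $k$, donc $x \in S^{-1}\bigl(\bigcap_k M_k\bigr)$; l'inclusion réciproque est triviale. Ce lemme sera invoqué à deux reprises.

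Premi\`erement, pour montrer que $\gA_S$ est \dvlaz, je ram\`enerais toute liste non vide dans $(\gA_S)^*$, par associés dans $\gA_S$, à une liste $(\an)$ d'\elts de $\Atl$. Soient $(\bbm)$ une \ivda de $(\ua)$ dans $\gA$ et $g \in \Atl$ un pgcd fort des $a_ib_j$. La condition~\ref{i1ddefiStrongPGCD} de la proposition~\ref{defiStrongPGCD} s'écrit $\bigcap_{i,j} \fraC{1}{a_ib_j}\gA = \fraC{1}{g}\gA$ dans $\gK$, et la \lon donne l'\egt analogue dans $\gA_S$; donc $g$ reste pgcd fort des $a_ib_j$ dans $\gA_S$, et $(\an)$ y est \dvliz.

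Ensuite, je définirais $\varphi_S$ sur les \dvrs positifs par $\varphi_S\bigl(\dv_\gA(\ua)\bigr)=\dv_{\gA_S}(\ua)$, puis par différences sur $\DivA$. Pour la bonne définition, il suffit d'établir que $\dv_\gA(\ua)\leq\dv_\gA(c)$ implique $\dv_{\gA_S}(\ua)\leq\dv_{\gA_S}(c)$. Avec les m\^emes $(\ub)$ et $g$, le point~\emph{5} de la proposition~\ref{lemsupfinidivpr} donne $\Idv_\gA\bigl(\dv_\gA(\ua)\bigr) = (g:\fb)_\gA = \bigcap_j \fraC{g}{b_j}\gA$ avec $\fb=\gen{\bbm}$, et la m\^eme formule vaut dans $\gA_S$, o\`u $(\ub)$ reste \ivda de $(\ua)$ avec le m\^eme $g$. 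Par \lonz, $\Idv_{\gA_S}\bigl(\dv_{\gA_S}(\ua)\bigr) = S^{-1}\,\Idv_\gA\bigl(\dv_\gA(\ua)\bigr)$, d'o\`u l'implication voulue par le point~\emph{3} de la m\^eme proposition.

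Enfin, les identités $\dv(\ua\uc)=\dv(\ua)+\dv(\uc)$ et $\dv(\ua)\vi\dv(\uc)=\dv(\ua,\uc)$, valables dans les deux anneaux, montrent que $\varphi_S$ préserve $+$ et $\vi$, donc est un morphisme de \grlsz. La surjectivité est immédiate: tout \dvr positif de~$\gA_S$ s'écrit $\dv_{\gA_S}(a_1/s_1,\dots,a_n/s_n) = \dv_{\gA_S}(a_1,\dots,a_n) = \varphi_S\bigl(\dv_\gA(a_1,\dots,a_n)\bigr)$; on passe aux différences pour les \dvrs arbitraires. L'unicité découle de ce que $\DivA$ est engendré comme \grl par les \dips $\dv_\gA(a)$, $a\in\Atl$. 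L'obstacle principal sera d'assurer la bonne définition de $\varphi_S$, qui exige d'exploiter conjointement la bijection entre \dvrs et \idifs (corollaire~\ref{corlemsupfinidivpr}) et le lemme de commutation ci-dessus.

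\bigskip
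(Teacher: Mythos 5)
Votre démonstration est correcte et suit essentiellement la même voie que celle de l'article: tout repose sur le fait que la localisation préserve les intersections finies de sous-modules de $\gK$ (le papier cite le fait II-6.5 de \cite{ACMC}), donc les ppcm et les pgcd forts restent tels après localisation via le point \emph{1c}/\emph{1d} de la proposition~\ref{defiStrongPGCD}, ce qui donne à la fois que $\gA_S$ est \dvla et que $\varphi_S$ est bien définie, morphisme et surjective. Vous explicitez simplement davantage la bonne définition de $\varphi_S$ (via $\Idv$ et sa commutation à la localisation), étape que le papier traite d'un mot.
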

%--------- fin theorem ---------------------------------------------- 
%
\begin{proof}  
On note tout d'abord que $\gA_S$ reste un anneau int\`egre. Ensuite vue la proposition~\ref{defiStrongPGCD} (point \textsl{1c}), le résultat tient à ce qu'un ppcm reste un ppcm apr\`es \lonz,
car les \lons préservent les intersections
finies d'\ids (\aref{\hbox{Fact II-6.5}}). Ceci implique que les pgcd forts  restent des pgcd forts, que les listes \dvlis restent \dvlisz, que l'application 
$$
\varphi_S:\dvA(\xn)\mapsto\dv_{\gA_S}(\xn)
$$
\noindent 
est bien définie, 
de $\DivA$ vers $\Div\gA_S$, et que c'est un morphisme de \grlsz.
L'unicité est claire.
\end{proof}

Pour un filtre $S$ d'un anneau $\gA$ on appelle \textsl{hauteur de $S$} la \ddk de $\gA_S$. En fait, du point de vue \cofz, la phrase bien définie est: \gui{le filtre~$S$ est un filtre de hauteur $\leq k$}. Le filtre~$S$ n'est pas supposé détachable. Un filtre~$S$ est dit \textsl{premier} si l'anneau $\gA_S$ est local. Autrement dit si l'implication suivante est satisfaite
$$
x+y\in S\Rightarrow x\in S \hbox{ ou }y\in S.
$$
\noindent Lorsque $S$ est premier, détachable et ne contient pas $0$, son \cop  est un \idep détachable $\fp\neq \gA$,
et on appelle  \textsl{hauteur de $\fp$} la hauteur de~$S$.   Enfin un \idep détachable $\neq \gA$  admet pour \cop un filtre premier.
On obtient ainsi une bijection entre les \ideps détachables $\neq \gA$  
 et les filtres premiers détachables $\neq \gA$.

Le lemme suivant est un complément pour le \thref{corlemdivirdadvlgnl}.
%:     lemma cor0lemdvlloc
\begin{lemma} \label{cor0lemdvlloc}
Soit un \advl $\gA$ et $\fp$ un \itf premier détachable de hauteur~$1$. Alors $\dv(\fp)$ est un \dvr \irdz.
\end{lemma}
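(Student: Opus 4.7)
The plan is to localize at $\fp$ in order to land in a valuation domain with principal nonzero maximal ideal, observe that $\dv(\fp)$ maps to the irreducible generator of the localized divisor group, and then transfer irreducibility back to $\DivA$ via a Cayley--Hamilton / integral-closure argument.

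Let $S=\gA\setminus\fp$, a detachable prime filter. By Theorem \ref{lemdvlloc0}, $\gA_\fp = S^{-1}\gA$ is an \advl with an $\ell$-group morphism $\varphi_S\colon\DivA\to\Div\gA_\fp$. The height hypothesis gives $\dim\gA_\fp\leq 1$, so by Theorem \ref{thi2clcohidv} $\gA_\fp$ is a Prüfer domain of dimension $\leq 1$, hence (being local) a valuation domain. Its maximal ideal is finitely generated, hence principal in a valuation domain, say $\fp\gA_\fp=(p)$ with $p\neq 0$. The divisor $\dv_{\gA_\fp}(p)$ is the smallest positive element of the totally ordered group $\Div\gA_\fp$, so it is irreducible there. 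Since $\varphi_S(\dv_\gA(\fp))=\dv_{\gA_\fp}(p)>0$, we already obtain $\dv_\gA(\fp)>0$ in $\DivA$.

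For irreducibility, suppose $\dv_\gA(\fp)=\alpha+\beta$ with $\alpha,\beta\geq 0$. Applying $\varphi_S$ and using irreducibility of $\dv_{\gA_\fp}(p)$, I may assume $\varphi_S(\alpha)=0$. Writing $\alpha=\dv_\gA(\fa)$ for a finitely generated $\fa\subseteq\gA$, this forces $\fa\gA_\fp=\gA_\fp$ (in a valuation domain, a finitely generated ideal with zero divisor is the whole ring), so there exists $s\in\fa\setminus\fp$. It remains to show $\Idv(\fa)=\gA$, equivalently that every $c\in\gK^{\times}$ with $c\mid\fa$ satisfies $1/c\in\gA$.

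Fix such a $c$. From $\alpha\leq\dv_\gA(\fp)$ one has $\Idv(\fa)\supseteq\Idv(\fp)\supseteq\fp$, and from $c\mid\fa$ one has $\Idv(\fa)\subseteq c\gA$; therefore $c$ divides every element of $\fp$. For each $g\in\fp$, $g/c\in\gA$ and $(g/c)\cdot s = g\cdot(s/c)\in\fp$; since $s\notin\fp$ and $\fp$ is prime, $g/c\in\fp$. Thus multiplication by $1/c$ is an $\gA$-linear endomorphism of the finitely generated module $\fp$, and the determinant trick furnishes a monic $\chi\in\gA[T]$ with $\chi(1/c)\cdot\fp=0$; as $\fp\neq 0$ in the domain $\gA$, $\chi(1/c)=0$, so $1/c$ is integral over $\gA$. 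Integral closure of $\gA$ (Theorem \ref{prop2Idv}) then forces $1/c\in\gA$, finishing the proof. The main obstacle is exactly this last step: starting only from the local information $\varphi_S(\alpha)=0$, one must find a global reason why $1/c\in\gA$, and the mechanism is that the three constraints $c\mid\fa$, $c\mid\fp$ and $s\in\fa\setminus\fp$ conspire through primeness of $\fp$ to pull $1/c$ into an $\gA$-linear action on $\fp$, which is precisely the input integral closure needs.
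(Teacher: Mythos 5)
Your argument is correct. Its first half --- localizing at $\fp$, identifying $\gA_S$ as a valuation domain whose maximal ideal is generated by some $p\neq 0$, and pulling the positivity of $\dv_{\gA_S}(p)$ back to $\dv_\gA(\fp)>0$ --- is exactly the paper's proof, which in fact stops there: the irreducibility is meant to be supplied by Theorem \ref{corlemdivirdadvlgnl} (the lemma is announced as a \gui{complément} to it), whose proof establishes $\fp=\Idv(\fp)$ via a relation $\fp\,\gen{\ub}=\gen{a}\gen{\uc}$ with $\Gr(\uc)\geq 2$ coming from Lemma \ref{lemdvli}, and then invokes the characterization of irreducible divisors as those $\alpha$ with $\Idv(\alpha)$ prime (Theorem \ref{lemdivirdadvlgnl}). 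You prove irreducibility from scratch by a genuinely different route: a summand $\alpha=\dv(\fa)\geq 0$ of $\dv(\fp)$ with $\varphi_S(\alpha)=0$ yields an $s\in\fa\setminus\fp$, and together with $\fp\subseteq\Idv(\fp)\subseteq\Idv(\fa)\subseteq c\gA$ this forces $c^{-1}\fp\subseteq\fp$ for every common divisor $c$ of $\fa$ in $\Ktl$; the determinant trick and the integral closedness of $\gA$ (Theorem \ref{prop2Idv}) then give $1/c\in\gA$, whence $\alpha\leq\dv(1)=0$. This is the same mechanism the paper uses to prove Theorem \ref{thiclcohidv}, so it fits the spirit of the text. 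Your route is longer and forgoes the useful by-product $\fp=\Idv(\fp)$, but it is self-contained and never needs the divisibility in $\gA$ to be decidable: detachability of $\fp$ alone makes the dichotomy \gui{unit or in $\fp\gA_S$} decidable in $\gA_S$, which is all that your reduction to the case $\varphi_S(\alpha)=0$ (and the claim that $\dv_{\gA_S}(p)$ is the least strictly positive divisor) requires --- a point worth stating explicitly, and one that incidentally smooths over the mismatch between the hypotheses of the lemma and those of Theorem \ref{corlemdivirdadvlgnl}.
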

\begin{proof}
On doit montrer que  $\dv(\fp)>0$. Si $S=\gA\!\setminus \fp$, on sait que $\gA_S$ est un \advl avec un morphisme naturel surjectif de \grls $\DivA\to\Div\gA_S$ (\thref{lemdvlloc0}), et par hypoth\`ese $\gA_S$ est de \ddk $1$. Donc par le \thref{thi2clcohidv} c'est un \adpz. Comme il est local c'est un \advz,
d'\idema $\fp\gA_S$.
On a $\dv_{\gA_S}(\fp)=\dv_{\gA_S}(\an)=\Vi_i\dv_{\gA_S}(a_i)$ pour des $a_i\in\Atl\cap\fp$. Comme les 
$\dv_{\gA_S}(a_i)$ sont deux à deux comparables et tous $>0$, on a $\dv_{\gA_S}(\fp)>0$, ce qui implique $\dvA(\fp)>0$. 
\end{proof}
%

%: subsec{L'anneau de Kronecker }
\subsec{Un anneau à la Kronecker}

Dans ce paragraphe, les \dems sont laissées \alecz.
 
\smallskip 
Soit $\gB$ un anneau arbitraire et $\Xn$ des indéterminées. On rappelle que l'on note $\rc_{\gB,\uX}(p)$ ou $\rc(p)$ le contenu de $p\in\BuX=\BXn$,
\cad l'\id de $\gB$ engendré par les \coes de $p$. 

On fixe les \idtrs $\uX$ et  
on définit l'ensemble
$$
S_{\dv}(\gB) =S_{\dv} = \sotq{f\in\gB[\uX]}{\Gr(\rc_\gB(f))\geq 2}.
$$
%:     fact factSKRprof2
\begin{fact} \label{factSKRprof2}
L'ensemble $S_{\dv}$ est un filtre.
\end{fact}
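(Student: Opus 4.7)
Le plan est de vérifier les trois propriétés d'un filtre pour $S_{\dv}$: contenir $1$, être stable par produit, et être saturé (si $fg \in S_{\dv}$, alors $f, g \in S_{\dv}$).

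Le cas $1 \in S_{\dv}$ est immédiat puisque $\rc(1) = \gen{1}$ et $\Gr(\gen{1}) \geq 2$. Pour la saturation, on commence par observer que chaque coefficient de $fg$ est combinaison $\gB$-linéaire de produits d'un coefficient de $f$ et d'un coefficient de $g$, ce qui donne les inclusions $\rc(fg) \subseteq \rc(f)\rc(g) \subseteq \rc(f)$ (et symétriquement $\rc(fg) \subseteq \rc(g)$). La monotonie de la profondeur rappelée en début de section ($\fa \subseteq \fb$ et $\Gr(\fa) \geq k$ impliquent $\Gr(\fb) \geq k$) entraîne alors que $fg \in S_{\dv}$ force $f, g \in S_{\dv}$.

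Le cœur de la preuve est la stabilité par produit. On envisage d'invoquer le lemme de Dedekind--Mertens (présent dans \cite{ACMC}): il existe un entier $m$ (par exemple $m = \deg_{\uX} g$) tel que
\[ \rc(f)^{m+1}\,\rc(g) \;=\; \rc(f)^{m}\,\rc(fg). \]
Supposons $\Gr(\rc(f)), \Gr(\rc(g)) \geq 2$. En itérant la règle $\Gr(\fa\fb) \geq k$ dès que $\Gr(\fa), \Gr(\fb) \geq k$, on obtient $\Gr\big(\rc(f)^{m+1}\big) \geq 2$, puis $\Gr\big(\rc(f)^{m+1}\rc(g)\big) \geq 2$. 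L'égalité de Dedekind--Mertens transporte cette profondeur à $\rc(f)^m \rc(fg)$, et l'inclusion $\rc(f)^m\rc(fg) \subseteq \rc(fg)$ combinée à la monotonie conclut $\Gr(\rc(fg)) \geq 2$, c'est-à-dire $fg \in S_{\dv}$.

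L'obstacle principal est précisément cette dernière étape: la seule inclusion de Kronecker $\rc(f)\rc(g) \subseteq \Icl(\rc(fg))$, valable dans tout anneau, ne suffit pas ici, car rien ne permet a priori de déduire $\Gr(\rc(fg)) \geq 2$ de $\Gr\big(\Icl(\rc(fg))\big) \geq 2$ en l'absence d'hypothèse d'intégrale clôture sur $\gB$. C'est l'\emph{égalité exacte} entre idéaux fournie par Dedekind--Mertens qui procure le pont requis entre la profondeur du produit des contenus et celle du contenu du produit.
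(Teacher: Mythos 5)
Votre démonstration est correcte et compl\`ete : le paragraphe du papier laisse explicitement cette vérification au lecteur, et l'argument que vous donnez est bien celui attendu. Les deux points faciles (appartenance de $1$ et saturation via $\rc(fg)\subseteq\rc(f)\rc(g)\subseteq\rc(f)$ et la monotonie de la profondeur) sont traités correctement, et pour la stabilité par produit le recours au lemme de Dedekind--Mertens (disponible dans \cite{ACMC}, y compris en plusieurs variables) est exactement le bon pont, puisque la seule inclusion de Kronecker $\rc(f)\rc(g)\subseteq\Icl(\rc(fg))$ ne permettrait pas de conclure, comme vous le signalez justement.
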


%:     definition  defKRprof2
\begin{definition} \label{defKRprof2}
Le sous-anneau 
${\gB_{\dv}(\uX)=S_{\dv}^{-1}\gB[\uX]}$
de $(\Frac\gB)(\uX)$ est appelé \textsl{l'anneau de Nagata divisoriel de $\gB$}.
\end{definition}

\rem Cette \dfn est à distinguer de celle des \gui{Kronecker function rings}
usuels de la littérature anglaise en théorie multiplicative des \ids (voir à ce sujet le survey \cite{FL06}). Le filtre $S_{\dv}$ appara\^{\i}t  dans la littérature usuelle dans le cas d'un anneau int\`egre $\gB$ de corps de fractions $\gK$, et l'anneau  $\gB_{\dv}(\uX)$ est alors appelé  \textsl{anneau de Nagata pour la star opération $v$} définie comme suit:
$v:\fa\mapsto(\fa^{-1})^{-1}$. Pour un anneau int\`egre, notre
$\gB_{\dv}(X)$ est noté $\mathrm{Na}(\gB,v)$ ou $\mathrm{Na}(\gB,v)(X)$ par d'autres auteurs.
\eoe

\medskip On vérifie alors les \prts suivantes. 
%:     proposition  propKRprof2
\begin{proposition} \label{propKRprof2}~
 \begin{enumerate}
\item  Pour $a$, $b\in\Reg(\gB)$,
$a$ divise $b$ dans $\gB$ \ssi $a$ divise $b$ dans $\gB_{\dv}(\uX)$.
\item \label{i2propKRprof2} Si $\gA$ est un \advlz, alors
\begin{enumerate}
\item  $\gA_{\dv}(\uX)$ est un anneau de Bézout,
\item  tout $p\in\AuX$ est le pgcd  dans $\gA_{\dv}(\uX)$ de ses \coesz,
\item  pour $f$, $g\in\AuX$, $\fraC f g\in\gA_{\dv}(\uX) \iff\dvA(\rc(g))\leq \dvA(\rc(f))$. 
\end{enumerate}
\end{enumerate}
\end{proposition}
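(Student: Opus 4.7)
Pour le point \emph{1}, l'implication directe est triviale. Pour la réciproque, on part de $bq=ap$ dans $\gB[\uX]$ avec $q\in S_{\dv}$. En comparant les \coes, on obtient $bq_\alpha=ap_\alpha$ pour chaque monôme $\alpha$; donc $c=b/a\in\Frac\gB$ satisfait $cq_\alpha=p_\alpha\in\gB$ pour tout $\alpha$. La liste $(p_\alpha)$ est ainsi proportionnelle à $(q_\alpha)$ dans $\gB$, et la condition $\Gr(\rc(q))\geq 2$ produira un $x\in\gB$ tel que $p_\alpha=xq_\alpha$ pour tout $\alpha$. On aura alors $(c-x)q_\alpha=0$ pour chaque $\alpha$, et $\Gr(\rc(q))\geq 1$ forcera $c=x\in\gB$, soit $a\mid b$ dans $\gB$.

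Le cœur de la preuve est le point \emph{2b}, dont on déduira ensuite \emph{2a} et \emph{2c}. Partant de $p\in\AuX$, $\fa=\rc(p)$ et d'une \ivda $\fb=\gen{b_1,\dots,b_m}$ de $\fa$ avec $g\in\Atl$ pgcd fort de $\fa\fb$, on fabrique $q\in\AuX$ en disposant les $b_j$ sur des monômes suffisamment éloignés pour que $\rc(q)=\fb$ et que les monômes de $pq$ soient tous distincts; alors $\rc(pq)=\fa\fb\subseteq\gen{g}$ et le corollaire~\ref{corprop2Idv} livre $\dv(\rc(pq))=\dv(g)$. En écrivant $pq=gr$ avec $r\in\AuX$, on lit $\dv(\rc(r))=0$, \cad $r\in S_{\dv}$, de sorte que $g\in p\,\gA_{\dv}(\uX)$. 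Le pas décisif consiste à observer que, $g$ étant pgcd fort des $p_\alpha b_j$, on a aussi $p_\alpha q=g\,q'_\alpha$ dans $\AuX$ pour chaque $\alpha$; substituant dans l'identité $p\,(p_\alpha q)=p_\alpha\,(pq)=p_\alpha gr$, il vient $pg\,q'_\alpha=p_\alpha\,g\,r$, puis $p\,q'_\alpha=r\,p_\alpha$ par simplification par le \ndz $g$. Ceci donne $p\mid p_\alpha$ dans $\gA_{\dv}(\uX)$ pour tout $\alpha$, et l'inclusion inverse étant triviale, $\gen{p}=\rc(p)\,\gA_{\dv}(\uX)$, ce qui est exactement \emph{2b}.

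Pour \emph{2a}, étant donnés $f_1,f_2\in\AuX$, on choisit une variable $X_i$ et un entier $N$ supérieur aux degrés partiels en $X_i$ des $f_k$; alors $h=f_1+X_i^N f_2$ vérifie $\rc(h)=\rc(f_1)+\rc(f_2)$, et \emph{2b} livre $\gen{f_1,f_2}=\rc(h)\,\gA_{\dv}(\uX)=\gen{h}$. Pour \emph{2c}, $\fraC fg\in\gA_{\dv}(\uX)$ équivaut par \emph{2b} à $\rc(f)\subseteq\rc(g)\,\gA_{\dv}(\uX)$. Si cette inclusion a lieu, un dénominateur commun $s\in S_{\dv}$ fournit $\rc(s)\,\rc(f)\subseteq\rc(g)$, et $\dv(\rc(s))=0$ donne $\dv(\rc(g))\leq\dv(\rc(f))$. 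Réciproquement, on prendra une \ivda $\fc=\gen{c_1,\dots,c_\ell}$ de $\rc(g)$ de pgcd fort $g_0$ et un $u\in\AuX$ de contenu $\fc$: le raisonnement de \emph{2b} donnera $ug=g_0\,v$ avec $v\in S_{\dv}$; l'inégalité $\dv(\rc(g))\leq\dv(f_\alpha)$ (conséquence de $\dv(\rc(f))=\Vi_\alpha\dv(f_\alpha)$) entraînera $g_0\mid c_i f_\alpha$ dans $\gA$ et donc $u\,f_\alpha=g_0\,w_\alpha$; en combinant avec $ug=g_0\,v$ on obtient $v\,f_\alpha=g\,w_\alpha$, soit $g\mid f_\alpha$ dans $\gA_{\dv}(\uX)$ pour tout $\alpha$, puis $g\mid f$.

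La difficulté centrale réside dans \emph{2b}: il s'agit de convertir l'information globale \gui{$g$ est pgcd fort des $p_\alpha b_j$} en la divisibilité coefficient par coefficient de $p_\alpha$ par $p$ dans $\gA_{\dv}(\uX)$. L'astuce consiste à articuler la factorisation globale $pq=gr$ avec $r\in S_{\dv}$ (fournie par le corollaire~\ref{corprop2Idv}) et les factorisations individuelles $p_\alpha q=g\,q'_\alpha$ (conséquence directe de la définition du pgcd fort), puis à simplifier par le \ndz $g$. Une fois \emph{2b} établi, \emph{2a} et \emph{2c} s'ensuivent essentiellement par calcul formel sur les diviseurs.
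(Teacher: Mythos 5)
Le texte de l'article laisse explicitement les démonstrations de ce paragraphe au lecteur (\og Dans ce paragraphe, les démonstrations sont laissées à la lectrice \fg), si bien qu'il n'y a pas de preuve de référence à comparer; votre rédaction est correcte et suit la seule voie naturelle au vu des outils développés (pgcd forts, \ivdesz, et le corollaire~\ref{corprop2Idv} donnant $\dv(\rc(pq))=\dv(\rc(p))+\dv(\rc(q))$), le point clé $\gen{p}=\rc(p)\,\gA_{\dv}(\uX)$ étant bien établi via la simplification par le régulier $g$ dans $pg\,q'_\alpha=p_\alpha g\,r$. Deux micro-retouches seulement: au point \emph{1}, il vaut mieux conclure avec $(b-ax)q_\alpha=0$ et $\Gr(\rc(q))\geq 1$ qu'avec $c-x$ qui vit dans $\Frac\gB$; et au point \emph{2c} (sens direct), l'inclusion $\rc(s)\rc(f)\subseteq\rc(g)$ n'est pas justifiée telle quelle --- ce qui découle de $fs=gp$ est $\rc(fs)\subseteq\rc(g)$, d'où $\dv(\rc(g))\leq\dv(\rc(fs))=\dv(\rc(f))$ par le corollaire~\ref{corprop2Idv}, ce qui suffit.
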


En particulier on obtient \fbox{$\DivA\simeq \Div {(\gA_{\dv}(\uX))}\simeq \gA_{\dv}(\uX)\etl/\gA_{\dv}(\uX)\eti$}.

Le  \thref{corplccAnnDivl} démontre la réciproque suivante: si 
$\gA$ est int\`egre et si~$\gA_{\dv}(\uX)$ est \ari (à fortiori si c'est un anneau de Bézout), alors~$\gA$ est un \advlz.

%: subsec{Principe local-global (1)}
\subsec{Principe local-global  et applications}

\newcommand{\dfd}{de profondeur $\geq 2$ }
\newcommand{\dfdz}{de profondeur $\geq 2$}
\newcommand{\dfde}{de profondeur $\geq 2$ }
\newcommand{\dfdez}{de profondeur $\geq 2$}

%:     plcc {plccAnnDivl}
\begin{plcc} \label{plccAnnDivl} 
\emph{(\Plgc pour la \dvez, les \idlisz, les \acls  et les \advlsz)}.
\\
Soit $\gA$ un anneau int\`egre et 
$(\sn)$ une suite \dfdz. On note $\gA_i=\gA[\fraC1{s_i}]$. 
\begin{enumerate}
\item Soient $a$, $b$, $a_1$, \dots, $a_k\in\gA$.
\begin{enumerate}
\item \label{i1plccAnnDivl} $a$ divise $b$ dans $\gA$ \ssi $a$ divise $b$ dans chaque $\gA_i.$
\item \label{i2plccAnnDivl} $a$ est un pgcd fort de $(a_1,\dots,a_k)$ dans $\gA$ \ssi $a$ est un pgcd fort de $(a_1,\dots,a_k)$ dans chaque $\gA_i$.
\item \label{i3plccAnnDivl} L'\id $\fa=\gen{a_1,\dots,a_k}$ est \dvli dans $\gA$ \ssi il est
\dvli dans chaque $\gA_i$.
\end{enumerate}
\item \label{I2plccAnnDivl} L'anneau $\gA$ est \icl \ssi chaque anneau~$\gA_i$ est \iclz.
\item \label{i4plccAnnDivl} L'anneau $\gA$ est \dvla \ssi chaque anneau $\gA_i$ est \dvlaz.
\end{enumerate}
\end{plcc}
%--------- fin theorem ---------------------------------------------- 
%
\begin{proof} \textsl{1b} (et à fortiori \textsl{1a}).
Pour simplifier on se contente de localiser en trois \mosz~$S$, $T$ et~$U$.
Soient $c$, $y\in\Atl$ et supposons que $c$ divise les $ya_i$ dans $\gA$.
Apr\`es \lon on trouve que~$c$ divise  $sya$ pour \hbox{un $s\in S$}, que~$c$ divise~$tya$ pour \hbox{un $t\in T$} et que~$c$ divise~$uya$ pour \hbox{un $u\in U$}. Comme $(s,t,u)$ admet $1$
pour pgcd fort dans $\gA$, cela implique que $c$ divise $ya$ dans $\gA$. 

\snii \textsl{1c.} La \dem est analogue, donnons les détails.
\\
On consid\`ere un \itf $\fa=\gen{a_1,\dots,a_\ell}$ avec $a_1\in\Atl$. 
On doit trouver un \itf $\fb$ tel que
$\fa\fb$ admette $a_1$ comme pgcd fort (lemme \iref{lemdvli}). Pour simplifier on se contente de localiser en deux \mos $S$ et $T$. 
\\
Dans le premier localisé on trouve
$b_1$, \dots, $b_m$ tels que les $u_{ij}=a_ib_j$ admettent~$a_1$ pour pgcd fort (lemme \iref{lemdvli}). 
\\
Dans le deuxi\`eme localisé  on trouve
$c_1$, \dots, $c_p$ tels que les $v_{ik}=a_ic_k$ admettent~$a_1$ pour pgcd fort.
\\
On peut supposer que les $b_j$ et $c_k$ sont dans $\gA$ ainsi que \hbox{les 
$u_{ij}/a_1$} \hbox{et $v_{ik}/a_1$}.
On va montrer que la famille finie formée par les $u_{ij}$ et $v_{ik}$ admet $a_1$ pour pgcd fort dans
$\gA$. Pour cela on consid\`ere $c$ \hbox{et $y\in\Atl$} tels que $c$ divise tous \hbox{les $yu_{ij}$ et $yv_{ik}$}. 
Apr\`es \lon on trouve que $c$ divise  $sya_1$ pour \hbox{un $s\in S$} et~$c$ divise~$tya_1$ pour \hbox{un $t\in T$}. Comme $s$ et~$t$ admettent~$1$
pour pgcd fort dans~$\gA$, cela implique que $c$ divise $ya_1$ dans $\gA$.
Ainsi l'\itfz~\hbox{$\fb=\gen{\bbm,c_1,\dots,c_p}$} satisfait la condition que $\fa\fb$
admet~$a_1$ pour pgcd fort.

\snii 
\textsl{\iref{I2plccAnnDivl} et \iref{i4plccAnnDivl}.} Résultent des points \textsl{1a} et \textsl{1c}.
\end{proof}

Le corolaire suivant compl\`ete la proposition \iref{propKRprof2}.
Nous retrouvons ici un résultat de \cite[Kang, 1989]{Kang}.
%:     theorem  corplccAnnDivl
\begin{theorem} \label{corplccAnnDivl} \emph{(L'anneau de Nagata divisoriel)}\\
Un anneau int\`egre $\gA$ est un \advl \ssi son anneau de Nagata divisoriel~$\gA_{\dv}(\uX)$
est un \ddpz, ou encore un domaine de Bézout.
\end{theorem}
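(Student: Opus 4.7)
The plan is as follows. The direction ``$\gA$ \dvla $\Longrightarrow$ $\gA_{\dv}(\uX)$ Bezout'' is already Proposition~\ref{propKRprof2}.\iref{i2propKRprof2}, and a Bezout domain is trivially Prüfer, so only the converse requires work: assuming $\gA_{\dv}(\uX)$ Prüfer, I want to show that $\gA$ is an \advlz. Via Lemma~\ref{lem2suffisent}, this reduces to checking that every faithful two-generator ideal $\fa=\gen{a,b}$ of $\gA$ (with $a$, $b\in\Atl$) is \dvliz.

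The nonzero \tf ideal $\fa\,\gA_{\dv}(\uX)$ is invertible in the Prüfer domain $\gA_{\dv}(\uX)$, so there exist $\xi$, $\eta\in \fa^{-1}$ (with $a\xi$, $b\xi$, $a\eta$, $b\eta\in\gA_{\dv}(\uX)$) such that $a\xi+b\eta=1$. Reducing to a common denominator $S\in S_{\dv}$, I write
\[
a\xi=P_1/S,\qquad b\xi=Q_1/S,\qquad a\eta=P_2/S,\qquad b\eta=Q_2/S
\]
with $P_i$, $Q_i\in\gA[\uX]$. Since $\gA[\uX]$ is integral, this is equivalent to the three identities
\[
P_1+Q_2=S,\qquad bP_1=aQ_1,\qquad bP_2=aQ_2 \quad\text{in } \gA[\uX],
\]
together with the decisive fact $\Gr(\rc(S))\geq 2$ (definition of $S_{\dv}$).

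The key step is to set $\fb:=\rc(P_1)+\rc(P_2)\subseteq\gA$. The relations $bP_i=aQ_i$ give $b\,\rc(P_i)=a\,\rc(Q_i)\subseteq a\gA$, hence $b\fb\subseteq a\gA$ and
\[
\fa\fb \;=\; a\fb+b\fb \;=\; a\fc,\qquad \fc:=\rc(P_1)+\rc(Q_1)+\rc(P_2)+\rc(Q_2).
\]
From $P_1+Q_2=S$ I get $\rc(S)\subseteq \rc(P_1)+\rc(Q_2)\subseteq \fc$, and inheritance of depth under inclusion yields $\Gr(\fc)\geq 2$. By Proposition~\ref{defiStrongPGCD}.\iref{i4defiStrongPGCD}, $1$ is then a strong gcd of $\fc$; applying Proposition~\ref{defiStrongPGCD}.\iref{i3defiStrongPGCD} with the regular multiplier $a$, we conclude that $a$ is a strong gcd of $a\fc=\fa\fb$. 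Hence $\fa$ is \dvli with \ivda $\fb$, which completes the proof.

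The main obstacle I anticipate is identifying the right auxiliary ideal $\fb$: once the Prüfer identity $a\xi+b\eta=1$ is pushed back into $\gA[\uX]$, the three polynomial relations among $P_1$, $Q_1$, $P_2$, $Q_2$, $S$ must be repackaged so that $\fa\fb$ becomes $a$ times an ideal containing the depth-$\geq 2$ content $\rc(S)$. Beyond this rearrangement, the argument is routine bookkeeping with the content operator $\rc$, powered by the filter property of $S_{\dv}$.
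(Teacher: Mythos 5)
Your proposal is correct, and the converse direction (the only one requiring work, since the forward direction is indeed Proposition~\ref{propKRprof2}) is handled by a genuinely different, more direct route than the paper's. Both arguments start from essentially the same data: your relations $P_1+Q_2=S$, $bP_1=aQ_1$, $bP_2=aQ_2$ with $\Gr(\rc(S))\geq 2$ are, up to relabelling, the paper's $s+t=1$, $sa=ub$, $tb=va$ taken in $\AuX$ (set $t=a\xi$, $s=b\eta$). The paper then \emph{localizes}: inverting a coefficient of $s$ makes $\gen{a,b}$ equal to $\gen{b}$, inverting a coefficient of $t$ makes it $\gen{a}$, and since these coefficients form a sequence of depth $\geq 2$ one concludes by point \emph{1c} of the local-global principle~\ref{plccAnnDivl}. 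You instead \emph{exhibit the divisorial inverse explicitly}: $\fb=\rc(P_1)+\rc(P_2)$ satisfies $\fa\fb=a\,\fc$ with $\rc(S)\subseteq\fc$, hence $\Gr(\fc)\geq 2$ and $a$ is a strong gcd of $\fa\fb$ by points \emph{3} and \emph{4} of Proposition~\ref{defiStrongPGCD}. Your version avoids the local-global machinery entirely and produces the inverse ideal in closed form, at the cost of a small content computation; the paper's version is shorter on the page because it delegates the gluing to a principle already established (and which is needed elsewhere anyway). The only bookkeeping you should add is to discard possible zero coefficients when listing generators of $\fb$ (the definition of a divisorially invertible list requires entries in $\Atl$); at least one coefficient of $P_1$ or $P_2$ is regular, since $P_1=P_2=0$ would force $a=0$.
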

\begin{proof}
Nous montrons que si $\gA_{\dv}(\uX)$ est un \ddpz, tout \id $\gen{a,b}$ est \dvliz. Dans $\gA_{\dv}(\uX)$ on a des \elts $u$, $v$, $s$, $t$ tels \hbox{que $sa=ub$, $tb=va$}, \hbox{et $s+t=1$}. On peut prendre $u$, $v$, $s$, $t\in \AuX$ auquel cas on obtient
$\Gr_\gA(\rc(s+t))\geq 2$. Les \coes de $s$ et $t$ engendrent donc un \id   \dfd (i.e. $\Gr_\gA(\rc(s)+\rc(t))\geq 2$). 
\\
Or lorsqu'on inverse un \coe $s_k$ de $s$ l'\egt $sa=ub$ dans $\AuX$ nous donne $\gen{a,b}=\gen{b}$ dans~$\gA[1/s_k]$, et lorsqu'on inverse un \coe 
$t_\ell$ de~$t$ on obtient $\gen{a,b}=\gen{a}$ dans $\gA[1/t_\ell]$. Dans tous les cas l'\id $\gen{a,b}$ est \dvli dans le localisé. On conclut par le point \textsl{1c} du \plgz~\iref{plccAnnDivl} que $\gen{a,b}$ est \dvli dans $\gA$.  
\end{proof} 
%

%\newpage
Un autre corolaire intéressant est le suivant.
%:     theorem  cor2plccAnnDivl
\begin{theorem} \label{cor2plccAnnDivl} \emph{(Idéaux \dvlis comme \itfs \gui{\lotz} principaux)}
\begin{enumerate}
\item Dans un anneau int\`egre  
un \itf fidèle est \dvli \ssi il existe une suite $(s_1,\dots,s_p)$ \dfd telle qu'après \lon en chaque $s_i$,
l'\id devient principal.
\item En conséquence un anneau intègre est un \advl \ssi tout \itf devient principal après \lon en les \elts d'une suite \dfd.
%
%\item 
%
\end{enumerate}
\end{theorem}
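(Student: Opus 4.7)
Mon plan est de traiter séparément les deux implications de la première équivalence, puis d'en déduire l'assertion \gui{en conséquence}. La direction facile découle du \plg \ref{plccAnnDivl}, tandis que le sens direct demande de construire explicitement une suite de \profz~$\geq 2$ à partir des données d'une \ivdaz.

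Pour la réciproque, si $\fa$ devient principal dans chaque $\gA[1/s_i]$, alors il y devient a fortiori \dvliz: un \itf principal $\gen{g_i}$ avec $g_i\in\Atl$ est trivialement \dvliz, d'\ivda $\gen{1}$ (cf.\ l'exemple suivant la \dfn \ref{defiidvli}). Le point \ref{i3plccAnnDivl} du \plg \ref{plccAnnDivl} remonte alors cette \prt à $\gA$. La deuxième assertion du \tho en résulte immédiatement, puisqu'un \advl est par \dfn un anneau intègre dont tout \itf non nul est \dvliz.

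Pour le sens direct, on écrit $\fa=\gen{\an}$ en supposant, quitte à supprimer les \gtrs nuls grâce à la fidélité de $\fa$, que tous les $a_i\in\Atl$. Soit $\fb=\gen{\bbm}$ une \ivda avec $b_j\in\Atl$, et soit $g\in\Atl$ un pgcd fort de $(a_ib_j)_{i,j}$. Comme $g$ divise chaque $a_ib_j$ dans $\gA$, on pose $c_{ij}=a_ib_j/g\in\Atl$. En simplifiant $g$ dans la propriété du pgcd fort --- pour tout $x\in\Atl$, $xg$ est pgcd de $(xa_ib_j)=(xgc_{ij})$ dans $\gA$, donc $x$ est pgcd de $(xc_{ij})$ dans $\gA$ --- on obtient que $1$ est pgcd fort de la famille $(c_{ij})$, ce qui par \ref{defiStrongPGCD}, point \emph{\ref{i4defiStrongPGCD}}, équivaut à $\Gr_\gA(c_{ij})\geq 2$. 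C'est la suite de \profz~$\geq 2$ cherchée.

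Il reste à établir que $\fa\gA[1/c_{ij}]=a_i\gA[1/c_{ij}]$ pour chaque couple $(i,j)$. L'identité clé, valable déjà dans $\gA$, est
\[
 a_k\,c_{ij}=\frac{a_k a_i b_j}{g}=a_i\,c_{kj}\qquad(\text{pour tout }k),
\]
qui, après inversion de $c_{ij}$, donne $a_k=a_i(c_{kj}/c_{ij})\in a_i\gA[1/c_{ij}]$, d'où la principalité voulue. L'obstacle principal est vraiment de deviner la suite $(c_{ij})$; une fois celle-ci identifiée, la relation de proportionnalité $a_kc_{ij}=a_ic_{kj}$ rend immédiate la principalité après \lonz, et la \prt \gui{$1$ est pgcd fort des $c_{ij}$} traduit exactement la simplification par $g$ de l'hypothèse \gui{$g$ est pgcd fort des $a_ib_j$}.
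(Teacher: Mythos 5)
Votre démonstration est correcte. Pour le sens \gui{la condition est suffisante}, vous suivez exactement la même voie que l'article: un idéal principal engendré par un élément régulier est trivialement \dvliz, et le point \emph{1c} du \plg \ref{plccAnnDivl} permet de redescendre à $\gA$; la seconde assertion de l'énoncé en découle comme vous le dites. Pour le sens \gui{la condition est nécessaire}, votre argument est en revanche réellement différent de celui de l'article, et plus direct. L'article écrit $\fa\fb=g\,\fc$ avec $\Gr(\fc)\geq 2$, fixe un générateur $c_j$ de $\fc$, observe que $\fa$ devient \emph{inversible} dans $\gA[1/c_j]$, invoque le fait qu'un idéal inversible est localement principal pour obtenir des éléments comaximaux $u_{j,i}$ de $\gA[1/c_j]$, puis les remonte en des $v_{j,i}\in\gA$: la suite de profondeur $\geq 2$ finalement exhibée est la famille des $v_{j,i}$, dont l'idéal engendré contient les puissances $c_j^{m_j}$. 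Vous observez au contraire qu'en indexant les générateurs de $\fc$ par les couples, $c_{ij}=a_ib_j/g$, l'idéal $\fa$ est déjà \emph{principal}, égal à $\gen{a_i}$, dans $\gA[1/c_{ij}]$, en vertu de la relation de proportionnalité $a_kc_{ij}=a_ic_{kj}$; la suite cherchée est donc directement $(c_{ij})_{i,j}$, qui est bien de profondeur $\geq 2$ puisque $1$ en est un pgcd fort (simplification par $g$, point \emph{\ref{i4defiStrongPGCD}} de la proposition \ref{defiStrongPGCD}). Votre variante économise le détour par les idéaux localement principaux et les deux étages de localisation, et fournit une famille localisante explicite formée des $nm$ éléments $a_ib_j/g$ seulement; elle rend de plus transparent le fait que le générateur principal obtenu dans chaque localisé est l'un des $a_i$ de départ, là où la preuve de l'article garantit seulement l'existence d'une telle famille via les éléments comaximaux.
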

\begin{proof} Il suffit de démontrer le point \textsl{1}.\\
La condition est suffisante d'apr\`es le point \textsl{1c} 
du \plgz~\ref{plccAnnDivl}.

\snii \textsl{La condition est \ncrz.} On considère un \id $\fa=\gen{\an}$  et un \id $\fb=\gen{b_1,\dots,b_m}$
avec $(\an)\star(b_1,\dots,b_m)=g\,(s_{ij})_{i\in\lrbn,j\in\lrbm}$, $g$ régulier et $\Gr(\underline s)\geq 2$.
On a $gs_{ij}a_k=a_ib_ja_k=gs_{kj}a_i$, donc $s_{ij}a_k=s_{kj}a_i$. D’où  $s_{ij}\fa\subseteq \gen{a_i}$, et $\fa=\gen{a_i}$ dans $\gA[1/s_{ij}]$.
\end{proof}
%

%:junk
\junk{
\rem Voici maintenant une \demo directe pour le fait que la condition est suffisante. Donc le \tho précédent a sa place légitime juste après le lemme~\ref{lemdvli}.\\
On commence par remarquer que dans un anneau $\gB$, si un \itf  principal 
$\gen{\an}=\gen{g}$  contient un \elt régulier $c$, alors il existe une suite $(\aln)$ telle que $(\an)\star(\aln)$ admet le pgcd fort $c$. Si $g=\sum_i \beta_ia_i$, $a_k=\alpha_kg$ ($k\in\lrbn$) et $c=\gamma g$, alors $g=g\sum_i\alpha_i\beta_i$, donc les~$\alpha_i$ sont \com (car $g$ est régulier). On a 
\[
(\an)\star \gamma=(\alpha_1g\gamma,\dots\alpha_n g\gamma)=(\aln)\star c.
\] 
La suite $(\aln)$ engendre $1$, donc admet $1$ pour pgcd fort.
\\
Soit maintenant $c$ un \elt régulier de $\gen{\an}$. Il reste régulier dans tout localisé.
Notons alors $S_j=s_j^\NN$ et $\gA_j=S_j^{-1}\gA$. Par hypothèse on a un $g_j\in\gA_j$ tel que $\fa=\gen{g_j}$ dans~$\gA_j$. Toujours dans $\gA_j$
on~a donc des $\alpha_{ji}$ ($i\in\lrbn$) \comz, $\gamma_j$ qui divise $g_j$ et 
\[(\an)\star\gamma_j=(\alpha_{j1},\dots,\alpha_{jn})\star c. 
\]
Cela implique (en chassant les dénominateurs) qu’on a  $t_j,u_j\in S_j$ et $\gamma’_j,\alpha’_{j1},\dots,\alpha’_{jn}\in\fa$ tels que, dans $\gA$, 
\[
u_j\in\gen{\alpha’_{j1},\dots,\alpha’_{jn}}\hbox{ et }t_j\star(\an)\star\gamma’_j=t_j\star(\alpha’_{j1},\dots,\alpha’_{jn})\star c.
\] 
Donc 
\[(\an)\star(\gamma’_1,\dots,\gamma’_p)=(t_1\alpha’_{11},\dots,t_1\alpha’_{1n},\dots,t_p\alpha’_{p1},\dots,t_p\alpha’_{pn})\star c
\]
Il nous reste à vérifier que la liste des $t_j\alpha’_{ji}$ admet $1$ pour pgcd fort. Or l’\id engendré par cette suite contient pour chaque $j$ un \elt $t_ju_j\in S_j$, donc il admet $1$ pour pgcd fort. 
\eoe
}
%:fin junk

%\newpage
\penalty-2500
% subsec{Propriétés de \dcn d'un \grlz
\section{Propriétés de \dcn  des \grlsz} \label{secpropdecgrl}

%:subsubsec{Groupes réticulés quotients
\subsec{Groupes réticulés quotients}

Un sous groupe $H$ d'un groupe (abélien) ordonné est dit \textsl{convexe}
s'il vérifie la \prtz: $0\leq x\leq y$ et $y\in H$ impliquent $x\in H$. Sous cette condition,~$G/H$ est muni d'une structure de \textsl{groupe ordonné quotient}, i.e. vérifiant la \prtz:
$(G/H)^{+}=G^{+}+H$. Certains auteurs disent \textsl{sous-groupe isolé}.%
\index{convexe!sous-groupe ---}\index{isole@isolé!sous-groupe ---}%
\index{sous-groupe!isole@isolé}\index{sous-groupe!convexe}
 
Le noyau $H$ de la \prn canonique d'un \grl $G$ sur un \grl quotient $G/H$ est
ce que l'on appelle un \textsl{sous-groupe solide}\footnote{On peut consulter \cite{BKW}. Le \tho 2.2.1 donne les \prts \eqves suivantes pour un sous-groupe: (a) $H$ est solide, (b) $H$ est un sous-\grl convexe, (c) $H$ est convexe et $\xi\in H\Rightarrow \xi^{+}\in H$, (d) $H$ est convexe et filtrant.}, \cad un sous-groupe vérifiant la \prtz: $\xi\in H$  \hbox{et $\abs \zeta\leq \abs \xi$} impliquent~\hbox{$\zeta\in H$}.

Pour $\gamma\in G$ on note $\cC(\gamma)$ le sous-groupe solide engendré par $\gamma$, qui est l'ensemble des $\xi$ tels que $\abs{\xi}$ soit majoré par un \elt $n\abs \gamma$
($n\in\NN$). On a donc $\cC(\gamma)=\cC(\abs \gamma)$.

Pour $\gamma$ et $\eta\in G^{+}$ on a $\cC(\gamma)\cap\cC(\eta)=\cC(\gamma\vi \eta)$,
et $\cC(\gamma)\perp \cC(\eta)$ \ssi $\gamma\perp \eta$. 
Enfin $\cC(\gamma+\eta)=\cC(\gamma \vu \eta)$ est le plus petit sous-groupe solide 
contenant~$\cC(\gamma)$ et~$\cC(\eta)$. 

Nous  définissons maintenant l'analogue des \molos en un \mo (définis dans la catégorie des anneaux commutatifs en \aref{XV-4.5}) dans la catégorie des \grlsz.

%:--- Definition{defHomQuoGRL}-----------
\begin{definition}
\label{defHomQuoGRL} \textsl{(Morphismes de passage au quotient pour les \grlsz)}
 Soit  $G$ un \grl et $H$ un sous-groupe solide de $G$.  Un
morphisme $\Pi:G\to G'$ est appelé un \textsl{\moquo par~$H$} s'il est surjectif et si $\Ker\Pi=H$.
\end{definition}
%--- end-definition------------------------------------

Un \moquo pour un $H$ donné est \gui{unique à \iso unique pr\`es}: il y a un unique \homo $G'\to G/H$ qui fait commuter le diagramme convenable, et c'est un \isoz.  

Notez que comme $\Pi(z^{+})=\Pi(z)^{+}$ pour tout $z$, on a $\Pi(G)^{+}=\Pi(G^{+})$.

%:subsubsec{Un principe de recouvrements par quotients
\subsec{Un principe de recouvrements par quotients}

Rappelons le principe \cof suivant énoncé en %\ref{prcfgrl}
\aref{XI-2.10}.

\mni{\bf Principe de recouvrement par quotients pour les \grlsz.} 
\rdb \label{Prqgrl}\\
\textsl{Pour démontrer une \egt $\alpha=\beta$ dans un \grlz, on peut toujours supposer que les \elts
(en nombre fini) qui se présentent dans un calcul pour une \dem 
de l'\egt sont comparables,
si l'on en a besoin pour faire la \demz.
Ce principe s'applique aussi bien pour des in\egts que pour des \egts
puisque $\alpha\leq \beta$ équivaut à $\alpha\vi \beta=\alpha$.}

\medskip  Ce principe peut être considéré comme la version \cov du \tho de \clama qui dit qu'un \grl est toujours représentable comme sous-\grl d'un produit de groupes totalement ordonnés.

%%%%%%%%%%%%%%%%%%%%%%%%%%%%%%%%%%%%%%%%%%%%%%%%%%%%%%%%%%%%%%%%%%%%%%%%%%%
%:subsubsec{Groupes réticulés de dimension $1$
\subsec{Groupes réticulés de dimension $1$}

Pour des sous-\grls $H$, $K$, $L$ d'un \grl $G$, la notation $K=H\boxplus L$  signifie que $H\perp L$ et~$K=H\oplus L$. Autrement dit, lorsque le produit cartésien $H\times L$ est muni de la structure produit catégorique, l'application  $H\times L\to K, \,(\xi,\eta)\mapsto \xi+\eta$
est un \iso de \grlsz. 

En particulier, en notant $H\epr=\sotq{x\in G}{\forall h\in H, \,x\perp h}$, on a alors $H+H\epr=H\boxplus H\epr$.

%:     Lemma{lemGrl02}
\begin{lemma} \label{lemGrl02}
Soient un \grl $G$, et $\alpha$, $\beta\in G^{+}$. \Propeq
\begin{enumerate}
\item $\beta\in \cC(\alpha)\boxplus\cC(\alpha)\epr$.
\item $\exists n\in\NN$, $\exists \beta_1,\,\beta_2\in G^{+}$, $\beta_1\leq n\alpha$,
$\beta_2\perp \alpha$ et $\beta=\beta_1+\beta_2$.
\item $\exists n\in\NN$, $\beta\vi n\alpha=\beta\vi (n+1)\alpha$,
i.e. $n\alpha\geq \beta\vi (n+1)\alpha$.
\item $\exists n\in\NN$, $\forall m\geq n$, $\beta\vi m\alpha=\beta\vi (m+1)\alpha$.
\item $\cC(\beta)\subseteq  \cC(\alpha)\boxplus\cC(\alpha)\epr$.
\end{enumerate}
 
\end{lemma}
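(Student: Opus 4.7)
The plan is to run a cycle $1 \Rightarrow 2 \Rightarrow 4 \Rightarrow 3 \Rightarrow 2$ for the first four conditions, then close the loop with $2 \Rightarrow 5$ and the trivial $5 \Rightarrow 1$ (which reduces to $\beta \in \cC(\beta)$).

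For $1 \Rightarrow 2$, I unpack the orthogonal direct sum: write $\beta = \gamma_1 + \gamma_2$ with $\gamma_1 \in \cC(\alpha)$, $\gamma_2 \in \cC(\alpha)\epr$, so $\gamma_1 \perp \gamma_2$. The four pieces $\gamma_1^+, \gamma_1^-, \gamma_2^+, \gamma_2^-$ are then pairwise disjoint, so $\beta^{-}$ decomposes as $\gamma_1^- + \gamma_2^-$; since $\beta \geq 0$, both negative parts vanish, giving $\gamma_i \geq 0$. Then $\gamma_1 \in \cC(\alpha)^+$ yields $\gamma_1 \leq n\alpha$ for some $n$, while $\gamma_2 \in \cC(\alpha)\epr$ forces $\gamma_2 \perp \alpha$. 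For $2 \Rightarrow 4$, I apply the identity $(x+y) \vi z = x \vi z$ (valid whenever $y, z \geq 0$ with $y \perp z$) with $y = \beta_2$ and $z = m\alpha$ for $m \geq n$: this gives $\beta \vi m\alpha = \beta_1 \vi m\alpha = \beta_1$, stationary in $m$. The implication $4 \Rightarrow 3$ is immediate.

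The main obstacle is $3 \Rightarrow 2$. Setting $\beta_1 := \beta \vi n\alpha$ and $\beta_2 := \beta - \beta_1$, I automatically obtain $\beta_1, \beta_2\ge 0$ and $\beta_1 \leq n\alpha$; the non-trivial point is $\beta_2 \vi \alpha = 0$. Using the translation rule $(u-v) \vi w = u \vi (w+v) - v$ for $u \geq v$, I rewrite
\[ \beta_2 \vi \alpha \;=\; \beta \vi (\alpha + \beta_1) - \beta_1. \]
Distributivity of $+$ over $\vi$ gives $\alpha + \beta_1 = (\alpha + \beta) \vi (n+1)\alpha$, and taking $\vi$ with $\beta$ collapses the first term (since $\beta \leq \alpha + \beta$), leaving $\beta \vi (\alpha + \beta_1) = \beta \vi (n+1)\alpha$. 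The hypothesis then yields $\beta \vi (n+1)\alpha = \beta \vi n\alpha = \beta_1$, and so $\beta_2 \vi \alpha = 0$, as required.

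It remains to prove $2 \Rightarrow 5$. Since $\cC(\beta)$ is the smallest solid subgroup containing $\beta$, it suffices to check that $H := \cC(\alpha) \boxplus \cC(\alpha)\epr$ is itself solid (it contains $\beta$ by $2$). Given $0 \leq |\gamma| \leq h_1 + h_2$ with $h_1 \in \cC(\alpha)^+$ and $h_2 \in (\cC(\alpha)\epr)^+$ (automatically orthogonal), I split $|\gamma| = a + b$ with $a := |\gamma| \vi h_1$ and $b := |\gamma| - a = |\gamma| \vu h_1 - h_1$. Solidity of $\cC(\alpha)$ places $a$ (bounded by $h_1$) inside $\cC(\alpha)$, while the bound $|\gamma| \vu h_1 \leq h_1 + h_2$ gives $b \leq h_2$, placing $b$ inside $\cC(\alpha)\epr$. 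Hence $|\gamma| \in H$, and so $\gamma \in H$, completing the cycle.
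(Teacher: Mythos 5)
Your proof is correct, and its global architecture (the cycle $1\Rightarrow2\Rightarrow4\Rightarrow3\Rightarrow2$ closed by $2\Rightarrow5\Rightarrow1$) coincides, up to relabelling, with the implication graph used in the paper. The difference is in how the two substantive implications are established. For $2\Rightarrow4$ and $3\Rightarrow2$ the paper invokes its principle of covering by quotients: it assumes the relevant elements comparable ($\beta_2$ versus $\alpha$, resp.\ $\beta$ versus $n\alpha$) and settles each case in one line. You instead argue by direct lattice identities: for $2\Rightarrow4$ the Riesz inequality gives $(\beta_1+\beta_2)\vi m\alpha=\beta_1\vi m\alpha=\beta_1$, and for $3\Rightarrow2$ the translation computation $\beta_2\vi\alpha=\beta\vi(\alpha+\beta_1)-\beta_1=\beta\vi(n+1)\alpha-\beta_1=0$ reaches, without any case split, essentially the chain of equalities the paper writes only in its case $\beta\geq n\alpha$. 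This buys a proof that does not depend on the representation-flavoured covering principle, at the cost of a little more equational bookkeeping. Likewise, for condition \emph{5} the paper simply cites the fact that an orthogonal direct sum of solid subgroups is solid, whereas you prove it via the splitting $\abs{\gamma}=(\abs{\gamma}\vi h_1)+(\abs{\gamma}\vu h_1-h_1)$, which is more self-contained. Two cosmetic points: the identity $(x+y)\vi z=x\vi z$ also requires $x\geq0$ (true in your application, where $x=\beta_1$); and the final step from $\abs{\gamma}\in H$ to $\gamma\in H$ should be read as applying the same splitting separately to $\gamma^{+}$ and $\gamma^{-}$, since the solidity of $H$ is precisely what is being proved at that moment.
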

%%--------- fin lemma ---------------------------------------------- 
%%
\begin{proof} \textsl{1} $\Leftrightarrow$ \textsl{2}, \textsl{4} $\Rightarrow$ \textsl{3}, et \textsl{5} $\Rightarrow$ \textsl{1.} Clair

\snii\textsl{2} $\Rightarrow$ \textsl{4.} Il suffit de le démontrer lorsque $\beta_2\geq \alpha$ et lorsque $\beta_2\leq \alpha$. 
\\
Si $\beta_2\geq \alpha$ alors $\alpha=0$, 
et $\beta\vi m\alpha=0$ pour tout $m$. Si $\beta_2\leq \alpha$ alors $\beta_2=0$ donc $\beta=\beta_1\leq n\alpha\leq m\alpha$ pour $m\geq n$, donc $\beta\vi n\alpha=\beta\vi m\alpha$.

\snii\textsl{3} $\Rightarrow$ \textsl{2.}  On pose $\beta_1=\beta\vi n\alpha$ et $\beta_2=\beta-\beta_1$.
Il suffit de montrer $\beta_2\vi \alpha=0$ lorsque $\beta\geq n\alpha$ et lorsque $\beta\leq n\alpha$.
Si $\beta\leq n\alpha$, alors $\beta_1=\beta$ et $\beta_2=0$. Si $\beta\geq n\alpha$, alors $\beta_1=n\alpha$, et 

\snac{ 
 n\alpha    =   \beta \vi n\alpha   =  \beta\vi (n+1)\alpha = (\beta_1+\beta_2)\vi(n\alpha+\alpha) %\\[1mm] 
  =     (n\alpha+\beta_2)\vi(n\alpha+\alpha) 
     =   n\alpha+(\beta_2\vi \alpha),} 

\noindent  donc $\beta_2\vi \alpha=0$.

\snii\textsl{1} $\Rightarrow$ \textsl{5.} La \sdo de deux sous-groupes
solides est un sous-groupe solide.
\end{proof}
%

%:     definition{defdim1grl} dimension 1
\begin{definition} \label{defdim1grl} ~\\
Un \grl $G$ est dit \textsl{de dimension $\leq 1$} si pour tout  $\xi\in G^+$, on a $G=\cC(\xi)\boxplus\cC(\xi)\epr$. Pour des \prts \eqvesz, 
voir le lemme~\iref{lemGrl02}.
\end{definition}
%--------- fin definition ---------------------------------------------- 

%:     lemma factQuodiscret
\begin{lemma} \label{factQuodiscret}
Soit $G$ un \grl de dimension $\leq 1$ et $\xi\in G$. \Propeq
\begin{enumerate}
\item $G$ est discret
\item $\cC(\xi)$ et $G/\cC(\xi)$ sont discrets.
\item $\cC(\xi)$  est détachable et discret.
\item  $\cC(\xi)\epr$   est détachable et discret.
\end{enumerate}
\end{lemma}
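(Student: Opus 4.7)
Le plan repose sur deux observations structurelles fournies par l'hypothèse de dimension $\leq 1$. Premièrement, le lemme \iref{lemGrl02} donne une décomposition orthogonale directe effective: pour chaque $\gamma\in G^+$ on calcule un entier $n$ tel que, en posant $\gamma_1=\gamma\vi n\abs{\xi}$ et $\gamma_2=\gamma-\gamma_1$, on ait $\gamma_1\in\cC(\xi)$, $\gamma_2\in\cC(\xi)\epr$, avec $\gamma=0$ équivalent à $\gamma_1=0=\gamma_2$. Deuxièmement, cette décomposition fournit un isomorphisme canonique de grls $G/\cC(\xi)\simeq \cC(\xi)\epr$, et symétriquement $G/\cC(\xi)\epr\simeq \cC(\xi)$, car $\cC(\xi)\cap\cC(\xi)\epr=\so 0$ et la somme vaut $G$.

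Je procéderais alors selon le cycle \emph{1} $\Rightarrow$ \emph{3} $\Rightarrow$ \emph{2} $\Rightarrow$ \emph{1}, puis je noterais que \emph{1} $\Leftrightarrow$ \emph{4} s'obtient par le m\^eme argument en échangeant les r\^oles de $\cC(\xi)$ et $\cC(\xi)\epr$, ces deux sous-groupes jouant un r\^ole parfaitement symétrique dans la décomposition $\boxplus$.

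Pour \emph{1} $\Rightarrow$ \emph{3}: $\cC(\xi)$ est discret en tant que sous-groupe d'un grl discret; la détachabilité résulte de l'équivalence $\gamma\in\cC(\xi)\Leftrightarrow\gamma_2=0$, condition testable dans $G$ discret. Pour \emph{3} $\Rightarrow$ \emph{2}: la détachabilité de $\cC(\xi)$ équivaut précisément à la discrétude du quotient $G/\cC(\xi)$, puisque décider $[\gamma]=0$ revient à décider $\gamma\in\cC(\xi)$; avec la discrétude de $\cC(\xi)$ déjà donnée, on obtient \emph{2}. Pour \emph{2} $\Rightarrow$ \emph{1}: pour $\gamma\in G$ on calcule sa décomposition et l'on teste séparément $\gamma_1=0$ dans $\cC(\xi)$ et $\gamma_2=0$ dans $\cC(\xi)\epr\simeq G/\cC(\xi)$, les deux tests étant disponibles par hypothèse.

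Le point potentiellement délicat est de bien s'assurer que la décomposition est constructivement disponible: il faut effectivement disposer de l'entier $n$ du lemme \iref{lemGrl02}, ce qui est le contenu effectif de la condition de dimension $\leq 1$. Une fois cet entier obtenu, tous les arguments se réduisent à un transfert de décidabilité entre $G$, ses deux facteurs orthogonaux, et les deux quotients correspondants, sans calcul supplémentaire.
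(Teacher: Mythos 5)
Votre démonstration est correcte et suit essentiellement la même voie que celle de l'article: tout repose sur la décomposition $G=\cC(\xi)\boxplus\cC(\xi)\epr$ fournie par la dimension $\leq 1$, sur les isomorphismes $G/\cC(\xi)\simeq\cC(\xi)\epr$ et $G\simeq\cC(\xi)\times G/\cC(\xi)$ qui en découlent, et sur le transfert de la discrétude/détachabilité entre $G$, ses deux facteurs orthogonaux et les quotients. La seule différence est d'organisation (un cycle \emph{1} $\Rightarrow$ \emph{3} $\Rightarrow$ \emph{2} $\Rightarrow$ \emph{1} au lieu des équivalences deux à deux du texte), ce qui ne change rien au contenu.
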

\begin{proof} \textsl{1} $\Leftrightarrow$ \textsl{2.} L'\egt $G=\cC(\xi)\boxplus\cC(\xi)\epr$ donne les \isos $G/\cC(\xi)\simeq \cC(\xi)\epr$ et~$G\simeq \cC(\xi)\times G/\cC(\xi)$. Et un produit de deux groupes est discret \ssi chaque facteur est discret.

\snii \textsl{2} $\Leftrightarrow$ \textsl{3.} $G/\cC(\xi)$ est discret
\ssi  $\cC(\xi)$  est détachable.

\snii \textsl{3} $\Leftrightarrow$ \textsl{4.} L'\egt $G=\cC(\xi)\boxplus\cC(\xi)\epr$ montre que dans cette affaire $\cC(\xi)$ et  $\cC(\xi)\epr$ jouent des r\^oles symétriques.
\end{proof}
%

%:     lemma lemirreddim1
\begin{lemma} \label{lemirreddim1} \emph{(\'Eléments \irds dans un \grl de dimension~\hbox{$\leq 1$})}  \\ 
Soit $G$ un \grl discret de dimension $\leq 1$ et $\pi>0$ dans~$G$.
\Propeq
\begin{enumerate}
\item $\pi$ est un \elt \irdz.
\item $\cC(\pi)=\ZZ\pi\simeq (\ZZ,\geq 0)$.
\item Tout $\alpha\in G^{+}$ s'écrit de mani\`ere unique sous forme $n\pi+\alpha_2$ avec $n\in \NN$ et $\alpha_2\perp \pi$ dans~$G^{+}$.
\end{enumerate}
\end{lemma}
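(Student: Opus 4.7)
Le plan est de démontrer cycliquement $1\Rightarrow 2\Rightarrow 3\Rightarrow 1$, en exploitant systématiquement la décomposition $G=\cC(\pi)\boxplus\cC(\pi)\epr$ issue de la définition \ref{defdim1grl}, ainsi que le fait classique qu'un \grl est sans torsion.

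\smallskip Pour $1\Rightarrow 2$, je noterais d'abord que si $0\leq\xi\leq\pi$, l'écriture $\pi=\xi+(\pi-\xi)$ force, par irréductibilité de $\pi$, $\xi=0$ ou $\xi=\pi$. Pour un $\xi\in\cC(\pi)^+$ quelconque, je choisirais $n$ tel que $\xi\leq n\pi$ et raisonnerais par récurrence sur $n$ en séparant selon $\xi\vi\pi\in\{0,\pi\}$: dans le cas $\xi\geq\pi$, la récurrence s'applique à $\xi-\pi\leq(n-1)\pi$; dans le cas $\xi\perp\pi$, la stabilité de l'orthogonalité par multiples donne $\xi\vi n\pi=0$, d'où $\xi=0$ puisque $\xi\leq n\pi$. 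On obtient ainsi $\cC(\pi)^+=\NN\pi$, et l'absence de torsion dans un \grl fournit l'isomorphisme $\cC(\pi)\simeq(\ZZ,\geq 0)$.

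\smallskip Pour $2\Rightarrow 3$, j'utiliserais que $G=\cC(\pi)\boxplus\cC(\pi)\epr$ est un produit catégorique de \grlsz: tout $\alpha\geq 0$ s'écrit uniquement $\alpha_1+\alpha_2$ avec $\alpha_1\in\cC(\pi)^+=\NN\pi$ et $\alpha_2\in(\cC(\pi)\epr)^+$, soit $\alpha=n\pi+\alpha_2$ avec $\alpha_2\perp\pi$. L'unicité découle directement de $\cC(\pi)\cap\cC(\pi)\epr=\{0\}$.

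\smallskip Pour $3\Rightarrow 1$, étant donnée une écriture $\pi=\eta+\zeta$ avec $\eta,\zeta\geq 0$, je décomposerais les deux termes sous la forme $\eta=n_1\pi+\eta_2$, $\zeta=n_2\pi+\zeta_2$ (où $\eta_2,\zeta_2\perp\pi$); alors $\pi=(n_1+n_2)\pi+(\eta_2+\zeta_2)=1\cdot\pi+0$, et l'unicité donnée par \emph{3} force $n_1+n_2=1$ et $\eta_2=\zeta_2=0$, donc $\{\eta,\zeta\}=\{0,\pi\}$.

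Le seul point qui demande un peu de vigilance est la récurrence dans $1\Rightarrow 2$, précisément la justification de \gui{$\xi\perp\pi\Rightarrow\xi\vi n\pi=0$}. Elle repose sur l'inégalité $\xi\vi(a+b)\leq(\xi\vi a)+(\xi\vi b)$ valable pour $\xi,a,b\geq 0$ dans tout \grl (conséquence de la distributivité du treillis), itérée pour passer de $\xi\vi\pi=0$ à $\xi\vi n\pi=0$.
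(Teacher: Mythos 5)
Votre démonstration est correcte et suit pour l'essentiel la même route que celle du papier : tout repose sur la décomposition $G=\cC(\pi)\boxplus\cC(\pi)\epr$ fournie par l'hypothèse de dimension $\leq 1$, jointe à l'identification $\cC(\pi)^{+}=\NN\pi$. Le papier se contente de déclarer \gui{clair} $1\Rightarrow 2$ et $3\Rightarrow 1$ et ne détaille que $2\Rightarrow 3$ (où il retrouve $\alpha_1\in\NN\pi$ par un argument de discrétude au lieu de le lire directement dans l'hypothèse \emph{2} comme vous le faites) ; votre organisation cyclique et la récurrence dans $1\Rightarrow 2$ ne sont que des expansions honnêtes de ces étapes.
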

\begin{proof} \textsl{1} $\Rightarrow$ \textsl{2}  et  \textsl{3} $\Rightarrow$ \textsl{1.} Clair.

\snii
\textsl{2} $\Rightarrow$ \textsl{3.}
Soit $\alpha\in G^{+}$. On écrit $\alpha=\alpha_1+\alpha_2$ avec $0\leq \alpha_1\leq m\pi$, $m\in\NN$ \hbox{et $\alpha_2\perp \pi$} dans~$G^{+}$
(point \textsl{2} du lemme \ref{lemGrl02}).  Puisque $G$ est discret on a \hbox{un $n\in\NN$} tel que $n\pi\leq \alpha_1<(n+1)\pi$. 
\\Donc $\pi=(\alpha_1-n\pi)+((n+1)\pi-\alpha_1)$ tous deux $\geq 0$ et le deuxi\`eme $> 0$. Ceci implique que le premier est nul.
\end{proof}
%

%:subsec{Groupe totalement ordonnés de dimension $1$}
\subsec{Groupes totalement ordonnés de dimension $1$}
Un \textsl{groupe totalement ordonné} est un groupe ordonné dans lequel on \hbox{a $x\geq 0$} \hbox{ou $x\leq 0$} pour tout $x$. Il est facile de voir que c'est un \grlz.

On note $(\OO,\geq )$ l'ensemble ordonné des réels  pour lesquels on dispose d'un test de comparaison aux rationnels.  C'est aussi la réunion disjointe de~$\QQ$ et des irrationnels,
définis par leurs fractions continues infinies.
L'ensemble $\OO$ est stable pour les opérations $x\mapsto (ax+b)/(cx+d)$,
où $a$, $b$, $c$, $d$ sont des entiers avec $ad-bc\neq 0$.
On a aussi une application bien définie \gui{identité}, de $\OO$ vers $\RR$. 
Par contre on ne peut pas démontrer \cot que~$\OO$ soit égal à~$\RR$, ni que~$\OO$ soit discret, ou stable pour l'addition, ou stable pour la multiplication. Une partie $R$ de $\OO$ sera appelée un sous-groupe (additif) de $\OO$
si l'on a une fonction $+:R\times R\to \RR$ qui redonne l'addition dans $\RR$.
%:2020  R\times R  plutôt que \OO\times \OO 

%:     lemma lemTOdim1
\begin{lemma} \label{lemTOdim1} \emph{(Groupes totalement ordonnés 
discrets archimédiens)}\\
Soit $G$ un \grl discret non nul. \Propeq
\begin{enumerate}
\item $G$ est totalement ordonné de dimension $1$.
\item Pour tous $\alpha$, $\beta>0$ il existe $m\in N$ tel que $m\alpha>\beta$.
\item Pour tout $\alpha>0$, $G=\cC(\alpha)$.
\item $G$ est de dimension $1$ et pour tout $\alpha>0$, $G=\cC(\alpha)$.
\item  $G$ est isomorphe à $(R,\geq)$ pour un sous-groupe discret de
$(\OO,\geq )$.
\item Pour tout $\alpha>0$ dans $G$, on a un \iso $\varphi_\alpha:G\to R_\alpha$ tel que $\varphi_\alpha(\alpha)=1$, où $(R_\alpha,\geq )$  est un sous-groupe discret de
$(\OO,\geq )$. En outre $\varphi_\alpha$ et $R_\alpha$ sont déterminés de mani\`ere unique.
\end{enumerate}
\end{lemma}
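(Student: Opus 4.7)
\medskip\noindent\textbf{Plan de démonstration.}

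Je commencerais par le bloc \emph{1} $\Leftrightarrow$ \emph{2} $\Leftrightarrow$ \emph{3} $\Leftrightarrow$ \emph{4}, qui est essentiellement formel et repose sur le lemme~\ref{lemGrl02}. Pour \emph{1} $\Rightarrow$ \emph{2}: dans un groupe totalement ordonné, deux \elts positifs orthogonaux ont un minimum nul, donc l'un des deux est nul; la dimension $\leq 1$ donne $G=\cC(\alpha)\boxplus\cC(\alpha)\epr$, et par totalité de l'ordre $\cC(\alpha)\epr=0$, d'où $G=\cC(\alpha)$ et $\beta\leq m\alpha$ pour un $m\in\NN$; la discrétion permet enfin le passage à l'inégalité stricte. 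L'implication \emph{2} $\Rightarrow$ \emph{3} n'est autre que la définition de $\cC(\alpha)$ appliquée à $\abs{\xi}$ pour chaque $\xi\in G$. Pour \emph{3} $\Rightarrow$ \emph{4}, la dimension $\leq 1$ est automatique puisque $\cC(\alpha)\epr$ se réduit à $0$ (tout $\gamma\perp\alpha$ appartenant à $G=\cC(\alpha)$ vérifie $\abs\gamma\leq n\alpha$ et $\abs\gamma\perp n\alpha$, donc $\abs\gamma=0$); pour l'ordre total, étant donnés $\beta,\gamma\in G^+$, je considérerais $\delta=\beta\vi\gamma$ et les éléments orthogonaux $\beta'=\beta-\delta$, $\gamma'=\gamma-\delta$; l'hypothèse $G=\cC(\beta')$ produit un $n$ avec $\gamma'\leq n\beta'$, et l'orthogonalité $\gamma'\perp n\beta'$ force $\gamma'=0$, donc $\gamma\leq\beta$ (la discrétion sert à décider si $\beta'=0$). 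L'implication \emph{4} $\Rightarrow$ \emph{1} est alors triviale.

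La partie centrale de la démonstration est la construction du plongement de Hölder, \emph{2} $\Rightarrow$ \emph{6}; l'implication \emph{6} $\Rightarrow$ \emph{5} est triviale, et pour \emph{5} $\Rightarrow$ \emph{1}, un sous-groupe de $(\OO,\geq)$ hérite de l'ordre total et de l'archimédianité de $\RR$, donc vérifie \emph{2}, donc \emph{1} par le bloc précédent. Fixons $\alpha>0$. Pour $\beta\in G$ on pose
\[
\varphi_\alpha(\beta)\;=\;\sup\,\bigl\{\,p/q\in\QQ\,:\,q>0,\ p\alpha\leq q\beta\,\bigr\}.
\]
L'archimédianité garantit que cet ensemble est non vide et majoré, donc que la coupure définit un réel. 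La discrétion de $G$ rend la comparaison $p\alpha\leq q\beta$ décidable pour tout $p/q\in\QQ$, ce qui place $\varphi_\alpha(\beta)$ dans $\OO$: on a décidablement soit $\varphi_\alpha(\beta)\geq p/q$, soit $\varphi_\alpha(\beta)\leq p/q$. On vérifie ensuite que $\varphi_\alpha$ est un morphisme de groupes ordonnés envoyant $\alpha$ sur $1$; l'additivité s'obtient par encadrement: pour chaque $q$, l'archimédianité produit des $p_i$ tels que $p_i\alpha\leq q\beta_i<(p_i+1)\alpha$, d'où $\abs{\varphi_\alpha(\beta_1+\beta_2)-\varphi_\alpha(\beta_1)-\varphi_\alpha(\beta_2)}\leq 2/q$ pour tout $q$, et l'on conclut en faisant $q\to\infty$. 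L'injectivité est immédiate: si $\beta>0$, on trouve $m$ avec $m\beta\geq\alpha$, donc $\varphi_\alpha(\beta)\geq 1/m>0$. On pose $R_\alpha=\varphi_\alpha(G)$, sous-groupe discret de $\OO$.

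Pour l'unicité de $\varphi_\alpha$ et $R_\alpha$: si $\psi:G\to\RR$ est un autre morphisme de groupes ordonnés avec $\psi(\alpha)=1$, alors pour tout $p/q\in\QQ$ avec $q>0$, la chaîne $\psi(\beta)\geq p/q\iff q\psi(\beta)\geq p\iff q\beta\geq p\alpha$ détermine entièrement $\psi(\beta)=\varphi_\alpha(\beta)$. L'obstacle principal sera la partie \cov de la construction de Hölder: il faut utiliser explicitement la discrétion de $G$ pour garantir que la coupure appartient à $\OO$ et non pas seulement à $\RR$, et manier soigneusement les encadrements archimédiens pour établir l'additivité. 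Les autres implications se ramènent à des manipulations élémentaires sur les sous-groupes solides et les décompositions orthogonales fournies par le lemme~\ref{lemGrl02}.
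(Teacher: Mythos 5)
Votre démonstration est correcte. Pour le bloc \emph{1} $\Leftrightarrow$ \emph{2} $\Leftrightarrow$ \emph{3} $\Leftrightarrow$ \emph{4} et pour \emph{4} $\Rightarrow$ \emph{1}, vous suivez le même chemin que l'article (qui expédie ces implications en quelques lignes), avec le même argument clé: deux éléments positifs orthogonaux dont l'un est $>0$ forcent l'autre à être nul via $G=\cC(\alpha)$ --- notez seulement que votre preuve de la totalité de l'ordre, logée dans \emph{3} $\Rightarrow$ \emph{4}, relève en fait de \emph{4} $\Rightarrow$ \emph{1}, le point \emph{4} ne mentionnant pas l'ordre total. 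La vraie divergence porte sur \emph{2} $\Rightarrow$ \emph{6}: l'article invoque le calcul classique du développement en fraction continue de $\beta/\alpha$ (algorithme euclidien sur les restes, qui se raccorde à la description de $\OO$ comme réunion de $\QQ$ et des fractions continues infinies), tandis que vous construisez le plongement de H\"older par une coupure de Dedekind localisée $\varphi_\alpha(\beta)=\sup\sotq{p/q}{p\alpha\leq q\beta}$. Les deux routes sont constructivement valables et produisent le même morphisme; la vôtre se raccorde plus directement à la définition première de $\OO$ (réels munis d'un test de comparaison aux rationnels), la décidabilité de $p\alpha\leq q\beta$ fournissant exactement ce test --- à condition de vérifier, ce qui tient en une ligne, que $p\alpha>q\beta$ entraîne bien que $p/q$ majore la coupure. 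Un point de rédaction: \gui{non vide et majoré, donc la coupure définit un réel} est un raccourci insuffisant en mathématiques constructives; c'est la localisation de la coupure (la décidabilité que vous établissez juste après) qui fait de $\varphi_\alpha(\beta)$ un réel bien défini, et il convient donc de présenter ces deux remarques dans l'autre ordre.
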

\begin{proof} \textsl{1} $\Rightarrow$ \textsl{2}, \textsl{6} $\Rightarrow$ \textsl{5} $\Rightarrow$ \textsl{2},
 et \textsl{2} $\Leftrightarrow$ \textsl{3} $\Leftrightarrow$ \textsl{4.} Clair

\snii
\textsl{4} $\Rightarrow$ \textsl{1.}  (l'ordre est total).
Pour $\alpha$ et $\beta\in G^{+}$ il suffit de montrer que $\alpha\vi\beta=0$ 
implique $\alpha=0$ ou~\hbox{$\beta=0$}. On écrit $\alpha=\alpha_1+\alpha\vi\beta$
et $\beta=\beta_1+\alpha\vi\beta$. On a $\alpha_1\perp\beta_1$. Donc si $\alpha_1>0$, une in\egt $\beta_1\leq m\alpha_1$ implique $\beta_1=0$, donc $\beta\leq \alpha$.

\snii \textsl{1} et \textsl{2} $\Rightarrow$ \textsl{6.} Calcul  classique du développement en fraction continue.
\end{proof}
%

%%%%%%%%%%%%%%%%%%%%%%%%%%%%%%%%%%%%%%%%%%%%%%%%%%%%%%%%%%%%%%%%%%%%%%%%%%
%:subsec{Sous-groupes premiers d'un \grlz} 
\subsec{Sous-groupes premiers d'un \grlz}

%:     definition{defsgpprem}
\begin{definition} \label{defsgpprem} ~\\
Un sous-groupe solide $H$ d'un \grl $G$ est dit \textsl{premier} 
si le quotient~$G/H$ est un groupe totalement ordonné.
\end{definition}
%--------- fin definition ---------------------------------------------- 
Naturellement $G/H$ est discret \ssi $H$ est une partie détachable de~$G$.

%:     lemma lem2irreddim1
\begin{lemma} \label{lem2irreddim1} 
Soit $G$ un \grl discret de dimension $\leq 1$ et $\pi$ un \elt \irdz.
\\
Alors $\pi\epr$ est un sous groupe premier et $G/\pi\epr\simeq \cC(\pi)= \ZZ\pi$.
\end{lemma}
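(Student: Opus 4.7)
The plan is to reduce everything to the direct sum decomposition provided by the hypothesis that $G$ has dimension $\leq 1$, applied to the element $\pi$, and then to read off both assertions from Lemma~\ref{lemirreddim1}.

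First I would verify that $\pi\epr$ coincides with $\cC(\pi)\epr$. This rests on the standard fact that in an $\ell$-group the orthogonality relation $x\perp y$ (for $y\geq 0$) implies $x\perp ny$ for every $n\in\NN$, hence $x\perp \pi$ is equivalent to $x\perp \xi$ for every $\xi\in\cC(\pi)$. In particular $\pi\epr$ is a solid subgroup of $G$ (polars are always solid).

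Next, since $G$ is of dimension $\leq 1$, the definition applied to $\pi$ gives the orthogonal direct sum decomposition
\[
G \;=\; \cC(\pi)\boxplus \cC(\pi)\epr \;=\; \cC(\pi)\boxplus \pi\epr.
\]
By the general property recalled just before Definition~\ref{defHomQuoGRL}, such a decomposition yields a natural $\ell$-group isomorphism between the quotient $G/\pi\epr$ and the complementary summand $\cC(\pi)$. Now Lemma~\ref{lemirreddim1} (applied to the irreducible element $\pi$ in the discrete dimension-$\leq 1$ $\ell$-group $G$) tells us that $\cC(\pi)=\ZZ\pi\simeq(\ZZ,\geq 0)$. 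We therefore obtain the claimed isomorphism $G/\pi\epr\simeq \cC(\pi)=\ZZ\pi$.

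Finally, $(\ZZ,\geq)$ is totally ordered, hence so is the quotient $G/\pi\epr$, which is exactly the condition for $\pi\epr$ to be a prime subgroup in the sense of Definition~\ref{defsgpprem}. There is no real obstacle in this argument — the only mildly delicate point is the identification $\pi\epr=\cC(\pi)\epr$, which is settled by the orthogonality remark above; everything else is an immediate consequence of Lemma~\ref{lemirreddim1} and the definition of dimension~$\leq 1$.
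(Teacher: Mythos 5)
Votre démonstration est correcte et suit exactement la même voie que celle de l'article : on identifie $\pi\epr$ à $\cC(\pi)\epr$, on utilise la décomposition $G=\cC(\pi)\boxplus\cC(\pi)\epr$ fournie par la définition de la dimension $\leq 1$ pour obtenir $G/\pi\epr\simeq\cC(\pi)$, puis le lemme~\ref{lemirreddim1} donne $\cC(\pi)=\ZZ\pi$, groupe totalement ordonné, d'où la primalité. Vous explicitez simplement des étapes (l'égalité $\pi\epr=\cC(\pi)\epr$ et l'argument de primalité) que l'article laisse implicites.
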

\begin{proof}
D'apr\`es la \dfn \ref{defdim1grl}, $G/\pi\epr\simeq \cC(\pi)$ et l'\egt $\cC(\pi)= \ZZ\pi$ traduit le fait que $\pi$ est \ird (lemme \ref{lemirreddim1}).
\end{proof}

\rem \label{RemDimGrl} La dimension des \grls qui intervient dans la \dfn \ref{defdim1grl}
est la dimension de Krull du \trdiz, noté $\Zar G$, obtenu en quotientant le \trdi
$G^+\cup\so\infty$ par la relation d'\eqvc suivante: 

\smallskip 
\centerline{$\xi\sim \zeta \iff \exists n>0$ tel que $\xi\leq n\zeta$ et $\zeta\leq n\xi $.} 

\snii Autrement dit encore $\Zar G $ est l'ensemble des sous-groupes $\cC(\xi)$ \hbox{($\xi\in G^{+}$)}, auquel on rajoute un \elt $+\infty$, avec sa structure de \trdi naturelle. 
\\
Un \grl est \zed \ssi il est nul.
Un groupe totalement ordonné a pour dimension son rang défini de mani\`ere \gui{usuelle}. 
Les groupes totalement ordonnés~$\ZZ$ et~$\QQ$ sont de rang~$1$, un produit fini catégorique a pour rang le maximum des rangs des facteurs, un produit lexicographique a pour rang la somme des rangs. En \clama la dimension d'un \grl peut être définie 
comme la longueur maximum d'une cha\^{\i}ne de sous-groupes premiers
(ici, comme pour les \ideps d'un anneau commutatif, la \hbox{cha\^{\i}ne $H_0\subsetneq H_1\subsetneq H_2$} est dite de longueur 2, mais
notez que la cha\^{\i}ne maximale dans $\ZZ$ est $0\subsetneq \ZZ$).
\\
Pour un \adv  $\gV$ la dimension de $\Div\gV$ (son rang en tant que groupe totalement ordonné) est égale à la dimension valuative de $\gV$, qui est égale à sa dimension de Krull. 
Ceci s'étend aux \ddps mais pas aux anneaux de Krull:
un anneau~\hbox{$\KXn$} (où $\gK$ est un \cdiz) est un anneau de Krull
de \ddk $n$ (égale à sa dimension valuative), alors 
que son groupe de \dvrs reste de dimension~$1$. 
\eoe

%:subsubsec{\Prts de \dcn \gnles}
\subsec{\Prts de \dcn \gnlesz}
Nous donnons 
quelques \dfns \covs liées aux \prts de \dcn (voir \cite[Chap. XI]{ACMC}).

%:     definition defisgrls
\begin{definition} \label{defisgrls} \label{defigrldec}
 \textsl{(Quelques \prts de \dcn dans les \grlsz)}
\begin{enumerate}
\item Une famille $(a_i)_{i\in I}$  d'\eltsz~$>0$ dans un \grl \textsl{admet une \dcnpz}
si l'on peut trouver une famille finie $(p_j)_{j\in J}$ d'\eltsz~$>0$ deux à deux \orts telle que chaque $a_i$ s'écrive $\sum_{j\in J}r_{ij}p_j$
avec les $r_{ij}\in\NN$.
La famille $(p_j)_{j\in J}$ est alors appelée une \textsl{\bdpz} pour la famille~$(a_i)_{i\in I}$.
\item Un \grl  est dit \textsl{à \dcnpz}
 s'il est discret et si toute famille finie d'\elts $>0$ admet une \dcnpz.
\item Un \grl est dit \textsl{à \dcnbz} lorsque pour \hbox{tout $x\geq 0$} il existe un entier $n$ tel que, lorsque $x=\sum_{j=1}^nx_j$ avec les $x_j\geq 0$, au moins l'un des $x_j$ est nul.
\item Le \grl à \dcnb $G$ est dit \textsl{absolument borné} s'il existe un entier $n$ tel que dans toute famille de $n$ \elts deux à deux \orts dans $G^{+}$, il y a un \elt nul.
\item Un \grl  est  dit \textsl{à \dcncz} s'il est discret et si tout 
\eltz~$>0$ est une somme d'\elts \irdsz. 
\end{enumerate}
\end{definition}

Un exemple classique de \grl à \dcnp mais pas à \dcnb est le groupe des diviseurs de l'anneau de tous les entiers \agqs complexes (qui est un anneau de Bézout de dimension~$1$).
 
Nous donnons maintenant quelques résultats \cofs de base  (voir \cite{ACMC} pour la plupart d'entre eux, notamment pour le point \textsl{\iref{i1propgrldec}}, qui est donné par le \thoz~\hbox{XI-2.16} de~\cite{ACMC}).
 
%:     proposition propgrldec
\begin{proposition} \label{propgrldec} \emph{(Quelques relations liant les \prts de \dcn dans les \grlsz)}
\begin{enumerate}
\item \label{i3propgrldec} Un \grl  à \dcnc est à \dcnbz. 
\item \label{i1propgrldec} Un \grl discret et à \dcnb est à \dcnpz.
\item \label{i9propgrldec} Un \grl à \dcn partielle est de dimension $\leq 1$.
\item \label{i4propgrldec} Pour un \grl discret non nul $G$ \propeq
\begin{enumerate}
\item $G$ est à \dcncz.
\item $G$ est à \dcnb et il poss\`ede un test  d'irré\-ducti\-bilité\footnote{\`A la question \gui{$\pi$ est-il \irdz?}, le test doit donner l'une des deux réponses suivantes: 
\begin{itemize}
\item \gui{oui}, ou 
\item  \gui{non et voici $\pi_1$, $\pi_2>0$ tels que $\pi=\pi_1+\pi_2$}.
\end{itemize}} pour les \eltsz~$>0$.
\item $G$ est à \dcnb et tout \elt $>0$ est minoré par un \elt \irdz.
\item  $G$ est isomorphe comme \grl à un groupe $\ZZ^{(I)}$
pour un ensemble discret~$I$. On peut prendre pour $I$ l'ensemble des \elts \irds de $G$.
\end{enumerate}
\end{enumerate}
\end{proposition}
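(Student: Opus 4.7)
Mon plan est de traiter les quatre points successivement. Pour le point~\emph{1}, j'exploite la \prt d'interpolation de Riesz dans les \grlsz: étant donné $x \geq 0$ admettant la \dcnc $x = \pi_1 + \cdots + \pi_k$ et une \dcn arbitraire $x = y_1 + \cdots + y_{k+1}$ en \elts $\geq 0$, l'interpolation fournit des $c_{ij} \geq 0$ tels que $y_j = \sum_i c_{ij}$ et $\pi_i = \sum_j c_{ij}$; chaque $\pi_i$ étant \irdz, chaque ligne concentre sa masse sur une unique colonne $j_i$, et puisque $k+1 > k$, au moins une colonne $j_0$ reste vide, d'où $y_{j_0} = 0$. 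Le point~\emph{2} est annoncé dans l'énoncé comme étant le \tho XI-2.16 de \cite{ACMC}. Pour le point~\emph{3}, soient $\alpha, \beta \in G^+$; une \bdp $(p_1, \dots, p_r)$ pour le couple $(\alpha,\beta)$ donne $\alpha = \sum s_i p_i$ et $\beta = \sum t_i p_i$ avec les $p_i$ deux à deux \ortsz. Je pose alors $\beta_1 = \sum_{s_i > 0} t_i p_i$ et $\beta_2 = \sum_{s_i = 0} t_i p_i$, avec $\beta_1 \leq N\alpha$ pour un $N$ majorant les $t_i$ concernés, et $\beta_2 \perp \alpha$. Ceci vérifie la condition~\emph{2} du lemme~\ref{lemGrl02} et donne $G = \cC(\alpha) \boxplus \cC(\alpha)\epr$.

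Pour le point~\emph{4}, je procède par le cycle (a)$\Rightarrow$(b)$\Rightarrow$(c)$\Rightarrow$(a) puis (a)$\Leftrightarrow$(d). L'implication (a)$\Rightarrow$(b) fournit la \dcnb par le point~\emph{1}, et le test d'irréductibilité pour $\pi > 0$ consiste à décomposer $\pi$ en \irds et tester si le nombre de termes vaut $1$, ce qui est décidable car $G$ est discret. Pour (b)$\Rightarrow$(c), je pars de $\alpha > 0$ et j'itère: si le test déclare $\alpha$ \irdz, il est son propre minorant; sinon le test donne $\alpha = \alpha_1 + \alpha_2$ avec $\alpha_1, \alpha_2 > 0$ et je continue sur $\alpha_1$; après $k$ itérations sans rencontrer d'\ird on aurait décomposé $\alpha$ en $k+1$ \elts $>0$, ce que la \dcnb interdit au-delà d'un certain $k$. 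Pour (c)$\Rightarrow$(a), j'extrais itérativement un \ird $\pi_\ell$ minorant $\alpha - \pi_1 - \cdots - \pi_{\ell-1}$ tant que ce reste est $>0$; la \dcnb annule ce reste en un nombre fini d'étapes, fournissant $\alpha = \pi_1 + \cdots + \pi_m$. Pour (a)$\Rightarrow$(d), je remarque que deux \irds distincts $\pi, \pi'$ sont \ortsz: $\pi \wedge \pi' \leq \pi$ \ird donne $\pi \wedge \pi' \in \{0, \pi\}$, et de m\^eme $\pi \wedge \pi' \in \{0, \pi'\}$; si $\pi \wedge \pi' > 0$ alors $\pi = \pi \wedge \pi' = \pi'$, contradiction. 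Cette orthogonalité garantit l'unicité de la \dcn en \irds et fournit l'\iso $G \simeq \ZZ^{(I)}$, où $I$, ensemble des \irdsz, est discret car $G$ l'est. L'implication (d)$\Rightarrow$(a) est évidente.

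Le principal obstacle sera la justification rigoureuse de la terminaison des récursions du point~\emph{4} via une borne \emph{explicite} issue du témoin $n$ de la \dcnbz, indispensable du point de vue \covz. Il faudra aussi vérifier que la \prt d'interpolation de Riesz utilisée au point~\emph{1} est bien disponible \cot dans les \grlsz, et dans (a)$\Rightarrow$(d) ne pas oublier d'établir l'unicité de la \dcn par l'argument d'orthogonalité des \irds distincts.
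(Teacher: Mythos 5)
Votre démonstration est correcte et, pour le seul point que le papier démontre effectivement (le point~\emph{3}, les trois autres étant renvoyés à \cite{ACMC}), votre argument — base de \dcnp pour le couple, scission de $\beta$ selon le support de $\alpha$ — est exactement celui du papier. Les preuves que vous fournissez pour les points~\emph{1} et~\emph{4} (interpolation de Riesz plus pigeonhole, et le cycle d'implications avec terminaison contrôlée par la borne de la \dcnbz) sont les arguments standard que le papier délègue à \cite{ACMC}; elles sont constructivement valides telles que vous les esquissez.
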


\begin{proof}
Vu les résultats présentés dans \cite{ACMC}, seul
 le point   \textsl{\iref{i9propgrldec}} réclame une \demz.

\snii \textsl{\iref{i9propgrldec}.} On consid\`ere une \bdp $(\pi_1,\dots,\pi_k)$ pour 
le couple $(\xi,\zeta)$.
On écrit $\xi=\sum_{i\in I}n_i\pi_i$ avec des $n_i>0$. On note $J$ la partie complémentaire de $I$ dans $\lrbk$, 

\snic{\zeta=\sum_im_i\pi_i=\zeta_1+\zeta_2\hbox{  avec  }\zeta_1=\sum_{i\in I}m_i\pi_i\hbox{  et }\zeta_2=\sum_{i\in J}m_i\pi_i.}

\noindent 
On a bien $\zeta_2\perp \xi$, et $\zeta_1\leq m\xi$ si $m$ majore les $m_i/n_i$ \hbox{pour $i\in I$}. 
\end{proof}

%:     Lemma{lemGrldcc}
\begin{lemma} \label{lemGrldcc}
Soit $G$ un \grl à \dcnc et $I_G$ l'ensemble de ses \elts \irdsz.
\begin{enumerate}
\item Pour tout sous-groupe solide détachable $H$ de $G$, on a $G=H\boxplus H\epr$.
\item L'application $H\mapsto I_H = I_G \cap H$ établit une bijection entre l'ensemble des sous-groupes solides détachables de $G$  
et l'ensemble des parties détachables de $I_G$.
\end{enumerate}
\end{lemma}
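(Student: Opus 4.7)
Le plan est d'exploiter la \prt \iref{i4propgrldec} de la proposition~\ref{propgrldec}, qui fournit un \iso entre $G$ et $\ZZ^{(I_G)}$: chaque \elt $\alpha\in G^+$ admet donc une unique écriture $\alpha=\sum_{\pi\in I_G}n_\pi(\alpha)\pi$ avec les $n_\pi(\alpha)\in\NN$ presque tous nuls, et le support $S(\alpha)=\sotq{\pi\in I_G}{n_\pi(\alpha)>0}$ est une partie finie explicite de~$I_G$. On rappelle que pour $\alpha,\beta\in G^+$, on a $\alpha\perp\beta$ \ssi $S(\alpha)\cap S(\beta)=\emptyset$, puisque des \elts \irds distincts sont deux à deux \ortsz.

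Pour le point \emph{1}, je proc\`ederais ainsi. Soit $H$ un sous-groupe solide détachable, et $\alpha\in G^+$. Comme $H$ est détachable et $S(\alpha)$ est fini explicite, on partage $S(\alpha)=S_1\sqcup S_2$ avec $S_1=S(\alpha)\cap H$. En posant $\alpha_1=\sum_{\pi\in S_1}n_\pi(\alpha)\pi$ et $\alpha_2=\sum_{\pi\in S_2}n_\pi(\alpha)\pi$, on obtient $\alpha=\alpha_1+\alpha_2$ avec $\alpha_1\in H$. Il reste à vérifier que $\alpha_2\in H\epr$: pour $\beta\in H^+$ arbitraire, sa \dcn $\beta=\sum m_\rho\rho$ vérifie $\rho\leq\beta\in H$ pour chaque $\rho\in S(\beta)$, donc $\rho\in H$ par solidité; ainsi $S(\beta)\subseteq H$ et $S(\alpha_2)\cap S(\beta)=\emptyset$, soit $\alpha_2\perp\beta$. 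Comme tout \elt de $G$ est différence de deux \elts positifs, on en déduit $G=H+H\epr$, et l'intersection $H\cap H\epr=\so{0}$ donne la somme directe requise.

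Pour le point \emph{2}, la fonction $H\mapsto I_H$ est bien définie, la détachabilité de $I_H=I_G\cap H$ découlant de celle de $H$ dans le \grl discret~$G$. Pour la surjectivité, étant donné $J\subseteq I_G$ détachable, je poserais $H_J=\sotq{\alpha\in G}{S(\alpha^+)\cup S(\alpha^-)\subseteq J}$, \cad l'image réciproque de $\ZZ^{(J)}$ par l'\iso ci-dessus. C'est un sous-groupe solide détachable de $G$, et on vérifie directement $I_G\cap H_J=J$. Pour l'injectivité, si $I_H=I_{H'}$, le point \emph{1} décompose tout $\alpha\in H^+$ en somme d'\elts \irds tous dans $I_H=I_{H'}\subseteq H'$; donc $\alpha\in H'$, et par symétrie $H=H'$.

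L'obstacle principal se situe au point \emph{1}, dans la vérification $\alpha_2\in H\epr$: il faut utiliser soigneusement la solidité de $H$ pour conclure que les supports d'\elts positifs de $H$ sont entièrement contenus dans $H$. Une fois cette observation dégagée, tout le reste se lit directement sur la structure $\ZZ^{(I_G)}$ et les vérifications deviennent formelles. Il faudra aussi prendre soin de ne manipuler que des sommes finies à support explicite pour garder la \demz~\cov et éviter tout recours inconscient au tiers exclu.
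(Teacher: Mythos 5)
Votre démonstration est correcte et suit essentiellement la même voie que celle de l'article : l'observation clé — tout irréductible $\rho$ du support d'un $\beta\in H^{+}$ vérifie $\rho\leq\beta$, donc $\rho\in H$ par solidité, et par contraposée un irréductible hors de $H$ est orthogonal à $H$ — est exactement celle sur laquelle repose la preuve du texte, qui la formule via $\alpha\vi\pi\in\so{0,\pi}$ et laisse \gui{le reste} au lecteur. Vous ne faites qu'expliciter ce reste au moyen de l'\iso $G\simeq\ZZ^{(I_G)}$ fourni par la proposition~\ref{propgrldec}, ce qui est parfaitement licite.
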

%--------- fin lemma ---------------------------------------------- 
%
\begin{proof}
On démontre que si un \ird $\pi$ n'est pas dans $I_H$, il est dans $H\epr$.
En effet, si $\alpha\in H^{+}$, l'\elt $\alpha\vi\pi$, qui est égal à $\pi$ ou $0$, est \ncrt nul. Le reste suit.
\end{proof}
\rem Dans le lemme précédent, l'\egt $G=H\boxplus H\epr$ ne peut pas être démontrée \cot si on remplace l'hypoth\`ese \gui{$G$ à \dcncz} par \gui{$G$ à \dcnbz}.
Ceci crée quelques subtilités  \algqs que l'on retrouvera par la suite.
\eoe

%:     Lemma{lemGrldcp}
\begin{lemma} \label{lemGrldcp}
Soit $(\alpha_i)_{i\in I}$ une famille qui admet une \bdp dans un \grlz~$G$: $\alpha_i=\sum_{j\in J}r_{ij}\pi_j$ pour des $\pi_j$ deux à deux \ortsz.
\\
Alors on a: 
$\Vi_{i}\alpha_i=\som_{j\in J}\inf_{i\in I}(r_{ij})\,\pi_j$
et $\Vu_{i}\alpha_i=\som_{j\in J}\sup_{i\in I}(r_{ij})\,\pi_j.$ 
\end{lemma}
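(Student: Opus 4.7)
The result reduces to the following binary key identity: for pairwise orthogonal positive elements $\pi_1,\dots,\pi_k$ of a lattice-ordered group and nonnegative integers $n_j,m_j$,
\[
\Bigl(\sum_j n_j \pi_j\Bigr) \wedge \Bigl(\sum_j m_j \pi_j\Bigr) \;=\; \sum_j \min(n_j,m_j)\,\pi_j,
\]
since the general formulas for $\bigwedge_i \alpha_i$ and $\bigvee_i \alpha_i$ then follow by induction on (finite) $I$: each iterated meet or join of elements expressible in the basis $(\pi_j)$ remains expressible in that basis.

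To establish the binary identity, I would set $N_j = \min(n_j,m_j)$ and write $n_j = N_j + n'_j$, $m_j = N_j + m'_j$, so that $\min(n'_j,m'_j) = 0$ for every $j$. The translation invariance of the meet, $(x+z)\wedge(y+z)=(x\wedge y)+z$, then yields
\[
\Bigl(\sum_j n_j \pi_j\Bigr) \wedge \Bigl(\sum_j m_j \pi_j\Bigr) \;=\; \sum_j N_j \pi_j \;+\; \Bigl(\sum_j n'_j \pi_j\Bigr) \wedge \Bigl(\sum_j m'_j \pi_j\Bigr).
\]
By construction, the supports $\{j:n'_j>0\}$ and $\{j:m'_j>0\}$ are disjoint, so the two remaining sums are nonnegative combinations of $\pi_j$'s drawn from disjoint subsets of a pairwise orthogonal family; as argued below, such combinations are themselves orthogonal, and the residual meet vanishes. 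The formula for the join drops out of the standard identity $\alpha\vee\beta = \alpha+\beta-\alpha\wedge\beta$, together with the arithmetic fact $n_j+m_j-\min(n_j,m_j)=\max(n_j,m_j)$.

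The essential supporting fact is that for $a,b,c\in G^+$, if $a\perp b$ and $a\perp c$ then $a\perp(b+c)$. This follows from the Riesz decomposition property available in any lattice-ordered group: any $x$ with $0\le x\le a\wedge(b+c)$ decomposes as $x=x_1+x_2$ with $0\le x_1\le b$ and $0\le x_2\le c$, and since $x_1,x_2\le x\le a$ one gets $x_1\le a\wedge b = 0$ and $x_2\le a\wedge c = 0$, whence $x=0$. Iterating this step shows that disjoint-support nonnegative combinations of pairwise orthogonal positive elements are orthogonal.

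\textbf{Main obstacle.} The only real subtlety is this orthogonality-preservation step — the point at which the lattice structure (via Riesz) genuinely enters the argument. Everything else is routine bookkeeping with the translation identity for meets and the standard link between $\wedge$ and $\vee$. If $I$ is allowed to be infinite, a minor additional check is needed: the candidate $\sum_j \inf_i r_{ij}\,\pi_j$ is plainly a lower bound for $(\alpha_i)$, and any other lower bound $z$ satisfies $z\le \alpha_{i_0}$ for any fixed $i_0$, hence is bounded above by something expressible in the basis, reducing its comparison with the candidate to the finite case already handled.
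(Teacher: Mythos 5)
Your proof is correct, and it is worth noting that the paper states this lemma with no proof at all, so you are supplying an argument the authors left implicit. Your route is the elementary one: reduce to the binary case, peel off $\sum_j \min(n_j,m_j)\,\pi_j$ by translation invariance of $\wedge$, and kill the residual meet by showing that nonnegative combinations of pairwise orthogonal positive elements with disjoint supports are themselves orthogonal (via Riesz, or equivalently via the inequality $a\wedge(b+c)\le(a\wedge b)+(a\wedge c)$ for $a,b,c\ge 0$); the join formula then drops out of $\alpha\vee\beta=\alpha+\beta-\alpha\wedge\beta$. All of these steps are sound. The proof the authors almost certainly had in mind is the one-liner furnished by their own principe de recouvrement par quotients (\aref{XI-2.10}, rappelé \paref{Prqgrl}): to verify the two equalities one may assume that all the elements involved are totally ordered, and in a totally ordered group the pairwise orthogonality of the $\pi_j\ge 0$ forces all but at most one of them to vanish, after which both identities are immediate. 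That principle buys brevity; your computation buys self-containedness and makes visible exactly where the lattice structure (Riesz decomposition) actually enters. The only loose thread is your closing remark about infinite $I$: to reduce to the finite case you should say explicitly that for each $j$ in the finite set $J$ you choose an index $i_j$ attaining $\inf_i r_{ij}$ (unproblematic classically since the $r_{ij}$ are natural numbers, but requiring a word of justification in the constructive setting of the paper), after which $\bigwedge_{j\in J}\alpha_{i_j}$ already equals the candidate lower bound; since the paper only ever applies the lemma to finite families, this is a cosmetic point.
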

%--------- fin lemma ---------------------------------------------- 

Nous utilisons maintenant le principe \aref{XI-2.10} cité \paref{Prqgrl} 
pour obtenir des résultats qui s'av\`ereront utiles pour  
le groupe des \dvrs d'un \advl en raison de l'in\egtz~\hbox{$\dv(a+b)\geq \dv(a)\vi\dv(b)$}.

%:     Lemma{lemGrl01}
\begin{lemma} \label{lemGrl01}
Soient $\alpha_1$, \dots, $\alpha_n$ dans un \grl $G$. 
\\
On pose $\gamma_i=\Vi_{j\neq i}\alpha_j$
et l'on suppose que $\gamma_i\leq \alpha_i$ pour tout $i\in\lrbn$. 
\\
Alors on a: $$\Vi\nolimits_{i\in\lrbn}\big(\Vi\nolimits_{j\in\lrbn, j\neq i}\big(\abs{\alpha_i-\alpha_j}+\som_{k\notin \so{i,j}}(\alpha_i-\alpha_k)^{+}\big)\big)=0.$$ 
\end{lemma}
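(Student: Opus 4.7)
\smallskip

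My plan is to reduce to the totally ordered case via the principle of recovery by quotients for \grls (cited \paref{Prqgrl}). The target \egt is of the form \guillemotleft\;a certain \bif is $\leq 0$\;\guillemotright\ (since everything in sight is $\geq 0$), so it is exactly the kind of statement this principle allows us to reduce to the totally ordered setting. Thus I would begin by saying: to prove the \egt it suffices to treat the case where the finitely many \elts $\alpha_1,\dots,\alpha_n$ are pairwise comparable. From that point on, I am working in a totally ordered group.

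Once the $\alpha_i$ are totally ordered, the expression is symmetric under relabeling, so I would reindex so that $\alpha_1\leq \alpha_2\leq \cdots \leq \alpha_n$. Then $\gamma_1=\Vi_{j\neq 1}\alpha_j=\alpha_2$, and the hypothesis $\gamma_1\leq \alpha_1$ combined with $\alpha_1\leq \alpha_2$ forces
\[
\alpha_1=\alpha_2.
\]
This is the crucial consequence of the hypothesis: the minimum of the family is attained at least twice. I would then examine the particular inner term corresponding to $i=1$, $j=2$:
\[
|\alpha_1-\alpha_2|+\sum_{k\in\lrbn\setminus\so{1,2}}(\alpha_1-\alpha_k)^{+}.
\]
The first summand vanishes because $\alpha_1=\alpha_2$; each remaining summand vanishes because $\alpha_1\leq \alpha_k$ gives $(\alpha_1-\alpha_k)^{+}=0$. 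Hence this term is $0$.

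To conclude, I would observe that every term appearing in the double \bif is $\geq 0$ (as a sum of absolute values and positive parts), so the inner \bif for $i=1$ is $0$, and therefore the outer \bif is $0$ as well. The main \gui{obstacle} is conceptual rather than computational: one must notice that the hypothesis $\gamma_i\leq \alpha_i$ is equivalent to $\gamma_i=\Vi_j\alpha_j$ for every $i$, i.e.\ removing any single $\alpha_i$ leaves the global meet unchanged, and in the totally ordered reduction this translates precisely into \gui{the minimum is reached by at least two indices}. Once this is seen, picking the right pair $(i,j)$ makes one term collapse to zero, which swallows the whole \bifz. No further calculation is required; the \prt \gui{principe de recouvrement} does all the work of going from the general case to the totally ordered one.
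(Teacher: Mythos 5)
Your proof is correct and takes essentially the same route as the paper's: reduction to the totally ordered case via the quotient-covering principle, then the observation that the hypothesis forces the minimum of the family to be attained at least twice, so that the term indexed by the two smallest elements vanishes and kills the whole meet. Nothing to add.
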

%--------- fin lemma ---------------------------------------------- 
%
\begin{proof}
Il suffit de le démontrer lorsque les $\alpha_i$ sont totalement ordonnés. Par exemple si~$\alpha_i$ et~$\alpha_j$ sont plus petits que tous les autres, les in\egtsz~\hbox{$\gamma_i\leq \alpha_i$} et $\gamma_j\leq \alpha_j$ 
donnent~\hbox{$\alpha_j\leq \alpha_i$} \hbox{et $\alpha_i\leq \alpha_j$}, donc $\alpha_j=\alpha_i$.
\end{proof}

L'in\egt $\dv(a+b)\geq \dv(a)\vi \dv(b)$ peut se lire de mani\`ere
\smq en disant que \hbox{si $a_1+a_2+a_3=0$} alors chaque $\dv(a_i)$
majore la \bif des deux autres. Ceci se \gns par \recu comme suit.
%:     Fact{factDivSomme}
\begin{fact} \label{factDivSomme}
Dans un \advl on a l'implication  
$$
\som_{j=1}^{n}a_j=0\;\Rightarrow\;\Vi\nolimits_{i,j:j> i}\abS{\dv(a_j)-\dv(a_i)}=0.
$$
\end{fact}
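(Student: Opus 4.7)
The plan is to recognize this as a near-immediate consequence of Lemma~\ref{lemGrl01} applied to the family $\alpha_i=\dv(a_i)$, with the hypothesis of that lattice lemma supplied by the ring-theoretic inequality $\dv(a+b)\geq \dv(a)\vi\dv(b)$ of Proposition~\ref{lemADVL}. In this reading, the ``récurrence'' announced in the text is simply the induction that upgrades the binary inequality $\dv(a+b)\geq\dv(a)\vi\dv(b)$ to a sum of arbitrary length, after which the purely order-theoretic Lemma~\ref{lemGrl01} finishes the job.

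\emph{First step: checking the hypothesis of Lemma~\ref{lemGrl01}.} Fix $i\in\lrbn$. From $\sum_{j=1}^n a_j=0$ one gets $a_i=-\sum_{j\neq i}a_j$. Using $\dv(-x)=\dv(x)$ (since $-1\in\Ati$) and iterating $\dv(x+y)\geq\dv(x)\vi\dv(y)$, I obtain
\[
\alpha_i \;=\; \dv\Bigl(\sum_{j\neq i}a_j\Bigr) \;\geq\; \Vi_{j\neq i}\dv(a_j) \;=\; \gamma_i.
\]
So the family $(\alpha_1,\dots,\alpha_n)$ in $\DivA$ satisfies exactly the condition $\gamma_i\leq\alpha_i$ for every $i$ required by Lemma~\ref{lemGrl01}.

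\emph{Second step: applying the lemma and extracting the desired identity.} Lemma~\ref{lemGrl01} then yields
\[
\Vi_{i\in\lrbn}\Vi_{j\neq i}\Bigl(\abs{\alpha_i-\alpha_j}+\sum_{k\notin\so{i,j}}(\alpha_i-\alpha_k)^{+}\Bigr)\;=\;0.
\]
Since each $(\alpha_i-\alpha_k)^{+}\geq 0$, every term inside the outer meet dominates the corresponding $\abs{\alpha_i-\alpha_j}$, and as these are themselves non-negative we conclude
\[
0 \;\leq\; \Vi_{i\neq j}\abs{\alpha_i-\alpha_j} \;\leq\; 0.
\]
By the symmetry $\abs{\alpha_i-\alpha_j}=\abs{\alpha_j-\alpha_i}$, this common value coincides with $\Vi_{i<j}\abs{\dv(a_j)-\dv(a_i)}$, which is the conclusion of the fact.

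\emph{Where is the real work.} There is essentially no obstacle beyond what is already packaged into Lemma~\ref{lemGrl01}; the only ring-theoretic input is the sub-additivity $\dv(a+b)\geq\dv(a)\vi\dv(b)$, and everything else is lattice manipulation. If one preferred a self-contained induction on $n$ rather than invoking Lemma~\ref{lemGrl01}, the base case $n=3$ follows at once from the symmetric reading of $\dv(a+b)\geq\dv(a)\vi\dv(b)$ explained in the paragraph preceding the statement (after reduction to the totally ordered case by the principle of recovery by quotients, the two smallest $\alpha_i$'s must coincide), and the inductive step proceeds identically; this is exactly the argument built into the proof of Lemma~\ref{lemGrl01}, so the direct citation seems cleanest.
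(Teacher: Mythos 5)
Votre démonstration est correcte et suit essentiellement la même voie que celle de l'article : on vérifie l'hypothèse $\gamma_i\leq\alpha_i$ du lemme~\ref{lemGrl01} à partir de $-a_i\in\gen{a_j\mid j\neq i}$ (ou, de façon équivalente, par itération de $\dv(x+y)\geq\dv(x)\vi\dv(y)$), puis on conclut par ce lemme. Vous explicitez en plus le passage de la conclusion du lemme à l'énoncé du fait (les termes $(\alpha_i-\alpha_k)^{+}$ étant positifs, la borne inférieure des $\abs{\alpha_i-\alpha_j}$ est coincée entre $0$ et $0$), détail que l'article laisse au lecteur.
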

%--------- fin fact ---------------------------------------------- 
%
\begin{proof} 
Pour chaque $j$, on a $-a_j\in\geN{a_i\mid i\neq j},$ donc $\dv(a_j)\geq \Vi_{i\neq j}\dv(a_i)$. On conclut avec le lemme \iref{lemGrl01}. 
\end{proof}
%

%:     Lemma{lemGrl03}
\begin{lemma} \label{lemGrl03}
Soient un \grl $G$, et $\beta_1$, \dots, $\beta_p$, $\xi$, $\zeta$ dans $G^{+}$. 
\\
On suppose que $\Vi_{j\in\lrbp}\abs{\xi-\beta_j}=0=\Vi_{j\in\lrbp}\abs{\zeta-\beta_j}$.
\begin{enumerate}
\item On a $\xi\leq \Vu_{j\in\lrbp}\beta_j$.
\item On suppose que pour chaque  $ j$, $k\in \lrbp$, $\beta_j\in\cC(\beta_k)\boxplus \cC(\beta_k)\epr$. 
\\
Alors,  pour $\gamma$, $\delta\in\so{\xi, \zeta, \beta_1, \dots, \beta_p}$, on a $\gamma\in\cC(\delta)\boxplus \cC(\delta)\epr$, i.e. $\cC(\gamma)\subseteq \cC(\delta)\boxplus \cC(\delta)\epr$.
\end{enumerate}
\end{lemma}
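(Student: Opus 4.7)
My plan is to apply the principle of recovery by quotients \iref{Prqgrl} to reduce both parts to the totally ordered case. In that case the hypothesis $\Vi_{j\in\lrbp}\abs{\xi-\beta_j}=0$ forces $\xi=\beta_{j_0}$ for some $j_0\in\lrbp$ (and similarly $\zeta=\beta_{k_0}$), because the infimum of a finite family of elements of $G^+$ in a totally ordered group vanishes only when one of them is zero. Part \emph{1} is then immediate: the inequality $\xi\leq\Vu_k\beta_k$ reduces, in the totally ordered case, to $\beta_{j_0}\leq\Vu_k\beta_k$, which is trivial.

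For part \emph{2}, I first observe that $\cC(\delta)\boxplus\cC(\delta)\epr$ is a solid subgroup, so the two formulations $\gamma\in\cC(\delta)\boxplus\cC(\delta)\epr$ and $\cC(\gamma)\subseteq\cC(\delta)\boxplus\cC(\delta)\epr$ are equivalent. Three cases are easy. When $\gamma,\delta$ are both among the $\beta_j$, this is the hypothesis. When $\gamma\in\so{\xi,\zeta}$ and $\delta=\beta_k$, part \emph{1} gives $\gamma\leq\Vu_j\beta_j$; each $\beta_j$ lies in $\cC(\beta_k)\boxplus\cC(\beta_k)\epr$ by hypothesis, so does their supremum, and by solidity so does $\gamma$. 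When both $\gamma,\delta\in\so{\xi,\zeta}$, recovery by quotients reduces the claim to $\beta_{j_0}\in\cC(\beta_{k_0})\boxplus\cC(\beta_{k_0})\epr$, again a hypothesis.

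The remaining case, $\gamma=\beta_j$ with $\delta\in\so{\xi,\zeta}$, is delicate and is where I expect the main obstacle. The characterization in Lemma~\iref{lemGrl02}, condition~\emph{3}, rephrases $\beta_j\in\cC(\delta)\boxplus\cC(\delta)\epr$ as the existence of an integer~$n$ with $\beta_j\vi n\delta=\beta_j\vi(n+1)\delta$, and recovery by quotients only proves a single identity at a time. My workaround is to pick $n$ \emph{before} invoking the reduction: by the hypothesis combined with condition~\emph{4} of Lemma~\iref{lemGrl02} (once the equality holds at some integer, it persists for all larger ones), there is a single $n$ such that $\beta_j\vi n\beta_k=\beta_j\vi(n+1)\beta_k$ for every $k\in\lrbp$. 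I would then establish the equality $\beta_j\vi n\delta=\beta_j\vi(n+1)\delta$ by recovery by quotients: in the totally ordered case $\delta=\beta_{j_0}$, so the desired equality is among those used to pin down~$n$.
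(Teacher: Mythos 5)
Your proof is correct and follows essentially the same route as the paper's: reduce to the totally ordered case by the recovery-by-quotients principle, where $\xi=\beta_{j_0}$ and $\zeta=\beta_{k_0}$, and --- crucially --- fix the integer $n$ uniformly via le point \emph{4} du lemme~\ref{lemGrl02} \emph{avant} d'invoquer le principe, puisque l'appartenance à $\cC(\delta)\boxplus\cC(\delta)\epr$ est un énoncé existentiel et non une simple \egtz. Seule réserve: dans votre cas \gui{$\gamma$ et $\delta$ tous deux dans $\so{\xi,\zeta}$} vous appliquez le principe directement à un énoncé d'appartenance, ce qui est exactement l'écueil que vous identifiez ensuite; le m\^eme choix uniforme de $n$ (valable pour tous les couples $j,k$) répare ce cas mot pour mot, comme le fait la \dem du papier.
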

%--------- fin lemma ---------------------------------------------- 
%
\begin{proof} Pour le point \textsl{1} il faut démontrer une in\egtz, et pour le point \textsl{2} on veut une \egt $\gamma+m\delta=\gamma+(m+1)\delta$. Il suffit donc (principe \aref{XI-2.10}) de faire la \dem lorsque les $\abs{\xi-\beta_j}$ sont totalement ordonnés ainsi que les $\abs{\zeta-\beta_j}$. 
\\
Dans ce cas, il y a un indice~$h$ pour lequel  \fbox{$\xi=\beta_h$}, et donc $\xi\leq \Vu_{j\in\lrbp}\beta_j$.  
\\
Ceci donne le point \textsl{1.} Voyons le point \textsl{2.}
\\
On a aussi un indice~$\ell$ pour lequel  \fbox{$\zeta=\beta_\ell$}.
\\
D'apr\`es le point \textsl{4} du lemme \iref{lemGrl02} il y a un entier $m$ tel que 

\snic{m\beta_j\vi\beta_k=(1+m)\beta_j\vi\beta_k\hbox{  pour tous les  }j, \,k.}

\noindent  Ceci reste vrai en rempla{\c c}ant $\beta_j$ et/ou $\beta_k$
par $\xi$ ou $\zeta$. 
\end{proof}

%%%%%%%%%%%%%%%%%%%%%%%%%%%%%%%%%%%%%%%%%%%%%%%%%%%%%%%%%%%%%%%%%%%%%%%%%%%

\subsubsection*{\Prts de \dcn pour un \grl quotient}

%:     Proposition{propQuoGrlDcn}
\begin{proposition} \label{propQuoGrlDcn}
Soit $G$ un \grl et $\pi:G\to G'$ un \moquo par un sous-groupe solide $H$.
\begin{enumerate}
\item \label{i1propQuoGrlDcn} Si $G$ est de dimension~$\leq 1$, $G'$ l'est \egmtz.
\item \label{i3propQuoGrlDcn} Si $G$ est à \dcnbz , $G'$ l'est \egmtz.
\item \label{i4propQuoGrlDcn} Si $G$ est à \dcnp et si $G'$ est discret, $G'$ est à \dcnpz.
\item \label{i5propQuoGrlDcn} Si $G$ est à \dcnc et si $G'$ est discret, $G'$ est à \dcncz.
\item \label{i6propQuoGrlDcn} Si $G$ est discret à \dcnb et si $H=\alpha\epr$ pour un $\alpha>0$, alors $G'$ est \isoc à $\cC(\alpha)$ et  absolument borné.
\end{enumerate} 
\end{proposition}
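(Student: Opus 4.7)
My plan is to treat the five items in order, exploiting throughout the key fact that the quotient map $\pi\colon G\to G'$ is a surjective morphism of $\ell$-groups, so it preserves $\vi$, $\vu$, $+$ and positivity, and every element of $G'^+$ lifts to an element of $G^+$ (since $\pi(G^+)=G'^+$).

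For item \ref{i1propQuoGrlDcn}, I take $\xi'\in G'^+$ and any $\beta'\in G'^+$, lift them to $\xi,\beta\in G^+$, and use the hypothesis in $G$ to write $\beta=\beta_1+\beta_2$ with $0\leq\beta_1\leq m\xi$ and $\beta_2\perp\xi$ (criterion (2) of lemma \ref{lemGrl02}). Applying $\pi$ yields $\beta'=\pi(\beta_1)+\pi(\beta_2)$ with $\pi(\beta_1)\leq m\xi'$ and $\pi(\beta_2)\perp\xi'$, so $\beta'\in\cC(\xi')\boxplus\cC(\xi')^\perp$ by the same criterion. For item \ref{i3propQuoGrlDcn}, given $x'\in G'^+$ I lift to $x\in G^+$ and claim the bound $n$ for $x$ in $G$ still bounds $x'$. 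The crux is to lift any decomposition $x'=\sum_{j=1}^n x'_j$ (with $x'_j\geq 0$) to a decomposition $x=\sum y_j$ in $G^+$ with $\pi(y_j)=x'_j$; this I do inductively by setting $y_1=z_1\vi x$ for an arbitrary lift $z_1\geq 0$ of $x'_1$, so $\pi(y_1)=x'_1\vi x'=x'_1$ and $y_1\leq x$, and then applying the same construction to $x-y_1$ as a lift of $\sum_{j\geq 2}x'_j$.

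For item \ref{i4propQuoGrlDcn}, I take a finite family $(\alpha'_i)$ in $G'^+$, lift to $(\alpha_i)$ in $G^+$, apply partial decomposition in $G$ to get a basis $(p_j)$ of pairwise orthogonal positive elements with $\alpha_i=\sum r_{ij}p_j$. Then $\alpha'_i=\sum r_{ij}\pi(p_j)$ and the $\pi(p_j)\geq 0$ are pairwise orthogonal; using that $G'$ is discrete I can test whether each $\pi(p_j)=0$ and keep only the nonzero ones, producing a basis in $G'$. Item \ref{i5propQuoGrlDcn} is similar: I lift $\alpha'>0$ to $\alpha>0$ in $G^+$, write $\alpha=\sum\pi_k$ with $\pi_k$ irreducible in $G$, and project. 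The nonzero $\pi(\pi_k)$ (detected via discreteness of $G'$) must be irreducible in $G'$: here I use item \ref{i1propQuoGrlDcn} to know $G'$ has dimension $\leq 1$, and lemma \ref{lemirreddim1} reduces irreducibility to $\cC(\pi(\pi_k))\simeq\ZZ$; this follows because $\cC(\pi_k)=\ZZ\pi_k$ and solidity of $H$ implies $n\pi_k\in H\Leftrightarrow\pi_k\in H$, so the restriction $\pi\restriction_{\cC(\pi_k)}$ is injective whenever $\pi(\pi_k)\neq 0$.

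Item \ref{i6propQuoGrlDcn} is where the real work happens and is the main obstacle. First, using proposition \ref{propgrldec} (items \ref{i1propgrldec} and \ref{i9propgrldec}), $G$ discrete à \dcnb implies à \dcnp, hence of dimension $\leq 1$. Therefore $G=\cC(\alpha)\boxplus\cC(\alpha)\epr$, and since $\cC(\alpha)\epr=\alpha\epr=H$ (orthogonality to $\alpha$ propagates to all of $\cC(\alpha)$ in an $\ell$-group), I obtain the isomorphism $G'\simeq\cC(\alpha)$. For absolute boundedness, let $n$ be a bound for $\alpha$ in $G$. Given $n$ pairwise orthogonal $\beta_1,\dots,\beta_n>0$ in $\cC(\alpha)$, set $\gamma_i=\alpha\vi\beta_i$; each $\gamma_i>0$ (otherwise $\beta_i\in\alpha\epr\cap\cC(\alpha)=0$) and the $\gamma_i$ are pairwise orthogonal and $\leq\alpha$, so $\gamma_1+\cdots+\gamma_n\leq\alpha$ and $\alpha=\gamma_1+\cdots+\gamma_n+(\alpha-\sum\gamma_i)$ would give a decomposition of $\alpha$ into $n$ nonzero parts (contradicting the bound) unless one $\gamma_i=0$. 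The delicate point is this orthogonal-to-general bound passage: I need to ensure that any such $n$-tuple of orthogonal nonzero elements really produces $n$ nonzero summands in a decomposition of $\alpha$, which is what the $\gamma_i$ construction achieves.
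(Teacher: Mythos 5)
Your proof is correct in its main lines, but on the two items where real work is needed it takes a genuinely different route from the paper, which relies throughout on the Riesz decomposition theorem. For item \emph{2} the paper lifts $\pi(x)=\sum_i\pi(y_i)$ to $x=u+\sum_i y_i$ with $u\in H$ and then \emph{corrects} the $y_i$ by writing $u^{-}=\sum_i u_i$ with $0\leq u_i\leq y_i$ (Riesz); you instead \emph{lift} the decomposition of $x'$ term by term via $y_1=z_1\vi x$, $y_2=z_2\vi(x-y_1)$, etc., which works because $x'_1\leq x'$ gives $\pi(y_1)=x'_1\vi x'=x'_1$ --- this avoids Riesz entirely and is arguably more elementary. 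For item \emph{4} the paper applies Riesz to $p+u^{-}=q+r+u^{+}$ to split $p=p_1+p_2+p_3$ and concludes directly from irreducibility of $p$; you detour through dimension $\leq 1$ and the criterion $\cC(\pi')=\ZZ\pi'$ of lemme \ref{lemirreddim1}. That route is viable, but your justification has a small hole: injectivity of $\pi$ on $\cC(\pi_k)$ only gives $\pi\big(\cC(\pi_k)\big)\simeq\ZZ$, whereas the criterion needs the reverse inclusion $\cC\big(\pi(\pi_k)\big)\subseteq\ZZ\,\pi(\pi_k)$; this is true and follows from the same meet-lifting trick (for $0\leq\xi'\leq m\,\pi(\pi_k)$, lift $\xi'$ to $\eta\geq0$ and replace $\eta$ by $\eta\vi m\pi_k\in\cC(\pi_k)=\ZZ\pi_k$), but it must be said. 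Items \emph{1} and \emph{3} essentially match the paper (for \emph{1} the paper uses criterion \emph{3} of lemme \ref{lemGrl02} where you use criterion \emph{2}). For item \emph{5} the paper merely asserts that $\cC(\alpha)$ is absolutely bounded; your argument supplies the missing proof, except that your decomposition of $\alpha$ actually has $n+1$ summands ($\gamma_1,\dots,\gamma_n$ and the remainder $\rho$), so to invoke the bound $n$ you should group $\rho$ with $\gamma_n$: one of $\gamma_1,\dots,\gamma_{n-1},\gamma_n+\rho$ is then zero, and $\gamma_n+\rho=0$ forces $\gamma_n=0$.
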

%--------- fin proposition ----------------------
%
\begin{proof}
\textsl{\ref{i1propQuoGrlDcn}.} On doit montrer que dans le quotient, pour tous $x$, $y$
on a un entier $n$ tel que $y\vi nx=y\vi(n+1)x$. Or c'est déjà vrai avant de passer au quotient.

\snii\textsl{\ref{i3propQuoGrlDcn}.} On consid\`ere un \elt $x\in G^{+}$. On suppose que si $x=\sum_{i=1}^{n}x_i$ avec \hbox{des $x_i\in G^{+}$}, alors l'un des $x_i$ est nul.
\\
Supposons que l'on ait $\pi(x)=\sum_{i=1}^{n}\pi(y_i)$ avec des $\pi(y_i)\geq 0$ dans $G'$. Comme $\pi(y_i)=\pi(y_i^{+})$, on peut supposer les $y_i\geq 0$. On a $x=u+\sum_{i=1}^{n}y_i$ avec $u\in H$. On écrit $u=u^{+}-u^{-}$, on remplace $y_1$ par $u^{+}+y_1$.\\
On a maintenant $x=\sum_{i=1}^{n}y_i-u^{-}$.
 On a donc $u^{-}\leq \sum_{i=1}^{n}y_i$
et par le \tho de Riesz \aref{XI-2.11~\textsl{1}}, on écrit $u^{-}= \sum_{i=1}^{n}u_i$ avec $0\leq u_i\leq y_i$. Enfin on remplace chaque $y_i$ par $z_i=y_i-u_i$ et l'on a 
$x=\sum_{i=1}^{n}z_i$ avec \hbox{des $z_i\geq 0$}, donc l'un des~$z_i$ est nul,
et pour cet indice~$i$, on \hbox{a $\pi(y_i)=\pi(z_i)=0$}. 

\snii
\textsl{\ref{i4propQuoGrlDcn}.} On consid\`ere une famille finie $\big(\pi(x_i)\big)_{i\in I}$
on calcule une \bdf de la famille $( x_i )_{i\in I}$. On supprime les \elts de cette base qui deviennent nuls dans le quotient.  Les \elts restants sont $>0$ dans le quotient, et deux à deux \ortsz.

\snii
\textsl{\ref{i5propQuoGrlDcn}.} Pour $\pi(x)>0$ dans $G'$, on consid\`ere une \fcn compl\`ete
de~$x$ dans~$G$. Il suffit de vérifer la \prt suivante: si $p$ est \irdz, alors~$\pi(p)$
est nul ou \irdz. Supposons que $\pi(p)=\pi(q)+\pi(r)$,
\hbox{tous $\geq 0$} dans~$G'$. On peut supposer $q=q^{+}$ et $r=r^{+}$. 
\\
On écrit
$p+u^{-}=q+r+u^{+}$ avec~$u\in H$. Le \tho de Riesz \aref{XI-2.11~\textsl{2}}
nous donne des $p_i$ et $v_i\geq 0$ satisfaisant 

\snic{p=p_1+p_2+p_3$, $u^{-}=v_1+v_2+v_3$, $p_1+v_1=q$, $p_2+v_2=r$ et $p_3+v_3=u^{+},}

\noindent 
d'où~\hbox{$\pi(q)=0$} si $p_1=0$ et $\pi(r)=0$ si $p_2=0$.

\snii
\textsl{\ref{i6propQuoGrlDcn}.} En effet $G$ est de dimension $\leq 1$ 
donc $G'\simeq\cC(\alpha)$, qui est absolument borné.
\end{proof}

Par contre la \prt pour un \grl d'être discret ne passe pas toujours au quotient. Fort heureusement on a le lemme \ref{factQuodiscret}.

\section{Propriétés de stabilité pour les \advlsz} \label{secAnnDivl}
%%%%%%%%%%%%%%%%%%%%%%%%%%%%%%%%%%%%%%%%%%%%%%%%%%%%%%%%%%%%%%%%%%%%%%%%%%%
%: subsec{Localisations d'un \advlz, 2
\subsec{Localisations d'un \advlz, 2}

%:     theorem{lemdvlloc}
\begin{theorem} \label{lemdvlloc}
Soient $\gA$ un \advlz, $S$ un filtre ne contenant pas~$0$, et~$H_S$ le sous-groupe solide de $\DivA$ engendré par les~$\dv_\gA(s)$ \hbox{pour~$s\in S$}.
On a les \prts suivantes.
\begin{enumerate}
\item L'anneau $S^{-1}\gA=\gA_S$ est un \advl et 
 il y a un unique morphisme  de \grls 
$\varphi_S:\DivA\to\Div \gA_S$
tel 
{que $\varphi_S\big(\dv_\gA(a)\big)=\dv_{\gA_S}(a)$} pour tout $a\in\Atl$. 
Ce morphisme est surjectif, donc $\Div \gA_S\simeq (\DivA)/\Ker\varphi_S$. 
\item  On a $H_S^{+}=\sotq{\alpha\in\DivA}{\exists s\in S,\,0\leq \alpha\leq \dvA(s)}$,  et les \dips dans~$H_S^{+}$ sont les \elts de $\dvA(S)$.
\item Supposons que $\DivA=H_S\boxplus H'$. Alors le morphisme $\varphi_S$ est un \moquo par $H_S$: il permet  d'identifier   
$\Div\gA_S$  au \grl quotient $(\DivA)/H_S\simeq H'$.
\end{enumerate}
 
\end{theorem}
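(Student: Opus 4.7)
Le point 1 étant déjà établi dans le théorème \ref{lemdvlloc0}, je me concentre sur les points 2 et 3. Pour le point 2, je pars de la description usuelle du sous-groupe solide engendré par une partie positive : un élément $\alpha\geq 0$ appartient à $H_S^+$ si et seulement s'il est majoré par une somme finie $\sum_{i=1}^n \dv_\gA(s_i)$ avec $s_i\in S$. La stabilité multiplicative de $S$ ramène cette somme à $\dv_\gA(s)$ où $s=s_1\cdots s_n\in S$, d'où la caractérisation voulue. Pour l'identification des \dips dans $H_S^+$, si $\alpha=\dv_\gA(x)$ avec $x\in\Atl$ vérifie $\dv_\gA(x)\leq\dv_\gA(s)$ pour un $s\in S$, alors $x$ divise $s$ dans $\gA$, et la saturation du filtre $S$ (si $xy\in S$ alors $x\in S$) donne $x\in S$.

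Pour le point 3, l'idée directrice est d'établir d'abord l'égalité $\Ker\varphi_S=H_S$. L'inclusion $H_S\subseteq \Ker\varphi_S$ découle du point 2 : pour $s\in S$, $s$ devient inversible dans $\gA_S$, donc $\varphi_S(\dv_\gA(s))=0$, et on étend par linéarité et solidification. Pour l'inclusion réciproque, soit $\alpha\in\Ker\varphi_S$ ; en décomposant $\alpha=\alpha^+-\alpha^-$ et en utilisant que $\varphi_S$, morphisme de \grlsz, préserve les parties positives et négatives, je me ramène à $\alpha\geq 0$. J'écris $\alpha=\dv_\gA(\fa)$ pour un \itf $\fa$. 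La propriété d'\advlz de $\gA$ fournit un \itf $\fb=\gen{b_1,\ldots,b_m}$ et un $g\in\Atl$ avec $\dv_\gA(\fa)+\dv_\gA(\fb)=\dv_\gA(g)$. L'application de $\varphi_S$ donne $\dv_{\gA_S}(\fb)=\dv_{\gA_S}(g)$, c'est-à-dire que $g$ est un pgcd fort des $b_j$ dans $\gA_S$ ; en particulier $g$ divise chaque $b_j$ dans $\gA_S$, ce qui fournit des $s_j\in S$ tels que $s_j b_j\in g\gA$. Posons $s=\prod_j s_j\in S$. Pour chaque $j$, on a $\dv_\gA(g)\leq\dv_\gA(s_j)+\dv_\gA(b_j)\leq\dv_\gA(s)+\dv_\gA(b_j)$ ; la prise de la borne inférieure en $j$ donne $\dv_\gA(g)-\dv_\gA(s)\leq\dv_\gA(\fb)$, d'où $\alpha=\dv_\gA(g)-\dv_\gA(\fb)\leq\dv_\gA(s)$ et donc $\alpha\in H_S^+$ par le point 2.

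L'égalité $\Ker\varphi_S=H_S$ étant acquise, $\varphi_S$ est bien un \moquo par $H_S$, et induit un isomorphisme canonique $\DivA/H_S\simeq\Div\gA_S$. Sous l'hypothèse $\DivA=H_S\boxplus H'$, la projection sur $H'$ parallèlement à $H_S$ est un morphisme de \grls de noyau $H_S$ (l'orthogonalité $H_S\perp H'$ assurant la compatibilité aux opérations $(\cdot)^+$ et $(\cdot)^-$), d'où $\DivA/H_S\simeq H'$ et finalement $\Div\gA_S\simeq H'$. L'obstacle principal se trouve dans l'inclusion $\Ker\varphi_S\subseteq H_S$ : il faut passer de l'information purement locale \gui{$g$ divise $b_j$ dans $\gA_S$} à la construction explicite, globale, d'un $s\in S$ majorant $\alpha$ dans $\DivA$. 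C'est la finitude du système de générateurs $(b_j)$, garantie par l'hypothèse d'\advlz sur $\gA$ (qui fournit un $\fb$ \emph{de type fini} inversant $\fa$ divisoriellement), qui permet de former un tel $s$ comme produit fini des dénominateurs $s_j$.
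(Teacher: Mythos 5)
Votre démonstration est correcte. Les points 1 et 2 suivent la même voie que le texte (qui déclare le point 2 \gui{facile}; vos détails sur la stabilité multiplicative de $S$ et la saturation du filtre sont exactement ceux attendus). Pour le point 3 en revanche, votre preuve de l'inclusion $\Ker\varphi_S\subseteq H_S$ emprunte une route réellement différente, et un peu plus forte. Le texte utilise d'emblée l'hypothèse $\DivA=H_S\boxplus H'$ pour se ramener au cas où $\alpha\geq 0$ et $\alpha\perp H_S$, puis conclut par un argument direct sur les listes: une liste proportionnelle à $(\an)$ dans $\gA$ se compare à la liste $(\an,s)$ pour un $s\in S$ convenable, laquelle est de \prof $\geq 2$ dans $\gA$ par l'orthogonalité. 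Vous établissez au contraire l'égalité $\Ker\varphi_S=H_S$ sans recourir à cette hypothèse: pour $\alpha=\dv_\gA(\fa)\geq 0$ dans le noyau, l'\ivda $\fb=\gen{b_1,\dots,b_m}$ de $\fa$ et les dénominateurs $s_j$ apparaissant dans les divisions $g\mid b_j$ dans $\gA_S$ produisent explicitement un $s=\prod_j s_j\in S$ tel que $\alpha\leq\dv_\gA(s)$, d'où $\alpha\in H_S^{+}$ par le point 2. L'hypothèse $\DivA=H_S\boxplus H'$ ne sert alors plus qu'à identifier le quotient $(\DivA)/H_S$ à $H'$. Votre argument donne donc une conclusion valable pour tout filtre $S$ ne contenant pas $0$ (le morphisme $\varphi_S$ est toujours un \moquo par $H_S$), au prix d'un appel supplémentaire à la structure d'\advl (c'est la finitude de $\fb$ qui permet de former le produit fini $s$); l'argument du texte reste au niveau des suites proportionnelles mais ne conclut que sous l'hypothèse de décomposition. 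Les deux calculs sont corrects.
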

%--------- fin theorem ---------------------------------------------- 
%
\begin{proof}  
\textsl{1.}  C'est le \thref{lemdvlloc0}.

\snii \textsl{2.} En notant $H_1=\sotq{\alpha\in\DivA}{\exists s\in S,\,0\leq \alpha\leq \dvA(s)}$, on vérifie facilement que $H_1+H_1\subseteq H_1$
et que $H_1-H_1$ est un sous-groupe solide dont la partie positive est égale à $H_1$. 
 
\snii \textsl{3.} Il est clair que $\varphi_S$ est surjectif et que $H_S\subseteq \Ker\varphi_S$. On doit montrer l'inclusion réciproque.
Un \elt du noyau s'écrit $\alpha=\dvA(\ak)$ pour une suite $(\ak)$ de \profz~\hbox{$\geq 2$} dans $\gA_S$.
On peut supposer que les $a_i$ sont dans $\gA$.
Si $\alpha=\alpha_1+\alpha_2$ avec $\alpha_1\in H_S$ et~$\alpha_2\in H'$, on a $\varphi_S(\alpha)=\varphi_S(\alpha_2)$. On peut donc supposer $\alpha\perp H_S$, \cad que la suite $(\ak,s)$ est de \prof $\geq 2$ dans $\gA$ pour tout $s\in S$. On est donc ramené à montrer que si $(\ak)$ est de \profz~$\geq 2$ dans~$\gA_S$ et $(\ak,s)$ est de \prof $\geq 2$ dans $\gA$ pour \hbox{tout $s\in S$}, alors~$(\ak)$ est de \prof $\geq 2$ dans $\gA$. Soit donc une suite $(\ck)$
proportionnelle à $(\ak)$. Il existe $c\in\Atl$ et $s\in S$ tels que~$sc_i=c a_i$ pour $i\in\lrbk$. Ceci implique que les suites $(\ak,s)$ et $(\ck,c)$ sont proportionnelles, donc il existe $d\in\Atl$ tel que
$(\ck,c)=d(\ak,s)$. En particulier $(\ck)=d(\ak)$, ce qui termine la \demz. 
\end{proof}

%:subsec{Anneaux avec groupe des \dvrs de dimension $1$}
\subsec{Anneaux avec groupe des \dvrs de dimension $1$}

%:     lemma lemBezRad
\begin{lemma} \label{lemBezRad}
Soit $\gA$ un domaine de Bézout avec $\DivA$ de dimension $\leq 1$.
Pour un $a\in\Atl$ \propeq
\begin{enumerate}
\item $a\in\Rad\gA$.
\item $\DivA=\cC\big(\!\dvA(a)\big)$.
\end{enumerate}
\end{lemma}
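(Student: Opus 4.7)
The plan is to use the hypothesis $\dim \DivA \leq 1$, which by Definition \ref{defdim1grl} gives the direct sum decomposition $\DivA = \cC(\dvA(a))\boxplus \cC(\dvA(a))\epr$, and to translate the lattice-theoretic orthogonality $\dvA(b)\perp\dvA(a)$ into a Bézout identity $ua+vb=1$, which is available because $\gA$ is a Bézout domain.

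For \emph{1} $\Rightarrow$ \emph{2}, assuming $a\in\Rad\gA$, it suffices to show $\cC(\dvA(a))\epr=0$. Pick $\gamma\in\cC(\dvA(a))\epr$ with $\gamma\geq 0$. Since $\gA$ is Bézout (hence à pgcd), the first fundamental example after Theorem \ref{thADVL} gives $\DivA\simeq\Ktl/\Ati$ as ordered groups, so $\gamma=\dvA(b)$ for some $b\in\Atl$. The orthogonality $\dvA(a)\vi\dvA(b)=0$ translates into: $1$ is a strong pgcd of $a$ and $b$, so $\gen{a,b}=\gen{1}$ and we get $ua+vb=1$ in $\gA$. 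Then $vb=1-ua\in 1+a\gA\subseteq\Ati$ by the definition of $\Rad\gA$, hence $b\in\Ati$ and $\gamma=0$.

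For \emph{2} $\Rightarrow$ \emph{1}, take $y\in\gA$ and set $b=1+ya$; we must show $b\in\Ati$. From $\gen{a,b}=\gen{a,1}=\gen{1}$ one sees that $1$ is a strong pgcd of $a$ and $b$, so $\dvA(a)\vi\dvA(b)=0$, i.e., $\dvA(b)\in\cC(\dvA(a))\epr$. But by hypothesis $\DivA=\cC(\dvA(a))$, so $\dvA(b)$ also belongs to $\cC(\dvA(a))$, and the direct sum decomposition gives $\dvA(b)\in\cC(\dvA(a))\cap\cC(\dvA(a))\epr=\{0\}$. Hence $b$ is a unit. (The degenerate case $b=0$, which would force $a\in\Ati$ and $\gA$ to be a field, is to be handled separately; there the statement is essentially vacuous because $\DivA=0$.)

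The main obstacle is conceptual rather than computational: realizing that the dimension $\leq 1$ assumption is precisely what is needed to move freely between the orthogonality condition $\dvA(b)\perp\dvA(a)$ on the divisor side and the principal-ideal condition $\gen{a,b}=\gen{1}$ on the ring side. Once this bridge is acknowledged, both directions reduce to the short Bézout manipulation above, the only delicate point being the handling of the edge case $1+ya=0$.
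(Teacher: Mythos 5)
Your proof is correct and follows essentially the same route as the paper's: both directions rest on the decomposition $\DivA=\cC\big(\dvA(a)\big)\boxplus\cC\big(\dvA(a)\big)\epr$ coming from dimension $\leq 1$, the fact that in a Bezout domain every divisor is principal and orthogonality of $\dvA(a)$ and $\dvA(b)$ yields $\gen{a,b}=\gen{1}$, and the defining property of $\Rad\gA$. Your explicit flagging of the degenerate case $1+ya=0$ is a point the paper passes over silently, so no objection there.
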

\begin{proof}
\textsl{1} $\Rightarrow$ \textsl{2.}
L'anneau est de Bézout donc tous les \dvrs sont principaux. Pour un $b\in\Atl$ arbitraire,
puisque $\DivA$ est de dimension $\leq 1$, par le point \textsl{2} 
du lemme~\ref{lemGrl02}, on écrit $b=b_1b_2$ avec $b_1$ et $b_2\in\Atl$, $b_1\mid a^{n}$ et $\dvA(a,b_2)=0$.
Puisque l'anneau est de Bézout,  $1\in\gen{a,b_2}$, et puisque 
$a\in\Rad\gA$, $b_2\in\Ati$. Donc $\dvA(b)=\dvA(b_1)\in\cC(a)$.

\snii \textsl{2} $\Rightarrow$ \textsl{1.} Pour tout $b\in\Atl$ on a un $n\in\NN$
tel que $b\mid a^{n}$. Si $b=1+ax$, on a $\gen{b,a}=\gen{1}$, \hbox{donc $\gen{b}=\gen{b,a^{n}}=\gen{1}$}.
\end{proof}
%

%:     Proposition{propgrldec2}
\begin{proposition} \label{propgrldec2} 
Soit $\gA$ un \advl  
de \ddkz~\hbox{$\leq 1$}. Alors $\DivA$ est de dimension $\leq 1$.
\end{proposition}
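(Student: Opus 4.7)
\medskip\noindent\textbf{Plan de démonstration.}
L'idée est de ramener l'énoncé à une comparaison entre deux \dips $\dv(a)$ et $\dv(b)$, puis d'exploiter directement la \carn \cov de la \ddkz~\hbox{$\leq 1$} de $\gA$. Plus précisément, je veux montrer que pour tous $\xi,\zeta\in(\DivA)^{+}$, on a $\zeta\in\cC(\xi)\boxplus\cC(\xi)\epr$ (condition équivalente à la dimension $\leq 1$ par le lemme~\ref{lemGrl02}). Par Proposition~\ref{cordvla1} on écrit $\xi=\Vi_{i=1}^{p}\dv(a_i)$ et $\zeta=\Vi_{j=1}^{q}\dv(b_j)$ avec $a_i,b_j\in\Atl$, ce qui ram\`ene le probl\`eme au cas des \dipsz.

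\medskip\noindent\textbf{Cas principal.} Soient $a,b\in\Atl$. L'hypoth\`ese $\Kdim\gA\leq 1$ appliquée à la paire $(b,a)$ fournit des entiers $k,\ell\in\NN$ et $x,y\in\gA$ tels que
\[ b^{k}\bigl(a^{\ell}(1+ax)+by\bigr)=0. \]
Comme $\gA$ est int\`egre et $b\in\Atl$, on obtient $a^{\ell}(1+ax)=-by$, donc $\gen{a^{\ell}}\subseteq \gen{a^{\ell+1},b}$, \hbox{d'o\`u $\gen{a^{\ell},b}=\gen{a^{\ell+1},b}$}. En prenant les \dvrs et en utilisant $\dv(\fa)=\Vi \dv(\mathrm{gen.})$, il vient $\ell\dv(a)\vi\dv(b)=(\ell+1)\dv(a)\vi\dv(b)$. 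Par le point~\emph{3} du lemme~\ref{lemGrl02}, cela entra\^{\i}ne $\dv(b)\in\cC(\dv(a))\boxplus\cC(\dv(a))\epr$.

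\medskip\noindent\textbf{Passage au cas général.} D'apr\`es le point~\emph{4} du lemme~\ref{lemGrl02}, pour chaque paire $(i,j)$ il existe $N_{ij}$ tel que $\dv(b_j)\vi m\dv(a_i)=\dv(b_j)\vi(m+1)\dv(a_i)$ pour tout $m\geq N_{ij}$. Posant $N=\max_{i,j}N_{ij}$ et utilisant d'une part la commutation $m(\alpha\vi\beta)=m\alpha\vi m\beta$ dans tout \grlz, d'autre part $(\Vi_j B_j)\vi(\Vi_i A_i)=\Vi_{i,j}(B_j\vi A_i)$, on obtient
\[ \zeta\vi N\xi \;=\; \bigwedge_{i,j}\bigl(\dv(b_j)\vi N\dv(a_i)\bigr) \;=\; \bigwedge_{i,j}\bigl(\dv(b_j)\vi(N+1)\dv(a_i)\bigr) \;=\; \zeta\vi(N+1)\xi. \]
Toujours par le lemme~\ref{lemGrl02}, cela donne $\zeta\in\cC(\xi)\boxplus\cC(\xi)\epr$, et comme $\cC(\xi)\boxplus\cC(\xi)\epr$ est un sous-groupe contenant $(\DivA)^{+}$, on obtient bien $\DivA=\cC(\xi)\boxplus\cC(\xi)\epr$ pour tout $\xi\in(\DivA)^{+}$.

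\medskip\noindent\textbf{Difficulté principale.} Le c\oe ur de la preuve est vraiment l'extraction de l'\egt d'\ids $\gen{a^{\ell},b}=\gen{a^{\ell+1},b}$ à partir du caract\`ere de Krull de dimension~\hbox{$\leq 1$}: tout le reste n'est qu'une manipulation \algq usuelle dans les \grlsz. L'alternative consistant à utiliser d'abord le \thref{thi2clcohidv} pour passer à un \ddp de \ddk $\leq 1$ (puis raisonner sur les \ifrs \ivsz) conduit essentiellement au m\^eme calcul et n'apporte pas de simplification réelle.
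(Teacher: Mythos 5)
Votre démonstration est correcte, mais elle emprunte une route réellement différente de celle du texte. Le texte passe par le \thref{thi2clcohidv} (un \advl de \ddk $\leq 1$ est un \ddpz) puis invoque la propriété de factorisation des \itfs fid\`eles dans un \ddp de dimension $\leq 1$ donnée par \aref{\tho XII-7.2}, en signalant seulement qu'un argument \gui{plus direct} serait possible. Vous réalisez précisément un tel argument direct, et m\^eme plus radical que celui suggéré, puisque vous n'utilisez ni la structure de \ddp ni aucun résultat extérieur: le point clé est l'identité de colmatage qui définit \cot la \ddk $\leq 1$, appliquée au couple $(b,a)$ puis simplifiée par intégrité, ce qui donne $\gen{a^{\ell},b}=\gen{a^{\ell+1},b}$, donc $\dv(b)\vi \ell\,\dv(a)=\dv(b)\vi(\ell+1)\,\dv(a)$, c'est-à-dire exactement le crit\`ere \emph{3} du lemme~\ref{lemGrl02}. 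Le passage des \dips aux diviseurs positifs généraux, via $n(\alpha\vi\beta)=n\alpha\vi n\beta$ (justifiable par le principe de recouvrement cité \paref{Prqgrl}) et l'associativité des bornes inférieures finies, est correct, de m\^eme que la conclusion $\DivA=\cC(\xi)\boxplus\cC(\xi)\epr$ puisque tout \elt de $\DivA$ est différence de deux \elts positifs. Bilan: la rédaction du texte est plus courte sur la page mais s'appuie sur deux résultats antérieurs non triviaux, tandis que la v\^otre est autonome et montre que la proposition ne dépend que de la caractérisation \cov de la dimension de Krull, de l'intégrité de l'anneau et du lemme~\ref{lemGrl02}, sans cohérence ni inversibilité d'\idsz.
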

%--------- fin proposition ---------------------------------------------- 

%
\begin{proof} Les \advls de dimension $\leq 1$ sont des \ddps (\thref{thi2clcohidv}). On utilise alors la \prt de \fcn des \itfs fid\`eles
dans un \ddp de dimension $\leq 1$ donnée par  
\aref{Theorem XII-7.2}.
\\
On pourrait aussi argumenter \gui{plus directement} en montrant que la \ddk d'un \ddp
est égale à la dimension de son groupe des \dvrsz. 
\end{proof}

La proposition qui suit est un corolaire du \thref{lemdvlloc}.

%:     proposition corlemdvlloc
\begin{proposition} \label{corlemdvlloc}
Soit $\gA$ un \advl et $S$ le filtre engendré par \hbox{un $s\in\Atl$} (i.e. l'ensemble des $x$ qui divisent une puissance de $s$). D'apr\`es le \thref{lemdvlloc}, l'anneau $\gA_S$ est aussi un \advlz.
\begin{enumerate}
\item Si $\DivA$ est discret de dimension $1$,
il en va de même pour $\Div\gA_S$.
\item Si en outre $\DivA$ est à \dcnp ou à \dcnb ou à \dcncz,  
il en va de même pour $\gA_S$.
\item Si $\gA$ est \noco \fdiz, alors $\gA_S$
est \egmt \noco \fdiz.
\end{enumerate}
\end{proposition}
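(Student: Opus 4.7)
L'idée est d'utiliser le point \emph{3} du \thref{lemdvlloc} pour identifier $\Div \gA_S$ à un \grl quotient convenable de $\DivA$, puis de transférer les \prts voulues via la proposition~\ref{propQuoGrlDcn} et le lemme~\ref{factQuodiscret}.

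On remarque d'abord que, puisque $\DivA$ est de dimension $\leq 1$, la \dfn~\ref{defdim1grl} fournit la décomposition $\DivA = \cC(\dv(s))\boxplus \cC(\dv(s))\epr$. On vérifie alors que le sous-groupe solide $H_S$ engendré par $\dv(S)$ coïncide avec $\cC(\dv(s))$: l'inclusion $\cC(\dv(s))\subseteq H_S$ est immédiate car $\dv(s)\in H_S$; réciproquement, d'après le point \emph{2} du \thref{lemdvlloc}, tout $\alpha \in H_S^{+}$ vérifie $0\leq \alpha \leq \dv(t)$ pour un $t\in S$, et comme $t$ divise une puissance $s^{n}$, on en déduit $\alpha \leq n\,\dv(s)$, donc $\alpha \in \cC(\dv(s))$. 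L'hypothèse du point \emph{3} du \thref{lemdvlloc} est ainsi satisfaite et l'on obtient $\Div\gA_S \simeq \DivA/\cC(\dv(s)) \simeq \cC(\dv(s))\epr$. Le point \emph{1} de la proposition en résulte: la dimension $\leq 1$ passe au quotient (point~\iref{i1propQuoGrlDcn} de la proposition~\ref{propQuoGrlDcn}), et le lemme~\ref{factQuodiscret} appliqué à $\xi=\dv(s)$, joint à l'hypothèse que $\DivA$ est discret, garantit la discrétude du quotient $\Div\gA_S$.

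Le point \emph{2} se déduit alors par application directe des points~\iref{i3propQuoGrlDcn}, \iref{i4propQuoGrlDcn} et~\iref{i5propQuoGrlDcn} de la proposition~\ref{propQuoGrlDcn}, l'hypothèse de discrétude du quotient requise aux points~\iref{i4propQuoGrlDcn} et~\iref{i5propQuoGrlDcn} ayant été établie au point \emph{1}.

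Enfin, pour le point \emph{3}, comme $S$ est la saturation multiplicative de $\{1,s,s^{2},\ldots\}$, on a $\gA_S = \gA[1/s]$. La conservation de la \cohc, de la noethérianité et de la forte discrétude par une \lon en un \elt \ndz est un résultat classique de \comaz; c'est le seul point où l'argumentation sort du cadre des \grls et des \advlsz, mais il ne présente pas de difficulté conceptuelle et se ramène à citer les résultats standards correspondants de \cite{ACMC}. C'est d'ailleurs essentiellement l'unique obstacle technique, les points \emph{1} et \emph{2} n'étant qu'une synthèse des résultats antérieurs sur les quotients de \grlsz.
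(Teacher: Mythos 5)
Votre démonstration est correcte et suit essentiellement la même voie que celle du papier: identification de $H_S$ avec $\cC\big(\dv(s)\big)$, décomposition $\DivA=\cC\big(\dv(s)\big)\boxplus\cC\big(\dv(s)\big)\epr$ fournie par l'hypothèse de dimension $\leq 1$, puis transfert des propriétés au quotient via le lemme~\ref{factQuodiscret} et la proposition~\ref{propQuoGrlDcn}, le point \emph{3} étant renvoyé au résultat classique de \cite{ACMC}. Vous ne faites qu'expliciter les détails que le papier laisse implicites (en particulier la vérification $H_S=\cC\big(\dv(s)\big)$ à partir du point \emph{2} du théorème~\ref{lemdvlloc}).
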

\begin{proof}  On a $H_S=\cC(\dv(s))=\sotq{\xi\in\DivA}{\exists n\in\NN,\,\abs\xi\leq n\dv(s)}$.

\snii \textsl{1} et \textsl{2.} %et \textsl{4c.} 
On conclut avec le lemme \ref{factQuodiscret} et la proposition \ref{propQuoGrlDcn}.

\snii \textsl{3.} Résultat classique (cf.  \aref{Local-global Principle XII-7.13}).
\end{proof}
%

%:     theorem{lemdvlloc2}
\begin{theorem} \label{lemdvlloc2} ~\\ 
Soient $\gA$ un \advl avec $\DivA$ discret de dimension~$1$, 
 $\alpha$ un \dvr $>0$ 
 \hbox{et $S_{\alpha}=\sotq{x\in\Atl}{\dv (x)\perp \alpha}$}.
Il est clair que $S_\alpha$ est un filtre détachable, et le \thref{lemdvlloc} s'applique. On a les \prts suivantes.  
\begin{enumerate}
\item \label{i1lemdvlloc2} L'anneau $S_\alpha^{-1}\gA$ est un \advl avec $\Div(S_\alpha^{-1}\gA)\simeq\cC(\alpha)$.\\
En particulier $S_\alpha^{-1}\gA$ est un \adv discr\`ete \ssi $\cC(\alpha)$ est isomorphe à $(\ZZ,\geq)$.   
\item \label{i2lemdvlloc2}
\Propeq
\begin{enumerate}
\item  \label{i2alemdvlloc2} $\alpha$ est un \dvr \irdz.
\item  \label{i2Alemdvlloc2} $\cC(\alpha)=\ZZ \alpha$. 
\item  \label{i2blemdvlloc2} $\Idv(\alpha)$ est un \idepz.
\item  \label{i2clemdvlloc2} $S_\alpha$ est un filtre premier de hauteur $\leq 1$
et $\gA=S_\alpha\,\cup\,\Idv(\alpha)$ (union disjointe de deux parties détachables).
\item  \label{i2dlemdvlloc2} $S_\alpha^{-1}\gA$ est un \adv discr\`ete et si $p/1$ est une uniformisante, on a $\alpha=\dvA(p) \mod \cC(\alpha)\epr$.
\end{enumerate}
\item \label{i3lemdvlloc2}  Si en outre l'anneau $\gA$ est \cohz, les quatre ensembles suivants
sont égaux (on rappelle, \thref{lemdivirdadvlgnl}, que $\alpha\mapsto \Idv(\alpha)$ établit une bijection entre les \dvrs \irds et les \idifs premiers $\neq \gen{1}$).
\begin{itemize}
\item Les \idifs premiers  $\neq \gen{1}$. 
\item Les \itfs premiers $\fq\neq \gen{0}$ tels que $\dv(\fq)>0$. 
\item Les \itfs premiers $\fq\neq \gen{0},\gen{1}$ tels que $\fq=\Idv(\fq)$. 
\item Les \itfs premiers détachables 
de hauteur $1$. 
\end{itemize} 
\end{enumerate} 
\end{theorem}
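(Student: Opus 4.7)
Mon plan est de traiter les trois parties successivement en utilisant les résultats antérieurs.

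Pour la partie~\ref{i1lemdvlloc2}, j'applique le Théorème~\ref{lemdvlloc} au filtre $S_\alpha$. Cela fournit directement que $\gA_{S_\alpha}$ est un \advl avec un morphisme surjectif $\varphi_{S_\alpha}:\DivA\to\Div(\gA_{S_\alpha})$. Il reste à identifier $H_{S_\alpha}$ à $\cC(\alpha)\epr$ dans la décomposition $\DivA=\cC(\alpha)\boxplus\cC(\alpha)\epr$, valable puisque $\DivA$ est de dimension $\leq 1$. L'inclusion $H_{S_\alpha}\subseteq\cC(\alpha)\epr$ est immédiate, car $s\in S_\alpha$ équivaut à $\dv(s)\in\cC(\alpha)\epr$. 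Pour l'inclusion inverse, étant donné $\gamma\in\cC(\alpha)\epr$ positif écrit $\gamma=\dv(\fa)$ pour $\fa=\gen{a_1,\dots,a_n}\subseteq\gA$, je décompose chaque $\dv(a_i)=\mu_i+\nu_i$ selon la somme directe, utilise la compatibilité des bornes inférieures avec le produit direct ($\gamma=\Vi_i\mu_i+\Vi_i\nu_i$ avec $\Vi_i\mu_i=0$ par orthogonalité) pour en déduire que $\gamma$ est majoré par un $\dv(s)$ avec $s\in S_\alpha$. Le Théorème~\ref{lemdvlloc}~item~3 donne alors $\Div(\gA_{S_\alpha})\simeq\cC(\alpha)$, et le critère pour être un \adv discret vient du Lemme~\ref{lemVALD}.

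Pour la partie~\ref{i2lemdvlloc2}, j'enchaîne les équivalences: (\ref{i2alemdvlloc2})$\Leftrightarrow$(\ref{i2Alemdvlloc2}) par le Lemme~\ref{lemirreddim1} appliqué à $\DivA$ discret de dimension $\leq 1$; (\ref{i2alemdvlloc2})$\Leftrightarrow$(\ref{i2blemdvlloc2}) par le Théorème~\ref{lemdivirdadvlgnl}; (\ref{i2Alemdvlloc2})$\Leftrightarrow$(\ref{i2dlemdvlloc2}) par la partie~\ref{i1lemdvlloc2} combinée au Lemme~\ref{lemVALD}, en notant que si $p/1$ est une uniformisante de $\gA_{S_\alpha}$, alors $\dv_{\gA_{S_\alpha}}(p)$ est le générateur positif de $\Div(\gA_{S_\alpha})\simeq\cC(\alpha)$, correspondant à $\alpha$, d'où $\dvA(p)\equiv\alpha\pmod{\cC(\alpha)\epr}$. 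Pour (\ref{i2alemdvlloc2})$\Rightarrow$(\ref{i2clemdvlloc2}): le Lemme~\ref{lemirreddim1}~item~3 donne la décomposition unique $\dv(x)=n\alpha+\beta'$ pour $x\in\Atl$, d'où la dichotomie détachable $x\in S_\alpha$ (si $n=0$) ou $x\in\Idv(\alpha)$ (si $n\geq 1$), disjointe car $\alpha>0$ exclut que $\dv(x)$ soit à la fois $\perp\alpha$ et $\geq\alpha$; la hauteur de $S_\alpha$ vaut $1$ puisque $\gA_{S_\alpha}$ est un \adv discret par (\ref{i2dlemdvlloc2}). Enfin (\ref{i2clemdvlloc2})$\Rightarrow$(\ref{i2blemdvlloc2}) est immédiat: le complémentaire d'un filtre premier détachable est un \idep détachable.

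Pour la partie~\ref{i3lemdvlloc2}, l'égalité des trois premiers ensembles est le Corollaire~\ref{corcorlemdivirdadvlgnl}. Pour l'identification avec les \itfs premiers détachables de hauteur~$1$: si $\fq=\Idv(\pi)$ pour un \dvr \ird $\pi$, la partie~\ref{i2lemdvlloc2}~(\ref{i2clemdvlloc2}) assure que $\fq=\gA\setminus S_\pi$ est détachable et que $\gA_\fq=\gA_{S_\pi}$ est un \adv discret, donc $\fq$ est de hauteur~$1$; réciproquement, si $\fq$ est un \itf premier détachable de hauteur~$1$, le Lemme~\ref{cor0lemdvlloc} affirme que $\dv(\fq)$ est un \dvr \irdz, et le Corollaire~\ref{corcorlemdivirdadvlgnl} place $\fq$ dans les trois premiers ensembles. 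Le point délicat de l'ensemble de la \dem est l'identification $H_{S_\alpha}=\cC(\alpha)\epr$ dans la partie~\ref{i1lemdvlloc2}: l'inclusion non triviale exige de majorer un $\gamma\in\cC(\alpha)\epr^{+}$ arbitraire par un diviseur principal $\dv(s)$ avec $s\in S_\alpha$, alors que la composante dans $\cC(\alpha)$ d'un diviseur principal n'est pas elle-même principale en général; il faut exploiter avec soin la compatibilité des opérations réticulaires avec la somme directe et l'orthogonalité pour contourner cette difficulté.
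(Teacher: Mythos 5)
Votre démonstration suit pour l'essentiel le m\^eme chemin que celle de l'article: pour le point \emph{1}, réduction à l'\egt $H_{S_\alpha}=\cC(\alpha)\epr$ puis application du point \emph{3} du \thref{lemdvlloc}; pour les points \emph{2} et \emph{3}, m\^eme cha\^{\i}ne d'\eqvcs via \ref{lemirreddim1}, \ref{lemdivirdadvlgnl}, \ref{corcorlemdivirdadvlgnl} et \ref{cor0lemdvlloc}. Ces deux derniers points sont corrects et conformes au texte.

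La difficulté est dans le point \emph{1}, précisément là o\`u vous tentez d'\^etre plus précis que l'article. L'inclusion non triviale $\cC(\alpha)\epr\subseteq H_{S_\alpha}$ demande, pour tout $\gamma\in(\cC(\alpha)\epr)^{+}$, un $s\in S_\alpha$ tel que $\gamma\leq\dv(s)$. Vous écrivez $\gamma=\Vi_i\dv(a_i)$, vous décomposez $\dv(a_i)=\mu_i+\nu_i$ et vous obtenez $\Vi_i\mu_i=0$ et $\gamma=\Vi_i\nu_i$; mais la conclusion \gui{donc $\gamma$ est majoré par un $\dv(s)$ avec $s\in S_\alpha$} n'est pas tirée. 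Pour exhiber un tel $s$ par cette voie, il faudrait qu'un des $\mu_j$ soit nul (auquel cas $s=a_j$ conviendrait); or $\Vi_i\mu_i=0$ ne le donne pas lorsque les $\mu_i$ ne sont pas deux à deux comparables, et le principe de recouvrement par quotients ne s'applique pas à un énoncé d'existence. Votre derni\`ere phrase reconna\^{\i}t d'ailleurs la difficulté sans la résoudre. Il est vrai que l'article lui-m\^eme affirme $H_{S_\alpha}=\cC(\alpha)\epr$ sans justification; dans le cas d'un \aKr cette inclusion se déduit du \thref{thKrullApproxSim}, mais celui-ci n'est pas disponible sous les seules hypoth\`eses considérées ici. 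Telle quelle, votre rédaction laisse donc un trou au c{\oe}ur du point \emph{1}, dont dépendent ensuite l'\iso $\Div(S_\alpha^{-1}\gA)\simeq\cC(\alpha)$ et les implications \emph{(a)} $\Rightarrow$ \emph{(c)}, \emph{(d)} du point \emph{2}.
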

%--------- fin theorem ---------------------------------------------- 
%
\begin{proof}
\textsl{\ref{i1lemdvlloc2}.} On a $\DivA=\cC(\alpha)\boxplus\cC(\alpha)\epr$. On applique donc le point \textsl{3} du \thref{lemdvlloc}. On a (mêmes notations) $H_{S_\alpha}=\cC(\alpha)\epr$, donc $\Div(S_\alpha^{-1}\gA)\simeq\cC(\alpha)$.\\
Pour la derni\`ere affirmation on applique le lemme~\ref{lemVALD}.

\snii
\textsl{\ref{i2lemdvlloc2}.} Notons d'abord que puisque $\alpha>0$, $\DivA$ n'est pas nul, et $\gA$ n'est pas un corps. Par ailleurs les hypoth\`eses
impliquent que $\gA$ est  \dveez, et que 
$S_\alpha$ et $\Idv(\alpha)$ sont deux parties détachables disjointes de $\gA$.

\snii
\textsl{\ref{i2alemdvlloc2}} $\Leftrightarrow$ \textsl{\ref{i2Alemdvlloc2}.} D'apr\`es le lemme \ref{lemirreddim1}.

\snii
\textsl{\ref{i2alemdvlloc2}} $\Leftrightarrow$ \textsl{\ref{i2blemdvlloc2}.} 
D'apr\`es le \thref{lemdivirdadvlgnl}.

\snii
\textsl{\ref{i2clemdvlloc2}} $\Rightarrow$ \textsl{\ref{i2blemdvlloc2}}, et
\textsl{\ref{i2dlemdvlloc2}} $\Rightarrow$ \textsl{\ref{i2blemdvlloc2}.}
Clair.

\snii
\textsl{\ref{i2alemdvlloc2}} $\Rightarrow$ \textsl{\ref{i2clemdvlloc2}} et
\textsl{\ref{i2dlemdvlloc2}}.
D'apr\`es le point \textsl{\ref{i1lemdvlloc2}}
et puisque $\cC(\alpha)=\ZZ\alpha\simeq \Div(S_\alpha^{-1}\gA)$, l'anneau $S_\alpha^{-1}\gA$ est un \adv discr\`ete, et $\alpha$ vu dans $S_\alpha^{-1}\gA$ est égal 
à $\dv(\fraC p1)$ si $p$ engendre le radical. Il reste à voir que le complémentaire de~$S_\alpha$ est bien l'\id $\Idv(\alpha)$ (qui est alors premier de hauteur 1 par \dfnz). 
Tout \dvrz~\hbox{$\xi\geq 0$} s'écrit $n_\xi\alpha+\rho$ avec $n_\xi\in\NN$ et $\rho\perp\alpha$ (lemme~\ref{lemirreddim1}). Par \dfn $\Idv(\alpha)=\sotq{x\in\gA}{\xi=\dvA(x)\geq \alpha}$. Dans la \dcn précédente, cela signifie que $n_\xi>0$, tandis \hbox{que 
$x\in S_\alpha$} signifie~\hbox{$n_\xi=0$}. On a donc bien deux parties \cops de $\gA$. 

\snii
\textsl{\ref{i3lemdvlloc2}.}
Les trois premiers ensembles sont  égaux 
d'apr\`es le corolaire \ref{corcorlemdivirdadvlgnl}. 
Il reste à montrer que $\fp$ est un \itf premier détachable de hauteur $1$ \ssi on a $\dv(\fp)>0$.
\\
L'implication $\Leftarrow $ est donnée par \textsl{\ref{i2blemdvlloc2}} $\Rightarrow$ \textsl{\ref{i2clemdvlloc2}.} 
\\
L'implication $\Rightarrow $ est donnée par le lemme \ref{cor0lemdvlloc}.
\end{proof}
%

%%%%%%%%%%%%%%%%%%%%%%%%%%%%%%%%%%%%%%%%%%%%%%%%%%%%%%%%%%%%%%%%%%%%%%%%%%%
%: subsec{Stabilité pour les anneaux de \pols
\subsec{Stabilité pour les anneaux de \polsz}

\fbox{Dans ce paragraphe, $\gA$ est un \advlz}. 

\smallskip Pour~\hbox{$p\in\AuX=\AXn$}, on note~$\rc(p)$ pour $\rc_{\gA,\uX}(p)$.

%:     Lemma{lemth2AnnDivl}
\begin{lemma} \label{lemth2AnnDivl}
Une fraction $p/q$ dans $\gK(\uX)$ (avec $p$ et $q\in\AuX$) est dans~$\AuX$ \ssi  $p/q\in\KuX$ et $\dv(\rc(q))\leq \dv(\rc(p))$.  Autrement dit, pour $p$, $q\in\AuX$, $q$ divise $p$ dans $\AuX$ \ssi $q$ divise $p$ dans $\KuX$ et $\dv\big(\rc(q)\big)\leq \dv\big(\rc(p)\big)$. 
\end{lemma}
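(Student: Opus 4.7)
L'ingrédient clé est la formule de Kronecker divisorielle du corollaire \iref{corprop2Idv}, à savoir $\dv(\rc(fg))=\dv(\rc(f))+\dv(\rc(g))$ pour $f,g\in\AuX$, combinée avec la \carn de la divisibilité dans $\Atl$ donnée par la proposition \iref{cordvla1}, point \emph{1(a)}.

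Pour le sens direct, supposons que $q$ divise $p$ dans $\AuX$, donc $p=qh$ avec $h\in\AuX$. Alors trivialement $q\mid p$ dans $\KuX$, et par le corollaire \iref{corprop2Idv} on a $\dv(\rc(p))=\dv(\rc(q))+\dv(\rc(h))\geq \dv(\rc(q))$, puisque $\dv(\rc(h))\geq 0$ (car $\rc(h)\subseteq \gA$).

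Pour le sens réciproque, supposons $p/q\in\KuX$ et $\dv(\rc(q))\leq \dv(\rc(p))$. On écrit $p/q=r/s$ avec $r\in\AuX$ et $s\in\Atl$, d'où l'égalité $sp=qr$ dans $\AuX$. Le corollaire \iref{corprop2Idv} donne alors
\[
\dv(s)+\dv(\rc(p))\,=\,\dv(\rc(sp))\,=\,\dv(\rc(qr))\,=\,\dv(\rc(q))+\dv(\rc(r)),
\]
d'où $\dv(\rc(r))-\dv(s)=\dv(\rc(p))-\dv(\rc(q))\geq 0$, autrement dit $\dv(s)\leq \dv(\rc(r))$. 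D'après la proposition \iref{cordvla1} (point \emph{1(a)}), ceci signifie exactement que $s$ divise dans $\Atl$ chacun des \coes de $r$. Par suite $r/s\in\AuX$, c'est-à-dire $p/q\in\AuX$, comme souhaité.

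L'argument est pour l'essentiel routinier; le seul point à surveiller est la traduction correcte de l'in\egt $\dv(s)\leq \dv(\rc(r))$ en la divisibilité coefficient par coefficient, qui repose bien sur la \carn de $\leq$ dans $\DivA$ en termes de pgcd et non seulement en termes de contenus d'idéaux.
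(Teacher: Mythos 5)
Votre démonstration est correcte et suit essentiellement la même voie que celle du texte: écrire $p/q=r/s$ avec $s\in\Atl$, appliquer le corollaire \iref{corprop2Idv} à l'égalité $sp=qr$ pour obtenir $\dv(s)\leq \dv\big(\rc(r)\big)$, puis conclure que $s$ divise chaque coefficient de $r$. Vous explicitez en outre le sens direct et la référence à la proposition \iref{cordvla1}, ce qui ne change rien au fond de l'argument.
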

%--------- fin lemma ---------------------------------------------- 
%
\begin{proof} La condition est évidemment \ncrz. Montrons qu'elle est suffisante. Puisque~$q$ divise $p$ dans $\KuX$ on écrit $qr=ap$ avec $a\in\Atl$
et $r\in\AuX$. On veut montrer que $r/a\in\AuX$. Or $\dv\big(\rc(r)\big)\geq \dv(a)$, car 

\snic{\dv\big(\rc(q)\big)+\dv\big(\rc(r)\big)=\dv(a)+\dv\big(\rc(p)\big).}

\noindent  Et puisque $a\in\Atl$
cela signifie que $a$ divise tous les \coes de $r$.
\end{proof}

Le \tho suivant est une version \cov non \noee du \tho qui affirme que si $\gA$ est un anneau de Krull, il en est de même pour~$\AX$ (voir le \thref{thKrAX} et \cite[\tho 12.4]{Mat}).
%:     Theorem{th2AnnDivl}
\begin{theorem} \label{th2AnnDivl}
Soit $\gA$ un \advl de corps de fractions $\gK$. L'anneau~$\AuX$ est \egmt
\dvlaz. En outre on a un \iso naturel de \grls 
\[ 
\begin{array}{rcl} 
\Div(\AuX)  & \simarrow  & \Div(\KuX)\times \DivA ,\;\hbox{avec} \\[.3em] 
\dv_{\AuX}(f)  & \longmapsto  &  \big(\dv_{\KuX}(f),\dvA(\rc(f)\big)\;\;\hbox{pour }(f\in\AuX).  \end{array}
\]
%\noindent  En particulier si $\DivA$ est discret, il en va de même pour $\Div(\AX)$.
\end{theorem}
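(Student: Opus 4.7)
The plan is to exhibit an explicit candidate for $\bigl(\dv_{\AuX}, \Div(\AuX)\bigr)$ satisfying axioms $\rD_1'$, $\rD_2'$, $\rD_3'$ of the divisoriel project, then invoke the uniqueness part of Theorem~\ref{thADVL} to conclude simultaneously that $\AuX$ is an \advl and that $\Div(\AuX)$ has the claimed structure. As candidate target I take $G := \Div(\KuX) \times \DivA$, which is a lattice-ordered group as a product of two such (note that $\KuX$ is a UFD, hence an \advl with principal divisor group). The candidate map $\phi : \gK(\uX)\etl \to G$ is defined for $p,q \in \AuX\etl$ by $\phi(p/q) := \bigl(\dv_{\KuX}(p/q),\, \dvA(\rc(p)) - \dvA(\rc(q))\bigr)$. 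Well-definedness on $\gK(\uX)\etl$ and the morphism property reduce to Corollary~\ref{corprop2Idv}, which gives $\dvA(\rc(pq)) = \dvA(\rc(p)) + \dvA(\rc(q))$. Axiom $\rD_1'$ is then automatic, and axiom $\rD_2'$ is precisely Lemma~\ref{lemth2AnnDivl}: the inequality $\phi(p/q) \geq 0$ translates coordinatewise into $\dv_{\KuX}(q) \leq \dv_{\KuX}(p)$ together with $\dvA(\rc(q)) \leq \dvA(\rc(p))$, which is exactly $p/q \in \AuX$.

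The heart of the argument is axiom $\rD_3'$: every element of $G$ must be a finite meet of elements of $\phi(\gK(\uX)\etl)$. Given $(\alpha, \beta) \in G$, use that $\KuX$ is a GCD domain to write $\alpha = \dv_{\KuX}(p/q)$ for some $p, q \in \AuX\etl$ (clear denominators if necessary). Set the content-offset $\gamma := \dvA(\rc(p)) - \dvA(\rc(q)) \in \DivA$, and by Proposition~\ref{lemsupfinidivpr} write the corrected divisor $\beta - \gamma$ in the form $\dvA(c_1,\dots,c_l) - \dvA(b)$ with $c_j \in \gA$ and $b \in \Atl$. For each $j$ put $f_j := p c_j / (qb) \in \gK(\uX)\etl$; since $b$ and $c_j$ are units in $\KuX$, the first coordinate of $\phi(f_j)$ equals $\alpha$, while the second equals $\gamma + \dvA(c_j) - \dvA(b)$. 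Taking the componentwise meet in the product $G$ and using that translation by $\gamma$ is a lattice automorphism,
\[
\Vi_{j=1}^{l} \phi(f_j) \;=\; \bigl(\alpha,\ \gamma + \dvA(c_1,\dots,c_l) - \dvA(b)\bigr) \;=\; (\alpha, \beta),
\]
which establishes $\rD_3'$.

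The main obstacle is the bookkeeping in this last step: any representative $p/q$ realizing $\alpha$ in $\Div(\KuX)$ inevitably carries an extraneous offset $\gamma$ on the content coordinate, and one has to absorb it through a carefully chosen finite family of scalar adjustments $c_j/b$ so that a single componentwise meet reconstitutes exactly $\beta$. Once $\rD_1'$--$\rD_3'$ are in place, Theorem~\ref{thADVL} supplies the unique isomorphism $\Div(\AuX) \simarrow G$ sending $\dv_{\AuX}(f)$ to $\phi(f) = \bigl(\dv_{\KuX}(f), \dvA(\rc(f))\bigr)$ for every $f \in \AuX\etl$, which is the asserted formula.
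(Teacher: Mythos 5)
Your proposal is correct and follows essentially the same route as the paper: both exhibit the explicit map into $\Div(\KuX)\times\DivA$, check $\rD'_1$ and $\rD'_2$ via Corollary~\ref{corprop2Idv} and Lemma~\ref{lemth2AnnDivl}, establish $\rD'_3$ by an explicit finite meet, and conclude by the uniqueness in Theorem~\ref{thADVL}. The only (harmless) difference is in the realization of $\rD'_3$: you take the meet of a family $pc_j/(qb)$ all sharing the same first coordinate, whereas the paper takes the meet of just two elements $bu$ and $uvr/a$, one pinning each coordinate.
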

%--------- fin theorem ---------------------------------------------- 
%
\begin{proof} ~\\ 
Le groupe de \dve de $\AuX$ est $G=\gK(\uX)\eti/\Ati$, celui de $\KuX$ est
$H=\gK(\uX)\eti/\Ktl$. \\
Puisque $\KuX$ est un anneau à pgcd, $\Div(\KuX)$
est simplement le groupe~$H$, en passant en notation additive. 
\\
Pour un  $f\in\gK(\uX)\eti$ nous notons $f_G$ sa
classe dans $G$ et  $f_H$ sa classe dans~$H$. 
 Nous notons~$\preceq$ la relation de \dve dans ces groupes.

\noindent 
On a un morphisme de groupes ordonnés 
${\varphi:G\to G'=H\times \DivA}$
donné par 

\snic{\varphi(f_G)=\left(f_{H},\dv\big(\rc(p)\big)-\dv\big(\rc(q)\big)\right)
\qquad \hbox {où $f = p/q$ avec  $p$, $q\in\AuX$}.}

\noindent 
D'apr\`es le lemme \iref{lemth2AnnDivl}, ce morphisme est un \iso sur son image,
ce qui permet d'identifier~$G$ à un sous-groupe ordonné de $G'$.
\\
On va montrer que $\varphi$ satsifait les requêtes du projet divisoriel 2.
Il nous reste à voir que tout \elt $\Delta=(f_{H},\delta)$ du \grlz~$G'$ est \bif d'une famille finie dans $G$. 
On peut supposer $\Delta\geq 0$. Cela signifie que $\delta\geq 0$
et $f_{H}=u_{H}$ pour un~$u$ dans~$\AuX$. 
Cela donne~\hbox{$\Delta=(u_{H},\delta)$}.
\\
Pour $b\in\Atl$, on a $\dv\big(\rc(bu)\big)=\dv(b)+\dv\big(\rc(u)\big)$. 
\\
Soient $b$ tel
que $\dv(b)\geq \delta-\dv\big(\rc(u)\big)$ et  $w=bu\in\AuX$, alors
$\varphi(w_{G})=(u_{H},\delta_1)$ \hbox{avec $\delta_1\geq \delta$}.
\\
L'\id $\rc(u)$ de $\gA$ admet un \ivda qui peut être écrit sous la 
forme~$\rc(v)$ pour \hbox{un $v\in\AuX$}. On a donc

\snic{\dv\big(\rc(uv)\big)=\dv\big(\rc(u)\big)+\dv\big(\rc(v)\big)=\dv(a)}

\noindent  pour un $a\in\Atl$ (voir le
lemme \iref{corprop2Idv}).  En outre $\delta=\dv\big(\rc(r)\big)$ avec~\hbox{$r\in\AuX$}.
\\  
On pose $w'=uvr/a$, on a
$\varphi(w'_G)=\big((uvr)_{H},\delta\big)$, \hbox{et $u_{H}\preceq (uvr)_{H}$}.

\noindent Finalement $\Delta=(u_{H},\delta)=\varphi(w_G)\vi\varphi(w'_G)$ dans $G'$.
\end{proof}

Un corolaire \imd est le \tho suivant.
%:     Theorem{th4AnnDivl}
\begin{theorem} \label{th4AnnDivl} \label{thKrAX}
Soit $\gA$ un \advlz.
 \label{i1th4AnnDivl} Si le \grl  $\DivA$ est discret, ou de dimension $1$, ou   à \dcnpz,
ou a \dcnbz ,  
il en va de même pour $\Div(\AX)$.
\end{theorem}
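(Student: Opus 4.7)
Le plan est d'obtenir ce \tho comme corollaire \imd du \thref{th2AnnDivl} appliqué avec $\uX=X$, qui fournit l'\iso canonique de \grls
\[
\Div(\AX) \simeq \Div(\gK[X]) \times \DivA.
\]
Il suffit d\`es lors de vérifier deux points: que $\Div(\gK[X])$ poss\`ede d'office chacune des quatre \prts considérées, et que ces \prts sont stables par produit direct fini dans la catégorie des \grlsz.

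Pour le premier point, $\gK[X]$ est un anneau de \pols en une variable sur un corps discret, donc un anneau à pgcd via l'algorithme d'Euclide. Son groupe des \dvrs est discret (la \dve est décidable par division euclidienne) et à \dcnbz: pour $f\in\gK[X]\setminus\so{0}$, la borne $n=\deg(f)+1$ convient, car toute \dcn $\dv(f)=\sum_{j=1}^n\xi_j$ en termes positifs s'écrit $\xi_j=\dv(g_j)$ avec $f=u\prod_jg_j$ pour un $u\in\Ktl$, d'o\`u $\sum_j\deg(g_j)=\deg(f)<n$, donc au moins un $g_j$ est de degré~$0$ et $\xi_j=0$. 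La proposition~\iref{propgrldec} (points \iref{i1propgrldec} et~\iref{i9propgrldec}) entra\^{\i}ne alors que $\Div(\gK[X])$ est aussi à \dcnpz\ et de dimension $\leq 1$. Les quatre \prts sont donc satisfaites sans condition par le facteur de gauche.

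Pour le second point, soient $G_1$ et $G_2$ deux \grls et $G=G_1\times G_2$. Le caract\`ere discret de $G$ découle trivialement de celui des facteurs. Si $G_1$ et $G_2$ sont à \dcnbz, avec des bornes respectives $n_1$ et $n_2$ pour $x_1$ et $x_2$, alors $n=n_1+n_2-1$ convient pour $(x_1,x_2)\in G^{+}$: une \dcn en $n$ termes $(y_{1,j},y_{2,j})\geq 0$ sans terme nul forcerait l'existence d'au moins $n_1$ indices $j$ avec $y_{1,j}>0$, ou d'au moins $n_2$ indices $j$ avec $y_{2,j}>0$, contredisant l'hypoth\`ese de \dcnbz\ dans le facteur correspondant. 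Pour la \dcnpz, on calcule une \bdp coordonnée par coordonnée et l'on prend la réunion disjointe, plongée dans $G_1\times\so{0}$ et $\so{0}\times G_2$. Enfin, si $G_1$ et $G_2$ sont de dimension $\leq 1$, étant donnés $\alpha$, $\beta\in G^{+}$, on décompose~\hbox{$\beta_i=\beta_i'+\beta_i''$} avec $\beta_i'\leq n_i\alpha_i$ et $\beta_i''\perp\alpha_i$ dans chaque facteur (lemme~\iref{lemGrl02}), puis l'on assemble composante par composante avec~\hbox{$n=\max(n_1,n_2)$}.

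Il n'y a aucune véritable difficulté à prévoir: l'\iso fourni par le \thref{th2AnnDivl} fait tout le travail, et les vérifications restantes sont des manipulations formelles sur les \grlsz.
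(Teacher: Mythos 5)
Votre démonstration est correcte et suit exactement la même voie que celle de l'article: on invoque l'isomorphisme $\Div(\AX)\simeq\Div(\KX)\times\DivA$ du \thref{th2AnnDivl}, on constate que $\Div(\KX)$ est discret et à décomposition bornée (donc a fortiori à décomposition partielle et de dimension $\leq 1$), et on conclut par stabilité de ces propriétés par produit fini de groupes réticulés. La seule différence est que vous explicitez les vérifications (borne $\deg f+1$ pour $\KX$, stabilité par produit) que l'article laisse au lecteur, ce qui est tout à fait bienvenu.
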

%--------- fin theorem ---------------------------------------------- 
%
\begin{proof} Ceci résulte de ce que $\Div(\AX)\simeq\DivA\times \Div(\KX)$ (\thref{th2AnnDivl})
et de ce \hbox{que $\Div(\KX)$} est discret et à \dcnb (à fortiori de dimension $1$ et à \dcnpz).
\end{proof}
%

%: subsec{Stabilité extensions enti\`eres
\subsec{Stabilité pour les extensions enti\`eres intégralement closes}

%:     Lemma{lemth1AnnDivl}
\begin{lemma} \label{lemth1AnnDivl}
Soit $\gA$ un \acl de corps de fractions~$\gK$ \hbox{et $\gL\supseteq \gK$} un \cdiz. Soit $\gB$ la \cli de $\gA$ dans $\gL$.
Soient $a$, $a'$,   $a_1$, \dots, $a_n$ dans $\gA$.
\begin{enumerate}
\item  L'\elt $a$ divise $a'$ dans $\gA$ \ssi il divise $a'$ 
dans~$\gB$.
\item L'\elt $a$ est un pgcd fort de $(\an)$
dans $\gA$ \ssi il l'est dans~$\gB$.
%
%\item 
%
\end{enumerate}
\end{lemma}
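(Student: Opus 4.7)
Mon plan pour le point \emph{1} est direct: l'implication $\gA\Rightarrow \gB$ est triviale; pour la réciproque, si $a\divi a'$ dans $\gB$ alors $a'/a\in\gB\cap\gK$, et puisque $\gA$ est \icl dans $\gK$, on a $\gB\cap\gK=\gA$, d'o\`u $a'/a\in\gA$.

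Pour le point \emph{2}, j'utiliserai la \carn \emph{(d)} de la proposition~\ref{defiStrongPGCD}, selon laquelle $g$ est pgcd fort de $(\an)$ dans un anneau int\`egre $R$ de corps de fractions $F$ \ssi l'\egt $\frac{1}{g}R=\bigcap_i\frac{1}{a_i}R$ a lieu dans~$F$. L'implication $\gB\Rightarrow \gA$ s'en déduit directement: l'inclusion $\frac{1}{a}\gA\subseteq \bigcap_i\frac{1}{a_i}\gA$ vient du point~\emph{1} (car $a$ divise alors les $a_i$ dans $\gA$); réciproquement, pour $y\in\gK$ avec $ya_i\in\gA$ pour tout~$i$, on a a fortiori $ya_i\in\gB$, d'o\`u $ya\in\gB$ par l'hypoth\`ese sur $\gB$, et comme $ya\in\gK$ il vient $ya\in\gB\cap\gK=\gA$.

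L'implication $\gA\Rightarrow \gB$ est plus délicate. Supposons $a$ pgcd fort de $(a_i)$ dans $\gA$; on écrit $a_i=a\,b_i$ (les $b_i$ sont dans $\gA$ par le point~\emph{1}). La condition \emph{(d)}, multipliée par $a$, entra\^{\i}ne $\bigcap_i\frac{1}{b_i}\gA=\gA$, ce qui signifie que $1$ est pgcd fort de $(b_i)$ dans $\gA$, \cad que la suite $(b_i)$ est de \profz~\hbox{$\geq 2$} dans~$\gA$. Il restera alors à vérifier que pour $y\in\gL$, l'hypoth\`ese $yb_i\in\gB$ pour tout $i$ entra\^{\i}ne $y\in\gB$.

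L'argument clef utilisera le polynôme minimal et reposera de mani\`ere essentielle sur l'intégrale cl\^oture de $\gA$. Un tel $y$ est algébrique sur $\gK$ puisque $yb_1\in\gB$ est entier sur $\gA$ et que $b_1$ est non nul dans $\gK$; je noterai $m(X)=X^n+c_1 X^{n-1}+\cdots+c_n\in\gK[X]$ son polynôme minimal. Un calcul direct montre que le polynôme minimal de $yb_i$ sur $\gK$ est alors $b_i^n\,m(X/b_i)=X^n+b_ic_1 X^{n-1}+\cdots+b_i^n c_n$. Puisque $yb_i$ est entier sur $\gA$ et que $\gA$ est \icl, ce polynôme est à coefficients dans $\gA$, de sorte que $b_i^j c_j\in\gA$ pour tous $i$ et $j$. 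Or la suite $(b_i^j)$ reste de \profz~\hbox{$\geq 2$} dans $\gA$ (rappel du début de la section~\ref{secTheoDiviseurs}), donc $1$ est pgcd fort de $(b_i^j)$ dans $\gA$, et la \carn \emph{(d)} donnera $c_j\in\bigcap_i\frac{1}{b_i^j}\gA=\gA$ pour chaque $j$. Ainsi $m(X)\in\gA[X]$ et $y\in\gB$. La difficulté principale sera de reconna\^{\i}tre que le polynôme minimal de $yb_i$ a bien ses coefficients dans $\gA$, ce qui repose précisément sur l'hypoth\`ese $\gA$ \iclz.
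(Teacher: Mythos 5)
Votre point \emph{1} et la direction \gui{pgcd fort dans $\gB$ $\Rightarrow$ pgcd fort dans $\gA$} coïncident avec la démonstration de l'article, et votre réduction du sens direct (écrire $a_i=ab_i$ et se ramener à une liste $(b_i)$ de profondeur $\geq 2$) est équivalente à la normalisation \gui{$a=1$} du texte. L'argument final est lui aussi celui de l'article: les \coes d'un \pol unitaire annulant $y$, multipliés par les puissances $b_i^{j}$, tombent dans $\gA$ par le théorème de Kronecker, puis la profondeur $\geq 2$ de la liste $(b_1^{j},\dots,b_n^{j})$ force chaque $c_j$ dans $\gA$.

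Le point qui ne passe pas, dans le cadre \cof de l'article, est l'appel au polynôme minimal de $y$ sur $\gK$ (l'argument est correct en mathématiques classiques, mais pas ici). Les hypothèses ne permettent pas de le calculer: $\gL$ est seulement un \cdi contenant $\gK$, sans base finie, $\gK$ n'est pas supposé détachable dans $\gL$, et on ne dispose d'aucun algorithme de factorisation dans $\gK[X]$. On a bien un \pol unitaire explicite annulant $y$, à savoir $g_1(b_1X)/b_1^{m_1}$ où $g_1\in\gA[X]$ est un annulateur unitaire de $yb_1$, mais décider si un diviseur unitaire strict de ce \pol annule encore $y$ est impossible en général. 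C'est exactement l'obstacle que l'article contourne: il remplace le polynôme minimal par le pgcd unitaire $f$ des $n$ \pols explicites $f_i(X)=g_i(b_iX)/b_i^{m_i}$, calculable par l'algorithme d'Euclide dans $\gK[X]$ puisque $\gK$ est un \cdiz. Ce pgcd fait tout ce que vous demandez au polynôme minimal: il annule $y$, il est unitaire, et $b_i^{\deg f}f(X/b_i)$ divise le \pol unitaire $g_i\in\gA[X]$, donc a ses \coes dans $\gA$ par Kronecker puisque $\gA$ est \iclz. Notez enfin que la difficulté que vous annoncez en conclusion n'est pas la bonne: une fois un annulateur unitaire commun obtenu, Kronecker règle la question des \coesz; la vraie difficulté est de produire cet annulateur commun sans invoquer le polynôme minimal.
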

%--------- fin lemma ---------------------------------------------- 
%
\begin{proof} \textsl{1.} Supposons que $x=a'/a\in\gK$ soit dans $\gB$, on doit montrer qu'il est dans~$\gA$. Cela résulte de ce que
tout \elt de $\gB$ est entier \hbox{sur $\gA$: $x\in\gK\cap\gB$} est entier sur~$\gA$
donc dans $\gA$. 

\snii \textsl{2.}
Vu le point \textsl{1}, 
l'affirmation dans $\gB$ est plus forte que celle dans $\gA$.
Comme la notion de pgcd fort est stable par multiplication par un \elt de
$\Atl$, on peut supposer que $a=1$ est pgcd fort des $a_i$
 dans $\gA$ et on doit montrer qu'il l'est dans $\gB$.
Ainsi on doit montrer que \hbox{si  $b\in\gB$} divise $ya_1,\dots,ya_n$ dans $\gB$
($y\in\gB$),
alors l'\elt $z=y/b$ de $\gL$ est  dans $\gB$.
Pour cela il suffit de montrer que $z$ est entier sur $\gA$.
Par hypoth\`ese $za_i\in\gB$ pour chaque $i$.\\
%Supposons un moment conna\^{\i}tre le \polmin de $z$ sur $\gK$, disons 
%
%\snic{f(X)=X^m+\sum_{k<m}c_{m-k}X^k.}
%
%\noindent Alors $a_i^mf(X/a_i)$ est le \polmin de $za_i$ sur
%$\gK$, donc ses \coes $c_1a_i$, $c_2a_i^{2}$, \dots, $c_ma_i^{m}$ sont dans $\gA$ par le \tho de Kronecker,
%parce que $\gA$ est \iclz.
%Par exemple \hbox{les $c_2a_i^2$} sont dans~$\gA$. Les~$a_i^{2}$ sont 
%de pgcd fort $1$ donc $c_2\in\gA$. De même chaque $c_k$ est dans $\gA$.
%Et l'on a bien $z$ entier sur~$\gA$. \\
%Dans la situation \gnle où l'on ne conna\^{\i}t pas le \polminz,
%on dispose néanmoins de \pols unitaires $f_i\in\AX$ qui annulent respectivement les $za_i$, et fournissent autant de \pols de $\KX$ qui annulent $z$.
%On peut calculer dans $\KX$ le pgcd unitaire $f$ de ces derniers \polsz, qui va nous servir de substitut pour le \polmin de $z$ sur $\gK$, car le raisonnement précédent fonctionne aussi avec ce nouveau $f$, l'important étant que pour chaque $i$,
%le \pol $a_i^mf(X/a_i)$ divise  $f_i$ dans $\KX$.
On dispose de \pols unitaires $g_i\in\AX$ qui annulent les $za_i$, et fournissent autant de \pols de $\KX$ qui annulent $z$: $f_i(z)=g_i(za_i)/a_i^{m_i}$.
On peut calculer dans $\KX$ le pgcd unitaire $f$ de ces derniers \polsz

\snic{f(X)=X^m+\sum_{k<m}c_{m-k}X^k.}

\noindent  Alors pour chaque $i$,
le \pol $a_i^mf(X/a_i)$ divise  $g_i$ dans $\KX$, donc ses \coes $c_1a_i$, $c_2a_i^{2}$, \dots, $c_ma_i^{m}$ sont dans $\gA$ par le \tho de Kronecker, parce que $\gA$ est \iclz. Par exemple \hbox{les $c_2a_i^2$} sont dans~$\gA$. Les~$a_i^{2}$ sont 
de pgcd fort $1$ donc $c_2\in\gA$. De même chaque $c_k$ est dans $\gA$.
Et l'on a bien $z$ entier sur~$\gA$. 
\end{proof}

%:     notation  notaPolKro
\begin{definota} \label{notaPolKro} 
Pour toute liste  $(\an)=(\ua)$ dans $\gA$, on note $K_{(\ua)}(T)$
le \polz~\hbox{$\sum_{j=1}^{n}a_jT^{j-1}$}. On dit que c'est \textsl{le \pol de Kronecker associé à la liste ordonnée~$(\ua)$}.
\\
 Si $(\ub)=(\bbm)$ est une autre liste on définit $(\ua)\star(\ub)$
 par l'\egt  $K_{(\ua)\star(\ub)}=K_{(\ua)}\,K_{(\ub)}$.
 \\
Si $\gA$ est un \advlz, le corolaire \ref{corprop2Idv} implique que 

\snic{\dvA(\ua)+\dvA(\ub)=\dvA\big((\ua)\star(\ub)\big).}
\end{definota}

%:     Lemma{lem2th1AnnDivl}
\begin{lemma} \label{lem2th1AnnDivl}
Soit $\gA$ un \advl de corps de fractions~$\gK$ \hbox{et $\gL\supseteq \gK$} un \cdiz. Soit $\gB$ la \cli de $\gA$ dans $\gL$.
 \\ 
Pour toute liste   $(\ub)=(\bn)$ dans $\Btl$ il existe une liste $(\ub')=(b'_1,\dots,b'_m)$ telle que la liste $(\ub)\star(\ub')$ soit dans $\gA$ et admette un pgcd fort dans $\Atl$.
\end{lemma}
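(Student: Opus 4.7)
L'idée générale est de fabriquer $(\ub')$ en deux temps. D'abord, je construis un facteur $q_0$ tel que $K_{(\ub)}\cdot q_0 \in\AuX$ via une norme galoisienne; ensuite, je multiplie par un polynôme $q_1\in\AuX$ dont le contenu est une inverse divisorielle du contenu de $K_{(\ub)}\cdot q_0$, ce qui force l'apparition d'un pgcd fort. L'ingrédient clé est le corollaire~\ref{corprop2Idv} ($\dvA(\rc(fg))=\dvA(\rc(f))+\dvA(\rc(g))$ dans un \advlz).

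Puisque chaque $b_i\in\gB$ est entier sur $\gA$, on peut supposer $\gL=\gK(b_1,\ldots,b_n)$, extension finie de $\gK$. Soit $\gL'$ une extension normale finie de $\gK$ contenant $\gL$ (obtenue en scindant un multiple commun des polynômes minimaux des $b_i$, par exemple via l'anneau de décomposition universel), et soit $\gC$ la clôture intégrale de $\gA$ dans $\gL'$. En posant $p(T)=K_{(\ub)}(T)\in\gC[T]$, je forme
\[
P(T)=\prod_\sigma \sigma(p)(T)\quad\text{et}\quad q_0(T)=\prod_{\sigma\neq\mathrm{id}}\sigma(p)(T),
\]
où $\sigma$ parcourt les $\gK$-plongements de $\gL$ dans $\gL'$. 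Par invariance galoisienne, $P\in\gK[T]$; ses coefficients étant des expressions polynomiales en les $\sigma(b_i)\in\gC$, ils sont entiers sur $\gA$, donc dans $\gA$ puisque $\gA$ est intégralement clos (\thref{prop2Idv}). Ainsi $P=p\cdot q_0\in\AuX$.

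L'\itf $\fa_P=\rc_\gA(P)$ est non nul dans $\gA$. Comme $\gA$ est un \advlz, il existe un \itf $\fa'=\gen{a'_1,\ldots,a'_r}_\gA$ tel que $\fa_P\,\fa'$ admette un pgcd fort $g\in\Atl$. Je pose $q_1(T)=\sum_{k=1}^r a'_kT^{k-1}\in\AuX$, de sorte que $\rc_\gA(q_1)=\fa'$. Je prends alors $(\ub')$ égal à la liste des coefficients de $q_0\cdot q_1\in\gC[T]$, de sorte que $(\ub)\star(\ub')$ est la liste des coefficients de $p\cdot q_0\cdot q_1=P\cdot q_1\in\AuX$. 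Par le corollaire~\ref{corprop2Idv} appliqué dans l'\advl $\gA$:
\[
\dvA\bigl(\rc(Pq_1)\bigr)=\dvA\bigl(\rc(P)\bigr)+\dvA\bigl(\rc(q_1)\bigr)=\dvA(\fa_P)+\dvA(\fa')=\dvA(g),
\]
d'où, par la proposition~\ref{cordvla1}, $g$ est un pgcd fort des coefficients de $P\,q_1$ dans $\Atl$, ce qui termine.

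Le principal obstacle est la construction constructive de $\gL'$ et de l'action des $\gK$-plongements $\sigma$; ceci relève des outils standard de l'algèbre constructive (anneau de décomposition universel), mais demande quelques précautions hors du cas séparable. Une fois cette étape acquise, tout le reste se ramène à un usage soigneux de la formule $\dvA(\rc(fg))=\dvA(\rc(f))+\dvA(\rc(g))$ valide sur l'\advl $\gA$, et à la possibilité de fabriquer une inverse divisorielle de n'importe quel \itfz.
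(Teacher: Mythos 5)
Votre stratégie générale --- multiplier $K_{(\ub)}$ par un cofacteur pour retomber dans $\gA[T]$, puis composer avec une inverse divisorielle du contenu et conclure par le corollaire \ref{corprop2Idv} --- est bien celle de la démonstration du texte, et la seconde moitié de votre argument (le choix de $q_1$ et le calcul de $\dvA\big(\rc(Pq_1)\big)$) est correcte. Mais la construction du cofacteur $q_0$ comme produit de conjugués $\prod_{\sigma\neq\mathrm{id}}\sigma(p)$ comporte une lacune réelle, que vous signalez vous-même sans la combler. D'une part, la fabrication d'une extension normale finie $\gL'$ et l'énumération des $\gK$-plongements exigent en général de factoriser des polynômes sur $\gK$, ce qui n'est pas disponible constructivement pour un corps discret arbitraire (et c'est précisément le type d'hypothèse que l'article veut éviter). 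D'autre part, et c'est plus grave, l'argument échoue pour une extension inséparable: si $\gL=\gK(b)$ avec $b^p=a\in\gK$ et $b\notin\gK$ en caractéristique $p$, il n'y a qu'un seul $\gK$-plongement, le produit $P=\prod_\sigma\sigma(p)$ se réduit à $p$ lui-même et n'est pas dans $\gK[T]$; l'\gui{invariance galoisienne} ne donne rien car $\gL'/\gK$ n'est pas galoisienne. Enfin, même dans le cas séparable, il resterait à vérifier que les coefficients de $q_0q_1$ sont dans $\gB$ (et non seulement dans la clôture intégrale de $\gA$ dans $\gL'$), point que vous n'abordez pas alors que l'usage du lemme dans le théorème \ref{th1AnnDivl} l'exige.

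Le remède est exactement celui du texte: au lieu de conjugués dans un corps de décomposition, on plonge $\gB_1=\gA[b_1,\dots,b_n]$ comme quotient d'une $\gA$-algèbre libre de rang fini $\gC=\gA[X_1,\dots,X_n]/(h_1(X_1),\dots,h_n(X_n))$ (les $h_i$ unitaires annulant les $b_i$), on pose $B=K_{(\ub)}$ vu dans $\gC[T]$ et l'on utilise l'élément cotransposé $\wi B$, qui vérifie $B\wi B=\rN_{\gC[T]/\gA[T]}(B)\in\gA[T]$. Cela fournit le cofacteur cherché, à coefficients dans $\gB_1\subseteq\gB$, sans aucune hypothèse de séparabilité ni de factorisation: la norme dans une algèbre libre est la version calculable de votre produit sur les plongements.
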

%--------- fin lemma ----------------------------------------------
%
\begin{proof}
On consid\`ere l'anneau $\gB_1=\gA[\bn]$ qui est \hbox{un \Amoz} \tfz. En fait $\gB_1$ est un quotient d'un anneau $\gC$ (non \ncrt int\`egre)
qui est une \Alg libre de rang fini\footnote{On peut prendre $\gC=\aqo\AXn{h_1(X_1),\dots,h_n(X_n)}$ où $h_i\in\gA[X_i]$ est \untz. Notons que le \pol $N_B$
est un \pol \ndz de $\AT$ ($N_B(0)=\rN_{\gC/\gA}(b_1)$).}. 
On consid\`ere le \polz~\hbox{$B=K_{(\ub)}=\sum_{k=1}^nb_kT^{k-1}$}
vu dans $\CT$, puis l'\elt cotransposé~\hbox{$\wi B\in \CT$}.
\\
Soit $N_B=\rN_{\CT/\AT}(B)\in\AT$. On a $N_B=B\wi B$, et 
en revenant de~$\gC$ à~$\gB_1$, cela nous donne un $C\in\gB_1[T]$ avec $BC=N_B$. Si $C=K_{(\uc)}$ \hbox{et $N_B=K_{(\ud)}$} on a $(\ub)\star(\uc)=(\ud)$.
\\
La liste $(\ud)$ de $\gA$  admet une \ivde ${(\aq)}={(\ua)}$ dans~$\gA$. Ainsi  $(\ud)\star{(\ua)}$ admet un pgcd fort~$g$
dans $\Atl$.  
Le point \textsl{2} du lemme \iref{lemth1AnnDivl} nous dit
que $(\ud)\star{(\ua)}$ admet le pgcd fort~$g$
dans~$\gB$.
Finalement on obtient que la liste  ${(\ub)\star(\uc) \star(\ua)}$ 
est dans $\AT$ et qu'elle admet $g$ pour pgcd fort (dans~$\Atl$ comme dans $\Btl$). 
Ainsi, 
la liste $(\ub')=(\uc) \star(\ua)$ satisfait les requêtes voulues. 
\end{proof}
%%
%:     Theorem{th1AnnDivl}
\begin{theorem} \label{th1AnnDivl}
Soit $\gA$ un \advl de corps de fractions~$\gK$ \hbox{et $\gL\supseteq \gK$} un \cdiz. Soit $\gB$ la \cli de $\gA$ dans $\gL$. 
\begin{enumerate}
\item L'anneau $\gB$ est un \advlz.
\item On a un unique morphisme de \grls $\varphi:\DivA\to\DivB$ 
tel  que 

\snic{\varphi\big(\dvA(a)\big)=\dvB(a)$  pour $a\in\gA.}

\noindent 
Ce morphisme est injectif: cela permet d'identifier $\DivA$ à un sous-\grl de~$\DivB$.
\item 
Soit $x\in\gB$ et $f$ un \pol \unt   de $\AX$ (de degré $d$) qui annule $x$. Avec $\xi=\dvB(x)$ et $D$ le ppcm des entiers $\in\lrb{1..d}$, on a des \elts $\gamma_k\in\DivA\subseteq \DivB$ dans le sous-groupe engendré par les \dvrs des \coes
de $f$, tels que $\Vi_k\abs{D\xi-\gamma_k}=0$.   
\end{enumerate}
\end{theorem}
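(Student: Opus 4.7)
My plan is to handle the three parts in order, with Part~1 providing the structural backbone on which Parts~2 and~3 rest.

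\emph{Part 1.} Given a non-zero finitely generated ideal $\fb=\gen{b_1,\dots,b_n}_\gB$ with $b_i\in\Btl$, Lemma~\ref{lem2th1AnnDivl} yields a list $(\ub')=(b'_1,\dots,b'_m)$ in $\gB$ such that $(\ub)\star(\ub')$ lies in $\gA$ and admits a strong gcd $g\in\Atl$ (also a strong gcd in $\Btl$ by Lemma~\ref{lemth1AnnDivl}). The heart of the matter is to upgrade this to the statement that $g$ is a strong gcd of the ordinary product list $\{b_i b'_j\}_{i,j}$ in $\Btl$, because then $\fb\fb'$ with $\fb'=\gen{\ub'}_\gB$ admits $g$ as a strong gcd, so $\fb$ is divisoriellement inversible and $\gB$ is an \advl. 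To see $g\mid b_i b'_j$ in $\gB$: Kronecker's theorem applied to $K_{(\ub)},K_{(\ub')}\in\gB[T]$ says each $b_i b'_j$ is integral over $\rc(K_{(\ub)}K_{(\ub')})\gB=\gen{\ub\star\ub'}_\gB\subseteq g\gB$ (the inclusion because $g$ divides each convolution coefficient in $\gA$); dividing the resulting monic relation by $g^n$ shows $b_i b'_j/g\in\gL$ is integral over $\gB$, hence lies in $\gB$ by integral closedness. For the strong-gcd property, if $h\in\Btl$ divides all $x b_i b'_j$ for some $x\in\Btl$, then $h$ divides every $x$-multiple of a convolution coefficient, and the strong-gcd property of $g$ for $(\ub)\star(\ub')$ in $\Btl$ forces $h\mid xg$.

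\emph{Part 2.} Uniqueness of $\varphi$ follows since $\DivA$ is generated as a group by the image of $\dvA$. For existence and injectivity simultaneously, I establish the equivalence
\[
\dvA(\ua)\leq\dvA(b)\iff\dvB(\ua)\leq\dvB(b)
\]
for every list $(\ua)$ in $\Atl$ and every $b\in\Atl$. Using Lemma~\ref{lemdvli}, pick a list $(\uc)$ in $\gA$ such that $(\ua)\star(\uc)$ admits a strong gcd $g\in\Atl$ (also a strong gcd in $\Btl$ by Lemma~\ref{lemth1AnnDivl}). By the fifth item of Proposition~\ref{lemsupfinidivpr}, the left-hand inequality reduces to the conditions $g\mid b c_k$ in $\gA$ for all $k$, and the right-hand inequality reduces analogously (with the same $g$ and $(\uc)$) to $g\mid b c_k$ in $\gB$ for all $k$; these are equivalent by Lemma~\ref{lemth1AnnDivl} since all elements involved lie in $\gA$. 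Taking meets over the $b_j$ extends the equivalence to a list on the right-hand side, giving both well-definedness and injectivity of $\varphi$; compatibility with meets is built into the construction ($\varphi(\Vi_i\dvA(a_i))=\Vi_i\dvB(a_i)$), and compatibility with joins then follows from the identity $x+y=(x\wedge y)+(x\vee y)$ in a groupe réticulé.

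\emph{Part 3.} Setting $a_0=1$ and $y_k=a_k x^{d-k}$ for $0\leq k\leq d$, the annihilation $f(x)=0$ reads $\sum_{k=0}^d y_k=0$. Since $\gB$ is now known to be an \advl (Part~1), Fact~\ref{factDivSomme} applies in $\DivB$ and yields
\[
\Vi_{0\leq i<j\leq d}\bigl|\,(j-i)\,\xi-(\dvB(a_j)-\dvB(a_i))\,\bigr|=0.
\]
Introduce the candidates $\gamma_{i,j}=(D/(j-i))(\dvB(a_j)-\dvB(a_i))$, which lie in the subgroup of $\DivA\subseteq\DivB$ generated by the $\dvA(a_k)$ since $(j-i)\mid D$. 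Apply the cover principle~\ref{Prqgrl} to reduce to a totally ordered quotient: there, the meet being zero forces $(j-i)\xi=\dvB(a_j)-\dvB(a_i)$ for at least one pair, so multiplying by $D/(j-i)$ gives $D\xi=\gamma_{i,j}$, and hence $|D\xi-\gamma_{i,j}|=0$. Thus $\Vi_{(i,j)}|D\xi-\gamma_{i,j}|=0$ in every totally ordered quotient, and therefore in $\DivB$.

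\emph{Main obstacle.} The decisive step is the argument in Part~1 that transfers the strong-gcd property of the convolution list $(\ub)\star(\ub')$ produced by Lemma~\ref{lem2th1AnnDivl} to the ordinary product list $\{b_i b'_j\}$: this is precisely where the Kronecker theorem together with the integral closedness of $\gB$ in $\gL$ is used. Everything else amounts to unpacking the definitions of divisorial inequality and invoking the lattice-group cover-by-quotients principle.
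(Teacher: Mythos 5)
Votre démonstration est correcte et suit pour l'essentiel la même route que celle de l'article : le point \emph{1} via le lemme \ref{lem2th1AnnDivl}, le point \emph{2} via le transfert des pgcd forts donné par le lemme \ref{lemth1AnnDivl}, et le point \emph{3} via le fait \ref{factDivSomme} combiné à la réduction aux quotients totalement ordonnés. La seule différence notable est que vous explicitez le passage de la liste de convolution $(\ub)\star(\ub')$ à la famille des produits $(b_ib'_j)$ (théorème de Kronecker plus cl\^oture intégrale de $\gB$), étape que l'article laisse implicite.
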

%--------- fin theorem ---------------------------------------------- 
%
\begin{proof} \textsl{1.}
Toute liste dans $\Btl$ admet une \ivde d'apr\`es 
le lemme~\ref{lem2th1AnnDivl}. 

\snii \textsl{2.}  L'unicité si existence est claire car l'\elt $\dvA(\an)=\Vi_i\dvA(a_i)$ de~$\DivAp$ doit avoir pour image $\Vi_i\dvB(a_i)=\dvB(\an)$.
\\
Pour l'existence, montrons d'abord que l'on peut définir une application $\varphi$
\hbox{de $\DivAp$} dans~$\DivBp$  en posant 

\snic{\varphi(\delta)=\dv_\gB(\an)$ si $\delta=\dv_\gA(\an)$ pour des $a_i$ dans $\Atl.}

\noindent Le lemme
\iref{lemth1AnnDivl} nous dit que $\delta=0$ implique $\varphi(\delta)=0$. 
\\
Supposons que $\dv_\gA(\an)=\dv_\gA(a'_1,\dots,a'_m)$. Il existe donc une liste~$(\ux)$ dans~$\gA$ telle que les familles $(a_ix_j)_{i,j}$ \hbox{et
$(a'_ix_j)_{i,j}$} admettent un même pgcd fort $g$ dans~$\gA$. 
Le lemme~\iref{lemth1AnnDivl} nous dit que les familles $(a_ix_j)_{i,j}$ \hbox{et
$(a'_ix_j)_{i,j}$} admettent aussi le pgcd fort~$g$ dans~$\gB$,
donc 

\snic{\dv_\gB(\ua)+\dv_\gB(\ux)=\dvB(g)=\dv_\gB(\ua')+\dv_\gB(\ux),}

\noindent d'où $\dv_\gB(\ua)=\dv_\gB(\ua')$. Ceci montre que $\varphi$ est bien définie. 
\\
Montrons que $\varphi$ est injective.
%\\ 
Si $\dvB(\an)=\dvB(a'_1,\dots,a'_m)$, on consid\`ere une liste~$(\ux)$ dans~$\gA$ telle que la liste $(a_ix_j)_{i,j}$  admette un  pgcd fort~$g$ dans~$\gA$. 
C'est aussi un pgcd fort dans $\gB$, \hbox{donc $\dvB(\ua)+\dvB(\ux)=\dvB(g)$}.
\hbox{Donc $\dvB(\ua')+\dvB(\ux)=\dvB(g)$}, ce qui signifie que la liste $(a'_ix_j)_{ij}$ admet le pgcd fort $g$ dans $\gB$. 
Comme c'est une liste dans~$\gA$, elle admet aussi $g$ comme pgcd fort dans $\gA$. Donc
$\dvA(\ua')+\dvA(\ux)=\dvA(g)$. \hbox{Et $\dvA(\ua)=\dvA(\ua')$.}
% 
%La procédure qui a été utilisée au point~\textsl{1.} pour calculer un \ivda d'une liste $(\bn)$ dans $\gB$ montre que si la liste de départ 
%est dans~$\gA$ on obtient un \ivda engendré par une liste dans $\gA$.
%Comme les pgcds forts co\"{\i}ncident dans $\gA$ et $\gB$ lorsqu'il s'agit d'\elts de $\gA$, cela montre que deux listes dans $\gA$ qui donnent le même \dvr dans $\gB$  donnent aussi le même \dvr dans~$\gA$.
\\
L'application $\varphi$ 
que l'on vient de définir est un morphisme injectif de
\mos positifs, de~$\DivAp$ \hbox{dans $\DivBp$} et s'étend de mani\`ere unique en un morphisme de \grlsz, de~$\DivA$ \hbox{dans $\DivB$} 
(vérification laissée \alecz).

\snii\textsl{3.} Soit $f(X)=\sum_{j=1}^{d}a_jX^{j}\in\AX$  \hbox{avec $a_d=1$} et $f(x)=0$.
On consid\`ere les $\alpha_j=\dvB(a_jx^{j})$ pour les $a_j\in\Atl$. 
Le fait \iref{factDivSomme}
nous dit que 

\snic{\Vi_{j> k,\,a_j,a_k\in\Atl}\abs{\dvB(a_jx^{j})-\dvB(a_kx^{k})}=0.}

\noindent  Or 

\snic{\dvB(a_jx^{j})-\dvB(a_k\xi^{k})=(j-k)\dvB(x)-\big(\dvB(a_k)-\dvB(a_j)\big)}

\noindent  et $\dvB(x)\geq 0$. 
On a donc
$\Vi_{j> k}\abs{D\xi-\alpha_{jk}^{+}}=0$ pour des $\alpha_{jk}^{+}\in\DivAp$. 
\end{proof}
%

%:     Theorem{th3AnnDivl}
\begin{theorem} \label{th3AnnDivl}
Soit $\gA$ un \advl de corps de fractions~$\gK$ \hbox{et $\gL\supseteq \gK$} un \cdiz. Soit $\gB$ la \cli de $\gA$ dans $\gL$
(qui est un \advl d'apr\`es le \tho précédent). 
\begin{enumerate}
\item \label{i2th3AnnDivl} Si $\gL$  admet une base  sur $\gK$ et si $\DivA$ est discret (i.e. $\gA$ est \dveez), alors~$\DivB$ est discret\footnote{On ne suppose pas ici que la base de $\gL$ sur $\gK$ soit une partie finie de $\gL$. L'hypothèse est seulement que~$\gL$ est libre sur $\gK$. Dans \cite{MRR}, les auteurs montrent comment construire un \Amo librement engendré par un ensemble $I$ arbitraire, non \ncrt discret. Ce module libre $F$ possède une base qui est une famille $(x_i)_{i\in I}$ dans $F$, et l'ensemble des $x_i$ est en bijection avec $I$ si l'anneau $\gA$ est non trivial. Dans la situation présente où $\gK\subseteq \gL$, comme $\gL$ est discret, l'ensemble $I$ est lui-même discret et l'\evc $\gL$ est isomorphe à $\gK^{(I)}$. Il s'ensuit que tout calcul fini dans $\gL$ se déroule en fait dans un \Kev qui possède une base finie.}.
\item \label{i1th3AnnDivl} Si $\DivA$ est de dimension $\leq 1$, il en va de même pour~$\DivB$.
\end{enumerate}
\end{theorem}
%--------- fin theorem ---------------------------------------------- 
%
\begin{proof}
 \textsl{\ref{i2th3AnnDivl}.} On doit tester $z\in\gB$ pour un $z\in\gL$. Puisque
$\gL$ admet une base sur $\gK$, on peut calculer le \polmin $f(X)$ de 
$z$ sur $\gK$. Et par le \tho de Kronecker, $z$ est zéro d'un \pol \unt de
$\AX$ \ssi son \polmin sur $\gK$ est dans~$\AX$. 

\snii\textsl{\ref{i1th3AnnDivl}.} Le point \textsl{3} du \thref{th1AnnDivl} donne
$\Vi_{j}\abs{D\xi-\gamma_j}=0$ pour des $\gamma_j\in\DivAp$. 
On a un résultat du même type pour~$\zeta=\dv(z)$. 
On aura donc une double \egt

\snic{\Vi_{h}\abs{D\xi-\beta_{h}^{+}}=0=\Vi_{h}\abs{M\zeta-\beta_{h}^{+}}}

\noindent  où les $\beta_{h}\in\DivA$. Par le lemme \iref{lemGrl03}
on obtient $\cC(D\xi)\subseteq \cC(M\zeta)\boxplus \cC(M\zeta)\epr$.
\\
Enfin $\cC(D\xi)=\cC(\xi)$ et $\cC(M\zeta)=\cC(\zeta)$. 
\end{proof}

%

%: 	subsec{Autres \prts de stabilité}
\subsec{Autres \prts de stabilité}

La proposition suivante est une sorte de réciproque du \thref{th1AnnDivl}
dans un cas particulier.

%:     proposition  propFixAutAdvl
\begin{proposition} \label{propFixAutAdvl}
Soit $\gB$ un \advlz, $\Gamma$ un groupe fini d'\autos de~$\gB$
\hbox{et $\gA = \gB^\Gamma$} le sous-anneau des points fixes de $\Gamma$.
Alors $\gA$ est un \advlz.
\end{proposition}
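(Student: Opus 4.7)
\medskip
\textbf{Plan.} L'idée est de ramener la \dvli d'un \itf de $\gA$ à un calcul de diviseurs dans $\gB$, en introduisant un polynôme de Kronecker $\Gamma$-invariant qui joue le rôle du cotransposé utilisé dans le lemme~\iref{lem2th1AnnDivl}.

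D'abord, $\gB$ est entier sur $\gA$ (chaque $b \in \gB$ annule $\prod_{\sigma \in \Gamma}(X - \sigma(b)) \in \gA[X]$). On vérifie également que $\gK := \Frac\gA = (\Frac\gB)^{\Gamma}$, via l'écriture $b/b' = b\prod_{\sigma \neq 1}\sigma(b') \,/\, \rN(b')$ où $\rN(b') := \prod_\sigma \sigma(b') \in \gA$. Puisque $\gB$ est un \advl et donc \iclz, $\gB$ est la \cli de $\gA$ dans $\gL := \Frac\gB$, et $\gA$ est \icl dans $\gK$. Le point~\emph{1} du lemme~\iref{lemth1AnnDivl} s'applique alors: pour $a,b \in \gA$, $a$ divise $b$ dans $\gA$ \ssi $a$ divise $b$ dans $\gB$.

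Soit $\fa = \gen{\an}_\gA$ un \itf avec $a_i \in \Atl$. Puisque $\gB$ est un \advlz, il existe $(\ub) = (b_1, \dots, b_m)$ dans $\Btl$ et $g \in \Btl$ tels que $\fa\gB \cdot \gen{\ub}_\gB$ admette $g$ pour pgcd fort dans $\gB$. Je formerais alors le polynôme de Kronecker $B(T) = \sum_k b_k T^{k-1}$ et sa \emph{$\Gamma$-norme} \[ B^*(T) := \prod_{\sigma \in \Gamma} B^{\sigma}(T), \qquad B^{\sigma}(T) := \sum_k \sigma(b_k)\, T^{k-1}. \] Comme $B^*$ est $\Gamma$-invariant, $B^* \in \gA[T]$ et $\rc_\gA(B^*)$ est un \itf de $\gA$; la moyennisation par $\Gamma$ joue ici le rôle que jouait le cotransposé $\wi B$ dans la \dem du lemme~\iref{lem2th1AnnDivl}.

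En appliquant le corollaire~\iref{corprop2Idv} dans $\gB$ au produit $B^* = \prod_\sigma B^\sigma$, la $\Gamma$-stabilité de $\fa\gB$ (donnant $\sigma_*(\dv_\gB(\fa\gB)) = \dv_\gB(\fa\gB)$) et l'\egt $\dv_\gB(\fa\gB \cdot \gen{\ub}_\gB) = \dv_\gB(g)$, on obtient \[ \dv_\gB\!\big(\fa^N \gB \cdot \rc_\gA(B^*)\,\gB\big) \;=\; \sum_{\sigma \in \Gamma} \sigma_*\!\big(\dv_\gB(\fa\gB) + \dv_\gB(\gen{\ub}_\gB)\big) \;=\; \dv_\gB(\rN(g)), \] où $N = |\Gamma|$ et $\rN(g) := \prod_{\sigma} \sigma(g) \in \Atl$. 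Ainsi $\rN(g)$ est pgcd fort dans $\gB$ de la liste $\fa^N \cdot \rc_\gA(B^*)$, laquelle est à éléments dans $\gA$. Par le point~\emph{1} du lemme~\iref{lemth1AnnDivl}, $\rN(g)$ est alors pgcd fort de cette même liste dans $\gA$. Donc $\fa^N$ est \dvli dans $\gA$ (d'\ivda $\rc_\gA(B^*)$), d'où $\fa$ est \dvli (d'\ivda $\fa^{N-1} \cdot \rc_\gA(B^*)$), ce qui conclut. La difficulté principale est la découverte de $B^*$: il faut un polynôme à coefficients dans $\gA$ dont le contenu a, vu dans $\gB$, le bon diviseur $\sum_\sigma \sigma_*(\dv_\gB(\gen{\ub}_\gB))$; une fois $B^*$ en main, tout le reste (calcul divisoriel et descente par le lemme~\iref{lemth1AnnDivl}) est formel.
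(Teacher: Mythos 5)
Votre démonstration est correcte et suit essentiellement la même voie que celle de l'article: on prend le produit des transformés par $\Gamma$ du polynôme de Kronecker représentant l'inverse divisoriel dans $\gB$ (l'article écrit $fg=dh$ puis $f^N G=DH$, ce qui revient exactement à votre $B^*=\prod_\sigma B^\sigma$), et l'on redescend la condition de pgcd fort de $\gB$ à $\gA$ via le lemme~\ref{lemth1AnnDivl}. Seule micro-remarque: la descente du pgcd fort rel\`eve du point~\emph{2} de ce lemme (dont la direction utile découle, il est vrai, du point~\emph{1}, comme le note l'article lui-même).
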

\begin{proof}
Tout d'abord, $\gA$ est \icl parce que $\gB$
est \iclz. Par ailleurs $\gB$ est entier sur $\gA$ parce que $\Gamma$
est fini. Donc $\gB$ est la \cli de~$\gA$ dans~$\Frac(\gB)$, ce qui nous am\`ene, à la fin de la \demz, dans la situation du \thref{th1AnnDivl}.
\\
On considére une liste finie dans $\gA$ pour laquelle on cherche une
\ivdez. Cette liste donne les \coes d'un \pol $f\in\AX$.
Comme la liste admet une \ivde dans $\gB$, il existe $g$, $h\in\BX$ et $d\in\Btl$ tels que
$$ f\,g=d\,h \hbox{ et }\Gr_\gB(\rc(h))\geq 2. \eqno (*)
$$
En transformant $(*)$ par les $\sigma\in\Gamma$ et en faisant le produit des
\egts obtenues on a une \egt 
$$\ndsp
f^{N}\,G= D\,H \hbox{ avec } N=\abs\Gamma,\,G=\prod_{\sigma\in\Gamma} \sigma(g),\,\hbox{ etc.}
$$
On écrit ceci sous la forme
$$
f\,(f^{N-1} \,G)= D\,H \hbox{ où } G,\,H\in\AX \hbox{ et } D\in\gA\eqno(\#)
$$
On a $\Gr_\gB(\rc(H)) \geq  2$ car $\Gr_\gB(\rc(h))\geq 2$. On applique alors
le point \textsl{2} du lemme \ref{lemth1AnnDivl} 
et on obtient   $\Gr_\gA(\rc(H))\geq 2$.
Ainsi $(\#)$ fournit un \ivda de l'\id $\rc(f)$ dans $\gA$.
\end{proof}
\exl Soit $\gk$ un \cdi de \cara $\neq 2$, $\gB=\kuX$ l'anneau des \pols en $n$ \idtrs ($n\geq 2$) et $\gA$ le sous-anneau des \pols pairs. Alors $\gA$ est un \advlz, en tant qu'égal à $\gB^{\Gamma}$, \hbox{avec $\Gamma=\gen{\sigma}$},  où $\sigma$ échange $X_i$ et $-X_i$ pour chaque $i$. 
On a par exemple dans~$\gA$ un \dvr \ird non principal $\dvA(gX_1,gX_2)$ pour chaque \pol \ird impair $g$ dans $\gB$ ($\dvB(gX_1,gX_2)=\dvB(g)$).
\eoe

%%%%%%%%%%%%%%%%%%%%%%%%%%%%%%%%%%%%%%%%%%%%%%%%%%%%%%%%%%%%%%%%%%%%%%%%%%
\section{Anneaux de Krull}
\label{secAKrull}

\subsec{Définition et premi\`eres \prtsz}

%:     Definition{defiAdKrull}
\begin{definition} \label{defiAdKrull}
On appelle \textsl{anneau de Krull} un \advl non trivial dont le groupe des \dvrs
est discret et à \dcnbz.\index{anneau a diviseurs!de Krull}\index{Krull!anneau de ---}%  
\end{definition}
%--------- fin definition ---------------------------------------------- 

Un \cdi est un \aKr dont le groupe des \dvrs est nul. Ce sont les autres \aKrs qui nous intéressent,
ceux pour lesquels existent des \dvrs strictement positifs.

%:     Definition{defiAdvlfac}
\begin{definition} \label{defiAdvlfac}
Un \advl  est dit \textsl{à \dcncz} (resp. \textsl{à \dcnpz})
si son groupe des \dvrs est à \dcnc (resp. à \dcnpz).
\index{anneau a diviseurs!a dec@à \dcn compl\`ete}%  
\index{decomposition@décomposition!anneau a diviseurs a dec@\advl à --- compl\`ete}% 
\index{anneau a diviseurs!a dec@à \dcn partielle}%  
\index{decomposition@décomposition!anneau a diviseurs a dec@\advl à --- partielle}% 
\index{anneau a diviseurs!a dec@à \dcnbz }%  
\index{decomposition@décomposition!anneau a diviseurs a dec@\advl à --- bornée}% 
\end{definition}
%--------- fin definition ---------------------------------------------- 

\exls 1) Si $\gk=\ZZ$ ou un \cdiz, $\kXn$ est un anneau à pgcd \dvee
et l'on montre facilement que c'est un \aKrz. Il est à \dcnc lorsque $\gk=\ZZ$ et pour certains \cdisz, comme $\QQ$ et ses extensions finies, ou les \cacz.

\snii 2)  Un \adv discr\`ete est évidemment un \aKr local de dimension $1$.
Pour une réciproque voir le lemme \ref{lemValDiscKrull}.

\snii  3) Les exemples de base d'\aKrs à \dcnc sont les anneaux factoriels et les \doksz, à condition qu'ils soient à \fatz. Voir aussi le \thref{thLasNoetDiv}. 
\eoe

\medskip Le fait qui suit rassemble des conséquences de la proposition
 \ref{propgrldec} concernant les \grls lorsqu'on l'applique au groupe des \dvrsz.
Le point \textsl{3} nous donne une version de \cite[\tho 1, \textsection 1.19]{Edw}, que nous reprenons ici dans un cadre \cof  plus \gnlz. C'est un outil tr\`es utile qui remplace souvent de mani\`ere efficace la \prt de \dcncz.

%:     fact
\begin{fact} \label{fact1Krull} ~
\begin{enumerate}
\item Un \advl à \dcnc est un anneau de Krull. 
\item En \clamaz, les deux notions sont \eqves car tout \grl discret à \dcnb est alors à \dcncz
\footnote{On notera que pour ce point la \dem classique utilise le tiers exclu (pour le test d'irréductibilité d'un \dvrz) mais pas l'axiome du choix.}. 
\item Un \aKr est à \dcnpz. 
\item Le groupe des \dvrs  d'un \aKr $\gA$ est de dimension $\leq 1$: pour 
tout $\alpha\in\DivA$ on a $\DivA=\cC(\alpha)\boxplus\cC(\alpha)\epr$.
\end{enumerate}
\end{fact}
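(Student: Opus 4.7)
All four assertions are purely formal consequences of what has already been established about groupes réticulés, applied to $G = \DivA$, which exists and is a \grl by the very definition of an \advl. The main point is simply to invoke the correct parts of Proposition~\ref{propgrldec} (and Definition~\ref{defdim1grl}) in the right order. The hard work has already been done in the previous section; there is no real obstacle, only bookkeeping.

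For point~1, an \advl à \dcnc is by Definition~\ref{defiAdvlfac} an \advl whose $\DivA$ is discret and à \dcnc. Point~\ref{i3propgrldec} of Proposition~\ref{propgrldec} then gives that $\DivA$ is à \dcnbz, so by Definition~\ref{defiAdKrull} we obtain an \aKrz. For point~2, we recall from point~\ref{i4propgrldec} of Proposition~\ref{propgrldec} that \dcnc is equivalent to \dcnb together with a test d'irréductibilité for the elements $>0$. In \clamaz, the tiers exclu provides this test automatically for any discrete \grlz, so the two notions coincide; the axiom of choice is not needed.

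For point~3, a \aKr has $\DivA$ discret and à \dcnbz, so applying point~\ref{i1propgrldec} of Proposition~\ref{propgrldec} yields that $\DivA$ is à \dcnpz, i.e.\ $\gA$ is à \dcnp in the sense of Definition~\ref{defiAdvlfac}. For point~4, we feed the conclusion of point~3 into point~\ref{i9propgrldec} of Proposition~\ref{propgrldec} to get that $\DivA$ is de dimension~$\leq 1$, which unpacks by Definition~\ref{defdim1grl} precisely as $\DivA = \cC(\alpha)\boxplus\cC(\alpha)\epr$ for every $\alpha\in\DivA^+$; the extension to arbitrary $\alpha\in\DivA$ is immediate since $\cC(\alpha)=\cC(|\alpha|)$.
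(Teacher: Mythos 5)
Votre démonstration est correcte et suit exactement la voie que le papier indique lui-même : l'énoncé y est présenté sans preuve détaillée, comme simple conséquence de la proposition~\ref{propgrldec} (et de la définition~\ref{defdim1grl}) appliquée à $\DivA$, et vous invoquez précisément les bons points dans le bon ordre (\dcnc $\Rightarrow$ \dcnb pour le point~1, le test d'irréductibilité fourni par le tiers exclu pour le point~2, discret et \dcnb $\Rightarrow$ \dcnp pour le point~3, puis \dcnp $\Rightarrow$ dimension $\leq 1$ et $\cC(\alpha)=\cC(\abs\alpha)$ pour le point~4). Rien à redire.
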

 
Le \tho suivant   nous donne un exemple paradigmatique  d'\aKrz.
%:     theorem factKruladvdec
\begin{theorem} \label{factKruladvdec}~
\begin{enumerate}
\item \label{i1factKruladvdec} Un anneau \gmq \icl  est un anneau de Krull.  
\item \label{i2factKruladvdec} La \cli d'un anneau \gmq int\`egre dans son corps de fractions est un anneau de Krull.  
\end{enumerate}
\end{theorem}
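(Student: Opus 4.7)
\medskip\noindent\textbf{Plan de démonstration.} Nous établissons les trois conditions requises pour être un \aKrz: la structure d'\advlz, la discrétion du groupe $\DivA$, et la \dcnbz. Pour le point~1, soit $\gA$ un anneau \gmq intégralement clos, c'est-à-dire une algèbre de présentation finie sur un \cdi $\gk$, intègre et \iclz. Un tel anneau est cohérent, \noe et fortement discret (résultats classiques de l'algèbre constructive). Le \thref{thiclcohidv} entraîne alors que $\gA$ est \dvlaz, et la décidabilité de la divisibilité dans $\Atl$ (conséquence de la forte discrétion) combinée au lemme~\ref{thDivDiscret} fournit la discrétion de $\DivA$.

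La partie principale est la \dcnbz. Soit $\alpha \in \DivA^{+}$ non nul; il suffit de traiter le cas $\alpha = \dv(a)$ pour un $a \in \Atl$, le cas général $\alpha = \dv(\fa)$ s'y ramenant car pour tout $a \in \fa$ on a $\alpha \leq \dv(a)$, et une borne sur la longueur des décompositions positives de $\dv(a)$ en donne une pour $\alpha$. Par noethérianité, $\gen{a}$ n'a qu'un nombre fini de premiers minimaux $\fp_1, \ldots, \fp_r$, qui sont détachables et de hauteur~$1$ (car $\gA$ est intégralement clos et $a$ non inversible). Le lemme~\ref{cor0lemdvlloc} associe à chaque $\fp_i$ un diviseur irréductible $\pi_i$, et le \thref{lemdvlloc2} garantit que $S_{\pi_i}^{-1}\gA$ est un anneau de valuation discrète, munissant $\Atl$ d'une valuation $v_i$. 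Posant $e_i = v_i(a)$ et $n = 1 + \sum_i e_i$, toute décomposition $\alpha = \beta_1 + \cdots + \beta_n$ avec $\beta_j \geq 0$ comporte nécessairement un $\beta_j$ nul: on a $v_i(\beta_j) \leq v_i(\alpha) = e_i$ pour chaque couple $(i,j)$, et si tous les $\beta_j$ étaient $> 0$, chacun aurait $v_i(\beta_j) \geq 1$ pour au moins un indice $i$, d'où $n \leq \sum_i \sum_j v_i(\beta_j) = \sum_i e_i$, ce qui contredit le choix de $n$.

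Pour le point~2, soit $\gA$ un anneau \gmq intègre et $\gB$ sa \cli dans $\Frac(\gA)$. Par le théorème classique de finitude de la \cli pour une algèbre de présentation finie intègre sur un corps (accessible constructivement sous des hypothèses convenables sur $\gk$, notamment par la normalisation de Noether), $\gB$ est un \Amo de type fini, donc lui-même une algèbre de présentation finie sur $\gk$. Ainsi $\gB$ est un anneau \gmq intégralement clos, et le point~1 s'applique.

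L'obstacle principal réside dans la construction effective, en cadre constructif, des premiers minimaux détachables $\fp_i$ au-dessus de $\gen{a}$ et des valuations discrètes correspondantes. Ceci requiert les outils de décomposition primaire dans les anneaux noethériens cohérents fortement discrets, en accord avec l'approche constructive développée dans le texte. Un obstacle secondaire concerne le point~2, où la finitude constructive de la \cli impose des hypothèses précises sur la base $\gk$.
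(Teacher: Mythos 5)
Votre stratégie pour le point \emph{1} (cohérence et \thref{thiclcohidv} pour la structure d'anneau à diviseurs, forte discrétion pour la discrétion de $\DivA$) est correcte jusque-là, mais l'argument central pour la décomposition bornée repose sur la construction des premiers minimaux détachables de hauteur $1$ au-dessus de $\gen{a}$ et des valuations discrètes associées. C'est précisément l'obstacle que vous signalez vous-même en conclusion, et il n'est pas surmontable dans la généralité de l'énoncé : sur un corps discret arbitraire $\gk$ on ne sait pas factoriser les polynômes, donc on ne sait pas calculer les premiers minimaux d'un idéal principal dans une algèbre de présentation finie sur $\gk$. L'introduction de l'article insiste d'ailleurs sur le fait que, pour les anneaux géométriques intégralement clos, la décomposition d'un diviseur en somme d'irréductibles n'est pas assurée constructivement ; le \thref{thLasNoetDiv} réserve cette voie aux anneaux pleinement Lasker-N{\oe}ther, hypothèse qui n'est pas acquise ici. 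S'y ajoute une circularité technique : le \thref{lemdvlloc2}, que vous invoquez pour obtenir les anneaux de valuation discrète, suppose $\DivA$ discret de dimension $1$, propriété qui dans l'article se déduit de la décomposition bornée (via la décomposition partielle), c'est-à-dire de ce que vous cherchez justement à établir. Votre comptage final est classiquement correct une fois ces données acquises, mais il ne peut pas être amorcé constructivement.

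Le point \emph{2} souffre du même type de problème : la finitude de la clôture intégrale comme module n'est pas constructivement disponible sur un corps discret arbitraire, et vous êtes conduit à ajouter des \gui{hypothèses convenables sur $\gk$} que l'énoncé ne contient pas. La démonstration de l'article contourne tous ces obstacles : par mise en position de N{\oe}ther, $\gA$ est entier sur un anneau de polynômes $\gC=\gk[X_1,\dots,X_r]$, qui est un anneau à pgcd à factorisation bornée (la borne étant donnée par le degré), donc un anneau de Krull ; puis le \thref{th5AnnDivl} --- fondé sur le morphisme norme du \thref{thNormeDiviseur}, qui transporte la décomposition bornée de $\Div\gC$ vers $\Div\gB$ sans jamais identifier de diviseur irréductible ni exiger que $\gB$ soit un module fini sur $\gC$ --- montre que la clôture intégrale $\gB$ de $\gC$ dans $\Frac\gA$, qui coïncide avec celle de $\gA$, est un anneau de Krull. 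Le point \emph{1} en est alors un cas particulier. C'est cette substitution de la norme aux premiers minimaux qui rend l'énoncé constructif sans hypothèse supplémentaire sur $\gk$, et c'est l'idée qui manque à votre proposition.
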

\begin{proof}
\textsl{\ref{i1factKruladvdec}.} Cas particulier de \textsl{\ref{i2factKruladvdec}.}

\snii \textsl{\ref{i2factKruladvdec}.} 
Avec un changement de variables on obtient une mise en position de \Noe
qui fait appara\^{\i}tre l'anneau \gmq $\gA$ comme une extension finie 
d'un anneau de \polsz~\hbox{$\gC=\gk[X_1,\dots,X_r]$}. L'anneau $\gC$ est un anneau à pgcd à \fabz, donc un \aKrz. 
Si $\gA$ est int\`egre, $\Frac\gA$ est une extension finie \hbox{de $\gk (X_1,\dots,X_r)$}. On  conclut alors par
le \thref{th5AnnDivl}.
\end{proof}

 Concernant le cas crucial des \advs discr\`etes, la situation
en \coma est un peu plus délicate qu'en \clama comme l'indique le lemme suivant, qui compl\`ete le lemme~\ref{lemVALD}.
En \clama tout \aKr est à \dcnc et les cinq points sont \eqvsz.

%:     lemma  lemValDiscKrull
\begin{lemma} \label{lemValDiscKrull} \textsl{(Anneaux de valuation discr\`ete, 2)} \\
Soit  $\gA$ un \advl
 et un $\alpha>0$ dans $\DivA$ (par exemple $\alpha=\dv(a)$ avec $a\in\Atl\setminus \Ati$). 
 Considérons les \prts suivantes. %
\begin{enumerate}
\item \label{i1lemVDK} $\gA$ est un \adv discr\`ete.
\item \label{i3lemVDK} $\gA$ est un \aKr local de dimension $1$. 
\item \label{i4lemVDK} $\gA$ est un \aKr local  et $\DivA=\cC(\alpha)$. 
\item \label{i5lemVDK} $\gA$ est un anneau principal local à \fab et  $\DivA=\cC(\alpha)$. 
\item \label{i6lemVDK} $\DivA$ est discret et  $\idg{\DivA:\ZZ\alpha}\leq k$ pour un $k\geq 0$. 
\end{enumerate}
On a l'implication  
{\ref{i1lemVDK}} $\Rightarrow$ {\ref{i3lemVDK}}, et les \eqvcs {\ref{i3lemVDK}} $\Leftrightarrow$ {\ref{i4lemVDK}} $\Leftrightarrow$ {\ref{i5lemVDK}} $\Leftrightarrow$ {\ref{i6lemVDK}.} 
 \\
 L'implication {\ref{i3lemVDK}} $\Rightarrow$ {\ref{i1lemVDK}} est valable  si $\gA$ est à \dcncz. 
\end{lemma}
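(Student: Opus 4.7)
The plan is to dispatch the easy implication $(1) \Rightarrow (3)$ from Lemma~\ref{lemVALD}, then to prove the four-way equivalence $(3) \Leftrightarrow (4) \Leftrightarrow (5) \Leftrightarrow (6)$ by a convenient cycle, and finally to treat $(3) \Rightarrow (1)$ under \dcnc. For $(1) \Rightarrow (3)$: by Lemma~\ref{lemVALD}, a \adv discr\`ete is an \advl with $\DivA \simeq (\ZZ,\geq)$, which is discret and à \dcnb (hence Krull), and visibly local of \ddk $1$.

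For $(3) \Rightarrow (4)$ I apply Theorem~\ref{thi2clcohidv} to identify the local \advl of \ddk $\leq 1$ with a local \ddp, which is a valuation domain, so $\DivA$ is totally ordered. The Krull hypothesis makes $\DivA$ discret and à \dcnb, hence of dimension $\leq 1$ as a \grl by Fact~\ref{fact1Krull}; with $\alpha > 0$, Lemma~\ref{lemTOdim1} then delivers $\DivA = \cC(\alpha)$. The step $(4) \Rightarrow (5)$ rests on the observation that in a totally ordered à \dcnb group, if $\alpha$ admits decomposition length at most $n$, then there are at most $n-1$ distinct positive divisors strictly below $\alpha$: any chain $0 < \beta_1 < \cdots < \beta_N < \alpha$ would yield the decomposition $\alpha = \beta_1 + (\beta_2-\beta_1) + \cdots + (\alpha - \beta_N)$ of length $N+1$. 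Combined with the valuation-domain structure just obtained, this produces the principal local ring à \fab statement of (5), and the same count immediately gives $(5) \Rightarrow (6)$ with $[\DivA : \ZZ\alpha] \leq n$.

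For $(6) \Rightarrow (3)$, the finite index forces $m\beta \in \ZZ\alpha$ for every $\beta \in \DivA$ and some $m \leq k$, hence $\beta \leq m\beta = n\alpha$: so $\DivA = \cC(\alpha)$, discret and à \dcnb, making $\gA$ a Krull ring of divisor-group dimension $\leq 1$; localizing via Theorem~\ref{lemdvlloc2} at $S_\alpha$ (which collapses to $\Ati$ since $\DivA = \cC(\alpha)$) and combining with Theorem~\ref{thi2clcohidv} produces the local property and \ddk $\leq 1$. Finally, for $(3) \Rightarrow (1)$ under \dcnc, the cycle already proved yields (4), so $\DivA = \cC(\alpha)$ is totally ordered and discret; \dcnc then provides an explicit decomposition of $\alpha$ into irreducibles, and in a totally ordered group all irreducibles are associate, so $\DivA \simeq \ZZ\pi$ and Lemma~\ref{lemVALD} concludes. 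The main obstacle\,---\,and the reason this last implication genuinely requires \dcnc rather than à \dcnb alone\,---\,is the constructive production of the irreducible generator $\pi$: à \dcnb guarantees finitely many positive divisors below $\alpha$ but offers no means to enumerate them, whereas \dcnc supplies an explicit irreducible summand from which $\pi$ can be read off.
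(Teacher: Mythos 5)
Votre découpage $(1)\Rightarrow(3)$, puis le cycle $(3)\Rightarrow(4)\Rightarrow(5)\Rightarrow(6)\Rightarrow(3)$, est raisonnable, et les étapes $(1)\Rightarrow(3)$, $(3)\Rightarrow(4)$, $(5)\Rightarrow(6)$ et $(3)\Rightarrow(1)$ sous décomposition complète sont correctes et proches de l'article. Mais le cycle ne se referme pas~: dans l'étape $(4)\Rightarrow(5)$ vous invoquez \og la structure d'anneau de valuation qui vient d'être obtenue \fg. Or cette structure a été obtenue à partir de l'hypothèse de dimension de Krull $1$ de~$(3)$, qui est précisément ce que $(4)$ ne vous donne plus. À partir de $(4)$ seul ($\gA$ de Krull, local, $\DivA=\cC(\alpha)$), rien ne dit que $\DivA$ est totalement ordonné ni que $\gA$ est principal~; la localité de l'anneau ne se transfère pas directement au treillis $\DivA$. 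Votre argument de chaîne présuppose l'ordre total, et la conclusion \og anneau principal \fg{} de $(5)$ est le contenu non trivial de l'étape. C'est exactement là que l'article fait intervenir son ingrédient clé~: $\cC(\alpha)$ absolument borné, puis le point \emph{1} du théorème \ref{lemKrullfini} (qui repose sur le théorème d'approximation simultanée \ref{thKrullApproxSim}) pour conclure que $\gA$ est principal, donc de valuation puisque local, donc de dimension $1$ — l'article prouve ainsi $(4)\Rightarrow(3)$ plutôt que $(4)\Rightarrow(5)$. Sans cet apport, votre démonstration n'établit que $(3)\Leftrightarrow(5)\Leftrightarrow(6)$ et $(3)\Rightarrow(4)$, la réciproque de cette dernière implication restant ouverte.

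Un second point, réparable mais réel, concerne $(6)\Rightarrow(3)$. La localisation en $S_\alpha$ est ici l'identité (puisque $\DivA=\cC(\alpha)$ entraîne $S_\alpha\subseteq\Ati$), donc le théorème \ref{lemdvlloc2} ne produit aucune information nouvelle, et en particulier pas la localité — on ne peut pas non plus certifier constructivement $\cC(\alpha)\simeq(\ZZ,\geq)$ pour en tirer un anneau de valuation discrète, ce qui contredirait d'ailleurs la nécessité de l'hypothèse de décomposition complète pour $(3)\Rightarrow(1)$. De même, le théorème \ref{thi2clcohidv} \emph{suppose} la dimension de Krull $\leq 1$, il ne la \emph{produit} pas. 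L'argument correct est celui de l'article~: de $\ZZ\alpha\subseteq\DivA\subseteq\frac{1}{k!}\ZZ\alpha$ on tire que $\DivA$ est totalement ordonné, donc $0=\dv(x+y)\geq\dv(x)\wedge\dv(y)$ force $\dv(x)=0$ ou $\dv(y)=0$ (localité), et la propriété $\rD_2$ jointe à l'ordre total fait de $\gA$ un anneau de valuation, dont le groupe de valeurs archimédien donne la dimension $1$.
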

\begin{proof} %Par hypoth\`ese $\dv(\alpha)>0$. 
Dans chacun des points on a~$\DivA$ discret, i.e. $\gA$  \dveez.

\snii \textsl{\ref{i5lemVDK}} $\Rightarrow$ \textsl{\ref{i4lemVDK}},
\textsl{\ref{i1lemVDK}} $\Rightarrow$ \textsl{\ref{i6lemVDK}},   
et \textsl{\ref{i1lemVDK}} $\Rightarrow$ \textsl{\ref{i3lemVDK}.} 
Clair.

\snii \textsl{\ref{i3lemVDK}} $\Rightarrow$ \textsl{\ref{i4lemVDK}}, \textsl{\ref{i5lemVDK}} et \textsl{\ref{i6lemVDK}.} En tant qu'\advl local de dimension~\hbox{$1$, $\gA$} est un \adv (\thref{thi2clcohidv}).  Pour \hbox{un $\xi\in\DivAp$} arbitraire, on consid\`ere une \bdp $(\pi_1,\dots,\pi_r)$ pour $(\alpha,\xi)$. Les $\pi_i$ sont deux à deux \ortsz, et dans un \adv deux \dvrs $>0$ sont toujours comparables. Donc un et un seul des~$\pi_i$, par exemple~$\pi_1$, est~\hbox{$>0$}. On a donc
$\alpha=\ell \pi_1$ pour un $\ell\geq 1$, $\xi=m\pi_1$ pour \hbox{un $m\geq 0$},  \hbox{d'où $\ZZ\pi_1\subseteq \DivA\subseteq \QQ \alpha$}.
Enfin si $k$ majore le nombre d'\elts non nuls dans une écriture de~$\alpha$
comme somme d'\elts $\geq 0$, on aura \ncrt $\idg{\DivA:\ZZ\alpha}\leq k$
\hbox{et  $\DivA\subseteq \fraC 1 {k!}\ZZ \alpha$}.  

\snii \textsl{\ref{i4lemVDK}} $\Rightarrow$ \textsl{\ref{i3lemVDK}.}  
On note que $\cC(\alpha)$ est absolument borné. Le point~\textsl{\ref{i1lemVDK}} du \thref{lemKrullfini} nous dit que~$\gA$ est un anneau principal, donc  de dimension $\leq 1$. Il s'agit en fait d'un \adv et comme ce n'est pas un corps, il est de dimension exactement $1$. 

\snii \textsl{\ref{i6lemVDK}} $\Rightarrow$ \textsl{\ref{i5lemVDK}.} De l'in\egt  $\idg{\DivA:\ZZ\alpha}\leq k$ on déduit que $\ZZ\alpha\subseteq \DivA\subseteq \fraC 1{k!}\ZZ\alpha$. Donc $\DivA$ est à \dcnb et absolument borné, donc $\gA$ est un \aKr principal (\thref{lemKrullfini}). Il reste à montrer que $\gA$ est un \aloz.
Supposons que $x+y$ est \ivz. Les \dvrsz~$\dv x$ et~$\dv y$ s'expriment sous forme
$m\pi$ et $n\pi$ pour un $\pi\in\DivA$. 
\\
Comme $0=\dv(x+y)\geq \dv(x)\vi\dv(y)$, on a bien $\dv(x)$ ou  $\dv(y)$ nul.

\snii \textsl{\ref{i5lemVDK}} $\Rightarrow$ \textsl{\ref{i1lemVDK}} (lorsque $\gA$ est à \dcncz): si $\alpha$ est minoré par un \dvr \irdz~$\pi$, il est clair que $\DivA=\ZZ \pi$. 
\end{proof}
%

%: subsect{Théor\`eme d'approximations simultanée et conséquences}
\subsect{Théor\`eme d'approximation simultanée et conséquences}{Théor\`eme d'approximation simultanée}

 Le point \textsl{2} du \tho suivant  nous donne  une version de \cite[\tho 2, \textsection 1.20]{Edw} dans un cadre \cof  plus \gnlz. 

%:     Theorem{thKrullApproxSim}
\begin{theorem} \label{thKrullApproxSim} \emph{(\Tho d'approximation simultanée)} 
Soit $\gA$ un anneau de Krull. 
\begin{enumerate}
\item Soient $(\pi_1,\dots,\pi_r)$ 
des \dvrs $>0$ deux à deux \orts et $(n_1,\dots,n_k)$ dans $\NN$. Notons~\hbox{$\pi=\sum_i\pi_i$}  et $\alpha=\sum_i n_i\pi_i$. Il existe $a\in\Atl$ tel que $\dvA(a)=\alpha+\rho$ avec $\rho\geq 0$ et $\rho\perp \pi$ (à fortiori $\rho\perp \alpha$). 
\item Pour tout $\alpha\in(\DivA)^+$ et tout $\gamma\geq \alpha$ on peut trouver $a\in\Atl$ tel que $\dv a=\alpha+\rho$
avec $\rho\geq 0$ et $\rho\perp \gamma$.
\end{enumerate}
\end{theorem}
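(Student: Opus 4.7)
Notre strat\'egie est de d\'emontrer (1), puis d'en d\'eduire (2) via la propri\'et\'e de \dcnpz. Puisque $\gA$ est un \aKrz, le groupe $\DivA$ est discret \`a \dcnbz, donc \`a \dcnp (proposition~\ref{propgrldec}, point~\ref{i1propgrldec}). On applique cette \prt \`a la famille finie $\{\pi_1,\dots,\pi_r,\alpha\}$ pour obtenir une \bdp d'\'el\'ements positifs deux \`a deux \orts $(\theta_1,\dots,\theta_N)$; l'orthogonalit\'e des $\pi_i$ impose que les supports respectifs $J_i$ des $\pi_i$ dans cette base soient deux \`a deux disjoints. En posant $J=\bigcup_i J_i$ et en \'ecrivant $\alpha=\sum_{k\in J}m_k\theta_k$, on a $\cC(\pi)=\cC(\sum_{k\in J}\theta_k)$, de sorte que la condition $\rho\perp\pi$ \'equivaut \`a $\rho\perp\theta_k$ pour tout $k\in J$.

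Comme $\dv$ est additive sur les produits, il suffit de construire, pour chaque $k\in J$, un \'el\'ement $a_k\in\Atl$ dont $\dv(a_k)$ a pour composante sur $\cC(\theta_k)$ exactement $m_k\theta_k$ et dont la composante sur $\cC(\theta_j)$ est nulle pour tout $j\in J\setminus\{k\}$; le produit $a=\prod_k a_k$ conviendra alors. C'est cette \'etape qui forme le coeur de la d\'emonstration, car elle revient \`a prescrire une valuation en un \gui{\dvr premier} tout en contr\^olant simultan\'ement les valuations aux autres. Pour cela, on raffine chaque $\theta_j$ gr\^ace \`a la \dcnb jusqu'\`a des \dvrs \irdsz, dont les \idifs sont des \ideps d'apr\`es le \thref{lemdivirdadvlgnl}; un argument d'\'evitement des \ideps, appliqu\'e \`a $\Idv(m_k\theta_k)$ vis-\`a-vis des \ids $\Idv((m_k+1)\theta_k)$ et $\Idv(\theta_j)$ pour $j\neq k$, fournit alors l'\'el\'ement recherch\'e. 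Cette \'etape d'\'evitement, qui doit contourner l'absence de \dcn compl\`ete, constitue la principale difficult\'e.

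Pour~(2), on applique la \dcnp \`a la paire $\{\alpha,\gamma\}$: on obtient des \'el\'ements basiques deux \`a deux \orts $(\theta_k)$ avec $\alpha=\sum_k p_k\theta_k$ et $\gamma=\sum_k q_k\theta_k$ o\`u $p_k\leq q_k$ (gr\^ace \`a $\gamma\geq\alpha$). En appliquant (1) avec $\pi_k=\theta_k$ pour les $k$ tels que $q_k>0$ et les coefficients $n_k=p_k$, on obtient $a\in\Atl$ satisfaisant $\dv(a)=\alpha+\rho$ avec $\rho\perp\sum_k\theta_k$; comme $\gamma\in\cC(\sum_k\theta_k)$, on conclut $\rho\perp\gamma$.
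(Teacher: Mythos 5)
Votre point \emph{2} est correct et suit la même réduction que l'article (une \bdp pour la paire $(\alpha,\gamma)$, puis application du point \emph{1}). En revanche votre démonstration du point \emph{1}, qui concentre toute la difficulté, comporte une lacune réelle. L'étape \gui{on raffine chaque $\theta_j$ jusqu'à des \dvrs \irdsz} n'est pas disponible: un \aKr est seulement supposé à \dcnbz, et le passage à la \dcnc exige le tiers exclu (c'est précisément la distinction que l'article s'attache à préserver, voir le fait \ref{fact1Krull} et sa note, ainsi que la remarque suivant le lemme \ref{lemGrldcc}); votre argument n'est donc recevable qu'en mathématiques classiques. De plus, même classiquement, l'évitement décrit ne fournit pas l'élément voulu: $a_k\notin\Idv((m_k+1)\theta_k)$ garantit seulement que le coefficient de $\dv(a_k)$ vaut $m_k$ sur \emph{au moins une} composante irréductible de $\theta_k$, et non sur toutes (même remarque pour $a_k\notin\Idv(\theta_j)$); et les idéaux qu'il faudrait réellement éviter, du type $\Idv(m_k\theta_k+\sigma)$, ne sont pas premiers, de sorte que le lemme d'évitement des \ideps ne s'applique pas tel quel. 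Enfin, votre réduction demande un $a_k$ de composante \emph{exactement} $m_k\theta_k$ sur $\cC(\theta_k)$ et \emph{exactement} nulle sur les autres $\cC(\theta_j)$: c'est un cas particulier du théorème lui-même (tous les $n_i$ nuls sauf un), qui n'est pas plus facile que l'énoncé général, et la réduction est donc essentiellement circulaire.

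L'article contourne ces obstacles par un mécanisme différent qu'il vaut la peine de retenir. Pour chaque $i$ on ne prescrit qu'une condition \emph{unilatérale} sur les autres composantes: on construit $x_i$ de coefficient exactement $n_i$ sur $\pi_i$ et seulement $\geq n_j+1$ sur les $\pi_j$ pour $j\neq i$; un tel $x_i$ se lit sur une \bdp d'un système de générateurs d'un diviseur convenable, sans jamais exiger de composante exactement nulle. On pose ensuite $x=\sum_i x_i$ --- une somme, non un produit --- et le lemme \ref{lem5bdf} (dans une relation $\sum_i y_i=0$, la valeur minimale de chaque composante est atteinte au moins deux fois) force les coefficients de $\dvA(x)$ à valoir exactement $n_i$ sur chaque $\pi_i$. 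Quant au défaut de \dcncz, il est traité par relance: on calcule comme si les $\pi_j$ étaient \irdsz, et si une \bdp rencontrée en cours de route raffine l'un d'eux, on recommence avec la famille raffinée, ce qui ne peut se produire qu'un nombre fini de fois grâce à la \dcnbz.
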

%--------- fin theorem ---------------------------------------------- 
%
\begin{proof}
\textsl{1.} \textsl{On suppose en un premier temps les $\pi_j$ \irdsz}.
\\
Pour chaque $i\in\lrbk$ on va trouver un $x_i$ tel que

\snic{\dvA(x_i)=n_i \alpha_i + \beta_i$,  $\beta_i= \sum_{j\neq i} m_j\pi_j+\rho_i$ avec $m_j\geq n_j+1$ et $\rho_i\perp\pi  \quad (*)_i.}

\noindent Construisons par exemple $x_1$. Nous considérons $\gamma_1=(n_1-1)\pi_1+\pi$. C'est un \dvr  que l'on écrit $\dvA(c_1,\dots,c_m)$ pour des $c_i\in\Atl$.
On consid\`ere une \bdp pour $(\pi_1,\dots,\pi_r,c_1,\dots,c_m)$.
\\
Comme elle ne raffine aucun des $\pi_i$, pour l'un des $c_j$, qui est le $x_1$ recherché, la condition~$(*)_1$ est réalisée.
\\
Une fois les $x_i$ construits on consid\`ere $x=\sum_ix_i$.
On calcule une \bdp pour $(x,\xr)$. Comme elle ne raffine pas les 
$\pi_i$ on peut appliquer le lemme~\ref{lem5bdf}, et l'on obtient
que $\dvA(x)=\sum_{i=1}^{r}n_i\pi_i+\rho$ avec $\rho\perp \pi$.

\snii \textsl{Voyons le cas \gnlz.} On reprend le calcul précédent.
Si à une étape du calcul un ou plusieurs des $\pi_j$ se décomposent, on
remplace $(\pi_1,\dots,\pi_r)$ par la liste raffinée et on reprend tous les calculs depuis le début. Cet inconvénient ne peut se produire qu'un nombre fini de fois. \`A la fin le calcul se déroule comme si les $\pi_i$ étaient \irdsz. 

\snii\textsl{2.} On consid\`ere une \bdp $(\pi_1,\dots,\pi_r)$ pour $(\alpha,\gamma)$. 
\\
On écrit $\alpha=\sum_{i\in\lrbr}n_i\pi_i$ et on applique le point \textsl{1.}
\end{proof}
% 

%:     lemma lem5bdf
\begin{lemma} \label{lem5bdf} Soit $\gA$ un \advl avec $\DivA$  discret.
{ \\ } 
On suppose donnés  $(\xn)$ dans~$\Atl$, avec $\sum_{i}x_i=0$,
 et $(\pi_1,\dots,\pi_r)$ une \bdp pour $(\xin)$ (\hbox{où $\xi_i=\dvA(x_i)$}).
On écrit $\xi_i=\sum_{i=j}^{r}n_{ij}\pi_j$.
Alors pour chaque $j\in\lrbr$, la valeur minimum de $n_{ij}$ est atteinte au moins deux fois. 
\end{lemma}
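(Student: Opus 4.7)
The plan is to exploit the relation $\sum_i x_i = 0$ once per index rather than only globally. For each fixed $i \in \lrbn$, the rewriting $x_i = -\sum_{j \neq i} x_j$ combined with the basic inequality $\dv(a+b) \geq \dv(a) \vi \dv(b)$ of Proposition \ref{lemADVL} yields
\[
\xi_i \;\geq\; \Vi_{j \neq i} \xi_j \qquad (i \in \lrbn).
\]
This is the individualized form of the inequalities used in the proof of Fact \ref{factDivSomme}, and it is the source of everything else.

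Next, I compute the right-hand side coordinate-wise in the basis $(\pi_1,\dots,\pi_r)$. Since the $\pi_k$ are pairwise orthogonal and $\xi_j = \sum_k n_{jk} \pi_k$, Lemma \ref{lemGrldcp} gives
\[
\Vi_{j \neq i} \xi_j \;=\; \sum_{k=1}^{r} \bigl(\min_{j \neq i} n_{jk}\bigr)\,\pi_k.
\]
The difference $\xi_i - \Vi_{j \neq i} \xi_j = \sum_k \bigl(n_{ik} - \min_{j \neq i} n_{jk}\bigr)\pi_k$ is therefore an integer combination of pairwise orthogonal, strictly positive elements; such a combination is $\geq 0$ iff each coefficient is $\geq 0$ (its absolute value equals $\sum_k |c_k|\pi_k$, and comparing the two expressions forces the negative parts to vanish). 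Consequently, for every $i$ and every $k$,
\[
n_{ik} \;\geq\; \min_{j \neq i} n_{jk}.
\]

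Finally, fix $k$ and let $m_k = \min_i n_{ik}$. If this minimum were attained at a single index $i^{*}$, then every other index would satisfy $n_{jk} > m_k$, whence $\min_{j \neq i^{*}} n_{jk} > m_k = n_{i^{*} k}$, contradicting the inequality above applied at $i = i^{*}$. Therefore $m_k$ must be attained at least twice, as claimed. The main conceptual point to avoid missing is that one must keep the inequalities $\xi_i \geq \Vi_{j \neq i} \xi_j$ separated for each $i$ rather than folding them into the single consequence of Fact \ref{factDivSomme} (namely $\Vi_{i<j}|\xi_i - \xi_j| = 0$); in coordinates that global form would only yield $\min_{i<j}|n_{ik} - n_{jk}| = 0$, which is strictly weaker than the double-attainment of the minimum of the $n_{ik}$.
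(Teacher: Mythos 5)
Votre démonstration est correcte et suit essentiellement la même démarche que celle de l'article~: on tire de $\sum_i x_i=0$ l'inégalité individualisée $\xi_i\geq \Vi_{k\neq i}\xi_k$ pour chaque $i$, on la lit coordonnée par coordonnée dans la base de décomposition partielle (lemme \ref{lemGrldcp}) pour obtenir $n_{ij}\geq \min_{k\neq i}n_{kj}$, et on en conclut que le minimum est atteint au moins deux fois. Votre remarque finale sur l'insuffisance de la forme globale du fait \ref{factDivSomme} est juste, mais le contenu mathématique de la preuve est identique à celui du texte.
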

\begin{proof}
Puisque $\sum_{i}x_i=0$ on a pour chaque $i\in\lrbn$ $\xi_i\geq \Vi_{k\neq i}\xi_k$, ce qui donne $n_{ij}\geq \Vi_{k\neq i}n_{kj}$ pour chaque $j\in\lrbr$.
En particulier si $n_{ij}$ est la plus petite valeur (pour ce~$j$ fixé),
cet entier doit être égal à l'un des $n_{kj}$ pour $k\neq i$.
\end{proof}

On obtient maintenant des corolaires importants du \thref{thKrullApproxSim}.
%--------- fin theorem ---------------------------------------------- %:     
\begin{theorem} \label{thKrull1,5} \emph{(\Tho un et demi pour les anneaux de Krull)}\\
Soit $\gA$ un anneau de Krull et $\alpha\in\DivAp$. 
\begin{enumerate}
\item Pour tout $a\in\Atl$ tel que $\alpha\leq \dv(a)$ il existe $b\in\Atl$
tel que 

\snic{\alpha=\dv(a)\vi \dv(b)=\dv(a^n)\vi \dv(b)$ pour tout $n\in\NN\etl.}
\item Pour tout $c\in\Ktl$ tel que $\dv(c)\leq \alpha$ il existe $d\in\Ktl$
tel que $ \alpha=\dv(c)\vu\dv(d)$.
\end{enumerate}  
\end{theorem}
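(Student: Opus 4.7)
Le plan consiste à déduire les deux points du \thref{thKrullApproxSim}, combiné avec l'identité $x \vu y = \alpha - ((\alpha - x) \vi (\alpha - y))$, valable dans tout \grl, et avec le sous-lemme suivant: si $u$, $v$, $w$ sont positifs dans un \grl avec $u \vi w = 0$, alors $u \vi (v + w) = u \vi v$. Ce sous-lemme résulte immédiatement du principe de recouvrement par quotients (\paref{Prqgrl}), qui permet de se ramener au cas totalement ordonné, o\`u $u \vi w = 0$ force $u = 0$ ou $w = 0$, l'\egt étant triviale dans chacun des deux cas.

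Pour le point~\emph{1}, on applique le point~2 du \thref{thKrullApproxSim} à $\alpha$ avec $\gamma = \dv(a)$ (qui vérifie bien $\gamma \geq \alpha$). On obtient $b \in \Atl$ avec $\dv(b) = \alpha + \rho$, $\rho \geq 0$ et $\rho \perp \dv(a)$. Comme $\cC(n\dv(a)) = \cC(\dv(a))$ pour tout $n \geq 1$, on a encore $\rho \perp n\dv(a)$; le sous-lemme appliqué à $u = n\dv(a)$, $v = \alpha$, $w = \rho$ (tous positifs puisque $\alpha \leq \dv(a) \leq n\dv(a)$) donne alors
\[
\dv(a^n) \vi \dv(b) = n\dv(a) \vi (\alpha + \rho) = n\dv(a) \vi \alpha = \alpha.
\]

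Pour le point~\emph{2}, on pose $\beta = \alpha - \dv(c) \geq 0$. On va construire $d$ sous la forme d'un quotient $a/e$ avec $a$, $e \in \Atl$, de sorte que $\dv(d) = \alpha - \sigma$ pour un $\sigma \in \DivAp$ avec $\sigma \perp \beta$. L'identité ci-dessus donne alors
\[
\dv(c) \vu \dv(d) = \alpha - (\beta \vi \sigma) = \alpha.
\]
On réalise cette construction par deux applications successives du \thref{thKrullApproxSim}. D'abord avec $\alpha$ et $\gamma_1 = \alpha + \beta \geq \alpha$, on obtient $a \in \Atl$ avec $\dv(a) = \alpha + \rho_1$ o\`u $\rho_1 \perp \alpha$ et $\rho_1 \perp \beta$. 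Ensuite avec $\rho_1 \in \DivAp$ et $\gamma_2 = \rho_1 + \beta \geq \rho_1$, on obtient $e \in \Atl$ avec $\dv(e) = \rho_1 + \sigma$ o\`u $\sigma \perp \rho_1$ et $\sigma \perp \beta$. Le quotient $d = a/e \in \Ktl$ vérifie alors $\dv(d) = (\alpha + \rho_1) - (\rho_1 + \sigma) = \alpha - \sigma$, et la conclusion s'ensuit.

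La seule difficulté réelle se trouve au point~\emph{2}. Une unique application du \tho d'approximation à $\alpha$ fournit $a \in \Atl$ avec $\dv(a) = \alpha + \rho$, et poser $d = a$ donnerait $\dv(c) \vu \dv(d) = \alpha + \rho$ au lieu de $\alpha$. Il faut donc \gui{retrancher} l'excédent $\rho$, ce qui oblige à introduire un dénominateur et explique pourquoi $d$ est cherché dans $\Ktl$ et non dans $\Atl$. Le choix crucial est celui de $\gamma_2 = \rho_1 + \beta$ plut\^ot que $\rho_1$ seul: l'adjonction de $\beta$ dans $\gamma_2$ est précisément ce qui garantit $\sigma \perp \beta$, orthogonalité indispensable pour que l'identité générale ci-dessus donne exactement $\alpha$.
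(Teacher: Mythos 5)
Votre démonstration est correcte. Pour le point \emph{1}, vous suivez essentiellement la m\^eme route que le texte: une application du point \emph{2} du \thref{thKrullApproxSim} avec $\gamma=\dv(a)$, suivie d'un petit calcul dans le \grl $\DivA$; le texte écrit $(\alpha+\rho)\vi n\gamma=(\alpha\vu\rho)\vi n\gamma$ (licite car $\rho\perp\alpha$) puis distribue, tandis que vous passez par le sous-lemme $u\vi(v+w)=u\vi v$ lorsque $u\perp w$, justifié par le principe de recouvrement par quotients — deux variantes du m\^eme argument. En revanche, pour le point \emph{2}, votre route est réellement différente de celle du texte. Le texte \emph{réutilise le point 1}: il pose $\beta=\dv(a)-\alpha$ pour un $a$ avec $\dv(a)\geq\alpha$, remarque que $0\leq\beta\leq\dv(a/c)$ avec $a/c\in\Atl$, applique le point \emph{1} au couple $(\beta,\,a/c)$ pour obtenir $e$ avec $\beta=\dv(a/c)\vi\dv(e)$, et conclut par le calcul $\alpha=\dv(a)-\beta=\dv(c)\vu\dv(a/e)$. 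Vous, au contraire, invoquez \emph{deux fois} le \tho d'approximation directement (d'abord pour fabriquer $a$ avec $\dv(a)=\alpha+\rho_1$ et $\rho_1\perp\alpha+\beta$, puis pour fabriquer $e$ avec $\dv(e)=\rho_1+\sigma$ et $\sigma\perp\rho_1+\beta$), et vous concluez avec l'identité générale $x\vu y=\alpha-\big((\alpha-x)\vi(\alpha-y)\big)$ et l'\ort $\sigma\perp\beta$. Les deux arguments aboutissent à un $d$ de la forme $a/e$; la version du texte est plus courte et fait appara\^{\i}tre le point \emph{2} comme un simple dual du point \emph{1}, la v\^otre est plus directe et rend explicite le r\^ole de chacune des deux orthogonalités, au prix d'une seconde application du \tho d'approximation.
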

%--------- fin theorem ---------------------------------------------- 
%
\begin{proof}
\textsl{1.} On pose  $\gamma=\dv(a)$. 
Le \thoz~\iref{thKrullApproxSim} nous donne un
$b\in\gA$ tel que $\dv(b)=\alpha+\rho$ avec $\rho\geq 0$ et $\rho\perp\gamma$.
Par suite, pour $n\in\NN\etl$ 

\snic{\dv(b)\vi \dv(a^n)=(\alpha+\rho)\vi n\gamma=(\alpha\vu\rho)\vi n\gamma=
(\alpha\vi n\gamma)\vu(\rho\vi n\gamma)=\alpha\vu0=\alpha.}

\snii\textsl{2.} Soit $a$ avec $\dv(a)\geq \alpha$ puis $\beta=\dv(a)-\alpha$.
On a $\dv(c)\leq \alpha$, donc $0\leq \beta\leq \dv(\fraC a c)$ \hbox{avec $\fraC a c\in\Atl$}.  Le point \textsl{1.} nous donne \hbox{un $e\in\Atl$} tel que $\beta=\dv(\fraC a c,e)$, donc 

\snic{\alpha=\dv(a)-\beta=\dv (a)+\big(\dv(\fraC c a)\vu \dv(\fraC 1 e)\big)=\dv(c)\vu\dv(\fraC a e),}

\noindent d'où le résultat avec $d=\fraC a e$.
\end{proof}
%
%:     corollary  corthKrull1,5
\begin{corollary} \label{corthKrull1,5}
Soit $\gA$ un anneau de Krull, et $a$, $b\in\Atl$. Il existe $c$ \hbox{et $d\in \Atl$} tels que 
$$ \frac a b = \frac c d 
\;\hbox{ et }\;\dvA(a,b,c,d)=0.$$ 
D'où l'on déduit (en choissant de mettre $a$ en valeur):
\begin{enumerate}
\item [$\bullet$] $\gen{a,b}\gen{a,c}=a\gen{a,b,c,d}$,
\item [$\bullet$] $\dvA(a,b)+\dvA(a,c)=\dvA(a)$,
\item [$\bullet$] $\Idv(a,b)=(a:c)_\gA\,$ et $\,\Idv(a,c)=(a:b)_\gA$.
\end{enumerate}
En particulier, pour tout \dvr $\alpha\geq 0$, et tout $a\in\Atl$ tel que $\dv(a)\geq \alpha$, il existe  $c\in\Atl$ tel que $\Idv(\alpha)=(a:c)_\gA$.
\end{corollary}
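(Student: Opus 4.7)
Le plan est d'appliquer le \tho d'approximation simultanée (\thref{thKrullApproxSim}) pour produire le $c$ voulu, puis de poser $d:=bc/a$.

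Je pose $\beta=\dvA(a,b)=\dv(a)\vi\dv(b)$ ainsi que
\[
\alpha_1=\dv(a)-\beta,\qquad \alpha_2=\dv(b)-\beta,
\]
de sorte que $\alpha_1,\alpha_2\geq 0$ et $\alpha_1\perp\alpha_2$ (identité classique dans un \grlz). L'objectif est d'obtenir $\dv(c)=\alpha_1+\rho$ et $\dv(d)=\alpha_2+\rho$ pour un m\^eme $\rho\geq 0$ avec $\rho\perp\beta$: l'\egt $\dv(c)-\dv(d)=\alpha_1-\alpha_2=\dv(a)-\dv(b)$ imposera $ad=bc$, tandis qu'un calcul réticulé direct donnera $\dv(a)\vi\dv(b)=\beta$ et $\dv(c)\vi\dv(d)=\rho$ (puisque $\alpha_1\perp\alpha_2$), puis $\dvA(a,b,c,d)=\beta\vi\rho=0$.

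Pour réaliser cette forme, j'applique le point \emph{2} du \thref{thKrullApproxSim} avec le r\^ole de $\alpha$ joué par $\alpha_1$ et celui de $\gamma$ par $\alpha_1+\beta\geq\alpha_1$: cela fournit $c\in\Atl$ avec $\dv(c)=\alpha_1+\rho$, $\rho\geq 0$ et $\rho\perp(\alpha_1+\beta)$, d'o\`u en particulier $\rho\perp\beta$. Alors $\dv(bc)=\beta+\alpha_1+\alpha_2+\rho\geq\beta+\alpha_1=\dv(a)$, donc $a$ divise $bc$ dans $\gA$, et je pose $d:=bc/a\in\Atl$. Par construction $ad=bc$ et $\dv(d)=\alpha_2+\rho$, et le calcul ci-dessus donne bien $\dvA(a,b,c,d)=0$.

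Les trois conséquences énumérées sont alors formelles. L'\egt $\gen{a,b}\gen{a,c}=\gen{a^{2},ab,ac,bc}=\gen{a^{2},ab,ac,ad}=a\gen{a,b,c,d}$ ne requiert que $ad=bc$; en appliquant $\dvA$ on en déduit $\dvA(a,b)+\dvA(a,c)=\dvA(a)+\dvA(a,b,c,d)=\dvA(a)$. Ainsi $\gen{a,c}$ est une \ivda de $\gen{a,b}$ de pgcd fort $a$, et le point \emph{5} de la proposition~\ref{lemsupfinidivpr} donne $\Idv(a,b)=(a:\gen{a,c})_\gA=(a:c)_\gA$ (la contrainte sur le \gtr $a$ est automatique), et symétriquement $\Idv(a,c)=(a:b)_\gA$. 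Enfin pour l'assertion finale, étant donné $\alpha\geq 0$ et $a\in\Atl$ avec $\dv(a)\geq\alpha$, le point \emph{1} du \thref{thKrull1,5} fournit $b\in\Atl$ avec $\alpha=\dv(a)\vi\dv(b)=\dvA(a,b)$, et la partie principale du corollaire donne alors $c\in\Atl$ avec $\Idv(\alpha)=\Idv(a,b)=(a:c)_\gA$. La seule étape délicate est la premi\`ere: deviner la bonne forme perturbative $\dv(c)=\alpha_1+\rho$ et extraire l'orthogonalité $\rho\perp\beta$ du \tho d'approximation; le reste n'est qu'un exercice de comptabilité dans le \grlz.
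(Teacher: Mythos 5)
Votre démonstration est correcte et suit essentiellement la même voie que celle de l'article: on produit $c$ réalisant $\dvA(a,c)=\dv(a)-\dvA(a,b)$, puis on pose $d=bc/a$ et tout le reste est de la comptabilité dans le \grl et l'identité $\gen{a,b}\gen{a,c}=a\gen{a,b,c,d}$. La seule différence est cosmétique: l'article invoque directement le point \emph{1} du \thref{thKrull1,5} (avec $\beta=\dv(a)-\dvA(a,b)$) là où vous en déroulez la preuve en repartant du \thref{thKrullApproxSim}, et vous vérifiez $\dvA(a,b,c,d)=\beta\vi\rho=0$ par la \dcn \orte plutôt que de le lire sur l'\egt d'\idsz.
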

\begin{proof}
Posons $\alpha=\dvA({a,b})$ et $\beta=\dv(a)-\alpha$%
(\footnote{Si $\gA$ est \coh on peut prendre $\beta=\dvA(\fb)$ où $\fb=(a:\gen{a,b})=(a:b)$}). 
En appliquant le \tho un et demi à $\beta$ et $a$, il existe $c\in\Atl$ tel que
$\beta=\dv(a,c)$. Puisque $\alpha+\beta=\dv(a)$ on obtient que $\gen{a,b}\gen{a,c}$ admet $a$ pour pgcd fort.
En particulier $bc$ est divisible par $a$, on écrit $bc=ad$ et l'on obtient
$\gen{a,b}\gen{a,c}=a\gen{a,b,c,d}$ donc $\dv(a,b,c,d)=0$.\\
Le dernier point résulte de ce que 

\snic{\dv(a,b)=\dv(a)-\dv(a,c)=\dv(1)\vu\dv(\fraC a c),}

\noindent  donc $\Idv(a,b)=\gA\cap\fraC a c\,\gA$. 
\end{proof}
\rem Si $\gA$ est un \ddp possédant la \prt un et demi (par exemple s'il est de dimension $\leq 1$), on a la même \prt sous une forme plus forte:  $\dv(a,b,c,d)=0$
est remplacé par $\gen{a,b,c,d}=\gen{1}$.
\eoe

\medskip Le  \thref{lemKrullfini} est une version \cov 
du \tho suivant en \clamaz: \textsl{un anneau de Krull avec seulement un nombre fini
de \dvrs \irds est un anneau principal} (conséquence de \cite[Theorem 12.2]{Mat}).

%:     theorem{lemKrullfini}
\begin{theorem} \label{lemKrullfini}
Soit $\gA$ un \aKrz. 
\begin{enumerate}
\item Si $\DivA$ est absolument borné, $\gA$ est un anneau principal.
\item Si $\DivA$ est engendré par les \dvrs \irds $(\pi_1,\dots,\pi_r)$, 
on obtient en outre 
\begin{itemize}
\item $\gA$ est un anneau principal à \fatz,
\item  si $\pi_i=\dvA(p_i)$, l'\elt $q=\prod_{i=1}^{r}p_i$ engendre l'\id $\Rad\gA$,
\item  les \idemas détachables de $\gA$ sont les $\gen{p_i}$.
\end{itemize}
\item  Pour un $a\in\Atl$ \propeq
\begin{enumerate}
\item $\DivA=\cC\big(\dvA(a)\big)$ (qui est absolument borné).
\item $\gA$ est un anneau principal et $a\in\Rad\gA$.
\end{enumerate}
\end{enumerate}
\end{theorem}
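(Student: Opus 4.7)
La stratégie est de démontrer en premier le point \emph{2}, puis d'en déduire successivement les points \emph{1} et \emph{3}.

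Pour le point \emph{2}, on commence par établir l'\iso $\DivA=\boxplus_{i=1}^r\ZZ\pi_i\simeq \ZZ^r$. Comme $\gA$ est un \aKrz, $\DivA$ est de dimension $\leq 1$ (fait~\ref{fact1Krull}), donc $\cC(\pi_i)=\ZZ\pi_i$ pour chaque $i$ par le lemme~\ref{lemirreddim1}. Deux \dvrs \irds distincts sont orthogonaux (lemme d'Euclide dans un \grl discret à \dcnbz), donc la somme $\sum_i\cC(\pi_i)$ est directe; l'hypoth\`ese de génération donne l'\iso annoncé. Tout \dvr positif $\alpha=\sum n_i\pi_i=\dv(\prod p_i^{n_i})$ est alors principal, et comme $\dv$ est injectif modulo $\Ati$, chaque $a\in\Atl$ s'écrit de mani\`ere unique (à une unité pr\`es) sous la forme $u\prod p_i^{n_i}$: $\gA$ est un anneau factoriel dont les \irds sont les $p_i$.

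L'étape centrale, qui constitue l'obstacle principal, est de montrer que tout \itf de $\gA$ est principal. Pour $\fa=\gen{a_1,\ldots,a_k}$, on pose $g=\prod p_i^{m_i}$ avec $m_i=\min_j n_i(a_j)$, si bien que $\dv(g)=\dv(\fa)$ et $a_j=gb_j$ avec $\Vi_j\dv(b_j)=0$. Le probl\`eme se ram\`ene à prouver $\gen{b_1,\dots,b_k}=\gen{1}$ sous cette hypoth\`ese. Je procéderais en montrant d'abord que $\gA$ est de \ddk $\leq 1$, en vérifiant que chaque $\gen{p_i}$ est \idemaz: étant donné $a\notin\gen{p_i}$ (\cad $\dv(a)\perp\pi_i$), le théor\`eme d'approximation simultanée~\ref{thKrullApproxSim} fournit un $x\in\gA$ avec $\dv(x)=\pi_i+\rho$ o\`u $\rho\perp\dv(a)$, ce qui conduit, par un argument de type reste chinois, à $\gen{a,p_i}=\gen{1}$. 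Le théor\`eme~\ref{thi2clcohidv} fait alors de $\gA$ un \ddpz; combiné à la principalité de tous les \dvrsz, $\gA$ est un anneau de Bezout, donc principal. Pour les derni\`eres affirmations du point~\emph{2}, le lemme~\ref{lemBezRad} (applicable puisque $\gA$ est de Bezout avec $\DivA$ de dimension $\leq 1$) donne $a\in\Rad\gA\iff\DivA=\cC(\dv a)$; pour $q=\prod p_i$, on a $\cC(\dv q)=\cC(\sum\pi_i)=\DivA$, d'o\`u $q\in\Rad\gA$, et la réciproque (tout élément de $\Rad\gA$ est multiple de $q$) vient de la structure $\ZZ^r$. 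Les \idemas détachables sont les $\gen{p_i}$ par ce qui préc\`ede.

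Le point \emph{1} se déduit du point \emph{2}: sous le bornage absolu, combiné à la \dcnbz, à la discrétude et à la dimension $\leq 1$ (fait~\ref{fact1Krull}), $\DivA$ poss\`ede un nombre fini d'\irds qui l'engendrent (point~\emph{4} de la proposition~\ref{propgrldec}: le bornage absolu permet un test d'irréductibilité et garantit que la \dcn partielle itérée se termine en un nombre borné d'étapes sur des \irdsz). Le point \emph{3} s'obtient via le lemme~\ref{lemBezRad}: $\DivA=\cC(\dv a)$ est absolument borné ($\dv(a)$ étant à \dcnb dans un \aKrz), d'o\`u $\gA$ principal par~\emph{1} et $a\in\Rad\gA$ par le lemme; la réciproque est directe par les m\^emes références.
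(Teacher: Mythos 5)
Le début de votre point \emph{2} (\tho d'approximation simultanée $\Rightarrow$ tout \dvr positif est principal, $\DivA\simeq\ZZ^{r}$, \fcn en les $p_i$) co\"{\i}ncide avec la \dem du papier, et l'usage du lemme \ref{lemBezRad} dans le point \emph{3} est légitime. Mais deux étapes de votre plan ne tiennent pas dans le cadre \cof de l'article.

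La plus sérieuse est la déduction du point \emph{1} à partir du point \emph{2}. Vous affirmez que le bornage absolu fournit un test d'irréductibilité et donc un nombre fini d'\irds qui engendrent $\DivA$. C'est l'argument de \clamaz, qui utilise le tiers exclu, et le papier met explicitement en garde contre cette inférence: la remarque qui suit le \tho signale qu'il est m\^eme impossible d'obtenir \cot un $a$ tel que $\DivA=\cC\big(\dvA(a)\big)$ pour un \aKr absolument borné, et la remarque apr\`es le lemme \ref{lemGrldcc} pointe la m\^eme obstruction (le bornage borne la taille d'une \bdpz, mais ne permet pas de décider si un \elt de cette base est \irdz). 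La \dem du papier contourne l'obstacle en reprenant les calculs du point \emph{2} en rempla\c{c}ant les $\pi_i$, pour chaque vérification finie, par une \bdp des \elts qui y interviennent; le bornage absolu suffit alors à faire tourner le lemme \ref{lem5bdf}. Votre point \emph{3} hérite de ce défaut puisqu'il repose sur le point \emph{1}.

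Ensuite, dans le point \emph{2}, le passage \gui{chaque $\gen{p_i}$ est \idema donc $\gA$ est de \ddk $\leq 1$} n'est pas justifié d'un point de vue \cofz, car la maximalité de ces \ids particuliers ne borne pas la dimension: il faudrait produire, pour tous $a$, $b\in\Atl$, l'identité de collapsus $b^{m}(1+bx)\in\gen{a}$, ce que votre \gui{argument de type reste chinois} laisse dans l'ombre. L'ingrédient qui permet de conclure est précisément celui que le papier utilise pour aller directement à Bezout sans détour par la dimension ni par le \thref{thi2clcohidv}: si $\dv(b)\perp\dv(c)$, alors $b$ et $c$ sont \com (on pose $d=\prod_{k\in K}p_k$ o\`u $K$ est l'ensemble des indices hors des supports de $\dv(b)$ et $\dv(c)$, et le lemme \ref{lem5bdf} donne $\dv(b+cd)=0$, donc $b+cd\in\Ati$). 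Une fois ce fait explicité, votre détour devient réparable mais superflu.
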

%--------- fin theorem ----------------------------------------------
%
\begin{proof} \textsl{2.}  Tout d'abord, il est clair que $\DivA$ est à \dcncz. Comme un \dvr \ort à tous les $\pi_i$ est nul, pour tous $(n_1,\dots,n_r)$ dans~$\NN$, le \thref{thKrullApproxSim} nous donne un $a\in\Atl$ tel que $\dvA(a)=\sum_in_i\pi_i$. Ainsi tout \dvr est principal, autrement dit $\gA$ est un anneau à pgcd.

Montrons maintenant que~$\gA$ est un anneau de Bézout. Pour cela il suffit 
de montrer que si $b$ et $c$ ont pour pgcd $1$, ils sont \comz.
On écrit 

\snic{\dv(b)=\sum_{i\in I} m_i\pi_i$, $\dv(c)=\sum_{i\in J} m_i\pi_i$
 avec les $m_i>0$  et $I\cap J=\emptyset.}

\noindent Soit $K=\lrbr\setminus(I\cup J)$ et $d=\prod_{k\in K}p_k$. Alors $\dv(b+cd)=0$ par le lemme~\ref{lem5bdf} (dans la \dcn des trois \eltsz~$\dv(b)$, $\dv(cd)$ 
et~$\dv(b+cd)$, la valeur minimum $0$ doit être atteinte au moins deux fois
sur chaque composante~$\pi_i$). \hbox{Ainsi $b+cd\in\Ati$},
et $\gen{b,c}=1$.

Montrons que  $q\in\Rad\gA$. I.e., pour $x\in\Atl$, \hbox{$1+xq\in\Ati$} (ou encore $\dvA(1+xq)=0$). Dans la \dcn des trois \eltsz~$\dvA(1)$, $\dvA(1+xq)$ 
et~$\dvA(xq)$, la valeur minimum $0$ doit être atteinte au moins deux fois
sur chaque composante~$\pi_i$ (lemme~\ref{lem5bdf}).

Montrons que $\Rad\gA\subseteq \gen{q}$. Si $b\in\gA$
et si un des $p_i$ ne figure pas dans les diviseurs de $b$, on a $\gen{b,p_i}=1$ et donc $1+xb=yp_i$ pour $x$ et $y$ convenables, donc $1+xb$ n'est pas \ivz.

\noindent On laisse \alec le soin de montrer que les $\gen{p_i}$ sont les \idemas détachables.

\snii\textsl{1.} Les \dems dans le point \textsl{2} fonctionnent en rempla{\c c}ant les $\pi_i$ du point \textsl{2} par des \bdps pour les \elts de~$\gA$ qui définissent les \dvrs qui entrent dans la preuve. Les détails sont laissés \alecz.

\snii\textsl{3b} $\Rightarrow$ \textsl{3a.} D'apr\`es le lemme \ref{lemBezRad}.

\snii\textsl{3a} $\Rightarrow$ \textsl{3b.} L'anneau est principal d'apr\`es le point \textsl{1} Concernant $a\in\Rad\gA$ on reprend la preuve du point \textsl{2} en s'appuyant sur des \bdpsz.  
\end{proof}

\rems 1) Dans le point \textsl{3a} on ne peut pas remplacer l'hypoth\`ese
par la simple existence d'un \elt \ndz dans $\Rad\gA$:
il y a des anneaux factoriels locaux de dimension de Krull arbitraire. 

\noindent 2) En \clama tout \aKr $\gA$ avec~$\DivA$ absolument borné rel\`eve du point \textsl{2} ci-dessus. 
D'un point de vue \cofz, la situation est plus problématique: il est même impossible d'obtenir qu'un \aKr à \dcnc avec $\DivA$ absolument borné contient un \elt $a$ tel que
$\DivA=\cC(a)$. 
\eoe

\rdb
\medskip \exl \label{exempleFactnonCoh} 
On donne ici un anneau de Krull non \cohz. C'est
aussi un \alo à pgcd. Il n'est pas \coh (cela montre aussi qu'il n'est pas \noez). En \clama c'est un anneau factoriel en tant qu'anneau de Krull à pgcd. En outre il est de \ddk $\leq 2$. C'est l'exemple 5.2 dans \cite[Glaz]{Glaz01}.
\\
On consid\`ere le \cdi $\gF=\FF_2((a_i)_{i\in\NN},(b_i)_{i\in\NN})$, puis l'anneau local~\hbox{$\gB=\gF[x,y]_{1+\gen{x,y}}$}. On pose $p_i=a_ix+b_iy$ et l'on définit un \autoz~$\gamma $ de $\gB$ par 

\snic{\gamma (x)=x$, $\gamma (y)=y$, $\gamma (a_i)=a_i+yp_{i+1}$, $\gamma (b_i)=b_i+xp_{i+1}$
pour tout $i.}

\noindent 
On a $\gamma(p_i)=p_i$ et $\gamma $ engendre le groupe $G=\so{\Id,\gamma}$ d'ordre $2$. Enfin $\gA$ est le sous-anneau~$\gB^G$ des points fixes de $G$. 
\eoe

%%%%%%%%%%%%%%%%%%%%%%%%%%%%%%%%%%%%%%%%%%%%%%%%%%%%%%%%%%%%%%%%%%%%%%%%%%%
%: subsec{Localisations d'un \aKrz
\subsec{Localisations d'un \aKrz, \dvrs \irdsz}

Le \tho suivant est une conséquence \imde du \thref{lemdvlloc}
et de la proposition~\ref{propQuoGrlDcn}.
On notera que les hypoth\`eses du point \textsl{2} sont toujours satisfaites
en \clamaz. 

%:     theorem{lemLocKrull}
\begin{theorem} \label{lemLocKrull}
Soient $\gA$ un \aKrz, $S$ un filtre ne contenant pas $0$, et~$H_S$ le sous-groupe solide de $\DivA$ engendré par les~$\dv_\gA(s)$ \hbox{pour~$s\in S$}.
Alors $\gA_S$ est un \advlz, $\Div\gA_S$ est à \dcnb et de dimension $\leq 1$, et  il y a un unique morphisme  de \grls 
$\varphi_S:\DivA\to\Div \gA_S$
tel %\hbox
{que $\varphi_S\big(\dv_\gA(a)\big)=\dv_{\gA_S}(a)$} pour tout $a\in\Atl$.
\\
On a en outre les précisions qui suivent.
\begin{enumerate}
\item Si $\gA_S$ est \dveez, $\gA_S$ est un \aKrz.  
\item Si $H_S$ est détachable et si $\DivA=H_S\boxplus H'$ pour un sous-groupe solide $H'$, $\gA_S$ est un \aKr et le morphisme $\varphi_S$ est un \moquo par~$H_S$: il permet  
d'identifier~$\Div\gA_S$  au \grl quotient $(\DivA)/H_S\simeq H'$.
\end{enumerate}
\end{theorem}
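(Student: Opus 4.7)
Le plan est de combiner deux résultats déjà démontrés: le \thref{lemdvlloc} (qui traite la \lon d'un \advl quelconque) et la proposition \ref{propQuoGrlDcn} (qui décrit comment les \prts de \dcn se transmettent à un quotient de \grlz). Puisqu'un \aKr est en particulier un \advlz, le \thref{lemdvlloc} s'applique tel quel et fournit immédiatement: $\gA_S$ est un \advlz; le morphisme surjectif $\varphi_S:\DivA\to\Div\gA_S$ existe et est unique sous la condition donnée; et $\Div\gA_S\simeq(\DivA)/\Ker\varphi_S$.

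Il reste alors à établir que $\Div\gA_S$ est à \dcnb et de dimension $\leq 1$. Par \dfnz~\ref{defiAdKrull} d'un \aKrz, $\DivA$ est à \dcnbz, et le point~\emph{4} du fait~\ref{fact1Krull} assure qu'il est aussi de dimension $\leq 1$. La surjectivité de $\varphi_S$ permet alors d'invoquer les points \iref{i1propQuoGrlDcn} et \iref{i3propQuoGrlDcn} de la proposition~\ref{propQuoGrlDcn}, qui transfèrent respectivement la dimension $\leq 1$ et la \dcnb au \grl quotient $\Div\gA_S$.

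Pour le point~\emph{1}, dire que $\gA_S$ est \dvee signifie exactement, par le lemme~\ref{thDivDiscret} appliqué à $\gA_S$, que $\Div\gA_S$ est discret; combiné avec la \dcnb déjà obtenue, c'est précisément la \dfn d'un \aKrz. Pour le point~\emph{2}, le point~\emph{3} du \thref{lemdvlloc} s'applique sous l'hypothèse $\DivA=H_S\boxplus H'$: on en déduit que $\varphi_S$ est un \moquo par $H_S$ et que $\Div\gA_S\simeq(\DivA)/H_S\simeq H'$. L'hypothèse supplémentaire que $H_S$ soit détachable équivaut à la discrétion de $(\DivA)/H_S$ (car $\DivA$ est discret en tant que groupe des \dvrs d'un \aKrz), donc $\gA_S$ est \dveez, et on conclut par le point~\emph{1}.

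La seule subtilité réelle est de nature \cofz: la \dcnb et la dimension $\leq 1$ passent automatiquement au \grl quotient, mais pas la discrétion — ce qui justifie précisément l'introduction des hypothèses supplémentaires dans les points 1 et 2. Aucune étape ne présente d'obstacle substantiel: la preuve consiste essentiellement à orchestrer correctement les résultats \ref{lemdvlloc}, \ref{propQuoGrlDcn} et \ref{fact1Krull}.
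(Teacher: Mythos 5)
Votre démonstration est correcte et suit exactement la voie prévue par le papier, qui présente d'ailleurs ce théorème comme une conséquence immédiate du \thref{lemdvlloc} et de la proposition \ref{propQuoGrlDcn} sans détailler davantage; vous explicitez correctement les ingrédients (surjectivité de $\varphi_S$, transfert de la dimension $\leq 1$ et de la borne de décomposition au quotient, rôle du lemme \ref{thDivDiscret} et du point \emph{3} du \thref{lemdvlloc} pour les points \emph{1} et \emph{2}).
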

%--------- fin theorem ---------------------------------------------- 

 Rappelons que pour un \advl arbitraire,  on a
une bijection naturelle entre  les ensembles suivants (\thref{lemdivirdadvlgnl}).
\begin{itemize}
\item Les \dvrs \irdsz. 
\item Les \idifs   premiers $\neq \gen{1}$. 
\end{itemize}
Les points \textsl{2} à \textsl{4} du \tho suivant rajoutent quelques précisions pour les \aKrs qui ne sont pas des corps (on suppose d'existence d'un diviseur strictement positif).

%:     theorem{lemdvlloc2Krull}
\begin{theorem} \label{lemdvlloc2Krull}  
Soient $\gA$ un \aKrz,  $\alpha$ un \dvr $>0$ 
{et $S_{\alpha}=\sotq{x\in\Atl}{\dv x\perp \alpha}$}.
\begin{enumerate}
\item \label{i1lemdvlloc2Krull} L'anneau $\gB=S_\alpha^{-1}\gA$ est un anneau principal avec
$\DivB\simeq\cC(\alpha)$. En particulier on a un \elt \ndz
dans $\Rad\gB$ et une borne à priori sur le nombre d'\elts deux à deux étrangers dans $\gB$.   
\item \label{i2lemdvlloc2Krull}
\Propeq
\begin{enumerate}
\item  $\alpha$ est un \dvr \irdz.
\item  $\cC(\alpha)=\ZZ \alpha$.
\item   $\Idv(\alpha)$ est un \idepz.
\item   $S_\alpha$ est un filtre premier de hauteur $\leq 1$
et $\gA=S_\alpha\,\cup\,\Idv(\alpha)$ (union disjointe de deux parties détachables).
\item   $S_\alpha^{-1}\gA$ est un \adv discr\`ete et si $p/1$ est une uniformisante, on a $\alpha=\dvA(p) \mod \cC(\alpha)\epr$.
\end{enumerate}
\item \label{i3lemdvlloc2Krull} Si $\gA$ est  à \dcncz, on obtient selon le point  \ref{i2lemdvlloc2}. des bijections entre  les trois ensembles suivants.
\begin{itemize}
\item Les \dvrs \irdsz. 
\item Les \idifs   premiers  $\neq \gen{1}$. 
\item Les filtres premiers détachables de hauteur $1$. 
\end{itemize}
\item \label{i4lemdvlloc2Krull} Si $\gA$ est \cohz, les quatre ensembles suivants
sont égaux.
\begin{itemize}
\item Les \idifs premiers  $\neq \gen{1}$. 
\item Les \itfs premiers $\fq\neq \gen{0}$ tels que $\dv(\fq)>0$. 
\item Les \itfs premiers $\fq\neq \gen{0},\gen{1}$ tels que $\fq=\Idv(\fq)$. 
\item Les \itfs premiers détachables 
de hauteur $1$. 
\end{itemize} 
\end{enumerate} 
\end{theorem}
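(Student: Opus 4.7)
\medskip
\noindent\textit{Plan de démonstration.} La démonstration consiste essentiellement à rassembler les résultats déjà établis pour les advls et à exploiter les deux ingrédients supplémentaires fournis par l'hypothèse de Krull: la discrétion de $\DivA$ et sa décomposition bornée.

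Pour le point~\ref{i1lemdvlloc2Krull}, on commence par observer que $\DivA$ est de dimension~$\leq 1$ (Fait~\ref{fact1Krull}, point~4), donc $\DivA=\cC(\alpha)\boxplus\cC(\alpha)\epr$. Le sous-groupe solide $H_{S_\alpha}$ coïncide avec $\cC(\alpha)\epr$: l'inclusion directe vient de la définition de $S_\alpha$, la réciproque s'obtient en appliquant le théorème d'approximation simultanée~\ref{thKrullApproxSim} à tout $\beta\in\cC(\alpha)\epr$ positif (qui admet ainsi un antécédent $s\in\Atl$ avec $\dv(s)=\beta+\rho$, $\rho\perp\alpha$). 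Comme $\DivA$ est discret, $\cC(\alpha)\epr$ est détachable, et le point~2 du théorème~\ref{lemLocKrull} assure que $\gB=S_\alpha^{-1}\gA$ est un \aKr avec $\Div\gB\simeq\cC(\alpha)$. Puisque $\DivA$ est à \dcnb, $\cC(\alpha)$ est absolument borné (borné absolument par le nombre maximum de termes non nuls dans une décomposition de $\alpha$), d'où, par le point~1 du théorème~\ref{lemKrullfini}, $\gB$ est principal. Pour exhiber un élément \ndz de $\Rad\gB$, on applique à nouveau l'approximation simultanée dans $\gA$ pour obtenir $a\in\Atl$ avec $\dv_\gA(a)=\alpha+\rho$, $\rho\perp\alpha$: alors $\dv_\gB(a)$ correspond à $\alpha$ dans $\Div\gB\simeq\cC(\alpha)$, qui engendre $\Div\gB$ comme sous-groupe solide, et le point~3 du théorème~\ref{lemKrullfini} conclut. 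La borne sur le nombre d'éléments étrangers deux à deux se déduit directement de la borne absolue sur $\cC(\alpha)$.

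Le point~\ref{i2lemdvlloc2Krull} est une application directe du point~2 du théorème~\ref{lemdvlloc2}, valable car un \aKr a par définition un groupe des diviseurs discret, et de dimension~$\leq 1$ par le Fait~\ref{fact1Krull}.

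Pour les points~\ref{i3lemdvlloc2Krull} et~\ref{i4lemdvlloc2Krull}, la bijection entre diviseurs irréductibles et \idifs premiers $\neq\gen{1}$ est déjà acquise par le théorème~\ref{lemdivirdadvlgnl}; la correspondance avec les filtres premiers détachables de hauteur~$1$ passe par les équivalences (a)\,$\Leftrightarrow$\,(c)\,$\Leftrightarrow$\,(d) du point~\ref{i2lemdvlloc2Krull}. Pour le sens délicat du point~\ref{i3lemdvlloc2Krull} --- tout filtre premier détachable $S$ de hauteur~$1$ provient d'un diviseur irréductible --- on utilise l'hypothèse de \dcn complète: $\gA_S$ est alors un \aKr local de dimension~$\leq 1$ à \dcn complète, donc un \adv discrète par le lemme~\ref{lemValDiscKrull} (implication~\ref{i5lemVDK}\,$\Rightarrow$\,\ref{i1lemVDK}, qui réclame précisément cette \dcn complète), et le diviseur de son uniformisante se rel\`eve en un diviseur irréductible $\alpha$ de $\gA$ avec $S_\alpha=S$. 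Pour le point~\ref{i4lemdvlloc2Krull}, l'égalité des trois premiers ensembles est le corollaire~\ref{corcorlemdivirdadvlgnl}, et l'identification avec le quatri\`eme ensemble utilise à nouveau l'équivalence (c)\,$\Leftrightarrow$\,(d) du point~\ref{i2lemdvlloc2Krull}, la cohérence servant ici à garantir que les \idifs premiers sont \tfz.

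L'obstacle principal est la vérification soigneuse que $H_{S_\alpha}=\cC(\alpha)\epr$ (indispensable pour appliquer le point~2 du théorème~\ref{lemLocKrull}) et, dans le point~\ref{i3lemdvlloc2Krull}, le fait que seule la \dcn complète permet de conclure que $\gA_S$ (local Krull de dimension~$1$) est effectivement un \adv discr\`ete: sans cette hypothèse, le résidu n'est pas garanti d'être engendré par un élément irréductible, et la bijection recherchée échoue constructivement.
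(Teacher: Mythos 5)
Vos points 1, 2 et 4 suivent pour l'essentiel la démonstration du papier (qui se contente de renvoyer aux théorèmes \ref{lemdvlloc2}, \ref{lemKrullfini} et \ref{cor0lemdvlloc}); votre vérification de l'égalité $H_{S_\alpha}=\cC(\alpha)\epr$ au moyen du théorème d'approximation simultanée \ref{thKrullApproxSim} est un complément bienvenu d'un point que le papier laisse implicite dans la preuve du théorème \ref{lemdvlloc2}.

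En revanche, votre point 3 comporte une lacune réelle. Vous affirmez que $\gA_S$ est \gui{un \aKr local de dimension $\leq 1$ à \dcn compl\`ete} afin d'invoquer le lemme \ref{lemValDiscKrull}. Or rien ne garantit constructivement que $\Div(\gA_S)$ soit discret, ni a fortiori à \dcncz: le théorème \ref{lemLocKrull} ne fournit que \gui{\advlz, à \dcnb et de dimension $\leq 1$}, la discrétion ne passe pas au quotient en général (le papier le souligne explicitement), et le lemme \ref{factQuodiscret} ne s'applique qu'aux quotients par un $\cC(\xi)$ ou un $\cC(\xi)\epr$ --- ce que l'on ne sait pas encore être le cas de $\Ker\varphi_S$, puisque c'est précisément la conclusion visée $S=S_\pi$. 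De même, le \gui{relèvement} du diviseur de l'uniformisante en un diviseur irréductible de $\DivA$ est exactement l'étape non triviale, que vous ne détaillez pas. La démonstration du papier contourne l'obstacle en travaillant dans $\DivA$ et non dans $\Div(\gA_S)$: pour $x\in\Atl\setminus S$, on décompose $\dvA(x)=\sum_i n_i\pi_i$ en irréductibles (c'est ici, et seulement ici, qu'intervient l'hypothèse de \dcn compl\`ete, portant sur $\gA$ et non sur $\gA_S$); comme $\gA_S$ est un \adv (advl local de dimension $1$, théorème \ref{thi2clcohidv}), les images des $\pi_i$, deux à deux orthogonales et deux à deux comparables, sont toutes nulles sauf exactement une, notée $\pi$, et l'on vérifie alors directement $S=S_\pi$. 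Il faudrait reprendre votre point 3 selon ce schéma.
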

%--------- fin theorem ---------------------------------------------- 
%
\begin{proof}
%
%\snii
\textsl{\ref{i1lemdvlloc2Krull}.} Résulte du point \textsl{1} du \thref{lemdvlloc2} et du \thref{lemKrullfini}.

\snii
\textsl{\ref{i2lemdvlloc2Krull}.} C'est le point \textsl{2} du \thref{lemdvlloc2}. 
Ici intervient seulement le fait que~$\DivA$ est discret de dimension~$1$.

\snii
\textsl{\ref{i3lemdvlloc2Krull}.} 
Il reste à vérifier qu'un filtre premier $S$ de hauteur~$1$ est de la forme~$S_\pi$ pour un \dvr \irdz~$\pi$.

\noindent Le localisé $\gA_S$ est par \dfn un anneau local de dimension $\leq 1$.
D'apr\`es le point~\textsl{1} du \thref{lemdvlloc}, c'est un \advl dont le groupe des \dvrsz~$\Div(\gA_S)$ est un quotient de $\DivA$.
En tant qu'\advl local de dimension~\hbox{$1$, $\gA_S$} est un \adv (\thref{thi2clcohidv}). 
\\
Soit $x\in\Atl\setminus S$, on a $\dv_{\gA_S}(x)>0$
car $x\notin\gA_S\eti$. On consid\`ere la \dcn de $\xi=\dvA(x)$ sous forme $\sum_i n_i\pi_i$ avec les $\pi_i$ \irds et les $n_i\in\NN\etl$.
Les $\pi_i$ restent deux à deux \orts dans $\gA_S$, et dans un \adv deux \dvrs $>0$ sont toujours comparables. Donc un et un seul des $\pi_i$, appelons le~$\pi$, reste~\hbox{$>0$} dans~$\Div(\gA_S)$. De la même mani\`ere tout \dvr \ird distinct de~$\pi$ dans $\DivA$ s'annule dans~$\Div(\gA_S)$.
Ceci montre que  $\Div(\gA_S)\simeq (\ZZ,\geq)$ avec $\pi$ comme seul \dvr \irdz, correspondant à $1\in\ZZ$ dans l'\isoz. Tous \hbox{les $y\in\gA$} tels que
$\dvA(y)\perp\pi$ dans~$\gA$ sont dans~$S$ car $\dv_{\gA_S}(y)=0$, i.e. ce sont des unités de $\gA_S$. Ainsi on obtient bien $S=S_\pi$. 

\snii
\textsl{\ref{i4lemdvlloc2Krull}.} 
C'est le point \textsl{\ref{i3lemdvlloc2}} du \thref{lemdvlloc2}. 
\end{proof}

On se propose maintenant d'étudier, autant que faire se peut, \gui{toutes} les \lons d'un \aKr à \dcncz.

\smallskip  Le \thref{lemdvllocDcnc} \gns pour les \aKrs à \dcnc des résultats
 simples dans le cas d'un anneau factoriel. Il résulte essentiellement
 du \thref{lemLocKrull} et  du lemme \ref{lemGrldcc}.
En \clama il donne une description exhaustive des localisés
d'un \aKrz. En \coma on se limite aux \lons en des filtres détachables particuliers. Ce \tho compl\`ete pour les \aKrs le lemme \ref{lemGrldcc} qui décrit les sous-groupes détachables d'un \grl à
\dcncz.

%%%%%%%%%%%%%%%%%%%%%%%%%%%%%%%%%%%%%%%%%%%%%%%%%%%%%%%%%%%%%%%%%%%%%%%%%%%

%:     theorem{lemdvllocDcnc}
\begin{theorem} \label{lemdvllocDcnc} 
Soient $\gA$ un \aKr à \dcnc et $I$ l'ensemble de ses \dvrs \irdsz.
On reprend les notations du \thref{lemdvlloc}.
\begin{enumerate}
\item \label{i1lemdvllocDcnc} Si $S$ est un filtre et si $H_S$ est détachable,
alors $S$ est détachable, égal à 

\snic{\sotq{x\in\gA}{\forall \pi \in I,\,\pi\leq \dv(x)\Rightarrow \pi\in H_S}.}

\noindent  En outre $\Div(\gA_S)\simeq (\DivA)/H_S\simeq {H_S}\epr$.   
\item \label{i2lemdvllocDcnc} Si $H$ est un sous-groupe solide détachable
de $\DivA$, l'ensemble 

\snic{\sotq{x\in\gA}{\forall \pi \in I,\,\pi\leq \dv(x)\Rightarrow \pi\in H}}

\noindent  est un filtre détachable $S$ et $H_S=H$.
\item \label{i3lemdvllocDcnc} On obtient ainsi des bijections entre les trois ensembles suivants.
\begin{itemize}
\item Les filtres $S$ de $\gA$ tels que $H_S$ est détachable.
\item Les sous-groupes solides détachables de $\DivA$. 
\item Les parties détachables de $I$. 
\end{itemize}
\end{enumerate} 
On suppose dans la suite que $S$ est un filtre avec $H_S$ détachable.
\begin{enumerate} \setcounter{enumi}{3}
\item \label{i4lemdvllocDcnc} \Propeq
\begin{enumerate}
\item Le filtre $S$ est de hauteur $1$.
\item L'anneau $\gA_S$ est un \adk à \fatz.
\end{enumerate}
Dans ce cas \propeq
\begin{enumerate}\setcounter{enumii}{2}
\item Le sous-ensemble $I\cap{H_S}\epr$ de $I$ est fini non vide.
\item $\Rad(\gA_S)$ contient un \elt non nul.
\item L'anneau $\gA_S$ est principal et les \dvrs \irds de $\gA_S$ forment un ensemble fini non vide.
\end{enumerate}

\item \label{i5lemdvllocDcnc} Le filtre $S$ est premier de hauteur $1$ \ssi $I\cap{H_S}\epr$ est un singleton $\so\pi$. Dans ce cas l'\idep $\fp=\gA\setminus S$  est égal à $\Idv(\pi)$. 
\item \label{i6lemdvllocDcnc} Si  $S$ est premier de hauteur $\neq 0,1$,
$I\cap{H_S}\epr$ est infini et $\dv(\fp)=0$.
\end{enumerate}
\end{theorem}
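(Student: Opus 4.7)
La preuve suit l'ordre des points, s'appuyant sur la décomposition canonique $\DivA\simeq\bigoplus_{\pi\in I}\ZZ\pi$ fournie par la \dcnc (proposition~\iref{propgrldec}, point~\iref{i4propgrldec}).

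\emph{Points \iref{i1lemdvllocDcnc}--\iref{i3lemdvllocDcnc}.}
D'abord, pour tout filtre $S$ et tout $x\in\Atl$, on vérifie que $x\in S \iff \dvA(x)\in H_S$: le sens direct est clair, et le sens inverse résulte du point~\emph{2} du \thref{lemdvlloc} (si $\dv(x)\leq \dv(s)$ pour un $s\in S$, alors $x$ divise $s$ dans $\gA$, d'o\`u $x\in S$ par saturation du filtre). La \dcnc rend la condition $\dv(x)\in H_S$ équivalente à ce que chaque \ird composant $\dv(x)$ appartienne à $I\cap H_S$; si $H_S$ est détachable, $I\cap H_S$ l'est aussi, d'o\`u la détachabilité et la description de $S$ voulues. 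L'\iso $\Div(\gA_S)\simeq(\DivA)/H_S\simeq {H_S}\epr$ résulte alors du point~\emph{3} du \thref{lemdvlloc} combiné au lemme~\ref{lemGrldcc} qui donne $\DivA=H_S\boxplus {H_S}\epr$. Pour le point~\iref{i2lemdvllocDcnc}, l'ensemble défini est détachable (par \dcnc, en testant les composantes \irdsz) et forme un filtre (stabilité par produit car $H$ est un sous-groupe, saturation car $H$ est solide); l'inclusion $H_S\subseteq H$ est immédiate, l'inclusion inverse étant la difficulté principale décrite plus bas. Le point~\iref{i3lemdvllocDcnc} s'obtient alors en combinant les points précédents et le lemme~\ref{lemGrldcc} identifiant sous-groupes solides détachables de $\DivA$ et parties détachables de~$I$.

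\emph{Points \iref{i4lemdvllocDcnc}--\iref{i6lemdvllocDcnc}.}
L'\iso $\Div(\gA_S)\simeq {H_S}\epr\simeq\bigoplus_{\pi\in I\cap {H_S}\epr}\ZZ\pi$ traduit la hauteur de $S$ en cardinalité et non-vacuité de $I\cap {H_S}\epr$. Si ce dernier est fini non vide, $\Div(\gA_S)$ est absolument borné non nul, et le \thref{lemKrullfini} donne que $\gA_S$ est un anneau principal à \fat dont les \dvrs \irds sont en bijection avec $I\cap {H_S}\epr$, avec un \elt non nul dans $\Rad(\gA_S)$ (point~\emph{3} du m\^eme \thoz). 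Les conditions (c), (d), (e) équivalent alors entre elles, et à (a), (b), puisqu'un \aKr principal non trivial de dimension $\leq 1$ est un \adk à \fatz. Le point~\iref{i5lemdvllocDcnc} est le cas $|I\cap {H_S}\epr|=1$: alors $\Div(\gA_S)\simeq(\ZZ,\geq)$, donc $\gA_S$ est un \adv discr\`ete par le lemme~\ref{lemValDiscKrull}, $S$ est un filtre premier de hauteur~$1$, et $\fp=\gA\setminus S$ s'identifie à $\Idv(\pi)$ pour l'unique $\pi\in I\cap {H_S}\epr$. Pour le point~\iref{i6lemdvllocDcnc}, une hauteur $\neq 0,1$ exclut que $I\cap {H_S}\epr$ soit fini (sinon $\gA_S$ serait principal de dimension $\leq 1$), d'o\`u l'infinitude; et $\dv(\fp)=0$ s'obtient en observant que pour chaque $\pi\in I$, l'infinitude fournit un $\pi'\neq \pi$ dans $I\cap {H_S}\epr$, et tout $x\in\Idv(\pi')\cap \fp$ vérifie $\dv(x)\not\geq \pi$.

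\emph{Principale difficulté.}
L'étape la plus délicate est l'inclusion $H\subseteq H_S$ dans le point~\iref{i2lemdvllocDcnc}: pour chaque $\pi\in I\cap H$, il faut produire $s\in S$ (i.e., $\dv(s)\in H$) tel que $\pi\leq \dv(s)$. Le \thref{thKrullApproxSim} point~\emph{1} fournit $s\in\Atl$ avec $\dvA(s)=\pi+\rho$, $\rho\perp \pi$, mais rien ne garantit $\rho\in H$: les composantes \irds de $\rho$ peuvent sortir de $H$. La strat\'egie consiste à itérer via le point~\emph{2} du m\^eme \thoz: après chaque approximation, on identifie par \dcnc l'ensemble fini des \irds parasites $\pi'_1,\dots,\pi'_r$ apparaissant dans~$\rho$ hors de $H$, puis on réapplique l'approximation avec $\gamma=\pi+\pi'_1+\cdots+\pi'_r$, obtenant un nouveau $s'$ dont le reste évite ces parasites. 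La terminaison du processus, à établir finement avec la structure de $\DivA$ à \dcnc et en s'appuyant sur le lemme~\ref{lem5bdf}, est le c\oe ur technique; elle est analogue aux arguments d'approximation déjà mis en \oe uvre dans la preuve du \thref{thKrullApproxSim} lui-m\^eme.
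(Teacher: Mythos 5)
L'article ne donne aucune démonstration de cet énoncé: il est signalé comme facile, avec la seule indication qu'il résulte essentiellement du \thref{lemLocKrull} et du lemme \ref{lemGrldcc}. Votre traitement des points \emph{1}, \emph{4} et \emph{5} suit cette ligne et est correct dans ses grandes lignes. Deux retouches tout de m\^eme: au point \emph{4}, les conditions (c), (d), (e) ne sont équivalentes à (a)--(b) que dans un sens (pour $\gA=\ZZ$ et $S=\Ati$, le filtre est de hauteur $1$ mais $I\cap{H_S}\epr=I$ est infini); au point \emph{6}, pour établir $\dv(\fp)=0$ il faut produire une famille \emph{finie} d'éléments de $\fp$ de borne inférieure nulle --- ce que fournit le \thref{thKrullApproxSim} appliqué à deux \elts \irds distincts de $I\cap{H_S}\epr$ en rendant les deux restes orthogonaux entre eux --- et non seulement, pour chaque $\pi$, un élément de $\fp$ non divisible par $\pi$.

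La vraie difficulté est bien celle que vous isolez: l'inclusion $H\subseteq H_S$ au point \emph{2}, \cad, pour chaque $\pi\in I\cap H$, l'existence d'un $s\in S$ avec $\pi\leq \dv(s)$. Votre itération du théor\`eme d'approximation ne se termine pas: chaque application peut introduire de nouvelles composantes \irds hors de $H$, et aucune quantité ne décro\^{\i}t. Ce n'est pas un simple obstacle technique: l'inclusion est fausse en général. Prenez $\gA=\aqo{\QQ[x,y]}{y^{2}-x^{3}+2}$ (c'est un \dok à \fatz, donc un \aKr à \dcncz) et $\fp_0=\gen{x-3,y-5}$: la classe de $\fp_0$ correspond au point rationnel $(3,5)$ de la courbe, qui est d'ordre infini, de sorte qu'aucune puissance $\fp_0^{\,n}$ avec $n\geq 1$ n'est principale. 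Pour le sous-groupe solide détachable $H=\cC(\pi_0)=\ZZ\pi_0$, o\`u $\pi_0=\dv(\fp_0)$, l'ensemble $S$ du point \emph{2} se réduit à $\sotq{x\in\Atl}{\dv(x)\in\NN\pi_0}=\Ati$, donc $H_S=0\neq H$. Le point \emph{2} et la surjectivité du point \emph{3} ne peuvent donc pas \^etre démontrés tels quels; ce qui subsiste est l'injectivité de $S\mapsto H_S\mapsto I\cap H_S$ (votre point \emph{1}), la correspondance du point \emph{3} n'étant bijective que sur les sous-groupes solides détachables qui sont effectivement de la forme $H_S$. Vous aviez donc raison de ne pas prétendre avoir clos cette étape, mais il fallait pousser l'analyse jusqu'à constater qu'elle ne peut pas l'\^etre.
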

%--------- fin theorem ---------------------------------------------- 
%
\facile

\rem Dans le cadre des \aKrs à \dcncz, il ne semble pas que l'on puisse démontrer que~$H_S$ est détachable d\`es que~$S$ est un filtre détachable
(contrairement au cas des anneaux factoriels). De même on ne peut pas calculer en \gnl la hauteur d'un filtre $S$ sous la seule hypoth\`ese que~$H_S$ est détachable.
\eoe

\medskip Un anneau est dit \textsl{pleinement Lasker-Noether} 
lorsqu'il est \noe \coh \fdi et que tout idéal radical est intersection finie d'\ideps \tf (voir \cite{MRR}). Rappelons qu'en \clama tout anneau
\noe est pleinement Lasker-Noether, et donc, d'apr\`es le \thref{thLasNoetDiv}, tout anneau \noe \icl est un  \aKr à \dcncz.

%:     Theorem{thLasNoetDiv}
\begin{theorem} \label{thLasNoetDiv} 
Un anneau \icl pleinement Lasker-Noether
est un \aKr à \dcncz.
\end{theorem}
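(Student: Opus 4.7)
\medskip\noindent
\textit{Plan de démonstration.} La stratégie consiste à vérifier successivement les trois propriétés requises (anneau à diviseurs, groupe des diviseurs discret, décomposition complète).

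D'abord, puisque $\gA$ est \coh et \iclz, le \thref{thiclcohidv} entraîne immédiatement que $\gA$ est un \advlz. Ensuite, $\gA$ étant \fdiz, la \dve est décidable et le lemme \ref{thDivDiscret} donne que $\DivA$ est discret. Pour montrer que $\gA$ est un \aKr à \dcncz, il suffira alors d'établir que tout diviseur $\alpha=\dvA(a)$ ($a\in\Atl$) se décompose en somme finie de diviseurs irréductibles avec multiplicités entières positives.

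Pour cet ingrédient crucial, mon idée est d'exploiter directement l'hypothèse Lasker-N{\oe}ther pleine. Étant donné $a\in\Atl$, on applique la propriété de décomposition du radical:
\[
\sqrt{\gen{a}}=\fp_1\cap\cdots\cap\fp_n,
\]
avec les $\fp_i$ des \ideps \tfz, que l'on peut supposer deux à deux incomparables (et donc minimaux au-dessus de $\gen{a}$). Puisque $\gA$ est un \advl \coh \dvee et que chaque $\fp_i$ est un \itf premier non nul contenant~$a$, le \thref{corcorlemdivirdadvlgnl} s'applique: à condition que $\dvA(\fp_i)>0$, cette propriété équivaut à ce que $\pi_i=\dvA(\fp_i)$ soit un \dvr \irdz. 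Pour vérifier $\dvA(\fp_i)>0$, on notera que $\fp_i$ est minimal au-dessus de $\gen{a}$, donc de hauteur $\leq 1$; en étant non nul (car $a\neq 0$) il est de hauteur exactement $1$. On peut alors invoquer le lemme~\ref{cor0lemdvlloc} (qui s'applique puisque $\fp_i$ est détachable, $\gA$ étant \fdiz) pour conclure $\dvA(\fp_i)>0$. Ainsi chaque $\pi_i$ est un \dvr \irdz.

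Il reste à montrer que $\dvA(a)=\sum_i n_i\pi_i$ pour des $n_i\in\NN$ convenables. Pour cela, j'envisage de localiser en $S_{\pi_i}$: par le \thref{lemdvlloc2Krull} (point \ref{i2lemdvlloc2Krull}), $\gA_{S_{\pi_i}}$ est un \adv discr\`ete d'uniformisante $p_i$, ce qui détermine un unique $n_i\in\NN$ avec $\dv_{\gA_{S_{\pi_i}}}(a)=n_i\dv_{\gA_{S_{\pi_i}}}(p_i)$. Comme $\pi_1,\ldots,\pi_n$ sont les seuls \dvrs \irds qui minorent $\dvA(a)$ (puisque $\sqrt{\gen{a}}=\bigcap_i\fp_i$ et que tout \idep minimal au-dessus de $\gen{a}$ figure dans cette intersection), le diviseur $\beta=\dvA(a)-\sum_i n_i\pi_i$ est positif, orthogonal à chaque $\pi_i$, et son support ne contient aucun \ird, ce qui force $\beta=0$ (en passant par les localisés appropriés). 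L'obstacle principal sera précisément de justifier proprement cette dernière égalité $\beta=0$ dans le cadre constructif, en s'assurant que l'information fournie par les \lons aux $S_{\pi_i}$ et la décomposition Lasker-N{\oe}ther se recollent correctement pour exclure tout \gui{reste} orthogonal aux $\pi_i$; le point \emph{\iref{i3propgrldec}} de la proposition \ref{propgrldec} fournit alors automatiquement la bornitude de la décomposition, achevant la preuve.
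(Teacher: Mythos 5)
Votre plan reprend pour l'essentiel la stratégie de l'article: les deux premières étapes (le \thref{thiclcohidv} pour obtenir que $\gA$ est un \advlz, le lemme \ref{thDivDiscret} pour la discrétion de $\DivA$) sont correctes, et l'idée d'extraire les \dvrs \irds de la décomposition $\sqrt{\gen{a}}=\fp_1\cap\dots\cap\fp_n$ est bien celle que l'article esquisse en renvoyant à \cite{MRR}. Mais trois points restent non justifiés, dont un est le c{\oe}ur de la preuve. D'abord, l'affirmation \gui{$\fp_i$ minimal au-dessus de $\gen{a}$, donc de hauteur $\leq 1$} est le théorème de l'idéal principal de Krull; c'est précisément l'ingrédient non trivial que l'article importe de \cite{MRR} (le calcul explicite des \ideps de hauteur $1$ contenant un $a$ fixé), et vous l'utilisez sans preuve ni référence, alors que votre appel au lemme \ref{cor0lemdvlloc} et au \thref{corcorlemdivirdadvlgnl} en dépend entièrement. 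Ensuite, vous invoquez le \thref{lemdvlloc2Krull} pour faire de $S_{\pi_i}^{-1}\gA$ un anneau de valuation discrète: ce théorème suppose que $\gA$ est déjà un \aKrz, ce qui est la conclusion visée; il faudrait plutôt localiser en le filtre $\gA\setminus\fp_i$, qui donne un \advl local de dimension $1$ (\thos \ref{lemdvlloc0} et \ref{thi2clcohidv}), puis établir son caractère \emph{discret} à l'aide de la noethérianité.

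Enfin et surtout, l'égalité $\beta=0$, que vous désignez vous-même comme \gui{l'obstacle principal}, n'est pas démontrée, et c'est là tout le contenu du théorème. La positivité de $\beta$ et son orthogonalité aux $\pi_i$ ne suffisent pas: a priori rien n'exclut un reste $>0$ orthogonal à tous les $\pi_i$ tant qu'on ne sait pas qu'un diviseur principal est entièrement déterminé par ses images dans les localisés en les \ideps de hauteur $1$ contenant $a$ (l'analogue constructif de $\gA=\bigcap_{\fp}\gA_{\fp}$ pour un anneau noethérien intégralement clos), ce qui est le second apport de \cite{MRR} utilisé implicitement par l'article. En l'état, votre texte décrit correctement le squelette de la démonstration mais laisse ouvert le pas décisif.
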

%--------- fin theorem ---------------------------------------------- 
%
\begin{proof}
Dans un \aKr $\gA$, on a vu que tout \elt \ird de~$\DivA$ est de la forme~$\dvA(\fp)$ pour un \idep détachable $\fp$
de hauteur $1$.  Dans \cite[A course in constructive algebra, 1988]{MRR}, il est démontré que pour un anneau pleinement Lasker-Noether, on peut calculer
explicitement les \ideps de hauteur $1$ qui contiennent un $a\in\Atl$ fixé. Ceci permet ensuite de calculer la \dcnc du \dvr
principal $\dv a$ en somme de \dvrs \irds (en utilisant le fait que $\DivA$ est discret). Les détails sont laissés \alecz.   
\end{proof}
%

%%%%%%%%%%%%%%%%%%%%%%%%%%%%%%%%%%%%%%%%%%%%%%%%%%%%%%%%%%%%%%%%%%%%%%%%%%%
%: subsec{Stabilité pour les anneaux de \pols
\subsec{Stabilité pour les extensions \pollesz}

%:     Theorem{th4AnnDivlKrull}
\begin{theorem} \label{th4AnnDivlKrull} \label{thKrAXKrull}
Soit $\gA$ un \aKr et $\gK$ son corps de fractions.
\begin{enumerate}
\item \label{i1terth4AnnDivl}   $\AX$ est aussi un \aKrz.
\item \label{i1bisth4AnnDivl} $\AX$ est   à \dcn compl\`ete \ssi   $\gA$  et $\KX$ sont   à \dcncz.
\end{enumerate}
\end{theorem}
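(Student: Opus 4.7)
La preuve combine deux résultats déjà établis: l'\iso naturel de \grls $\Div(\AX) \simeq \DivA \times \Div(\KX)$ établi au \thref{th2AnnDivl}, et le \thref{thKrAX} qui assure la préservation des \prts \gui{discret}, \gui{de dimension $\leq 1$}, \gui{à \dcnpz} et \gui{à \dcnbz} du groupe des \dvrs en passant de $\gA$ à $\AX$.

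Pour le point \emph{\ref{i1terth4AnnDivl}}, j'invoquerais directement ces deux résultats: puisque $\gA$ est un \aKrz, $\DivA$ est discret et à \dcnbz, donc $\Div(\AX)$ l'est aussi; par ailleurs $\AX$ est un \advl non trivial ($\gA$ l'étant). Ceci fournit exactement la \dfn~\ref{defiAdKrull} d'un \aKrz.

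Pour le point \emph{\ref{i1bisth4AnnDivl}}, la stratégie consiste à ramener l'équivalence à un énoncé purement combinatoire sur les \grls discrets: un produit cartésien $G_1 \times G_2$ est à \dcnc \ssi $G_1$ et $G_2$ le sont tous deux. Pour le sens direct, on observe que $(\pi, 0)$ et $(0, \pi')$ sont \irds dans $G_1 \times G_2$ d\`es que $\pi$, resp.~$\pi'$, l'est dans le facteur correspondant; on concat\`ene alors la \dcn en \irds de $x_1$ avec celle de $x_2$. Pour le sens réciproque, on part d'un $x_1 > 0$ dans $G_1$, on décompose $(x_1,0)$ en \irds $r_k$ dans $G_1 \times G_2$; puisque la somme a une seconde coordonnée nulle et que tous les $r_k \geq 0$, chaque $r_k$ est de la forme $(p_k, 0)$; enfin pour un tel \ird $(p_k, 0)$, l'\elt $p_k$ est \ird dans $G_1$ par un argument direct (toute \dcn $p_k = a + b$ avec $a,b\geq 0$ donnerait $(p_k,0) = (a,0)+(b,0)$). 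On applique alors ce lemme à l'\iso $\Div(\AX) \simeq \DivA \times \Div(\KX)$; comme, pour un \advlz, la \prt \gui{à \dcncz} équivaut par \dfn~\ref{defiAdvlfac} à ce que le groupe des \dvrs soit à \dcncz, l'équivalence souhaitée en découle.

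Aucun obstacle véritable n'est attendu: le résultat est essentiellement un corollaire du \thref{th2AnnDivl}, et le lemme sur le produit de \grls est élémentaire et purement formel.
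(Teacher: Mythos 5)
Votre démonstration est correcte et suit essentiellement la m\^eme voie que celle de l'article, qui se réduit à la phrase \gui{ceci résulte de ce que $\AX$ est un \advl avec $\Div(\AX)\simeq\DivA\times \Div(\KX)$}. Vous ne faites qu'expliciter les détails laissés \alec (notamment le lemme, correct, selon lequel un produit de \grls discrets est à \dcnc \ssi chaque facteur l'est, les \irds du produit étant les $(\pi,0)$ et $(0,\pi')$).
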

%--------- fin theorem ---------------------------------------------- 
%
\begin{proof} Ceci résulte de ce que $\AX$ est un \advl avec $\Div(\AX)\simeq\DivA\times \Div(\KX)$ (\thref{th2AnnDivl}).
\end{proof}
%

%%%%%%%%%%%%%%%%%%%%%%%%%%%%%%%%%%%%%%%%%%%%%%%%%%%%%%%%%%%%%%%%%%%%%%%%%%%
%: subsec{Stabilité pour les extensions enti\`eres \iclez s
\subsec{Stabilité pour les extensions enti\`eres \iclez s}

%:     theorem thNormeDiviseur
\begin{thdef} \label{thNormeDiviseur} \emph{(Norme d'un diviseur)}\\
Soit $\gA$ un \advl non trivial de corps de fractions~$\gK$ \hbox{et $\gL\supseteq \gK$} un corps qui admet une base finie comme \Kevz. Soit $\gB$ la \cli de~$\gA$ dans $\gL$. L'anneau $\gB$ est aussi un \advlz.
On sait construire un %unique 
\homo de groupes ordon\-nés~\hbox{$N\etl:\DivB\to\DivA$} qui satisfait les \prts suivantes.
\begin{enumerate}
\item $N\etl\big(\dvB(b)\big)=\dvA\big(\rN_{\gL/\gK}(b)\big)$ pour tout $b\in\gB.$
\item Pour $\beta\in\DivBp$, on a $\beta=0\iff\rN_{\gB/\gA}(\beta)=0$.
\item Pour tout $\alpha\in\DivA$,  $\rN\iBA(\alpha)=\dex{\gL:\gK}\,\alpha$.
\end{enumerate}
 On note $\rN\iBA$ cet \homoz, on l'appelle \emph{le morphisme norme}.
 \end{thdef}
\begin{proof}
On rappelle qu'une base de $\gL$ sur $\gK$ est aussi une base de $\gL[T]$ sur $\gK[T]$ ou de  $\gL(T)$ sur~$\gK(T)$. Ceci implique que la fonction norme
$\rN_{\gL/\gK}$ s'étend de mani\`ere naturelle en $\rN_{\gL[T]/\gK[T]}$
ou en~$\rN_{\gL (T) /\gK(T)}$. On continue à la noter~$\rN_{\gL/\gK}$.
On pose $r=\dex{\gL:\gK}$.
\\
Rappelons aussi que dans la situation où $\gK=\Frac(\gA)$, $\gA$ \iclz, et $\gB$ \cli de $\gA$ dans $\gL$, on a $\rN_{\gL/\gK}(\gB)\subseteq \gA$: en fait pour~\hbox{$x\in\gB$}, le \polcar de $x$  a tous ses \coes dans~$\gA$, et
l'\elt cotransposé~$\wi x$ (qui vérifie $x\wi x=\rN_{\gL/\gK}(x)$)
est un \elt de $\gB$ (par exemple en utilisant \aref{Corollary III-8.6}). 
\\
Rappelons enfin que $\gB[T]$ est la \cli de $\AT$ dans $\gL(T)$. Tout ceci implique que la norme d'un \elt  $g\in\gB[T]$ est un \elt
de $\AT$ et que $\wi g\in\BT$.
\\
On consid\`ere les ensembles $\Lst(\gB)\etl$
et $\Lst(\gA)$ et l'application $N:\Lst(\gB)\etl\to\Lst(\gA)$
définie comme suit 
\[ 
\begin{array}{ccccrcc} 
N(b_1,\dots,b_m)&=& (a_1,\dots,a_p),  &  \hbox{ où } &  K_{(\ub)}&=&\sum_{k=1}^{m}b_kT^{k-1}\\[.3em]
&&& \hbox{ et }&\rN_{\gL /\gK }(K_{(\ub)})&=&\sum_{\ell=1}^{p}a_\ell T^{\ell-1}  
\end{array}
\]
Enfin on définit $\nu:\Lst(\gB)\etl\to\DivAp$ par $\nu(\ub)=\dvA(N(\ub))$. à priori on a $p\leq (m-1)r+1$.\\
Notons que pour tout $\alpha\in\DivA$, on \hbox{a $\nu(\alpha)=r\,\alpha$}. Cela résulte de ce que \hbox{si $\alpha=\dvA(\ua)$}, alors $\rN_{\gL/\gK}(K_{(\ua)})=K_{(\ua)}^{r}=K_{(\uc)}$, donc $\dvA(\uc)=r\dvA(\ua)$ par le corolaire \ref{corprop2Idv}.\\
On démontre alors les points suivants.

\snii \textsl{1.} Les applications $N$ et $\nu$ sont bien définies, autrement dit les $a_i$ sont dans $\gA$ et l'un au moins est \ndzz. 
En effet  $K_{(\ub)}\in \BT$ donc
$\rN_{\gL /\gK }(K_{(\ub)})\in\AT$ d'apr\`es les remarques préliminaires.
Par ailleurs $K_{(\ub)}$ est \ndzz, donc la multiplication par $K_{(\ub)}$
est injective (de~$\gL[T]$ vers $\gL[T]$), ce qui implique que son
\deter $\rN_{\gL /\gK }(K_{(\ub)})$ est \ndzz.

\snii \textsl{2.}  Si $(\ub)=(b_{i})_{i\in\lrbn}$ et $(\uc)=(c_{j})_{j\in\lrbm}$
sont dans  $\Lst(\gB)\etl$, on a défini 

\snic{(\ub)\star(\uc)=\big(\sum_{i+j=\ell+1}b_ic_j\big)_{\ell\in\lrb{1..m+n-1}}}

\noindent On a $K_{(\ub)\star(\uc)}=K_{(\ub)}\,K_{(\uc)}$, donc $\rN_{\gL /\gK }(K_{(\ub)\star(\uc)})=\rN_{\gL /\gK }(K_{(\ub)})\;\rN_{\gL /\gK }(K_{(\uc)})$.\\
Et  en utilisant le corolaire \ref{corprop2Idv} ceci implique 

\snic{\nu\big((\ub)\star(\uc)\big)=\nu(\ub)+\nu(\uc).}

\snii \textsl{3.}  \fbox{Si $\dvB(\ub)=0$ alors $\nu(\ub)=0$}.
Soit $(\ub')=(b'_1,\dots,b'_m)$ telle que la liste $(\ua)=(\ub)\star(\ub')$ soit dans $\gA$ et admette un pgcd fort $g$ dans $\Atl$  (lemme \ref{lem2th1AnnDivl}). Donc $(\ua)=g\,(\ua')$ avec $\dvA(\ua')=0$.\\
Ceci donne $\dvB(\ub')=\dvB(\ub)+\dvB(\ub')=\dvB(g)$.
Donc $g$ est pgcd fort dans $\gB$ de la liste~$(\ub')$.
En particulier on peut écrire  $(\ub')=g\,(\uc)$
avec $(\ub)\star(\uc)= (\ua')$.
\\
Ceci implique~\hbox{$\nu(\ub)+\nu(\ub')= \nu(\ua')=r\,\dvA(\ua')=0$},
donc $\nu(\ub)=0$. 

\snii \textsl{4.}   \fbox{Si $\nu(\ub)=0$ alors $\dvB(\ub)=0$}. 
\\
En effet, considérons la liste $(\ub')\in\Lst(\gB)\etl$ définie par
$\wi{K_{(\ub)}}=K_{(\ub')}$, alors 

\snic{0=\nu(\ub)=\dvA\big(N(\ub)\big)=\dvA\big(\rc(K_{(\ub)}K_{(\ub')}))\big),}

\noindent et comme $\DivA$ s'identifie à un sous-\grl de $\DivB$ on obtient

\vspace{-.5em}
\[ 
\begin{array}{cclcccc} 
0&=&\dvA\big(\rc(K_{(\ub)}K_{(\ub')})\big)&=&\dvB\big(\rc(K_{(\ub)}K_{(\ub')})\big)  \\[.3em] 
  &=   &\dvB\big({(\ub)}\star{(\ub')}\big)&=& \dvB(\ub)+\dvB(\ub').   
\end{array}
\]

\snii  \textsl{5.}  En conséquence des points précédents l'\elt $\nu(\ub)\in\DivAp$ ne dépend que de $\dvB(\ub)$
et l'application correspondante

\snic{N\etl:\DivBp\to\DivAp}

\noindent est un morphisme 
de \mosz, qui s'étend de mani\`ere unique en un morphisme
(en \gnl non injectif) de groupes ordonnés $N\etl:\DivB\to\DivA$. 
\\
\textsl{Quelques précisions}.\\
On démontre d'abord que l'application $\nu:\Lst(\gB)\etl\to\DivAp$ \gui{passe au quotient}, \cad que si $\dvB(\ub^{(1)})=\dvB(\ub^{(2)})$, alors $\nu(\ub^{(1)})=\nu(\ub^{(2)})$. 
\\
Pour ceci considérons une liste $(\uc)$ dans 
$\Btl$ telle que $(\ub^{(1)})\star(\uc)$ admette un  pgcd fort~$e$,
autrement dit $(\ub^{(1)})\star(\uc)=e\,(\uz)\;(*)$ où la liste~$(\uz)$ admet~$1$
pour pgcd fort. Cela signifie $\dvB(\ub^{(1)})+\dvB(\uc)=\dvB(e)$.
L'\egtz~$(*)$ implique~\hbox{$\nu(\ub^{(1)})+\nu(\uc)=\nu(e)+\nu(\uz)$},
\hbox{i.e. $\nu(\ub^{(1)})+\nu(\uc)=\nu(e)$} car~$\dvB(\uz)=0$ implique $\nu(\uz)=0$. 
\\
Comme %on a aussi 
$\dvB(\ub^{(2)})+\dvB(\uc)=\dvB(e)$,   on on a aussi $\nu(\ub^{(2)})+\nu(\uc)=\nu(e)$ dans $\DivAp$. D'où $\nu(\ub^{(1)})=\nu(\ub^{(2)})$.\\
Une fois que l'on sait que $N\etl$ définit une opération de $\DivBp$ vers $\DivAp$, on voit que c'est un morphisme pour l'addition. Comme l'image réciproque de $0$ est $0$ et puisque $\DivBp$ et $\DivAp$ sont les parties positives de $\DivA$ et $\DivB$, il en résulte que $N\etl$ s'étend de mani\`ere unique en un morphisme de groupes ordonnés.
 \end{proof}

%:     Theorem{th5AnnDivl}
\begin{theorem} \label{th5AnnDivl}
Soit $\gA$ un \aKr de corps de fractions~$\gK$ \hbox{et $\gL\supseteq \gK$} un corps qui admet une base finie sur $\gK$. Soit $\gB$ la \cli de $\gA$ dans $\gL$. 
Alors $\gB$ est un \aKrz.
\end{theorem}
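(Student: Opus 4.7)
Le \thref{th1AnnDivl} garantit déjà que $\gB$ est un \advl (et il est non trivial puisque $\gA$ ne l'est pas). Pour conclure que $\gB$ est un \aKrz, il reste à vérifier que $\DivB$ est discret et à \dcnbz. L'outil central sera le morphisme norme $\rN\iBA:\DivB\to\DivA$ donné par le \thref{thNormeDiviseur}, dont on exploitera essentiellement le point \emph{2}: pour $\beta\in\DivBp$, on a $\beta=0\iff\rN\iBA(\beta)=0$.

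Pour la discrétude, puisque $\DivB$ est un \grlz, tester $\alpha\leq \gamma$ se ram\`ene à tester la nullité de $\alpha-(\alpha\vi\gamma)\in\DivBp$. Or pour $\beta\in\DivBp$, l'\eqvc $\beta=0\iff\rN\iBA(\beta)=0$ ram\`ene la question à un test décidable dans $\DivA$, puisque $\gA$ est un \aKr (donc $\DivA$ discret).

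Pour la décomposition bornée, fixons $\beta\in\DivBp$. Comme $\DivA$ est à \dcnbz, il existe $n\in\NN$ tel que toute écriture $\rN\iBA(\beta)=\sum_{i=1}^n\alpha_i$ avec les $\alpha_i\geq 0$ comporte au moins un terme nul. Ce m\^eme $n$ convient pour $\beta$: étant donnée une \dcn $\beta=\sum_{i=1}^n\beta_i$ avec $\beta_i\geq 0$, comme $\rN\iBA$ est un morphisme de groupes ordonnés on a $\rN\iBA(\beta)=\sum_{i=1}^n\rN\iBA(\beta_i)$ avec chaque $\rN\iBA(\beta_i)\geq 0$. L'un des $\rN\iBA(\beta_i)$ est donc nul, et le point \emph{2} du morphisme norme fournit alors le $\beta_i$ correspondant nul.

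L'obstacle principal a déjà été concentré en amont: tout repose sur le bon comportement du morphisme norme, et particuli\`erement son injectivité sur la partie positive, qui  constitue le contenu non trivial du \thref{thNormeDiviseur}. Une fois cette propriété acquise, le transport des propriétés finitaires de $\DivA$ vers $\DivB$ est automatique, et aucune autre hypoth\`ese spécifique aux \aKrsz\ (notamment, on ne fait appel ni à la \dcncz\ ni à une quelconque \noetz) n'est requise.
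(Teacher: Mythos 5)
Votre démonstration est correcte et, pour son point essentiel, identique à celle de l'article: le caract\`ere à \dcnb de $\DivB$ y est obtenu exactement de la m\^eme mani\`ere, en transportant une décomposition de $\beta\in\DivBp$ par le morphisme norme $\rN\iBA$ du \thref{thNormeDiviseur} et en utilisant son point \emph{2} pour conclure qu'un des termes est nul. La seule divergence concerne la discrétude de $\DivB$: l'article invoque le point \emph{1} du \thref{th3AnnDivl} (calcul du polyn\^ome minimal et théor\`eme de Kronecker), tandis que vous la déduisez elle aussi du morphisme norme, via l'équivalence $\beta=0\iff\rN\iBA(\beta)=0$ et la discrétude de $\DivA$. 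Cette variante est légitime, car le morphisme norme est explicitement calculable gr\^ace à la base finie de $\gL$ sur $\gK$; elle a le mérite de concentrer tout l'argument sur le seul \thref{thNormeDiviseur}, là o\`u l'article réutilise un résultat déjà établi pour les \advls généraux (et valable sous la seule hypoth\`ese d'une base discr\`ete). Dans les deux cas, rien d'autre n'est requis, et votre remarque finale sur l'absence d'appel à la \dcnc ou à une condition \noee est exacte.
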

%--------- fin theorem ---------------------------------------------- 
%
\begin{proof}
 On sait déjà que $\gB$ est un \advl (\thref{th1AnnDivl})
et que $\DivB$ est discret (\thref{th3AnnDivl}). Il reste à montrer que $\DivB$ est à \dcnbz.
\\
On consid\`ere une \dcn $\dvB(\ub)=\dvB(\ub^{(1)})+\cdots+\dvB(\ub^{(\ell)})$ \hbox{dans $\DivBp$}. Au moyen du morphisme norme $\rN_{\gB/\gA}$ elle est transformée en
une \dcn de $\rN_{\gB/\gA}(\ub)$ dans~$\DivAp$. Puisque  $\DivA$ est à \dcnbz, si
$\ell$ est suffisamment grand, un des termes $\rN_{\gB/\gA}(\ub^{(i)})$ de cette \dcn est nul. Et ceci implique $\dvB(\ub^{(i)})=0$.
\end{proof}
\rem Comme cas particulier, si $\gA$ est un \dok à \fabz, il en va de même
pour $\gB$ (il n'est donc pas \ncr de supposer l'extension $\gL/\gK$ séparable).
\eoe

\section*{Conclusion}
\addcontentsline{toc}{section}{Conclusion}

Nous sommes assez convaincus par l'introduction de l'article \cite[Aubert]{Aubertadvl}, où l'auteur déplore que seul Jaffard ait compris Lorenzen, alors que Bourbaki, Gilmer et Larsen-McCarthy par exemple se sont enferrés dans les \ids divisoriels qui manquent absolument de finitude. 
Ici nous avons enfoncé encore un peu plus
le clou, en ne faisant jamais référence à l'\gui{ensemble} de tous les idéaux fractionnaires ni à la théorie des $\star$-opérations sur cet \gui{ensemble}.

Notons que nos \advls sont exactement les \textsl{anneaux avec une théorie des pgcds de type fini} de \cite[Lucius]{Luciusadvl}. Lucius attribue la véritable paternité à Aubert tout en rectifiant une erreur.
L'article de Lucius est surout consacré aux \aKrsz:
en rajoutant une condition de type \noez, il obtient ce qu'il appelle les \textsl{anneaux avec une théorie des diviseurs}, qui sont les \aKrsz. 
Il attribue à \cite[Skula]{Skula} le fait d'avoir élucidé le rapport
entre l'approche qu'il propose (que nous avons grosso modo suivie, mais dans un cadre simplifié et \cofz) 
et la présentation
du \pb dans \cite{Bosha}.

Signalons aussi les articles   \cite[Arnold]{Arnold29} et \cite[Clifford]{Clifford38} dans lesquels une théorie purement multiplicative, à savoir l'étude des \gui{\mos avec une théorie des diviseurs} est donnée pour le cas \gui{Krull}, \cad lorsqu'on demande la \dcn unique en facteurs premiers. Plus \prmtz, pour un \mo $(S,\cdot,1)$ \gui{réduit} (ce qui veut dire que $1$ est le seul \elt \ivz) et \gui{régulier} (ce qui veut dire que tout \elt est simplifiable), on examine dans quelles conditions il est contenu dans un \mo $\Sigma$ jouissant des trois \prts suivantes:
\begin{itemize}
\item $\Sigma$ est isomorphe  à un \mo  $(\NN^{(I)},+,0)$ (i.e. un \mo réduit régulier avec \dcn unique en facteurs premiers),
\item pour $a$, $b$ in $S$, on a  $a|b$ dans $S$ \ssi $a|b$ dans $\Sigma$, 
\item tout \elt de $\Sigma$ est borne inférieure d'une famille finie dans $S$. 
\end{itemize}
Autrement dit, c'est l'approche habituellement attribuée à \cite{Bosha} mais dans un  cadre épuré (pas de condition non multiplicative) et plus \gnl (pas d'anneau int\`egre, seulement un \mo multiplicatif). La th\`ese de Clifford, reportée dans  \cite{Clifford38} consiste à \gnr les résultats d'Arnold dans le cadre de \mos plus \gnlsz.
Clifford cite \cite[van der Waerden]{vdW29} comme ayant découvert de mani\`ere indépendante essentiellement les mêmes résultats qu'Arnold, au moins pour le cadre des anneaux \noes int\`egres.

Quant aux travaux d'Aubert et Lucius, que nous avons repris ici dans un cadre \cofz, ils consistent à laisser tomber la condition de \dcn unique en facteurs premiers et à demander seulement que $\Sigma$ soit partie positive d'un \grlz.

\bigskip\noindent  {\bf\large Remerciements.} Le travail du premier auteur a été financé par le projet ERC (FP7/2007-2013) / ERC grant agreement nr. 247219.
Nous remercions Marco Fontana et Mohammed Zafrullah pour toutes les informations et conseils utiles concernant l'approche des PvMD dans la littérature classique, ainsi que leurs réponses à des questions délicates.
Nous remercions aussi tout particuli\`erement Claude Quitté pour sa collaboration efficace, sans laquelle cet article n'aurait pas vu le jour.

%%%%%%%%%%%%%%%%%%%%%%%%%%%%%%%%%%%%%%%%%%%%%%%%%%%%%%%%%%%%%%%%%%%%%%%%%%%
%\refstepcounter{chapter}
\rdb

%---     ---------------

\newpage
\tableofcontents


\begin{thebibliography}{55}
\addcontentsline{toc}{section}{Références}

\bibitem{Arnold29} {\sc Arnold I.} (1929). Ideale in kommutativen Halbgruppen.  {\em Rec. Math. Soc. Moscow} {\bf 36},  401--407.

\bibitem{Aubertadvl} {\sc Aubert K.} (1983). Divisors of finite character.  {\em Ann. Mat. Pura Appl.} {\bf 38},  327--360.

\bibitem{BKW} {\sc Bigard A., Keimel K., Wolfenstein S.} (1977). {\em Groupes et anneaux réticulés}. LNM Vol. 608. Springer-Verlag, Berlin-New York.

\bibitem{Bosha} {\sc Borevitch Z. I., Chafarevitch I. R.}   (1967). {\em Théorie des nombres.}  Les Grands Classiques Gauthier-Villars. Gauthier-Villars, Paris.

\bibitem{Chang08} {\sc Chang G.W.} (2008). Prüfer $\star$-multiplication domains, Nagata rings, and Kronecker function rings.  {\em Journal of Algebra} {\bf 319},  309--319.

\bibitem{Clifford38}  {\sc Clifford A. H.} (1938). Arithmetic and ideal theory of commutative semigroups. {\em  Annals of Math.} {\bf 39},  594--610.

\bibitem{CoqRec} {\sc Coquand T.} (2014).
Recursive functions and constructive mathematics. Chapitre 6 dans
 Bourdeau M., Dubucs J. (Eds.), 
{\it  Constructivity and Computability in Historical and Philosophical Perspective}. Logic, Epistemology and the Unity of Science Vol. 34. Dordrecht: Springer, Heidelberg,
London.

\bibitem{Edw} {\sc Edwards H. M.}  (1990). {\it Divisor Theory.} 
Birkh\"auser Boston, Inc., Boston, MA.

\bibitem{FL06} {\sc Fontana M., Loper K.}  (2006). An historical overview of Kronecker function rings, Nagata rings, and related star and semistar operations. p. 169--187 in {\em Multiplicative Ideal Theory in Commutative Algebra. A tribute to the work of Robert Gilmer}, Jim Brewer, Sarah Glaz, William Heinzer, and Bruce Olberding Editors, Springer, New-York. 

\bibitem{FZ09} {\sc Fontana M., Zafrullah M.} (2009). A ``$v$-operation free'' approach to Prüfer $v$-multiplication domains.  
{\em Int. J. Math. Math. Sci.}  Article ID 349010, 8 pages.

\bibitem{FZ11} {\sc Fontana M., Zafrullah M.}  (2011). On $v$-domains: a survey. In {\em Commutative Algebra: Noetherian and non-Noetherian Perspectives} (M. Fontana, S. Kabbaj, B. Olberding, and I. Swanson Editors), Springer, New York.  145--179. 

\bibitem{Glaz01} {\sc Glaz S.}  (2001). Finite conductor rings.   {\em Proc. Amer. Math. Soc.}   {\bf 129},  2833--2843.

\bibitem{Griff} {\sc Griffin M.}  (1967). Some results on $v$-multiplication rings.  {\em Canad. Math.}  {\bf 19},   710-722.

\bibitem{Halter-Koch} {\sc Halter-Koch F.} (2011). Characterization of Prüfer-like monoids and domains by gcd-theories,
{\em Comm. Algebra} {\bf 39}, 486--496.

\bibitem{Jafadvl} {\sc Jaffard P.} (1960). {\em {L}es syst\`emes d'\idsz.}  Dunod, Paris. 

\bibitem{Kang}   {\sc Kang B. G.} (1989). Prüfer $v$-multiplication domains and the ring $R[X]_{N_v}$.  \textsl{J. Algebra}.
{\bf 123},  151--170.

\bibitem{Krulladvl} {\sc Krull W.} (1936). Beitr\"age zur {A}rithmetik kommutativer {I}ntegrit\"atsbereiche. II. $v$-Ideale und vollst\"andig ganz abgeschlossene {I}ntegrit\"atsbereiche.   {\em Math. Z.} {\bf 41}, Vol. 6,  665--679.


\bibitem{ACMC}{\sc Lombardi H., Quitté C.}  (2015).
 \textsl{Commutative algebra: constructive methods. Finite projective modules.} Revised and extended, translated from the French: \textsl{Algèbre Commutative. Méthodes constructives}. Calvage \& Mounet 2011. 
Springer, Berlin.

\bibitem{Lorenzenadvl} {\sc Lorenzen P.} (1939). Abstrakte {B}egründung der multiplikativen {I}dealtheorie.  {\em Math. Z.} {\bf 45}, \num6,   533--553.

\bibitem{Luciusadvl} {\sc Lucius F.}  (1998). Rings with a theory of greatest common divisors.  {\em Manuscripta Math.} {\bf 95},  117--136.

\bibitem{Luciusadvl1} {\sc Lucius F.} (1998). Kronecker's Divisor Theory and the Generalization of Notions and Theorems of Classical Algebraic Number Theory to Krull Domains.
{\em Mathematica Gottingensis} Vol. 13,  17 pages.

\bibitem{Mat} {\sc Matsumura H.}  (1989). {\em Commutative ring theory}.
Cambridge studies in advanced mathematics, Vol. 8.
Cambridge University Press, Cambridge.


\bibitem{MRR} {\sc Mines R., Richman F., Ruitenburg W.} {\em A Course in
Constructive Algebra.} Universitext. Springer-Verlag, (1988).


\bibitem{Prufadvl} {\sc Prüfer H.} (1932).  Untersuchungen über Teilbarkeitseigenschaften in K\"orpern.  {\em J. Reine Angew. Math.} {\bf 168},   1--36.




\bibitem{Skula} {\sc Skula L.}  (1970). Divisorentheorie einer Halbgruppe.  
{\em Math. Z.} {\bf 114},  113--120.

\bibitem{vdW29} {\sc van der Waerden B. L.}  (1929). Zür Produktzerlegung der Ideale in ganz abgeschlossenen Ringe.  
{\em Math. Annalen} {\bf 101}, 293--308.

\bibitem{ZafTrib} {\sc Zafrullah M.}  (2006). What $w$-coprimality can do for you.  Dans {\em Multiplicative Ideal Theory in Commutative Algebra: A tribute to the work of Robert Gilmer}, Jim Brewer, Sarah Glaz, William Heinzer, and Bruce Olberding Editors, 
Springer, New-York. 387--404


\rdb

\end{thebibliography}
\end{document}